\titleformat{\section}[block]{\normalfont\large\bfseries\boldmath\centering}{\raggedright\makebox[1em][l]{\thesection.}}{.25em}{#1}
\titleformat{\subsection}[block]{\normalfont\bfseries\boldmath\centering}{\raggedright\makebox[1em][l]{\thesubsection.}}{1em}{#1}
\titleformat{\subsubsection}[runin]{\normalfont\bfseries\boldmath}{\raggedright\makebox[1em][l]{\thesubsubsection.}}{1.5em}{#1.~\hbox{---}}
\renewenvironment{abstract}{%
\begin{center}
\begin{minipage}{.9\textwidth}\linespread{1.05}\selectfont\small
\makebox[5em][l]{\bfseries\abstractname.~\hbox{---}}
\normalfont}
{\par\vspace{1em}
\end{minipage}
\end{center}
}
\numberwithin{equation}{section}
\newtheorem{thm}{\bfseries \upshape Theorem}[section]
\newtheorem{lem}[thm]{Lemma}
\newtheorem{prop}[thm]{Proposition}
\newtheorem{cor}[thm]{Corollary}
\theoremstyle{definition}
\newtheorem{rem}[thm]{Remark}
\newtheorem{thmA}{Theorem}
\newcommand{\ensemble}[1]{\mathbb{#1}}
\renewcommand{\P}{\ensemble{P}}
\renewcommand{\Pr}[1]{\P\left(#1\right)}
\newcommand{\E}{\ensemble{E}}
\newcommand{\Es}[1]{\ensemble{E}\left[#1\right]}
\newcommand{\Esc}[2]{\E\left[#1 \;\middle|\; #2\right]}
\newcommand{\Var}{\operatorname{Var}}
\newcommand{\Cov}{\operatorname{Cov}}
\newcommand{\R}{\ensemble{R}}
\newcommand{\Z}{\ensemble{Z}}
\newcommand{\N}{\ensemble{N}}
\newcommand{\D}{\ensemble{D}}
\newcommand{\dgr}{d_{\mathrm{gr}}}
\newcommand{\Msf}{\mathsf{M}}
\newcommand{\Lsf}{\mathsf{LT}}
\newcommand{\Zsf}{\mathsf{Z}}
\newcommand{\Vsf}{\mathsf{V}}
\newcommand{\Jsf}{\mathsf{J}}
\newcommand{\Csf}{\mathsf{C}}
\newcommand{\Xsf}{\mathsf{X}}
\newcommand{\Rsf}{\mathsf{R}}
\newcommand{\Loop}{\mathrm{Loop}}
\newcommand{\Leb}{\mathrm{Leb}}
\newcommand{\e}{\operatorname{e}} 
\newcommand{\q}{\mathbf{q}}
\renewcommand{\d}{\mathop{}\!\mathrm{d}}
\renewcommand{\i}{\operatorname{i}}
\newcommand{\degf}{\mathrm{f}}
\newcommand{\Degf}{\mathbf{f}}
\newcommand{\bdtheta}{\boldsymbol{\theta}}
\renewcommand{\ge}{\geqslant}
\renewcommand{\le}{\leqslant}
\newcommand{\ind}[1]{\mathbf{1}_{\{#1\}}}
\newcommand{\cv}[1][n]{\enskip\mathop{\longrightarrow}^{}_{#1 \to \infty}\enskip}
\newcommand{\cvloi}[1][n]{\enskip\mathop{\longrightarrow}^{(d)}_{#1 \to \infty}\enskip}
\newcommand{\cvps}[1][n]{\enskip\mathop{\longrightarrow}^{a.s.}_{#1 \to \infty}\enskip}
\newcommand{\cvproba}[1][n]{\enskip\mathop{\longrightarrow}^{\P}_{#1 \to \infty}\enskip}
\newcommand{\eqloi}[1][n]{\enskip\mathop{=}^{(d)}_{}\enskip}
\newcommand{\eqps}[1][n]{\enskip\mathop{=}^{a.s.}_{}\enskip}
\newcommand{\tildep}[1]{\widetilde{#1}\vphantom{#1}}
\newcommand{\overp}[1]{\overline{#1}\vphantom{#1}}
\DeclarePairedDelimiter\ceil{\lceil}{\rceil}
\DeclarePairedDelimiter\floor{\lfloor}{\rfloor}
\author{
	Cyril \textsc{Marzouk}\thanks{CMAP, \'{E}cole polytechnique.\hfill  \href{mailto:cyril.marzouk@polytechnique.edu}{\texttt{cyril.marzouk@polytechnique.edu}}}
}
\title{Scaling limits of random looptrees and bipartite plane maps with prescribed large faces}
\begin{document}

\maketitle

\begin{abstract}
We first rephrase and unify known bijections between bipartite plane maps and labelled trees with the formalism of looptrees, which we argue to be both more relevant and technically simpler since the geometry of a looptree is explicitly encoded by the depth-first walk (or \L ukasiewicz path) of the tree, as opposed to the height or contour process for the tree. We then construct continuum analogues associated with any c\`adl\`ag path with no negative jump and derive several invariance principles. 
We especially focus on uniformly random looptrees and maps with prescribed face degrees and study their scaling limits in the presence of macroscopic faces, which complements a previous work in the case of no large faces. The limits (along subsequences for maps) form new families of random metric measured spaces related to processes with exchangeable increments with no negative jumps and our results generalise previous works which concerned the Brownian and stable L\'evy bridges. 
\end{abstract}

\begin{figure}[!ht] \centering
\begin{minipage}{.6\linewidth}
\includegraphics[width=\linewidth]{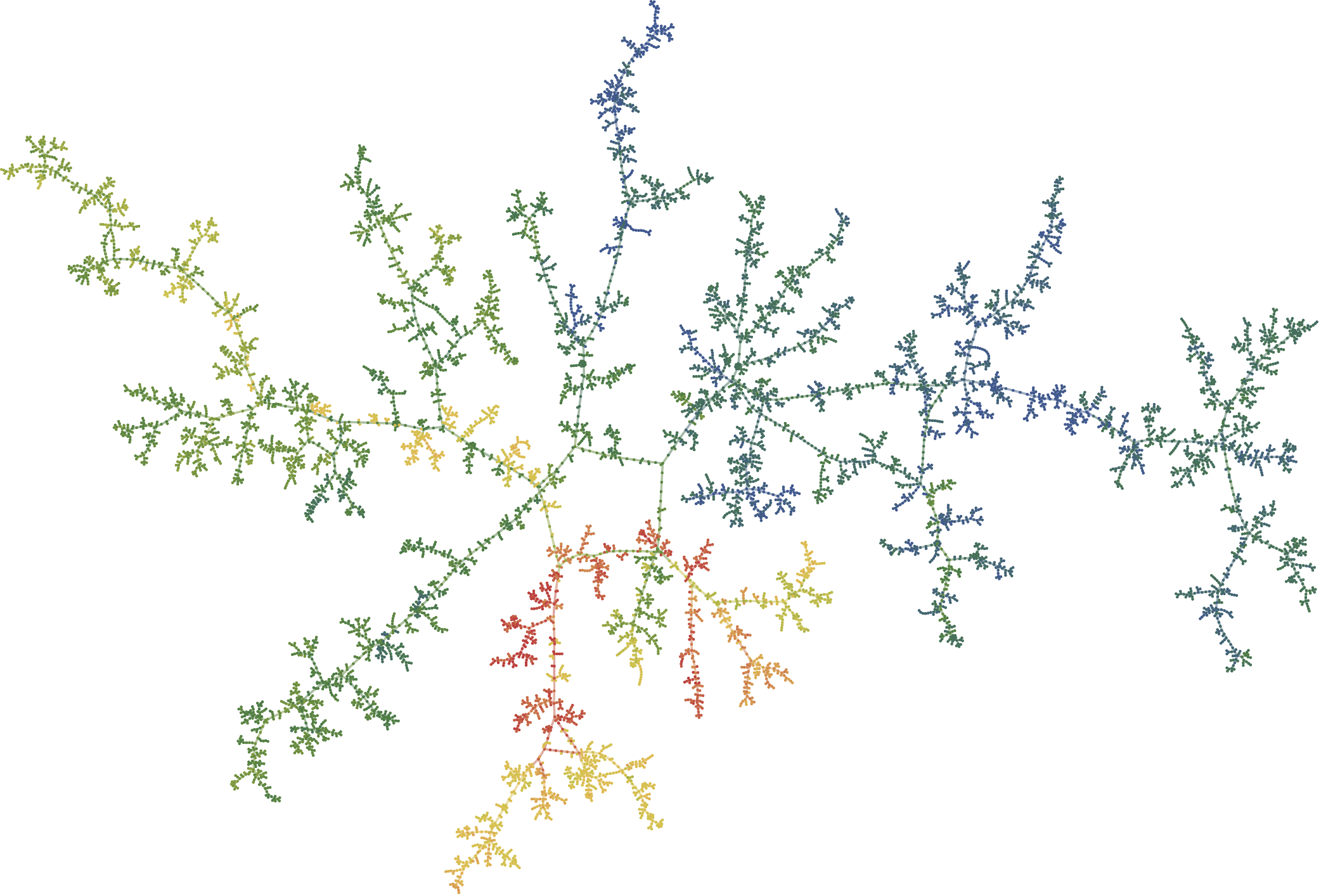}
\end{minipage}
\begin{minipage}[t]{.39\linewidth}
\includegraphics[width=\linewidth]{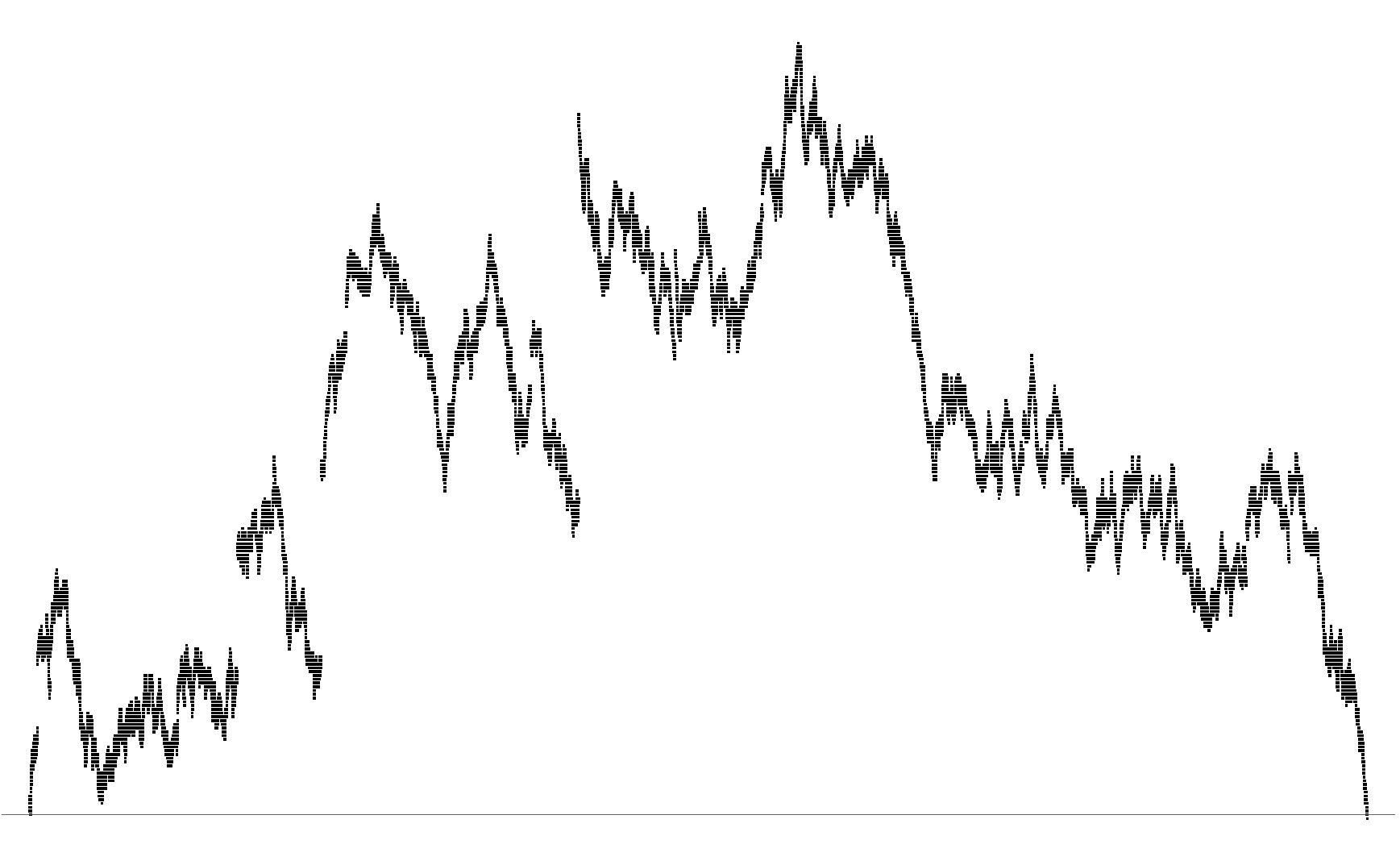}
\includegraphics[width=\linewidth]{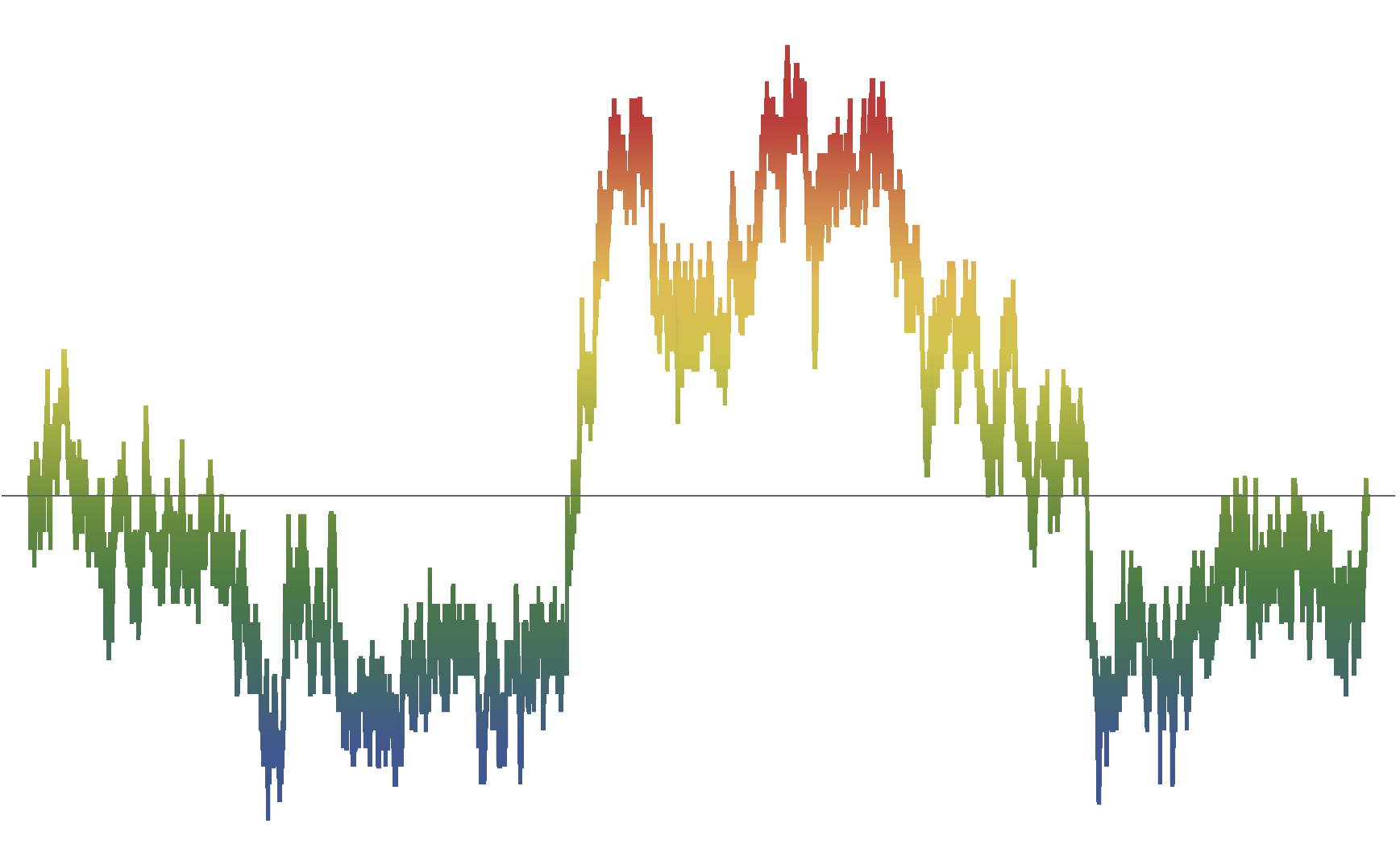}
\end{minipage}
\caption{\emph{Left:} A large uniformly labelled random looptree, drawn non-isometrically in the plane, 
which approximates the limit in the main theorems associated with a sequence satisfying both $\theta_0>0$ and  $\sum_i \theta_i = \infty$, with labels indicated by colours. The aspect is mostly tree-like, with a few long cycles.
\emph{Right:} An excursion with no negative jump coding its geometry on the top and a bridge with continous paths coding the labels on the bottom.}
\label{fig:simu}
\end{figure}

\clearpage

\setcounter{tocdepth}{2}
\tableofcontents
\clearpage

\section{Introduction}

A rooted plane map is the embedding of a finite, connected multigraph with a distinguished oriented edge (hereafter the \emph{root edge}) in the two-dimensional sphere and viewed up to orientation-preserving homeomorphisms.
We shall drop both adjectives and simply write ``maps'' in the rest of this paper.
Maps can be viewed as discrete surfaces and thus random maps offer simple models of random geometry and one can hope that by sampling large maps and letting the edge length tend to zero appropriately, one can obtain in the limit a nontrivial continuum random surface.
Due to embedding of the graph, it makes sense to define the \emph{faces} of the map as the connected components of the complement of the graph on the sphere; the \emph{degree} of a face is the number of edges incident to it, counted with multiplicity, i.e.~an edge incident on both sides to the same face contributes twice to its degree. 
The simplest model to study is that of quadrangulations with $n$ faces, all of which with degree $4$, sampled uniformly at random. For this model Chassaing \& Schaeffer~\cite{CS04} identified the growth rate as $n^{1/4}$ and after a series of works, Le~Gall~\cite{LG13} and Miermont~\cite{Mie13} independently proved that the metric space obtained by endowing the vertex set of such a quadrangulation with $(9/(8n))^{1/4}$ times the graph distance converges in distribution towards a random space $(S, D^\ast)$ called the \emph{Brownian sphere} (or \emph{Brownian map}). The latter is almost surely homemorphic to the sphere~\cite{LGP08, Mie08} and with Hausdorff dimension $4$~\cite{LG07}.

A natural question following this result is that of its universality. Already in~\cite{LG13}, Le~Gall proves that for $p=3$ or any $p \ge 6$ even, random maps with $n$ faces all of which with degree $p$ also converge in distribution towards $(S, D^\ast)$ at the scale $n^{1/4}$ up to a model-depending constant. The case of odd $p$'s has been treated more recently by Addario-Berry \& Albenque~\cite{ABA21}. 
In~\cite{Mar18b, Mar19}, a more general model was introduced, in which one chooses for every $n$ a deterministic array of $n$ positive (even) integers, which can differ from each others as well as vary with $n$, and one samples a plane map uniformly at random with $n$ faces whose degrees are given by these numbers.
In~\cite{Mar19}, fully extending~\cite{Mar18b}, convergence to the Brownian sphere for this model is shown under an optimal assumption of ``no macroscopic degree''.
The aim of this paper is to study this model in the regime when large faces are allowed. 
Let us next discuss our method before stating our main results.
Let us already mention that independently, Blanc-Renaudie~\cite{BR22} is currently working on the same model and same results but with completely different methods.

\subsection{A new formulation of the bijections with labelled trees}

A key tool to study the behaviour of large random maps
is a tailored constructive bijection with \emph{labelled trees}, i.e.~plane trees in which each vertex carries an integer, positive or not, and not necessarily distinct. It started with the celebrated Cori--Vauquelin--Schaeffer bijection that applies to quadrangulations and was then generalised to any plane maps by Bouttier, Di~Francesco, \& Guitter~\cite{BDG04} which was successfully used in numerous works such as~\cite{MM07, Mie06, LG07, Wei07, MW08, LG13, Abr16, BM17, ABA21}. This bijection takes a much simpler form in the case of \emph{bipartite} plane maps (when all faces have even degree), to which we will stick. Nevertheless the labelled trees, called \emph{mobiles} are still more complicated than in the case of quadrangulations. Later, Janson \& Stef{\'a}nsson~\cite{JS15} constructed a bijection between these mobiles and trees, thus providing a bijection between bipartite maps and simple labelled trees. This bijection has been used in the context of maps in e.g.~\cite{CK15, Ric18} without the labels, and in~\cite{Mar18b, Mar18a, Mar19} with the labels.

\begin{figure}[!ht] \centering
\includegraphics[page=7, height=9\baselineskip]{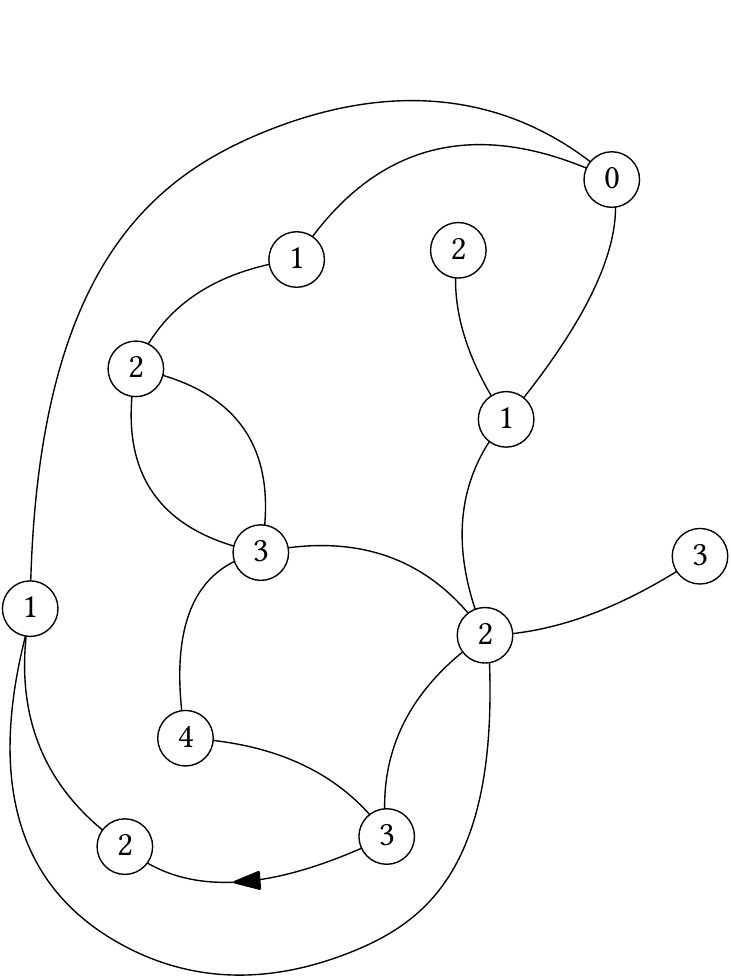}
\quad
\includegraphics[page=8, height=9\baselineskip]{bijections}
\quad
\includegraphics[page=11, height=9\baselineskip]{bijections}
\caption{From left to right: A plane tree, its looptree version as defined in~\cite{CK14}, and the version we consider here obtained by merging each internal vertex of the tree with its right-most offspring.}
\label{fig:looptree_intro}
\end{figure}

As opposed to all other listed papers, in~\cite{Mar19} the limit theorems for maps are obtained without a strong control on the associated trees themselves (their maximal height are unknown for example).
This suggests that these trees are in some sense not the right object of interest. Instead, we propose here to code the maps by a variation of the so-called \emph{looptree} version of a tree introduced in~\cite{CK14}, as depicted in Figure~\ref{fig:looptree_intro}. This encoding, described in Lemma~\ref{lem:bijection} below, is a simple modification of that from~\cite{BDG04} and the reader acquainted with the latter can directly look at Figure~\ref{fig:bijections} below. An example of this bijection is provided in Figure~\ref{fig:carte_intro}.
Let us already mention that it boils down to the CVS bijection in the particular case of quadrangulations, see Figure~\ref{fig:Schaeffer}.
Ultimately, we code these objects by a pair of discrete paths, which is the same as in~\cite{Mar19} but we believe that this viewpoint sheds some lights on the intermediate results of this reference and more generally on the construction of maps, their continuum analogues, and convergences. Furthermore, it allows to extend the results in~\cite{Mar19} without caring about the behaviour of the trees, which is still to be understood, although progress has been made very recently.
The point is that the geometry of the looptree is explicitly coded by the so-called \emph{{\L}ukasiewicz path}, whereas that of the tree is encoded in the \emph{height} or \emph{contour} process, and the latter is much more complicated to study than the former. 

Let us next introduce formally our model of random looptrees and maps.

\subsection{Random models with prescribed degrees}
\label{ssec:modele_intro}

Let us consider a triangular array of nonnegative integers: $\degf_{n,1} \ge \dots \ge \degf_{n,n+1} \ge 0$ such that $\sum_{i=1}^{n+1} \degf_{n,i} = n$ for every $n\ge 1$. 
For every subset $A \subset \Z$, let $\degf_n(A) = \#\{i \le n+1 : \degf_{n,i} \in A\}$ and simply write $\degf_n(k) = \degf_n(\{k\})$ in the case of a singleton.
In order to lighten the notation, we shall only indicate the dependence in $n$ in our objects, although they really depend on the whole sequence of $\degf_{n,i}$'s.
Then sample a looptree $LT^n$ uniformly at random amongst all those whose cycle lengths are the nonzero terms amongst $\degf_{n,1}, \dots, \degf_{n,n+1}$.
We refer the reader to Section~\ref{ssec:def_objets} for a formal definition of a looptree, and to Figure~\ref{fig:looptree_intro} for an example.
We will see that, once suitably randomly labelled, it encodes a bipartite map $M^n$ chosen uniformly at random amongst all those whose face degrees are the nonzero terms amongst $2\degf_{n,1}, \dots, 2\degf_{n,n+1}$.
One can check that both $LT^n$ and $M^n$ necessarily have $n$ edges and their number of vertices is respectively $\degf_n(0)$ and $\degf_n(0)+1$.

This model is inspired by similar trees sampled uniformly amongst all plane trees 
whose offspring numbers are the $\degf_{n,i}$'s,
studied in~\cite{AB12, BM14}, also~\cite{Lei19} in case of forests, and more recently in~\cite{AHUB20b, ABBHK21, ABDMM21, BR21, BHT21}. 
Their scaling limits are expected to be the so-called Inhomogeneous Continuum Random Trees studied especially in~\cite{AMP04} in a framework close to ours, and recently in~\cite{BR20} from another point of view. These trees are related to random processes with exchangeable increments which constitute the analogue of the {\L}ukasiewicz path in the discrete models. The convergence of the random {\L}ukasiewicz paths to such processes is well-understood, but this is not sufficient to control strongly the geometry of the trees and~\cite{BM14, Lei19} are restricted to the Brownian Continuum Random Tree, while the other papers are mostly restricted to a weak notion of convergence. What is especially missing is a tightness argument, however we shall prove in Theorem~\ref{thm:convergence_arbres_reduits} the convergence of such trees in the weak sense of so-called subtrees spanned by finitely many independent and uniform random vertices.
The study of the geometry of the associated labelled looptrees is however simpler.

The fundamental quantity which appears in our statements is
\[\sigma_n^2 = \sum_{i=1}^{n+1} \degf_{n,i} (\degf_{n,i} - 1) = \sum_{k \ge 0} k (k-1) \degf_n(k).\]
Observe that $k-1 \le k(k-1) \le 2(k-1)^2$ for every $k\ge 0$, which shows that
$\sigma_n^2$ lies between $\degf_n(0)-1$ and $2 (\degf_n(0)-1)^2$. We shall therefore assume henceforth that $\sigma_n^2 \to \infty$ as otherwise our graphs have a bounded number of vertices.
We shall also assume that there exists a sequence $\bdtheta$ of real numbers $\theta_1 \ge \theta_2 \ge \dots \ge 0$ such that
\begin{equation}\label{eq:hyp_sauts}
\sigma_n^{-1} \degf_{n,i} \cv \theta_i \quad\text{for every}\enskip i \ge 1.
\end{equation}
By Fatou's lemma, we have $\sum_{i \ge 1} \theta_i^2 \le 1$; let us define $\theta_0 \in [0,1]$ by
\[\theta_0^2 = 1 - \sum_{i \ge 1} \theta_i^2.\]
Let us refer to~\cite[Section~6.2]{AHUB20b} for explicit examples of triangular arrays satisfying~\eqref{eq:hyp_sauts} in the case $\theta_0=0$ and $\sum_i \theta_i = \infty$ as well as in the case $\theta_0, \theta_1, \theta_2, \ldots > 0$.

The limits in the theorems below will be constructed later in this paper. More precisely, Theorem~\ref{thm:convergence_looptree_intro} will be specified in Proposition~\ref{prop:tension_Holder_looptrees} and 
Theorem~\ref{thm:convergence_looptree_degres_prescrits_general}.
The definition of the Gromov--Hausdorff--Prokhorov topology is recalled in Section~\ref{ssec:labels_to_cartes}.

\begin{thmA}\label{thm:convergence_looptree_intro}
Suppose that $\sigma_n^2 \to \infty$ as $n\to\infty$.
\begin{enumerate}
\item From every increasing sequence of integers, one can extract a subsequence along which $\sigma_n^{-1} LT^n$ converges in distribution in the Gromov--Hausdorff--Prokhorov topology towards a limit with nonzero diameter.
\item Suppose that~\eqref{eq:hyp_sauts} holds and that there exists $a \ge 0$ such that 
\[\sigma_n^{-2} \degf_n(2\Z_+) \cv a \theta_0^2.\]
Then the convergence in distribution
\[\sigma_n^{-1} LT^n \cvloi \Loop(\bdtheta, a)\]
holds in the Gromov--Hausdorff--Prokhorov topology, where $\Loop(\bdtheta, a)$ is a random compact metric measured space whose law only depends on $a$ and $\bdtheta$.
\end{enumerate}
\end{thmA}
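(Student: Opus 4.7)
The plan is to code the looptree $LT^n$ by its {\L}ukasiewicz path $W^n$ and to deduce Gromov--Hausdorff--Prokhorov convergence of the spaces from Skorokhod convergence of the driving paths via a continuity property of the coding. A uniform looptree with prescribed cycle lengths corresponds under the bijection of the previous subsection to a uniform plane tree with prescribed offspring numbers, and the {\L}ukasiewicz path of such a tree is, by the cyclic lemma, the unique positive excursion of length $n+1$ of a random walk whose increments are a uniform random permutation of the integers $\degf_{n,i}-1$. In particular $\sigma_n^{-1} W^n$ is a rescaled bridge with exchangeable increments, and a direct computation gives $\sigma_n^{-2}\sum_i(\degf_{n,i}-1)^2 = 1+\sigma_n^{-2}\to 1$.

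The continuum looptree coding $X \mapsto \Loop(X)$ developed later in the paper sends a c\`adl\`ag excursion with no negative jump to a compact metric measured space, and $\sigma_n^{-1} LT^n$ equals $\Loop$ applied to the rescaled {\L}ukasiewicz path. For both parts the strategy is then to combine (a)~precompactness of $(\sigma_n^{-1} W^n)_n$ in Skorokhod topology, furnished by Kallenberg's invariance principle for exchangeable-increment bridges, with (b)~a H\"older-type modulus-of-continuity bound for the looptree pseudo-distance in terms of the oscillations and jumps of the driving path, which delivers GHP tightness and continuity of $\Loop$ in a quantitative Skorokhod-to-GHP sense.

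For part (i), given any increasing sequence of integers one may diagonally extract a sub-subsequence along which $\sigma_n^{-1}\degf_{n,i} \to \theta_i \ge 0$ for each $i \ge 1$ and $\sigma_n^{-2}\degf_n(2\Z_+) \to a' \ge 0$. Along this sub-subsequence Kallenberg's criterion identifies the limit of $\sigma_n^{-1} W^n$ as the specific exchangeable-increment bridge with jumps $(\theta_i)$ and diffusive parameter determined by $a'$, and the modulus bound transfers this to GHP convergence of $\sigma_n^{-1} LT^n$. Nonzero diameter comes for free: either $\theta_1 > 0$ and the limit contains a macroscopic loop, or all $\theta_i = 0$, $\theta_0 = 1$, the limiting bridge has a nondegenerate Brownian part, and its coded space has positive diameter. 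For part (ii) the hypotheses pin $(\theta_i)$ and $a' = a\theta_0^2$ uniquely, so every subsequential limit coincides with $\Loop(\bdtheta, a)$ and tightness from part (i) upgrades subsequential convergence to the full convergence.

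The main obstacle I expect is the H\"older/modulus-of-continuity estimate for the rescaled looptree distance. Skorokhod convergence of the driving path gives pointwise convergence of finitely many marked looptree distances, but GHP convergence requires uniform control across all pairs of points. The delicate point is the treatment of the large jumps of $W^n$: each macroscopic jump of size $\degf_{n,i}$ contributes a cycle of length $\sigma_n^{-1}\degf_{n,i}$ to the rescaled looptree, which must be spliced into the metric in a quantitatively continuous way. Establishing this estimate uniformly in $n$, with control by a few functionals of $W^n$, is where most of the technical effort would go.
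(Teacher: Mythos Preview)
Your overall plan captures the tightness half correctly, but the identification step contains a genuine gap. You write that ``the modulus bound transfers [Skorokhod convergence of $W^n$] to GHP convergence of $\sigma_n^{-1} LT^n$'', i.e.\ that $\Loop$ is essentially continuous at the limit path. This is precisely what fails when $\theta_0>0$. The paper makes this explicit in Remark~\ref{rem:convergence_Luka_pas_suffisante}: the functional convergence of the {\L}ukasiewicz path alone does \emph{not} determine the limit of the looptrees once the limit path has a nontrivial continuous part. Concretely, the limiting exchangeable-increment excursion $X$ depends only on $\bdtheta$ (its diffusive coefficient is $\theta_0^2=1-\sum_{i\ge1}\theta_i^2$, fixed by the jumps, not by $a$), whereas the limit space is $\Loop^{(a+1)/2}(X)$ from~\eqref{eq:def_distance_looptree}. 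Two degree sequences with the same $\bdtheta$ but different parity statistics $\sigma_n^{-2}\degf_n(2\Z_+)$ have the \emph{same} limiting $X$ yet \emph{different} looptree limits. So Skorokhod convergence of $W^n$ plus a H\"older modulus cannot by themselves identify the constant in front of the tree part $d_\Csf$.

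What the paper actually does for part~(ii) is: tightness comes from the H\"older bound of Proposition~\ref{prop:tension_Holder_looptrees} (your ``main obstacle'', which is indeed the right one for tightness); identification of the finite-dimensional marginals is obtained not by continuity of $\Loop$, but via the spinal decomposition consequence Lemma~\ref{lem:consequence_epine}, applied with $g(k,j)=\min\{j,k-j\}$. This lemma compares, along the ancestral line of random vertices, the total contribution of the short cycles to the looptree distance with the quantity $C^{n,\delta}$ of~\eqref{eq:approximation_delta_partie_continue}, and it is in this comparison that the ratio $G_n^{2,\delta}/\sigma_n^{2,\delta}$ --- and hence the extra parameter $a$ --- appears; see the computation in the proof of Theorem~\ref{thm:convergence_looptree_degres_prescrits_general}. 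The long cycles are handled separately by the pure-jump argument of Proposition~\ref{prop:convergence_looptrees_saut_pur}. Your proposal is missing this entire mechanism; without it, the value of $a$ never enters your argument, and you cannot distinguish $\Loop(\bdtheta,a)$ from $\Loop(\bdtheta,a')$ for $a\neq a'$.
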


Let us next decorate our random looptree $LT^n$ by the analogue of a branching random walk (or random snake) on trees. Precisely, let $(\xi_k)_{k \ge 0}$ be i.i.d. copies of a random variable $\xi$ supported by $\Z$, which is centred and has variance $\Var(\xi) > 0$. Assume that for every $\ell \ge 1$, the event $\{\xi_1 + \dots + \xi_\ell = 0\}$ has a nonzero probability.
Then conditionally given $LT^n$, equip each vertex with a random label, such that the root has label $0$, and the label increments when following any cycle in clockwise order has the law of $(\xi_1, \dots, \xi_\ell)$ under $\P(\,\cdot\mid \xi_1 + \dots + \xi_\ell = 0)$, where $\ell$ is the length of the cycle, and finally the collection of these bridges for all cycles are independent.
We encode the labels into a process $Z^n = (Z^n_i)_{0 \le i \le n}$ which gives successively the label of the $i$'th vertex when following the contour of the looptree and we extend it to $[0,n]$ by linear interpolation. See the precise definition in Section~\ref{ssec:codage_chemins}. Observe that $Z^n_0 = Z^n_n = 0$.
Theorem~\ref{thm:convergence_labels_intro} will be specified in Corollary~\ref{cor:tension_Holder_labels} and 
Theorem~\ref{thm:convergence_labels_degres_prescrits_general}.

\begin{thmA}\label{thm:convergence_labels_intro}
Suppose that $\sigma_n^2 \to \infty$ as $n\to\infty$ and that $\E[|\xi|^{4+\varepsilon}] < \infty$ for some $\varepsilon > 0$.
\begin{enumerate}
\item From every increasing sequence of integers, one can extract a subsequence along which the processes $(\sigma_n^{-1/2} Z^n_{nt})_{t \in [0,1]}$ converge in distribution in the uniform topology towards a limit which is not constant null.
\item Suppose that~\eqref{eq:hyp_sauts} holds and that there exists $\Sigma \ge 0$ such that 
\[\sigma_n^{-2} \sum_{k \ge 1} \degf_n(k) k (k+1) \Var(\xi_1 \mid \xi_1 + \dots + \xi_k = 0) \cv \Var(\xi) (1 + (\Sigma^2-1) \theta_0^2).\]
Then the convergence in distribution
\[\left((\Var(\xi) \sigma_n)^{-1/2} Z^n_{n t}\right)_{t \in [0,1]} \cvloi Z^{\bdtheta, \Sigma^2/3},\]
holds for the uniform topology and the law of the limit only depends on $\bdtheta$ and $\Sigma$.
\end{enumerate}
\end{thmA}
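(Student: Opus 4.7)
The plan is to condition on $LT^n$: by construction, given the looptree, $Z^n$ is obtained by gluing together independent discrete bridges, one on each cycle, the length of each bridge being the size of the corresponding cycle. As the contour visits the cycles one after another, $Z^n$ decomposes naturally as a sum over cycles of partial bridge increments, and both parts of the theorem reduce to transferring the conclusions of Theorem~A to this ``snake'' structure.

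For part~(i), the objective is a Kolmogorov--Chentsov moment bound of the form
\[
\E\bigl[(\sigma_n^{-1/2}|Z^n_{nt}-Z^n_{ns}|)^{4+\varepsilon}\bigr] \le C\, |t-s|^{1+\delta}
\qquad \text{for all } 0 \le s \le t \le 1,
\]
which yields Hölder tightness, and in particular tightness in the uniform topology. Conditionally on $LT^n$, $Z^n_{nt}-Z^n_{ns}$ is a sum of independent bridge-increments indexed by the cycles crossed by the contour between times $ns$ and $nt$. One controls the $(4+\varepsilon)$-moment of each such increment by the corresponding unconditioned random walk moment via a local limit theorem comparison for short cycles and via cyclic symmetry plus an Ottaviani-type maximal bound for long cycles, then sums, using a control on the cycles traversed derived from the geometry of the looptree already bounded in Theorem~A. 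Finite-dimensional subsequential convergence then follows by combining Theorem~A(i) with a conditional CLT given the limiting looptree: once the cycle structure converges along a subsequence, the values of $Z^n$ at finitely many points are sums of many independent rescaled bridge variables, hence jointly Gaussian in the limit, and the limit is non-constant since the limiting looptree has nonzero diameter.

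For part~(ii), Theorem~A(ii) provides the convergence $\sigma_n^{-1}LT^n \to \Loop(\bdtheta, a)$. The label at a typical point decomposes as a sum over the cycles lying on its ancestral path. Each macroscopic cycle indexed by $i\ge 1$ with $\theta_i>0$ contributes, after rescaling by $(\Var(\xi)\sigma_n)^{-1/2}$, a discrete bridge of length $\sim\sigma_n\theta_i$ that converges by Donsker for bridges to an independent Brownian bridge $\bdgamma_i$ of length $\theta_i$. The aggregate contribution of the microscopic cycles is handled by a Lindeberg-type CLT and yields an independent Brownian-motion contribution whose infinitesimal variance is extracted precisely from the hypothesis on $\sigma_n^{-2}\sum_{k\ge 1}\degf_n(k) k(k+1) \Var(\xi_1\mid\xi_1+\dots+\xi_k=0)$; the factor $1/3$ in the limit parameter $\Sigma^2/3$ comes from integrating the bridge variance profile $t(1-t)$ along each cycle traversed in contour order.

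The main obstacle will be the tightness estimate for cycles of \emph{intermediate} size, for which neither the local limit theorem (ideal for short cycles) nor the cyclic symmetry argument (ideal for very long ones) gives a clean bound on its own. Interpolating between the two regimes while summing uniformly over cycles of arbitrary sizes is precisely what forces the $4+\varepsilon$ (rather than just $4$) moment hypothesis on $\xi$, and matching the scaling $\sigma_n^{-1/2}$ of the labels with the scaling $\sigma_n^{-1}$ of the looptree used in Theorem~A requires one to keep careful track of the cycle-size spectrum of $LT^n$ throughout the argument.
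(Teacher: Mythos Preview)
Your overall strategy matches the paper's: tightness via a Kolmogorov bound, then finite-dimensional identification by splitting into macroscopic cycles (yielding Brownian bridges) and microscopic cycles (yielding the Gaussian snake part). But there is one genuine gap and a few smaller misalignments.

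The substantial gap is in how you identify the limiting variance of the small-cycle contribution. You say the aggregate is ``handled by a Lindeberg-type CLT'' with variance ``extracted precisely from the hypothesis on $\sigma_n^{-2}\sum_{k\ge 1}\degf_n(k)\,k(k+1)\Var(\xi_1\mid\xi_1+\dots+\xi_k=0)$''. But that hypothesis concerns the \emph{global} cycle-size statistics, whereas the conditional variance of $Z^n_{U_n}$ given $X^n$ is the sum of $\Var(\Xi^{k_i}_{j_i}) = \frac{j_i(k_i-j_i)}{k_i-1}\Var(\Xi^{k_i}_1)$ over the cycles on the \emph{ancestral line} of a uniform random point~$U_n$. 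Passing from one to the other is not automatic; it is precisely the content of the paper's spinal decomposition (Lemma~\ref{lem:consequence_epine} and its consequence~\eqref{eq:consequence_epine_Luka}), which shows that for any $g$ with $c\min\{j,k-j\}\le g(k,j)\le C(k-1)$, the ancestral sum $\sum_{i\prec U_n} g(k^n_i,j^n_i)\,\ind{k^n_i\le \delta\sigma_n}$ is close to $(2G_n^{2,\delta}/\sigma_n^{2,\delta})\,C^{n,\delta}_{U_n}$. This is what converts the $\Sigma$ hypothesis into the limiting variance $\tfrac{\Sigma^2}{3}\Var(\xi)\,C_U$ (the factor $1/3$ is $2\times 1/6$: the $1/6$ from summing $j(k-j)/(k-1)$ over $j$, and the $2$ from the spinal relation between $C^{n,\delta}_{U_n}$ and the height, not just from $\int_0^1 t(1-t)\,dt$). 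Without this lemma, your CLT produces Gaussianity with an unidentified variance.

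Two smaller points. First, you invoke Theorem~A(ii) in part~(ii), but its extra assumption on $\degf_n(2\Z_+)$ is not among the hypotheses of Theorem~B(ii); the paper's proof does not use GHP convergence of the looptree at all, only the convergence of the \L ukasiewicz path (Proposition~\ref{prop:convergence_excursions}) together with Lemma~\ref{lem:convergence_partie_continue} for $C^{n,\delta}\to C$. Second, your tightness bound $\E[(\sigma_n^{-1/2}|Z^n_{nt}-Z^n_{ns}|)^{4+\varepsilon}]\le C|t-s|^{1+\delta}$ is stated unconditionally, but only the conditional estimate $\E[|Z^n_{nt}-Z^n_{ns}|^q\mid LT^n]\le C_q\,d_{LT^n}(\lfloor ns\rfloor,\lfloor nt\rfloor)^{q/2}$ holds (this is~\eqref{eq:moment_labels_looptrees_discrets}, valid uniformly over all $\ell$ by the bridge moment bound $\E[|\Xi^\ell_j|^q]\le K_q\min\{j,\ell-j\}^{q/2}$ of~\cite{Bet10}, so your worry about ``intermediate-size cycles'' is a non-issue). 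One then combines with the \emph{in-probability} H\"older bound $d_{LT^n}(E_ns,E_nt)\le K\sigma_n|t-s|^{1/2-\varepsilon}$ of Proposition~\ref{prop:tension_Holder_looptrees} on events of probability $\ge 1-\varepsilon$; since $d_{LT^n}$ is random, an unconditional moment bound need not hold.
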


Random maps are associated via the bijection of Lemma~\ref{lem:bijection} below (which is a mere reformulation of~\cite{BDG04}) with such labelled looptrees in the particular case when $\xi$ has the distribution given by $\P(\xi=i) = 2^{-i-2}$ for every $i \ge -1$; note that it admits moments of all order and that $\Var(\xi) = 2$.
Moreover, according to Marckert \& Miermont~\cite[page~1664]{MM07}, we have for all $k \ge 1$ (beware that they consider random bridges with length $k+1$),
\[\Var(\xi_1 \mid \xi_1 + \dots + \xi_k = 0) = \frac{2(k-1)}{k+1}.\] 
In this case, the series in Theorem~\ref{thm:convergence_labels_intro} above equals $2\sigma_n^2$ and so the constant $\Sigma$ simply equals $1$.
Let $M^n$ denote a bipartite plane map sampled uniformly at random with 
$\degf_n(k)$ faces with degree $2k$ for every $k\ge 1$.
Recall that it always has $\degf_n(0)+1$ vertices; let $u_n$ denote a vertex sampled independently and uniformly at random. We let $d_{M^n}$ denote the graph distance in $M^n$ and $d_{M^n}(u_n, \vec{e}_n)$ denote the smallest distance between the vertex $u_n$ and the endpoints of the root edge.
The next result which, again, will be specified in Theorem~\ref{thm:convergence_cartes_degres_prescrits} below, follows then straightforwardly from the preceding one and the properties of the bijection. The first statement can be found in~\cite{Mar19}.

\begin{figure}[!ht]
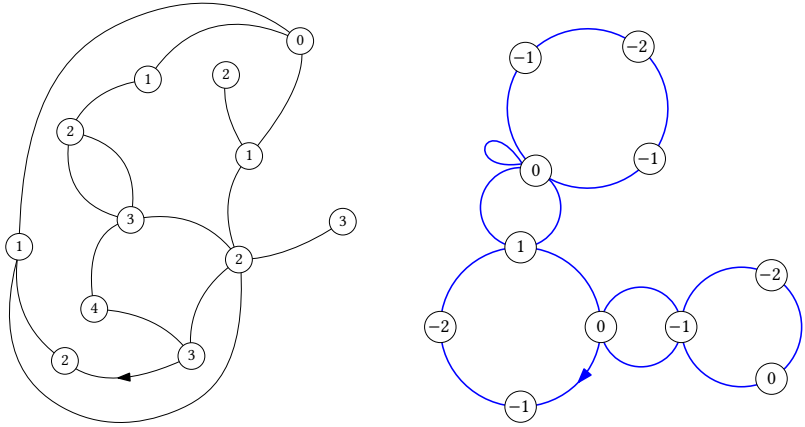
 \centering
\includegraphics[page=1, height=13\baselineskip]{bijections}
\qquad
\includegraphics[page=17, height=11\baselineskip]{bijections}
\caption{\emph{Left:} A negative pointed bipartite map, with its vertices labelled by their distance to the distinguished vertex.
\emph{Right:} The associated looptree equipped with a good labelling via the bijection from Lemma~\ref{lem:bijection}.}
\label{fig:carte_intro}
\end{figure}

\begin{thmA}\label{thm:convergence_cartes_intro}
Suppose that $\sigma_n^2 \to \infty$ as $n\to\infty$.
\begin{enumerate}
\item From every increasing sequence of integers, one can extract a subsequence along which $\sigma_n^{-1/2} M^n$ converges in distribution in the Gromov--Hausdorff--Prokhorov topology to the quotient space $[0,1]/\{D_\infty=0\}$ where $D_\infty$ is a random continuous pseudo-distance on $[0,1]$, which is not constant null.

\item Suppose that~\eqref{eq:hyp_sauts} holds and set $\mathbf{Z} = Z^{\bdtheta, 1/3}$, then
\[\frac{1}{\sqrt{2 \sigma_n}} d_{M^n}(u_n, \vec{e}_n) \cvloi - \min \mathbf{Z}
\qquad\text{and}\qquad
\frac{1}{\sqrt{2 \sigma_n}} \max_{v \in V(M^n)} d_{M^n}(u_n, v) \cvloi \max \mathbf{Z} - \min \mathbf{Z},\]
and for every continuous and bounded function $F$,
\[\frac{1}{\degf_n(0)} \sum_{v \in V(M^n)} F\left(\frac{1}{\sqrt{2 \sigma_n}} d_{M^n}(u_n, v)\right) \cvloi \int_0^1 F(\mathbf{Z}_t - \min \mathbf{Z}) \d t.\]

\item Still under~\eqref{eq:hyp_sauts} all the subsequential limits 
$D_\infty$ are 
such that if $U$ is sampled uniformly at random on $[0,1]$ and independently of the rest, then
\[D_\infty(U,\cdot) \eqloi \mathbf{Z} - \min \mathbf{Z}.\]
\end{enumerate}
\end{thmA}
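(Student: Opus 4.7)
The plan is to leverage the bijection of Lemma~\ref{lem:bijection}, which realises a bipartite map $M^n$ with face degrees $(2\degf_{n,i})$ pointed at a uniform vertex (together with a uniform random sign) as a labelled looptree $(LT^n, Z^n)$, and to identify this pointed vertex with the prescribed uniform vertex $u_n$ of the theorem. The Schaeffer/BDG distance formula then reads
\[ d_{M^n}(u_n, v) = Z^n(v) - \min Z^n + 1 \quad \text{for every vertex } v \ne u_n, \]
together with a classical upper bound $d_{M^n}(v, w) \le Z^n(v) + Z^n(w) - 2 \check{Z}^n(v, w) + 2$ in terms of a suitable running infimum of the labels along the contour of $LT^n$. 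In the map case $\Var(\xi) = 2$ and $\Sigma = 1$, so Theorem~\ref{thm:convergence_labels_intro}(ii) supplies the uniform convergence $(2\sigma_n)^{-1/2} Z^n_{n\cdot} \cvloi \mathbf Z$.

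For part (ii), the plan is to perform direct evaluations on $Z^n$. Under the bijection one endpoint of $\vec e_n$ is the image of the root corner, of label $0$ and adjacent to $u_n$, so $d_{M^n}(u_n, \vec e_n) = -\min Z^n + O(1)$; dividing by $\sqrt{2\sigma_n}$ yields the first limit $-\min \mathbf Z$. The distance formula gives $\max_v d_{M^n}(u_n, v) = \max Z^n - \min Z^n + 1$, producing the second limit. For the empirical distribution, I would rewrite the sum over vertices $v \ne u_n$ as a sum over corners of $LT^n$ along the contour parametrisation; uniform convergence of labels then provides a Riemann-sum approximation converging to $\int_0^1 F(\mathbf Z_t - \min \mathbf Z) \d t$.

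For part (i), Theorems~\ref{thm:convergence_looptree_intro} and~\ref{thm:convergence_labels_intro} yield joint tightness of $(\sigma_n^{-1} LT^n, \sigma_n^{-1/2} Z^n)$. From any subsequence one extracts a further one along which joint convergence holds, and via Skorokhod's representation theorem I would pass to almost sure convergence. I then define the pseudo-distance
\[ D_\infty(s, t) = \lim_{\varepsilon \downarrow 0} \limsup_n \sup_{|s' - s|, |t' - t| \le \varepsilon} \sigma_n^{-1/2} d_{M^n}(c_n(ns'), c_n(nt')), \]
where $c_n$ is the contour parametrisation of $LT^n$; the upper bound in terms of $Z^n$ together with the uniform convergence of labels ensures $D_\infty$ is a continuous pseudo-distance on $[0,1]$. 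The standard argument of~\cite{Mie13, LG13} then gives convergence of $\sigma_n^{-1/2} M^n$ to $[0,1]/\{D_\infty = 0\}$ in Gromov--Hausdorff--Prokhorov topology, with non-triviality inherited from Theorem~\ref{thm:convergence_labels_intro}(i).

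Part (iii) is obtained by passing to the limit in
\[ \sigma_n^{-1/2} d_{M^n}(u_n, c_n(nt)) = \sigma_n^{-1/2} (Z^n_{nt} - \min Z^n) + O(\sigma_n^{-1/2}) \]
along the extracted subsequence. Since $u_n$ is uniform and independent of the looptree structure, in the limit it corresponds to an independent uniform time $U \in [0,1]$; the right-hand side converges (up to the constant absorbed in the statement) to $\mathbf Z - \min \mathbf Z$, giving $D_\infty(U, \cdot) \eqloi \mathbf Z - \min \mathbf Z$. The main obstacle, and essentially the only step requiring care, is the Skorokhod extraction producing a common probability space carrying the joint limit of $(LT^n, Z^n, M^n)$ along the same subsequence, so that $D_\infty$ and $\mathbf Z - \min \mathbf Z$ are defined together; everything else is a translation of the label-level convergence from Theorem~\ref{thm:convergence_labels_intro} into the map-level statement via the distance formula, which is why the conclusion follows \emph{straightforwardly} from Theorems~\ref{thm:convergence_looptree_intro} and~\ref{thm:convergence_labels_intro}.
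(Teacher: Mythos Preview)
Your overall strategy is the paper's: deduce the map statements from the label convergence of Theorem~B via the distance formula~\eqref{eq:distance_carte_labels}, the Le~Gall-type tightness argument of Lemma~\ref{lem:tension_cartes_general}, and a re-rooting identity. Two related steps are however genuinely incomplete. For the profile in~(ii), you cannot ``rewrite the sum over vertices as a sum over corners'': the looptree has $n$ external corners but only $\degf_n(0)$ vertices, listed with redundancies along the contour, so a corner Riemann sum computes a \emph{degree-biased} average rather than the vertex-uniform one the statement asks for. The paper handles this with the vertex-counting process $\lambda^n$ and its convergence $E_n^{-1}\lambda^n(\degf_n(0)\,\cdot)\to\mathrm{id}$ from~\eqref{eq:proportion_feuilles_random}, which allows one to reindex the sum by vertices as in the proof of Lemma~\ref{lem:profil_general}.

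The more serious gap is in~(iii). Having identified $u_n$ with the bijection's distinguished vertex $v^n_\star$, you correctly obtain $(2\sigma_n)^{-1/2} d_{M^n}(v^n_\star, c_n(n\cdot)) \to \mathbf Z - \min\mathbf Z$ from the label formula. But $D_\infty$ is a pseudo-distance on $[0,1]$, whose points parametrise the \emph{looptree} vertices (equivalently the non-distinguished map vertices); $v^n_\star$ is precisely the one map vertex that is \emph{not} in the looptree and hence not represented by any time in $[0,1]$, so the sentence ``$u_n$ in the limit corresponds to an independent uniform time $U$'' does not parse. The missing step (proof of Theorem~\ref{thm:convergence_cartes_degres_prescrits}) is a coupling: a uniform time $U$ corresponds via $\lambda^n$ to a vertex drawn uniformly from the $\degf_n(0)$ looptree vertices, and this can be coupled to $v^n_\star$ (which is uniform among all $\degf_n(0)+1$ map vertices and independent of the map) so that they differ only on an event of probability $(\degf_n(0)+1)^{-1}\to0$; the label identity is then transferred across this coupling to yield $D_\infty(U,\cdot)\eqloi \mathbf Z-\min\mathbf Z$.
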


In the particular case $\theta_1 = 0$, when the largest degree is $\degf_{n,1} = o(\sigma_n)$, the space $\Loop(\bdtheta, a)$ reduces to the Brownian tree, coded by $a$ times the standard Brownian excursion, and the process $Z^{\bdtheta, \Sigma^2/3}$ is known in the literature as the head of the Brownian snake driven by $\Sigma^2/3$ times the standard Brownian excursion. In this case, Theorem~\ref{thm:convergence_cartes_intro} is completed in~\cite{Mar19} by showing that the subsequential limits all agree with $D_\infty = D^\ast$ and the rescaled maps thus converge without extraction towards the Brownian sphere $(S, D^\ast)$. This simply follows from the identity in law at the end of Theorem~\ref{thm:convergence_cartes_intro} and the work of Le~Gall~\cite{LG13} or Miermont~\cite{Mie13} on quadrangulations (which is therefore used as an input), via a re-rooting trick.
However for other models, this last identity is not sufficient to characterise the subsequential limits: it only gives the distances to a uniform random point, but one would need to identify the joint law of all the pairwise distances in any finite sample of i.i.d uniform random points.
This theorem can thus be seen as an extension to this more general model of maps of the work~\cite{LGM11} on so-called stable Boltzmann maps, for which proving uniqueness of the subsequential limits is under active investigation~\cite{CMR}. See Section~\ref{sec:Levy} for a discussion on the relation between the two models.

\subsection{Plan of the paper}

In Section~\ref{sec:bijection_objets_discrets}, we first recall the definitions of the discrete objects and we construct the bijection between bipartite maps and labelled looptrees. 
We then describe these objects by a pair of discrete paths. Then in Section~\ref{sec:deterministe} we construct analogously continuum looptrees from deterministic paths with no negative jump and then add random Gaussian labels on them. The constructions somewhat interpolate between those from~\cite{CK14, LGM11} which apply under some ``pure jump'' assumption, and the construction of the Brownian tree and snake from a continuous path. In Section~\ref{sec:convergence_saut_pur} we provide first invariance principles on randomly labelled looptrees in the pure jump case. In Section~\ref{sec:degres_prescrits} we introduce more precisely the models of random (loop)trees and maps with a prescribed degree sequence, we prove tightness results and we develop a consequence of a spinal decomposition from~\cite{Mar19} which will be the key to the identification of the subsequential limits. In Section~\ref{sec:echangeable} we discuss processes with exchangeable increments, which are the starting point of the construction of the limits. Section~\ref{sec:convergence} is then devoted to the proof of the invariance principles. Finally in Section~\ref{sec:Levy} we briefly present some applications with L\'evy processes, which are more understood than general exchangeable increment processes and will be studied more in the forthcoming work~\cite{KM22}.

\subsection*{Acknowledgment}

I am very grateful to Igor Kortchemski for multiple discussions on processes with exchangeable increments, especially the question presented in Remark~\ref{rem:PJ_echangeable_mieux}, also to Paul Th\'evenin and to Arthur Blanc-Renaudie for interesting discussions on their respective work~\cite{BHT21,BR22}.

\section{A new look at the bijections with labelled (loop)trees}
\label{sec:bijection_objets_discrets}

\subsection{The main characters}
\label{ssec:def_objets}

\paragraph{Maps and looptrees.}
Recall the notion of (rooted plane) maps from the introduction, which have a distinguished oriented root edge; we shall often also distinguish a vertex $v_\star$ in the map, in which case we say that the map is \emph{pointed}.
Then a (rooted plane) \emph{tree} is a map with only one face.
We shall think of such an object as a genealogical tree: the origin of the root edge (hereafter the \emph{root vertex}) is the ancestor of the family, the tip of the root edge is its left-most child, and then the neighbour of a vertex closer to the root is its parent whereas the other ones are its offspring, which are ordered from left to right. More generally, the vertices lying between the root and a given vertex are the ancestors of the latter. Finally an individual with no child is called a leaf, the other ones are called internal vertices.

Another way to view a tree is via its \emph{looptree} version and we use here a variation of the modified looptrees defined in Section~4 of~\cite{CK14}. First, for every internal vertex, instead of linking it to each offspring, only keep the edges to the first and last one and then link two consecutive offspring to each other; if it has only one offspring, then create a double edge. The resulting graph inherits a root edge from that of the tree.
This graph is the looptree defined in~\cite{CK14} and we shall denote such objects by $\overp{LT}$; here we further contract in each cycle the edge that links the parent to its right-most child and this looptree is denoted by $LT$. 
Let us refer to Figure~\ref{fig:looptree_intro} for an example.
Note that each cycle of this looptree corresponds to an internal vertex of the tree and the length of the cycle equals the offspring number of the vertex; in particular an individual with only one child induces a loop in the looptree. 
These looptrees already appeared in several works in order to study large boundaries of maps~\cite{CK15, Ric18, KR20}.

For a direct definition, the present looptrees are plane maps which satisfy the property that there is a distinguished ``outer face'', to the left of the root edge, and each edge has exactly one side incident to this face. This implies that all other ``inner faces'' are simple cycles and are edge-disjoint; also no edge is pending inside the outer face. 
The looptrees from~\cite{CK14} ironically forbid loops as well as vertices with degree more than $4$.

\paragraph{Contour sequence and labelled looptrees.}

Given a looptree with say $n$ edges one can orient these edges such that the outer face always lie to their left. In this way the edges are naturally ordered $e_0, \dots, e_{n-1}$, where $e_0$ is the root edge, by following the outer face in the direction prescribed by the oriented edges. We extend the sequence $(e_i)_{i \ge 0}$ by periodicity. Then for every $i \ge 1$, the angular sector in the outer face between $e_i$ and $e_{i-1}$ is called a \emph{corner} and denoted by $c_i$; we also set $c_0 = c_n$ to be the root corner. It may be useful later to identify $e_i$ and $c_i$ in a one-to-one correspondence. 
One can define a similar contour order on the $2n$ corners of a tree with $n$ edges by applying this construction to the looptree obtained by simply doubling each edge of the tree. 

Note that each corner is incident to a vertex of the (loop)tree, and may thus be identified with it, but of course the list of vertices thus obtained contains redundancies. We can get rid of them by retaining each vertex either only when it first appears in this list, which corresponds to the so-called \emph{depth-first search order}, or only when its last appears in this list, i.e.~when we finish visiting all its corners.
The following result should be clear from a picture and the proof is left as an exercise to the reader.

\begin{lem}\label{lem:ordre_contour_looptree_DFS}
Let $T$ be a tree with $n$ edges and let $LT$ denote its associated looptree. Then $LT$ has $n$ edges as well and furthermore the following two properties hold:
\begin{enumerate}
\item The corners $(c_i)_{1 \le i \le n}$ of $LT$ are in a one-to-one correspondence with the non root vertices of $T$, and the induced order is that of their first visit in the contour sequence of $T$;

\item The vertices of $LT$ are in a one-to-one correspondence with the leaves of $T$ and the order of appearance of the latter in the contour sequence of $T$ corresponds to the order of the vertices of $LT$ by their last visit in the contour sequence of $LT$.
\end{enumerate}
\end{lem}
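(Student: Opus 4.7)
The plan is to proceed in three steps: check the edge count, establish the corner bijection of~(i) together with its order property, and then deduce the vertex bijection of~(ii). The edge count is immediate: each internal vertex $v$ of $T$ with $k \ge 1$ children produces a cycle of length $k+1$ in $\overp{LT}$, which becomes a cycle of length $k$ in $LT$ once the edge between $v$ and its right-most child is contracted; summing $k$ over all internal vertices counts every non-root vertex of $T$ exactly once and yields $n$.

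For~(i), I would first set up the bijection cycle by cycle: the cycle of $LT$ coming from an internal vertex $v$ with children $w_1, \dots, w_k$ carries exactly $k$ corners in the outer face, and walking around this cycle along the outer face starting from the merged vertex $\{v, w_k\}$ one meets corners at $w_1, \dots, w_{k-1}$ and finally at $\{v, w_k\}$; I match the $i$-th corner encountered with the child $w_i$. To upgrade this to an order-preserving bijection, I would argue by induction on $n$: the root edge of $LT$ inherits its orientation from the root edge of $T$, so the outer-face contour first visits the root cycle at the corner of $w_1$; when $w_1$ is internal, the next outer-face edge lies in the cycle of $w_1$ and, by induction, the traversal exhausts the sub-looptree rooted at $w_1$ before returning to the root cycle and proceeding to the corner of $w_2$, and so on. This recursion mirrors exactly the depth-first exploration of $T$, so the contour order on the corners of $LT$ coincides with the first-visit order on non-root vertices of $T$.

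For~(ii), I would describe the equivalence classes produced by the contractions: iterating the rule \emph{each internal vertex is merged with its right-most child} partitions the vertices of $T$ into maximal chains $v_0, v_1, \dots, v_m$ with $v_{j+1}$ the right-most child of $v_j$ and $v_m$ the unique leaf of the chain, which yields the vertex-to-leaf bijection. For a class $w = \{v_0, \dots, v_m\}$, the corners of $LT$ incident to $w$ are, by the construction of~(i), in correspondence with $v_0, \dots, v_m$: one corner from the cycle of the parent of $v_0$ if $v_0$ is not the root and corresponds to $v_0$, and one corner at the merged position $\{v_j, v_{j+1}\}$ in the cycle of each $v_j$ for $j < m$, corresponding to $v_{j+1}$. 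Since each $v_{j+1}$ is the right-most child of $v_j$, depth-first search visits it strictly after $v_j$, so among these the leaf $v_m$ has the largest first-visit time in $T$; by~(i), this means the contour of $LT$ visits $w$ last at the corner associated with $v_m$. Applying~(i) once more to compare across different classes then transports the order of leaves in the contour of $T$ to the order of last visits in the contour of $LT$.

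The main technical obstacle is the planar bookkeeping around the merged vertices: one has to fix a consistent orientation convention (outer face to the left of every oriented edge, root edge pointing from a parent to its left-most child) and verify that it survives the passage $T \rightsquigarrow \overp{LT} \rightsquigarrow LT$, in particular that the contracted edge lies on the correct side of the cycle so that the cyclic order of the remaining corners in the outer face is the claimed one. Once this convention is in place, both~(i) and~(ii) reduce to the transparent inductive descriptions above.
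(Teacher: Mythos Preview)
The paper explicitly leaves this lemma as an exercise, offering only the intuitive remarks that property~(i) reflects the fact that non-first visits in the tree contour are backtracking steps, and property~(ii) the fact that each loop closes upon visiting the right-most leaf of the corresponding subtree. Your argument is a correct and careful formalisation of precisely this intuition: the cycle-by-cycle bijection together with the induction on $n$ for~(i), and the decomposition of each merged vertex into a maximal right-most chain terminating at a leaf for~(ii), are the natural way to make the paper's hints rigorous, and the final step of~(ii) (reading off the last corner at $w$ via~(i)) is exactly the mechanism the paper alludes to.
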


Note the shift in the first property: the root corner $c_n = c_0$ is placed at the end of the sequence.
The first property comes from the fact that when following the contour of the tree, after the first visit of a vertex, each other visit corresponds to backtracking in the DFS order, while the second property comes from the fact that we close each loop when visiting the right-most leaf of the subtree of the descendants of the corresponding internal vertex.
This lemma will be used in Section~\ref{ssec:codage_chemins} below.

We shall equip looptrees with labels that assign to each vertex a real number, not necessarily distinct, see the right of Figure~\ref{fig:bijection_JS} for an example. 
To this end, we assign instead labels to the oriented edges (keeping the external face to the left) of the looptree, which satisfy the consistency relation that the sum of all labels along any cycle must equal $0$. Then given any two vertices and any path between them in the looptree, the sum of the labels of the edges on this path traversed in their canonical direction minus the sum of the labels of the edges traversed in opposite direction does not depend on the path. This labelling of the edges therefore defines a unique labelling of the vertices up to a global shift; we fix this shift by requiring that the label of the root is $0$.
We shall encode the labelling by considering the bridges when turning around each cycle. Precisely, for a given cycle with length, say $\ell \ge 1$, let $(e_{i_1}, \dots, e_{i_\ell})$ denote its oriented edges, with $0 \le i_1 < \dots < i_\ell \le n-1$, let then $(\xi_{i_1}, \dots, \xi_{i_\ell})$ denote the label increments along these oriented edges.
Note that the consistency relation reads $\xi_{i_1} + \dots + \xi_{i_\ell} = 0$.
The collection of all these bridges associated with each cycle then entirely characterises the labels on the looptree.

We say that a labelling of a looptree is \emph{good labelling} if the label increment $\xi$ along each edge oriented in the natural direction lies in $\Z_{\ge-1} = \{-1, 0, 1, 2, \dots\}$. In other words, each vector $(\xi_{i_1}, \dots, \xi_{i_\ell})$ associated with each cycle lies in the set
\begin{equation}\label{eq:pont_sans_saut_negatif}
\mathcal{B}_\ell^{\ge-1} \coloneqq \left\{(x_1, \dots, x_\ell) \in \Z_{\ge -1}^\ell : x_1 + x_2 + \dots + x_\ell = 0\right\}.
\end{equation}

\subsection{The bijection}
\label{ssec:la_bijection}

Let us now review the bijection of~\cite{BDG04}, in the particular case of bipartite maps, from the point of view of looptrees. 
Let $M$ denote a bipartite pointed map, i.e.~which a distinguished vertex $v_\star$ in addition to the root edge. Then label all the vertices of $M$ by their graph distance to $v_\star$ and observe that since $M$ is bipartite, then the labels along each edge differ by exactly one. In particular the root edge may either be positively or negatively oriented.
We only consider negative maps, when the tip is closer to $v_\star$ than the root vertex; if the map is positive, we therefore switch the orientation of the root edge.
The next result is really just a reformulation of~\cite[Section~2]{BDG04}.

\begin{lem}[\cite{BDG04}]
\label{lem:bijection}
There is a one-to-one correspondence between looptrees equipped with a good labelling and pointed negative bipartite plane maps, which furthermore enjoys the following two properties:
\begin{enumerate}
\item The map and the looptree have the same amount of edges;
\item The cycles of the looptree correspond to the faces of the map, and the length of a cycle is half the degree of the associated face;
\item The vertices of the looptree correspond to the non-distinguished vertices of the map, and their label, minus the smallest label plus one, equals the graph distance in the map of the associated vertex to the distinguished one.
\end{enumerate}
\end{lem}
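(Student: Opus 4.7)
The plan is to view this lemma as a two-step rewriting of the classical bijection of Bouttier, Di~Francesco and Guitter~\cite{BDG04}, by inserting between the map and the mobile a looptree viewpoint. The first step is the BDG bijection itself: it sends a pointed negative bipartite map $M$, whose vertices are labelled by their graph distances to the distinguished vertex $v_\star$, to a labelled mobile $\mu$---a plane tree whose white vertices correspond bijectively to the faces of $M$ and have offspring number equal to half the face degree, and whose black vertices correspond to the non-distinguished vertices of $M$ and carry the associated labels. The second step is to collapse every white vertex of $\mu$ onto a cycle passing through its $k$ black children in their natural order; this is the looptree operation drawn in Figure~\ref{fig:looptree_intro}. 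Composing these yields the claimed map from pointed negative bipartite maps to labelled looptrees.

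In the forward direction I would make this explicit by, in each face of $M$ of degree $2k$, attaching to every corner $c$ an arc going to the next corner (in face-contour order) whose label equals $\ell(c)-1$; after erasing the original edges of $M$ and the vertex $v_\star$, and replacing each white vertex by the cycle through its $k$ black children, the arcs themselves reorganise into a looptree $LT$ with $k$-cycles in place of the former star-like white vertices. The labels descend from the black vertices, and the fact that between two consecutive siblings of a white vertex of $\mu$ the label drops by at most one translates directly into the statement that along each cycle, the label increment vector lies in $\mathcal{B}_\ell^{\ge-1}$, i.e.~the labelling is good. The root edge of $M$, oriented negatively, singles out a root corner in $\mu$ and hence a root edge of $LT$. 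The inverse map is obtained by uncollapsing each cycle back to a white-vertex star and then running BDG in reverse; the good-labelling condition ensures that the successor arcs can be drawn without crossings when reconstructing $M$.

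Properties (i) and (ii) are then immediate bookkeeping: the number of edges is preserved at every step (BDG produces a mobile with exactly $n$ edges, and the looptree operation preserves edge counts), and the identification of cycles of $LT$ with faces of $M$ with the correct degree relation is built into the construction. The only nontrivial point, and the main obstacle I would have to address, is property (iii), namely that the labels on $LT$ really encode graph distances in $M$ up to the global shift $\min - 1$. This is the classical distance argument of~\cite{BDG04}: iterating the successor arcs produces, for each vertex $v$ of $M$ distinct from $v_\star$, an explicit path from $v$ to $v_\star$ of length $\ell(v) - \min + 1$, giving the upper bound $d_M(v, v_\star) \le \ell(v) - \min + 1$; the matching lower bound is immediate from the bipartiteness of $M$, which forces labels along any edge to differ by exactly one, so that no shorter path can exist.
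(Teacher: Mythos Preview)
Your overall plan---compose the BDG mobile bijection with the passage from mobiles to looptrees---is exactly what the paper does, and the paper says so explicitly (``really just a reformulation of~\cite[Section~2]{BDG04}''). Your treatment of property~(iii) via successor paths for the upper bound and bipartiteness for the lower bound is also the standard argument the paper defers to.

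However, your explicit forward construction is garbled. The arcs you describe, ``from every corner $c$ to the next corner (in face-contour order) whose label equals $\ell(c)-1$'', are successor arcs: those belong to the \emph{reverse} direction (looptree/mobile $\to$ map), not to the forward one. In the map-to-looptree direction, one does not draw successor arcs at all; instead, in each face one \emph{marks} those corners whose immediate clockwise neighbour in the face has smaller label (exactly half of them, by bipartiteness), and then links the marked corners cyclically inside the face. This is what the paper does directly (see Figure~\ref{fig:bijections}), and it is equivalent to first building the BDG mobile and then collapsing each face-vertex onto the cycle through its neighbours. Your paragraph also mixes the two viewpoints incoherently: you draw arcs, then suddenly speak of ``replacing each white vertex by the cycle'' when no white vertices have been introduced yet. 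A minor point: your white/black convention is swapped relative to the paper's, but that is harmless.
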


Let us construct the correspondence in both directions. Let us already mention that a similar construction yields a bijection between rooted maps without distinguished vertex and looptrees equipped with a \emph{positive labelling}, in which instead all labels are positive and the root vertex has label $1$. However these objects are more complicated to study, which is why we consider pointed maps.

\begin{figure}[!ht]
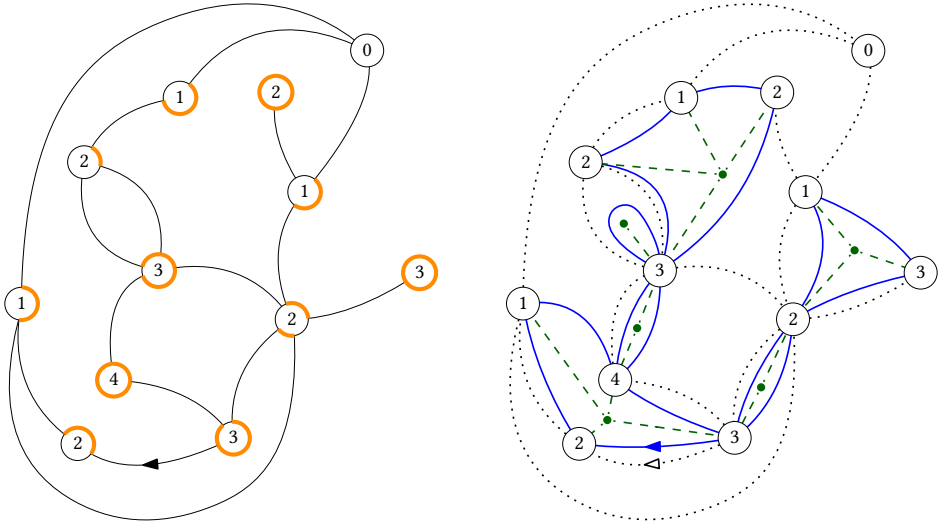
 \centering
\includegraphics[page=2, height=16\baselineskip]{bijections}
\qquad
\includegraphics[page=5, height=16\baselineskip]{bijections}
\caption{
\emph{Left:} a negative map with its vertices labelled by their distance to a distinguished vertex; each corner is marked in orange if the next one in clockwise order inside the face has a smaller label.
\emph{Right:} the corresponding mobile in dashed green, obtained by linking in each face each marked corner to an extra vertex inside the face, 
and in plain blue the corresponding looptree, obtained instead by linking the marked corners in each face in a cycle; the labelling of its vertices is a good labelling shifted so the minimum label is $1$.
}
\label{fig:bijections}
\end{figure}

\paragraph{From maps to looptrees.}
Let us start with the map; inside each face, the cyclic sequence of labels in clockwise order only has increments either $+1$ or $-1$. Let us mark each corner if the next one in clockwise order in the face has a smaller label, so half of the corners in each face are marked. Note that the vertex $v_\star$ has no marked corner.
In~\cite{BDG04} one then adds a new vertex inside each face and link it to each marked corner of the face; the collection of these edges only and their endpoints is called a \emph{mobile}. Instead, let us join these marked corners inside each face in a cycle, whose length is therefore half the degree of the face. See Figure~\ref{fig:bijections} for an example where both constructions are depicted. Since the mobile is a tree~\cite[Section~2.1]{BDG04}, then what we construct here is a looptree; the mobile is simply a ``vertex-dual'' graph of the looptree, obtained by linking each vertex in each cycle to an extra vertex inside. Note that, since the map is negative, then the corner of the root vertex in the face to the right of the root edge is marked, then the edge we draw from this corner is chosen as the root of the looptree.

The distance labelling in the map induces a labelling of the looptree. In a sense, in each face, reading the labels in clockwise order, we have merged the chains of positive increments into single nonnegative jumps, therefore the label increment in the looptree along each edge oriented in the canonical direction lies in $\Z_{\ge-1}$. Furthermore, the labels are all positive and the minimum equals $1$; by shifting them all so the root has label $0$ we thus obtain a good labelling of the looptree.

\paragraph{From looptrees to maps.}
For the converse construction, take a looptree equipped with a good labelling and shift all labels so the minimum is $1$. Then let us assign to each corner in the outer face the label of the incident vertex and link each corner of the looptree to the next one (in the infinite periodic sequence) with a smaller label, which in fact can only be smaller by exactly $1$. Note that this construction fails for the corners labelled $1$, instead we join them all to an extra vertex labelled $0$ in the outer face.
By observing that the corner sequence of the looptree corresponds to the corner sequence defined in the mobile in~\cite[Section~2.2]{BDG04}, the arguments there show that the graph we just produced is a map, pointed at the extra vertex, and the labelling corresponds to the graph distance in this map to this vertex. The edge emanating from the root corner of the looptree is the root edge of the (necessarily negative) map.
Finally the two constructions are inverse of one another by~\cite[Section~2.3]{BDG04} and the claimed properties in Lemma~\ref{lem:bijection} follow from each construction respectively.

\begin{rem}\label{rem:bijection_Schaeffer}
When all the faces of the map are quadrangles, then all the cycles of the looptrees have length $2$, so a good labelling can actually only vary by either $-1$, $0$, or $+1$ along each oriented edge. This construction in fact reduces exactly to Schaeffer's bijection in which each edge of the tree has been doubled, see Figure~\ref{fig:Schaeffer} for an example.
\begin{figure}[!ht]
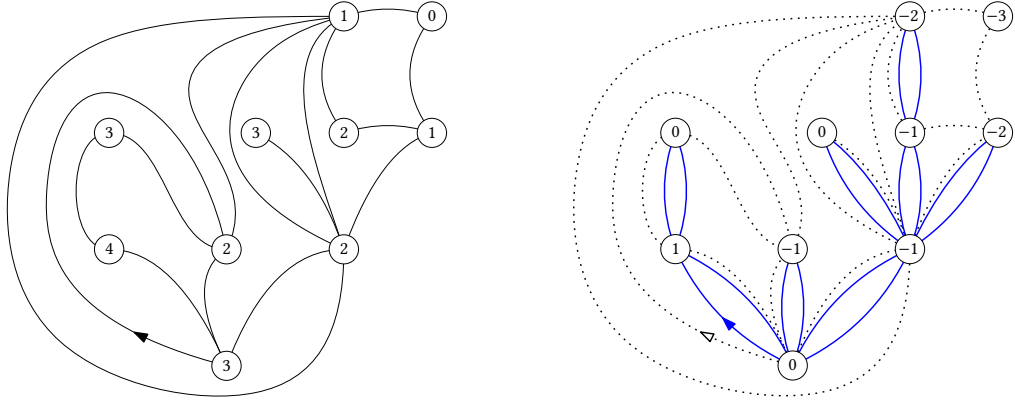
 \centering
\includegraphics[page=20, height=11\baselineskip]{bijections}
\qquad\qquad
\includegraphics[page=21, height=11\baselineskip]{bijections}
\caption{In the particular case of quadrangulations, the looptree reduces to the CVS tree in which each edge has been doubled.}
\label{fig:Schaeffer}
\end{figure}
\end{rem}

\paragraph{Relation with one-type trees.}
Let us now mention the relation with the bijection from~\cite{JS15}, which is represented in our example in Figure~\ref{fig:bijection_JS}. See also~\cite[Section~2.4]{Mar18b} for a closely related discussion.
The mobile of~\cite{BDG04} is a tree in which two types of vertices alternate: the vertices of the map different from $v_\star$, hereafter called ``white'', and the extra vertices placed inside each face, hereafter called ``black''. The root edge of the mobile goes from the root vertex of the map to the black vertex inside the face to the right of the root edge of the map.
Then in~\cite{JS15} the authors modify the construction of the mobile as follow: 
if a white vertex is linked to several black vertices, then remove these edges and instead link these black vertices in a chain, starting from the closest one to the root of the mobile and turning around the white vertex in clockwise order; finally link the last black vertex to the white vertex.
This modification of the mobile creates another tree, in which the white vertices turn into leaves and the black vertices into internal vertices. Let us mention that this bijection between mobiles and trees was already constructed by Deutsch~\cite{Deu00}. A moment's thought and Lemma~\ref{lem:ordre_contour_looptree_DFS} show that the looptree version of this new tree exactly corresponds to the one we constructed previously directly from the map.

\begin{figure}[!ht]
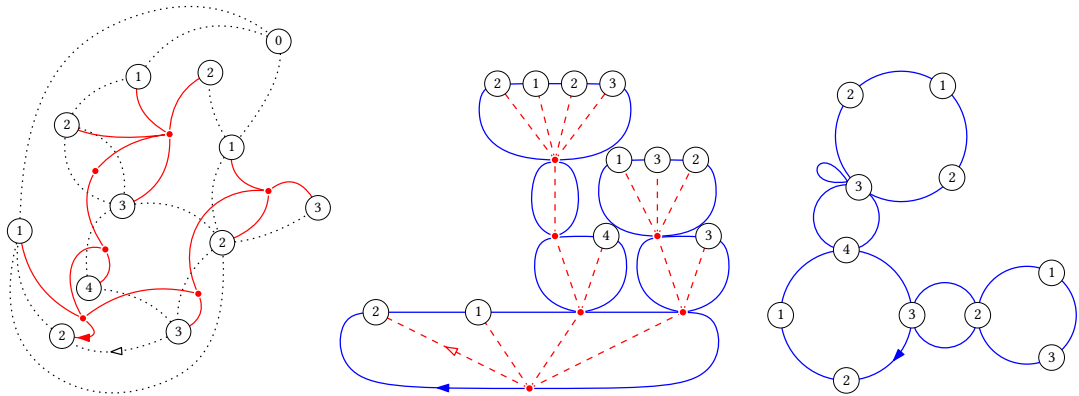
 \centering
\includegraphics[page=4, height=12\baselineskip]{bijections}
\includegraphics[page=9, height=9\baselineskip]{bijections}
\quad
\includegraphics[page=10, height=9\baselineskip]{bijections}
\caption{\emph{Left:} The map in dotted lines and the labelled one-type tree in plain red.
\emph{Middle:} The tree in dashed red and in plain blue its looptree version as in~\cite{CK14}.
\emph{Right:} The looptree version we consider here, equipped with a good labelling shifted so the minimum label is $1$.
}
\label{fig:bijection_JS}
\end{figure}

\subsection{Coding paths}
\label{ssec:codage_chemins}

Lemma~\ref{lem:bijection} reduces the study of a pointed bipartite map to that of a simpler labelled looptree. We explain now how to code these objects by a pair of discrete paths, as illustrated in Figure~\ref{fig:Luka}. In the next section we then adapt this construction with paths that evolve in continuous time in order to build the potential scaling limits of large looptrees and maps.

\paragraph{The (modified) {\L}ukasiewicz path.}
Fix a tree $T$ with $n$ edges and let $LT$ be its associated looptree. Recall from Lemma~\ref{lem:ordre_contour_looptree_DFS} that the depth-first search order on the vertices of $T$ corresponds to the contour order of the corners of $LT$. Also the cycles of $LT$ correspond to the internal vertices of $T$, and the lengths of the former equal the offspring numbers of the latter. Therefore we may construct the so-called \emph{{\L}ukasiewicz path} of the tree directly from its looptree version as follows. 
We shall modify the usual definition in such a way that applies similarly to the discrete and continuum models.

Recall the notation $(e_0, \dots, e_{n-1})$ for the sequence of (oriented) edges of the looptree, then extended by periodicity; for every $i \ge 0$, let $\ell(e_i)$ denote the length of the cycle adjacent to $e_i$ \emph{if} $e_i$ is the first edge in $(e_j)_{j \ge 0}$ adjacent to this cycle, and let $\ell(e_i) = 0$ otherwise. Note that $\ell(e_i) = 0$ for every $i \ge n$.
Then define the {\L}ukasiewicz path $X = (X_t ; t \in \R)$ associated with $LT$
by $X_t = 0$ for every $t < 0$, and for every $t \ge 0$,
\[X_t = \sum_{i=0}^{\floor{t}} \ell(e_i) - (t-\floor{t}).\]
In words, $X_0 = \ell(e_0)$ equals the length of the cycle incident to the root edge; then at every integer time, say $i \in \{1, \dots, n\}$, the path $X$ makes a nonnegative jump $\Delta X_i = X_i - X_{i-} = \ell(e_i)$, and between integer times it decreases at unit speed. One easily checks that $X_{n} = 0$, and further $X_{n+t} = -t$ for every $t \ge 0$, whereas for every $0 \le t < n$, we have $X_t > 0$ (although it may occur that $X_{t-} = 0$ for some integer $t \in (0,n)$). 
See Figure~\ref{fig:Luka} for an example.

\begin{rem}
By Lemma~\ref{lem:ordre_contour_looptree_DFS}, the path $X$ is a simple variation of the classical depth-first walk of the plane tree $T$ associated with $LT$, which takes value $0$ at time $0$ and $X_{i-1}-1$ at time $i \in \{1, \dots, n+1\}$, and it is usually extended to non integer values by adding flat steps, see e.g.~\cite[Chapter~6]{Pit06} or~\cite{LG05}. Note that one path converges after suitable scaling if and only if the other one does, and with the same limit so this won't change anything, except that the construction is now exactly the same in the continuum setting.
\end{rem}

The path $X$ entirely describes the geometry of the tree and its looptree and one can explicitly recover the graph distance between two vertices of the looptree from $X$.
Indeed let us call \emph{ancestral cycles} of a vertex (and by abuse of notation, of a corner) the cycles which must be traversed by any path to the root. For $j \in \{1, \dots, n-1\}$, the ancestral cycles of $c_j$ are in one-to-one correspondence with the integers $i<j$ such that $X_t > X_{i-}$ for every $t \in (i, j)$. Note that necessarily for such an $i$ we have $\Delta X_i > 0$ and the precise value of $\Delta X_i$ is the length of the cycle; also $X_i - \inf_{t \in [i, j]} X_t$ and $\inf_{t \in [i, j]} X_t - X_{i-}$ equal respectively the length of the left and right part of the cycle when going from the root to $c_j$. This allows to recover the graph distance of $c_j$ to the root by summing the minimum of these two lengths over all ancestral cycles. To recover the graph distance between any to vertices, we add the preceding contribution over all cycles which are ancestral cycles of exactly one of them, whereas the cycle which is ancestral to both (if any) receives a special treatment. We refer to Equation~\eqref{eq:def_distance_looptree_reduit} below which generalises this idea to any c\`adl\`ag path with no negative jump.

\paragraph{The label process.}
Next, if the looptree is labelled, meaning that every vertex carries a number, then we define the \emph{label process} $Z = (Z_i)_{0 \le i \le n}$ such that $Z_i$ is the label of the origin of the edge $e_i$ for every $0 \le i \le n$. Since $e_n=e_0$, then $Z_0 = Z_n$ which shall always be $0$. We further extend $Z$ to the whole interval $[0,n]$ by linearly interpolating between integer times. See Figure~\ref{fig:Luka} for an example.
Then a labelled looptree is entirely characterised by the pair of paths $(X, Z)$.

\begin{figure}[!ht]
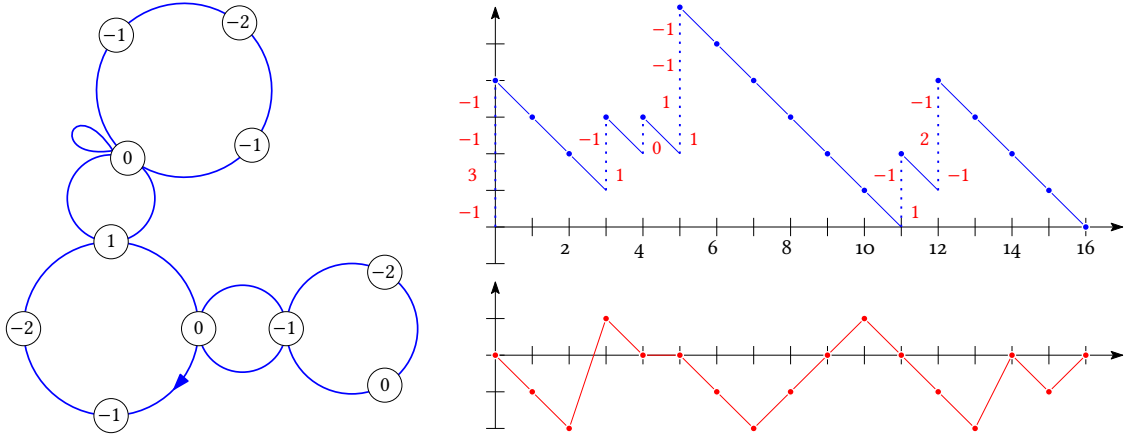
 \centering
\includegraphics[page=17, height=12\baselineskip]{bijections}
\quad
\includegraphics[page=18, height=12\baselineskip]{bijections}
\caption{\emph{Left:} A looptree equipped with a good labelling.
\emph{Top right:} its {\L}ukasiewicz path in blue and the values of the label increments along each edge in red, viewed as being indexed by the vertical jumps, from top to bottom.
\emph{Bottom right:} the label process.
}
\label{fig:Luka}
\end{figure}

It will be useful to describe the label process in terms of the {\L}ukasiewicz path. 
Recall that the labels on the vertices are obtained by summing labels on the edges of the looptree on the path from the root which always follows each cycle on its left. Recall also that we assign to each corner the label of its incident vertex.
Recall finally the bridges of label increments along each cycle introduced at the end of Section~\ref{ssec:def_objets}: each integer $i \in \{0, \dots, n-1\}$ such that $\Delta X_i > 0$ encodes a cycle of the looptree, with length $\ell_i = \Delta X_i$ and we denote by $(\xi^{i}_1, \dots, \xi^{i}_{\ell_i})$ the label increments along the edges of the cycle, in the order induced by following the contour of the looptree.
Then the label process is given by the formula: for every $0 \le j \le n$,
\begin{equation}\label{eq:def_processus_labels_discret}
Z_j \coloneqq \sum_{i=0}^{j-1} \sum_{k=0}^{\ell_i} \left(\xi^{i}_1+\dots+\xi^{i}_{k}\right) \ind{\inf_{t \in [i, j]} X_{\floor t} - X_{i-}=\ell_i-k}.
\end{equation}
We point out that the terms with $k = 0$ and $k = \ell_i$ actually give a null contribution.

Let us finally recall that the labelling is good when each bridge $(\xi^{i}_{1}, \dots, \xi^{i}_{\ell_i})$ belongs to $\mathcal{B}_{\ell_i}^{\ge-1}$ defined in~\eqref{eq:pont_sans_saut_negatif}. In this case, the increments of $Z$ all lie in $\Z_{\ge-1}$.
If $(\xi_k)_{k \ge 1}$ are i.i.d. copies of a random variable $\xi$ with the centred geometric distribution $\sum_{i \ge -1} 2^{-i-2} \delta_i$, then for every $\ell \ge 1$, one can check that the sequence $(\xi_1, \dots, \xi_\ell)$ under the conditional law $\P(\,\cdot\mid \xi_1+\dots+\xi_\ell = 0)$ has the uniform distribution on the set $\mathcal{B}_\ell^{\ge-1}$.
This provides a way to sample conditionally given $X$ a good labelling of the looptree uniformly at random.

\paragraph{The vertex-counting process.}
Consider a looptree with $n$ edges and its {\L}ukasiewicz path $X^n$.
Recall that following the contour of the looptree induces an order on the external corners, say $c^n_0, \dots, c^n_{n}$, where $c^n_n = c^n_0$. If we denote by $v^n_i$ the vertex incident to the corner $c^n_i$, then $(v^n_0, \dots, v^n_n)$ lists the vertices of looptree \emph{with redundancies} since some vertices are visited more than once.
For every $0 \le t \le n$, let us denote by $\Lambda^n(t)$ the number of vertices amongst $v^n_{0}, \dots, v^n_{\floor{t}}$ which have all their external corners in the list $(c^n_{0}, \dots, c^n_{\floor{t}})$ and note that it equals $\#\{0 \le i \le \floor{t} : \Delta X^n_i = 0\}$, the number of null jumps up to time $\floor{t}$; then let $V_n = \Lambda^n(n)$ denote the number of vertices of the looptree.
Conversely for $0 \le t \le V_n$, let $\lambda^n(t) \in \{0, \dots, n-1\}$ be the index such that the vertex $v^n_{\lambda(\ceil{t})}$ is the $\ceil{t}$'th vertex fully visited in the contour of the looptree. Note that the sequence $(v^n_{\lambda^n(0)}, \dots, v^n_{\lambda^n(V_n-1)})$ now lists the vertices \emph{without redundancies}. In our random model, the following convergence
\begin{equation}\label{eq:hyp_proportion_feuilles}
\left(V_n^{-1} \Lambda^n(n t), n^{-1} \lambda^n(V_n t)\right)_{t \in [0,1]} \cv \left(t, t\right)_{t \in [0,1]}
\end{equation}
will hold for the uniform topology.
Observe that since $\Lambda^n$ and $\lambda^n$ are inverse of one another and nondecreasing, then $n^{-1} \lambda^n(V_n \cdot)$ converges to the identity if and only if $V_n^{-1} \Lambda^n(n \cdot)$ does. Under this assumption, sampling uniformly at random in the looptree a corner (which can be done simply by sampling an instant in $[0,n]$) or a vertex is asymptotically equivalent.

\subsection{From labels to large plane maps}
\label{ssec:labels_to_cartes}

Let us end this section with invariance principles for bipartite pointed plane maps.
Precisely, we deduce partial results on the geometry of the maps from the convergence of the associated labels; this is very classical and we only recalled these results for the reader's convenience.
Lemma~\ref{lem:profil_general} below on the \emph{radius} and \emph{profile} was derived in a similar context in~\cite{MM07, LGM11} for random pointed bipartite maps and in~\cite{Mie06} for general maps, where in both cases, as here, distances are measured to the distinguished vertex. For non-pointed maps, where distances are measured to the origin of the root edge, it was derived for quadrangulations in~\cite{CS04, LG06}, bipartite maps in~\cite{Wei07}, and finally general maps in~\cite{MW08}.

Let $\Msf^n$ be a plane map with $n$ edges and $\Vsf_n+1$ vertices and let $v^n_\star$ be a distinguished vertex. Recall from Lemma~\ref{lem:bijection} that such a pair is in 2-to-1 correspondence (only the orientation of the root edge is unknown) with a labelled looptree, which is encoded by a pair $(\Xsf^n, \Zsf^n)$ where $\Xsf^n$ is the {\L}ukasiewicz path and $\Zsf^n$ the label process.
In this coding, the vertices of $\Msf^n$ different from $v^n_\star$ correspond to those of the looptree. 
Then with the preceding notation, the sequence $(v^n_{\lambda^n(0)}, \dots, v^n_{\lambda^n(\Vsf_n-1)})$ lists without redundancies the vertices of $\Msf_{n}$ different from $v^n_\star$.

The key property of the bijection from Lemma~\ref{lem:bijection} that we shall need is that the labels on the looptree correspond to the graph distance to $v^n_\star$ in the map, namely, for every $0 \le i \le n$,
\begin{equation}\label{eq:distance_carte_labels}
d_{\Msf^n}(v^n_{i}, v^n_{\star}) = \Zsf^{n}_{i} - \min \Zsf^{n}+1.
\end{equation}
The next result easily follows from the definitions and~\eqref{eq:distance_carte_labels}.
Recall from the introduction the notation $d_{\Msf^n}(v, \vec{e}_n)$ for the smallest between the vertex $v$ and the endpoints of the root edge.

\begin{lem}\label{lem:profil_general}
Let $\Zsf$ be a continuous function and suppose that there exists a sequence of positive real numbers $a_n \to \infty$ such that the convergence
\begin{equation}\label{eq:hyp_cv_labels_et_proportion_feuilles}
\left(a_n^{-1} \Zsf^n_{n t}, n^{-1} \lambda^n(\Vsf_n t)\right)_{t \in [0,1]} \cv \left(\Zsf_t, t\right)_{t \in [0,1]}
\end{equation}
holds for the uniform topology.
Then we have
\[\left(a_n^{-1}\dgr(v^n_\star, v^n_{\lambda^n(\Vsf_n t)})\right)_{t \in [0,1]} \cv \left(\Zsf_t - \min \Zsf\right)_{t \in [0,1]},\]
for the uniform convergence. In particular,
\[a_n^{-1} d_{\Msf^n}(v^n_\star, \vec{e}_n) \cvloi - \min \Zsf
\qquad\text{and}\qquad
a_n^{-1} \max_{v \in V(\Msf^n)} d_{\Msf^n}(v^n_\star, v) \cvloi \max \Zsf - \min \Zsf,\]
and for every continuous and bounded function $F$, we have
\[\Vsf_n^{-1} \sum_{v \in V(\Msf^n)} F\left(a_n^{-1} d_{\Msf^n}(v^n_\star, v)\right) \cvloi \int_0^1 F(\Zsf_t - \min \Zsf) \d t.\]
\end{lem}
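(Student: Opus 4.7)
The strategy is to reduce every statement to the label process via the identity \eqref{eq:distance_carte_labels}, and then feed in the two uniform convergences from hypothesis \eqref{eq:hyp_cv_labels_et_proportion_feuilles}. The only piece of ``geometry'' needed from Lemma~\ref{lem:bijection} is this identity together with the facts that the non-distinguished vertices of $\Msf^n$ are in bijection with the vertices of the looptree, listed without redundancy by $(v^n_{\lambda^n(k)})_{0 \le k < \Vsf_n}$, and that the two endpoints of the root edge are neighbours whose distances to $v^n_\star$ differ by~$1$.

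For part (i), I would first note that the minimum functional is $1$-Lipschitz for the uniform topology on $C([0,1],\R)$, so the uniform convergence $a_n^{-1}\Zsf^n_{n\cdot} \to \Zsf$ yields $a_n^{-1}\min \Zsf^n \to \min \Zsf$. Since $a_n \to \infty$ kills the additive constant $+1$ in \eqref{eq:distance_carte_labels}, this gives
\[\sup_{s \in [0,1]} \left| a_n^{-1} d_{\Msf^n}\bigl(v^n_\star, v^n_{\lfloor ns\rfloor}\bigr) - (\Zsf_s - \min \Zsf) \right| \cv 0.\]
Then I would compose with the time change $s = n^{-1}\lambda^n(\Vsf_n t)$: by the second component of \eqref{eq:hyp_cv_labels_et_proportion_feuilles}, $\sup_t |s-t| \to 0$, and since $\Zsf$ is continuous on the compact $[0,1]$, hence uniformly continuous, the modulus of continuity of $\Zsf$ evaluated at $\sup_t |s-t|$ tends to zero. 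A triangle inequality then transfers the uniform convergence through the time change, yielding part~(i).

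Part (ii) is then immediate. Taking $\sup_{t \in [0,1]}$ in part (i) and using that the sup functional is continuous for the uniform topology gives the radius asymptotic, since $\max_{v \in V(\Msf^n)} d_{\Msf^n}(v^n_\star, v)$ is attained on the looptree vertices (the vertex $v^n_\star$ itself contributing $0$). For the root-edge distance: after possibly reorienting the root edge to make the map negative (the quantity $d_{\Msf^n}(v^n_\star, \vec{e}_n)$ is orientation-independent), the tip is closer to $v^n_\star$ than the origin by exactly $1$; since $\Zsf^n_0 = 0$, identity \eqref{eq:distance_carte_labels} at $i=0$ gives $d_{\Msf^n}(v^n_0, v^n_\star) = -\min \Zsf^n + 1$, so the nearer endpoint is at distance $-\min \Zsf^n$, and dividing by $a_n$ and using the convergence of the minimum gives the claim.

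For part (iii) (the profile), I would decompose the sum according to whether $v = v^n_\star$ or not:
\[\sum_{v \in V(\Msf^n)} F\!\left(a_n^{-1} d_{\Msf^n}(v^n_\star, v)\right) = F(0) + \sum_{k=0}^{\Vsf_n - 1} F\!\left(a_n^{-1} d_{\Msf^n}(v^n_\star, v^n_{\lambda^n(k)})\right).\]
The hypothesis forces $\Vsf_n \to \infty$ (otherwise $n^{-1}\lambda^n(\Vsf_n t)$ could not approximate the identity on $[0,1]$), so $F(0)/\Vsf_n \to 0$. The remaining Cesàro average equals $\int_0^1 F\!\left(a_n^{-1} d_{\Msf^n}(v^n_\star, v^n_{\lambda^n(\lceil \Vsf_n t\rceil)})\right) \d t$, whose integrand converges uniformly on $[0,1]$ by part~(i), and boundedness of $F$ on the compact range of values lets one pass to the limit by dominated convergence. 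No part of the argument is genuinely hard; the only bookkeeping to be careful about is the composition with the random time change $\lambda^n$, which is handled cleanly by uniform continuity of the continuous limit $\Zsf$.
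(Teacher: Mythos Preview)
The proposal is correct and follows essentially the same approach as the paper: both use the identity~\eqref{eq:distance_carte_labels} to reduce distances to labels, compose with the time change $\lambda^n$ via the second component of~\eqref{eq:hyp_cv_labels_et_proportion_feuilles}, and rewrite the profile sum as an integral before passing to the limit. Your write-up is more detailed (explicitly invoking uniform continuity of $\Zsf$ for the time change, and unpacking the root-edge distance), but the argument is the same.
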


\begin{proof}
The convergence of the distances to $v^n_\star$ is an immediate consequence of~\eqref{eq:distance_carte_labels} and the convergence~\eqref{eq:hyp_cv_labels_et_proportion_feuilles}. 
This directly implies the next two convergences. For the last one, let us write by~\eqref{eq:distance_carte_labels}
\begin{align*}
\sum_{v \in V(\Msf^n)} F\left(a_n^{-1} d_{\Msf^n}(v^n_\star, v)\right)
&= F(0) + \sum_{i = 1}^{\Vsf_n} F\left(a_n^{-1/2} \dgr(v^n_{\lambda^n(i)}, v^n_\star)\right)
\\
&= F(0) + \sum_{i = 1}^{\Vsf_n} F\left(a_n^{-1/2} \left(\Zsf^n_{\lambda^n(i)} - \min\Zsf^n\right)\right)
\\
&= F(0) + \Vsf_n \int_0^1 F\left(a_n^{-1/2} \left(\Zsf^n_{\lambda^n(\Vsf_n t)} - \min\Zsf^n\right)\right) \d t.
\end{align*}
Then the convergence follows from~\eqref{eq:hyp_cv_labels_et_proportion_feuilles} again.
\end{proof}

We finally consider the map as a metric measured space by endowing the underlying graph with its graph distance and uniform probability measure. Lemma~\ref{lem:tension_cartes_general} below on the convergence of this metric space \emph{after extraction of a subsequence} finds its root in the work of Le~Gall~\cite{LG07} using a different encoding and was since adapted in many works, e.g.~\cite{LGM11, LG13, Abr16, BM17, ABA21, Mar19}.
The major issue is then to prove that all the subsequential limits agree. In the case of random quadrangulations and the Brownian sphere, this was eventually solved simultaneously in~\cite{LG13, Mie13} and their arguments allow to extend this uniqueness to other discrete models that converge to the same limit and is a key input in the papers cited above. In the case of stable maps, this is under active investigation~\cite{CMR}.

In words, two compact metric spaces equipped with a Borel probability measure, say $(X, d_X, p_X)$ and $(Y, d_Y, p_Y)$, are close to each other in the \emph{Gromov--Hausdorff--Prokhorov topology} if one can find a subset of each which carries most of the mass and which are close to be isometric. Formally, a \emph{correspondence} between $X$ and $Y$ is a subset $R \subset X \times Y$ such that for every $x \in X$, there exists $y \in Y$ such that $(x,y) \in R$ and vice-versa. The \emph{distortion} of $R$ is defined as
\[\mathrm{dis}(R) = \sup\left\{\left|d_X(x,x') - d_Y(y,y')\right| ; (x,y), (x', y') \in R\right\}.\]
Then the Gromov--Hausdorff--Prokhorov distance between these spaces is the infimum of all the values $\varepsilon > 0$ such that there exists a coupling $\nu$ between $p_X$ and $p_Y$ and a compact correspondence $R$ between $X$ and $Y$ such that
\[\nu(R) \ge 1-\varepsilon \qquad\text{and}\qquad \mathrm{dis}(R) \le 2 \varepsilon.\]
This is only a pseudo-distance, but after taking the quotient by measure-preserving isometries, one gets a genuine distance which is separable and complete, see Miermont~\cite[Proposition~6]{Mie09}.

We keep the preceding notation: $\Msf^n$ is a plane map with $n$ edges and $\Vsf_n+1$ vertices and $v^n_\star$ denotes a distinguished vertex. Let $\Msf^n \setminus \{v_\star\}$ be the metric measured space given by the vertices of $\Msf^n$ different from $v_\star$, their graph distance \emph{in $\Msf^n$} and the uniform probability measure and let us observe that the Gromov--Hausdorff--Prokhorov distance between $\Msf^n$ and $\Msf^n \setminus \{v_\star\}$ is bounded by one so it suffices to consider the latter.
The distances to $v^n_\star$ are explicitly coded by the labels in~\eqref{eq:distance_carte_labels}, which is not the case of the distances between two arbitrary vertices. Recall that the vertices of $\Msf^n$ different from $v^n_\star$ are listed as $v^n_{\lambda^n(0)}, \dots, v^n_{\lambda^n(\Vsf_n-1)}$. 
For every $i,j \in \{0, \dots, \Vsf_n-1\}$, let us set
\begin{equation}\label{eq:distances_carte}
d_n(i,j) = d_{\Msf^n}(v^n_{\lambda^n(i)}, v^n_{\lambda^n(j)}),
\end{equation}
and then extend $d_n$ to a continuous function on $[0, \Vsf_n]^2$ by ``bilinear interpolation'' on each square of the form $[i,i+1] \times [j,j+1]$, as in~\cite[Section~2.5]{LG13} or~\cite[Section~7]{LGM11}.

As already mentioned, the next result can be traced back to Le~Gall's work~\cite{LG07} and we refer the reader to~\cite[Section~3.1]{Mar19} for a recent account of the arguments.
Let us briefly recall the main steps as the proof gives more information than what is stated.
Recall the assumption~\eqref{eq:hyp_proportion_feuilles} and the discussion below about the convergence of the vertex-counting process $\lambda^n$.

Here and below, if $D$ is a continuous pseudo-distance on $[0,1]$, we let $[0,1]/\{D=0\}$ denote the quotient space obtained by identifying pairs which lie at $D$-distance $0$ and we equip it with the induced distance and the image of the Lebesgue measure.

\begin{lem}\label{lem:tension_cartes_general}
Under the assumption~\eqref{eq:hyp_cv_labels_et_proportion_feuilles} in Lemma~\ref{lem:profil_general}, from every increasing sequence of integers, one can extract a subsequence along which
\[\left(a_n^{-1} d_n(\Vsf_n s, \Vsf_n t)\right)_{s,t \in [0,1]} \cv \left(d_\infty(s,t)\right)_{s,t \in [0,1]},\]
uniformly, where $d_\infty$ is a continuous pseudo-distance.
Consequently, along this subsequence, 
\[a_n^{-1}\Msf^n \cv{} [0,1]/\{d_\infty=0\}\]
in the Gromov--Hausdorff--Prokhorov topology.
\end{lem}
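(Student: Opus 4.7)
The plan is to follow the classical tightness strategy initiated by Le~Gall~\cite{LG07}, adapted from mobiles to looptrees: combine the label--distance identity \eqref{eq:distance_carte_labels} with a suitable upper bound on graph distances in $\Msf^n$ in terms of the label process $\Zsf^n$ to derive equicontinuity of the rescaled pseudo-distances $(s,t)\mapsto a_n^{-1} d_n(\Vsf_n s, \Vsf_n t)$; Arzel\`a--Ascoli then yields a subsequential uniform limit $d_\infty$, which is a continuous pseudo-distance on $[0,1]$ by passing to the limit in symmetry, non-negativity, and the triangle inequality, and which satisfies $d_\infty(t,t)=0$ because the same upper bound forces $d_\infty(s,t) \to 0$ as $s \to t$.

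The key input is a cactus-type inequality in the spirit of~\cite{LG07,LGM11}: reading off a path in $\Msf^n$ obtained by concatenating, from two chosen corners, the successive arrows ``next corner with smaller label'' used to define the inverse bijection in Section~\ref{ssec:la_bijection}, one obtains for all integers $0 \le i,j \le n$ a bound of the form
\[
d_{\Msf^n}(v^n_i, v^n_j) \le \Zsf^n_i + \Zsf^n_j - 2 \min\!\Bigl(\min_{k \in [i \wedge j,\, i \vee j]} \Zsf^n_k,\ \min_{k \in [i \vee j,\, n] \cup [0,\, i \wedge j]} \Zsf^n_k\Bigr) + 2.
\]
After rescaling by $a_n^{-1}$ and reparametrising by $s = \lambda^n(\Vsf_n s)/n$, the hypothesis \eqref{eq:hyp_cv_labels_et_proportion_feuilles} together with the uniform continuity of the limit $\Zsf$ yields asymptotic uniform equicontinuity of the functions $(s,t)\mapsto a_n^{-1} d_n(\Vsf_n s, \Vsf_n t)$; the pointwise bound $a_n^{-1} d_n(\Vsf_n s, \Vsf_n t) \le 2(\max \Zsf - \min \Zsf) + o(1)$ provided by Lemma~\ref{lem:profil_general} supplies uniform boundedness, and Arzel\`a--Ascoli delivers the claimed subsequential uniform convergence to the announced pseudo-distance.

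To conclude the Gromov--Hausdorff--Prokhorov convergence $a_n^{-1}\Msf^n \to [0,1]/\{d_\infty=0\}$, introduce the correspondence
\[
R_n \;=\; \bigl\{\bigl(t,\, v^n_{\lambda^n(\floor{\Vsf_n t})}\bigr) : t \in [0,1]\bigr\}
\]
between $[0,1]$ endowed with Lebesgue measure and the vertex set of $\Msf^n \setminus \{v^n_\star\}$ endowed with its uniform probability; its distortion is bounded by $\sup_{s,t} |a_n^{-1} d_n(\Vsf_n s, \Vsf_n t) - d_\infty(s,t)| + o(1)$, which vanishes along the subsequence, while the discrepancy between the Lebesgue measure and the empirical measure on $\{v^n_{\lambda^n(i)} : 0 \le i < \Vsf_n\}$ is controlled by the convergence of $n^{-1}\lambda^n(\Vsf_n \cdot)$ to the identity. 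Passing to the quotient by $\{d_\infty=0\}$ kills the residual pseudo-metric kernel and produces a genuine compact metric measured space.

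The only nontrivial step is the cactus upper bound on map distances by label differences, which rests on the explicit description of the inverse bijection of Lemma~\ref{lem:bijection} given in Section~\ref{ssec:la_bijection}; once this is in hand, everything else is a soft Arzel\`a--Ascoli plus coupling argument, essentially identical to the one carried out in~\cite[Section~3.1]{Mar19}, and in particular the GHP convergence follows automatically from the uniform convergence of the rescaled distance matrix.
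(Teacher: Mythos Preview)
Your approach is the same as the paper's: cactus-type upper bound on map distances via label minima, then Arzel\`a--Ascoli for the rescaled distance matrix, then a correspondence for the Gromov--Hausdorff--Prokhorov convergence. The paper carries this out verbatim, first passing to $\widetilde d_n(i,j)=d_{\Msf^n}(v^n_i,v^n_j)$ indexed by corners (which is equivalent to your reparametrisation by $\lambda^n$), invoking exactly the bound you describe, and then using the same correspondence.

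However, your displayed inequality contains an error that breaks the argument as written: the outer $\min$ must be a $\max$. With $\min$ of the two arc-minima you get
\[
d_{\Msf^n}(v^n_i,v^n_j)\le \Zsf^n_i+\Zsf^n_j-2\min_{[0,n]}\Zsf^n+2,
\]
which does \emph{not} tend to $0$ as $|i-j|\to 0$, so no equicontinuity follows. The correct bound (the paper's~\eqref{eq:borne_sup_cactus}, i.e.\ $D_{\Zsf^n}(i,j)+2$) is
\[
d_{\Msf^n}(v^n_i, v^n_j) \le \Zsf^n_i + \Zsf^n_j - 2 \max\!\Bigl(\min_{k \in [i \wedge j,\, i \vee j]} \Zsf^n_k,\ \min_{k \in [i \vee j,\, n] \cup [0,\, i \wedge j]} \Zsf^n_k\Bigr) + 2,
\]
because the two chains of successor arrows can be followed around the contour in either direction and one takes the shorter option; the merge label is at least the minimum over the chosen arc, hence at least the larger of the two arc-minima. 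With this correction your proof is complete and coincides with the paper's.
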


\begin{proof}
Let us modify $d_n$ by setting for every $i,j \in \{0, \dots, n\}$,
\[\widetilde{d}_n(i,j) = d_{\Msf^n}(v^n_i, v^n_j),\]
and then extend it similarly to the whole square $[0,n]^2$. 
From the convergence of $\lambda^n$ in~\eqref{eq:hyp_cv_labels_et_proportion_feuilles},
it is equivalent to replace $d_n(\Vsf_n \cdot, \Vsf_n \cdot)$ in the statement by $\widetilde{d}_n(n\cdot, n\cdot)$.

For a continuous function $g : [0, 1] \to \R$, let us set for every $0 \le s \le t \le 1$,
\[D_g(s,t) = D_g(t,s) = g(s) + g(t) - 2 \max\left\{\min_{r \in [s, t]} g(r); \min_{r \in [0, s] \cup [t, 1]} g(r)\right\}.\]
For $0 \le i < j \le n$, let $[i, j]$ denote the set of integers from $i$ to $j$, and let $[j,i]$ denote $[j, n] \cup [0, i]$. Recall that we construct our map from a labelled looptree, using a Schaeffer-type bijection; following the chain of edges drawn starting from two points of the forest to the next one with smaller label until they merge, one obtains the following upper bound on distances: for every $0 \le i,j \le n$,
\begin{equation}\label{eq:borne_sup_cactus}
d_{\Msf^n}(v^n_i, v^n_j) \le D_{L^{n}}(i, j) + 2.
\end{equation}
See Le~Gall~\cite[Lemma~3.1]{LG07} for a detailed proof in a different context.

By continuity, if $a_n^{-1} \Zsf^n$ converges uniformly towards $\Zsf$, then $a_n^{-1} D_{\Zsf^n}$ converges towards $D_\Zsf$. From the previous bound and~\eqref{eq:hyp_cv_labels_et_proportion_feuilles}, one can then extract a subsequence along which we have
\begin{equation}\label{eq:convergence_distances_sous_suite}
a_n^{-1} \left(\Zsf^n_{nt}, D_{\Zsf^n}(s, t), \widetilde{d}_{n}(ns, nt), d_{n}(\Vsf_n s, \Vsf_n t)\right)_{s,t \in [0,1]}
\cv
(\Zsf_t, D_\Zsf(s,t), d_\infty(s,t), d_\infty(s,t))_{s,t \in [0,1]},
\end{equation}
where $d_\infty$ is a continuous pseudo-distance which depends a priori on the subsequence and, by~\eqref{eq:borne_sup_cactus}, satisfies $d_\infty \le D_\Zsf$, see~\cite[Proposition~3.2]{LG07} for a detailed proof in a similar context.

Finally, let $\Msf_\infty$ be the quotient $[0,1] / \{d_\infty=0\}$ equipped with the metric induced by $d_\infty$, which we still denote by $d_\infty$. We let $\Pi_\infty$ be the canonical projection from $[0,1]$ to $\Msf_\infty$ which is continuous (since $d_\infty$ is) so $(\Msf_\infty, d_\infty)$ is a compact metric space, which finally we endow with the Borel probability measure $p_\infty$ given by the push-forward by $\Pi_\infty$ of the Lebesgue measure on $[0,1]$.
The set
\[R_n = \left\{\left(v^n_{\lambda^n(\Vsf_n t)}, \Pi_\infty(t)\right) ; t \in [0,1]\right\}.\]
is a correspondence between $\Msf^n\setminus\{v^n_\star\}$ and $\Msf_\infty$. Let further $\nu$ be the coupling between $p_{\Msf^n}$ and $p_\infty$ given by
\[\int_{(M_n\setminus\{v^n_\star\}) \times \Msf_\infty} f(x, z) \d\nu(x, z) = \int_0^1 f\left(v^n_{\lambda^n(\Vsf_n t)}, \Pi_\infty(t)\right) \d t,\]
for every test function $f$. Then $\nu$ is supported by $R_n$ by construction. Finally, the distortion of $R_n$ is given by
\[\sup_{s,t \in [0,1]} \left| a_n^{-1} d_{n}\big(\Vsf_n s, \Vsf_n t\big) - d_\infty(s,t)\right|,\]
which tends to $0$ whenever the convergence~\eqref{eq:convergence_distances_sous_suite} holds, which concludes the proof of the convergence of the maps.
\end{proof}

\section{Continuum labelled looptrees}
\label{sec:deterministe}

In this section, we replace our {\L}ukasiewicz paths by any c\`adl\`ag path with no negative jump and extend the construction of a looptree from this path in Section~\ref{ssec:generalites_looptrees}. We then construct random Gaussian labels on such a deterministic looptree in Section~\ref{ssec:labels_continus}. 
Let us first introduce some notation that we will use in the rest of this paper.

\subsection{Path decomposition}
\label{ssec:decomposition_sauts_partie_continue}

Let $\Xsf$ denote a c\`adl\`ag path in the space $\D([0, 1], \R)$, with $\Xsf_0 \ge 0$ and with no negative jump: $\Delta \Xsf_s \ge 0$ for all $s \in [0,1]$. Although it will not be needed for this conversation, we note that all the paths we shall consider also satisfy $X_1 = 0$. By convention, we shall extend it to $[-1, 1]$ by setting $\Xsf_t = 0$ for every $t \in [-1, 0)$. We stress that $\Xsf$ may have a positive jump at $0$.
Define a partial order $\preceq$ on $[0,1]$ as follows. For every $s, t \in [0,1]$, set
\[s \prec t \qquad\text{when both}\qquad s < t \quad\text{and}\quad \Xsf_{s-} \le \inf_{[s,t]} \Xsf.\]
We then set $s \preceq t$ if $s \prec t$ or $s=t$. 
Then for every pair $s, t \in [0,1]$, set
\begin{equation}\label{eq:def_R}
\Rsf^t_s \coloneqq \inf_{r \in [s,t]} \Xsf_r - \Xsf_{s-} \qquad\text{if}\enskip s \prec t,
\end{equation}
and let $\Rsf^t_s = 0$ otherwise. Note that $\Xsf$ must jump at time $s$ for $\Rsf^t_s$ to be nonzero.
In analogy to the discrete setting, when $\Xsf$ is the {\L}ukasiewicz path associated with a plane tree, we interpret $s = \sup\{r \in [0,1] : r \prec t\}$ as the parent of $t$, then $\Delta \Xsf_s$ as the offspring number of $s$, and finally $\Rsf^t_s$ is the number of siblings of $t$ that lie to its right; in the looptree, this corresponds to the right-length of the loop. Finally, for any $s, t \in [0,1]$, we let $s \wedge t \coloneq \sup\{r \in [0,1] : r \prec s \text{ and } r \prec t\}$ denote their last common ancestor.

Fix $t \in [0,1]$ and consider the dual path $\Xsf^t_s = \Xsf_{t-} - \Xsf_{(t-s)-}$ and its running supremum $\overline{\Xsf}\vphantom{X}^t_s = \sup_{r \in [0,s]} \Xsf^t_r$ for every $s \in [0,t]$. Then $\Rsf^t_{t-s}$ is equal to the size of the (possibly null) jump $\Delta \overline{\Xsf}\vphantom{X}^t_s$, and we may decompose this supremum path as the sum of its jump part and its continuous part, namely for $0 \le s \le t \le 1$,
\begin{equation}\label{eq:def_saut_pure_partie_continue}
\Jsf^t_s = \sum_{t-s \preceq r \prec t} \Rsf^t_r
\qquad\text{and}\qquad
\Csf^t_s 
= \overline{\Xsf}\vphantom{X}^t_s - \Jsf^t_s
= \Leb\big(\big\{\overline{\Xsf}\vphantom{X}^t_r; r \in [0,s]\big\}\big)
,\end{equation}
where here and below, a sum over an empty set is null, and $\Leb$ denotes the Lebesgue measure. Finally set
\begin{equation}\label{eq:def_J_et_C}
\Jsf_t = \Jsf^t_t
\qquad\text{and}\qquad
\Csf_t = \Csf^t_t,
\qquad\text{so}\qquad
\Jsf_t + \Csf_t = \Xsf_{t-}.
\end{equation}
We stress that even if $\Xsf$ is continuous as a process on $[0,1]$, it may have a jump $\Delta \Xsf_0 = \Xsf_0$ at the origin, which then contributes to $\Jsf$.
The path $\Csf$, thus given for every $t \in [0,1]$ by 
$\Csf_t = \Leb(\inf_{r \in [s,t]} \Xsf_r; s \in [0,t]\})$, is continuous, and it may be null. In the latter case, constructions and invariance principles are simpler, so we shall emphasise the following ``pure jump'' assumption:
\begin{equation*}\label{eq:PJ}\tag{PJ}
\text{For every $t \in [0,1]$, the function $\overline{\Xsf}\vphantom{X}^t$ is pure jump, i.e.~$\Csf_t=0$.}
\end{equation*}
For example, in Section~\ref{sec:Levy} we consider L\'evy processes, for which~\eqref{eq:PJ} is equivalent to having no Gaussian component in their L\'evy--Khintchine decomposition.

\subsection{Continuum trees}
\label{ssec:arbres_continus}

Let $g$ denote a continuous function on $[0,1]$. It is classical, see e.g.~\cite{LG05} for a detailed discussion, that it encodes a continuum real tree as follows. For every $0 \le s \le t \le 1$, set
\begin{equation}\label{eq:def_distance_arbre}
d_g(s,t) = g_s + g_t - 2 \min_{r \in [s,t]} g_r,
\end{equation}
and let $d_g(t,s) = d_g(s,t)$, then $d_g$ defines a continuous pseudo-distance. 
The quotient space $T_g = [0,1] / \{d_g=0\}$ is called the tree coded by $g$. 
It is a real tree in the topological sense, it is compact metric space when equipped with the canonical projection of $d_g$ (which we shall still denote by $d_g$), and finally the push-forward $p_g$ of the Lebesgue measure defines a uniform probability measure on $T_g$. 
This coding of a real tree from a continuous function is analogous to that of a discrete plane tree (actually a forest) from its associated contour function.

Given the path $\Xsf$, one can then construct the tree $T_\Csf$ from the continuous function $\Csf$ in~\eqref{eq:def_J_et_C}. We stress that it may not be the continuum tree for which $\Xsf$ would play the role of the {\L}ukasiewicz path. Indeed, under the pure jump assumption~\eqref{eq:PJ} it is reduced to a single point, whereas a nontrivial tree might still be constructed from $\Xsf$, see the example of L\'evy trees~\cite{DLG02} recalled in Section~\ref{sec:Levy}. However for L\'evy processes which do not satisfy~\eqref{eq:PJ}, the process $\Csf$ corresponds up to a scaling factor to the so-called height process, so $T_\Csf$ is indeed a multiple factor of the associated L\'evy tree (or rather forest).

\subsection{Continuum looptrees}
\label{ssec:generalites_looptrees}

We aim at defining a looptree $\Loop(\Xsf)$ from the path $\Xsf$.
Recall the partial order $\preceq$ on $[0,1]$ previously defined from $\Xsf$ and the notation $s \wedge t \coloneq \sup\{r \in [0,1] : r \preceq s \text{ and } r \preceq t\}$ for the last common ancestor of $s, t \in [0,1]$. Let $t \in [0,1]$ be a jump time of $\Xsf$, we associate with it a cycle $[0, \Delta \Xsf_t]$ with $\Delta \Xsf_t$ identified with $0$, equipped with the metric $\delta_t(a,b) = \min\{|a-b|, \Delta \Xsf_t - |a-b|\}$ for all $a,b \in [0, \Delta \Xsf_t]$. For definiteness, if $\Delta \Xsf_t = 0$, then we set $\delta_t(0,0) = 0$. For every $s, t \in [0,1]$, we let
\begin{equation}\label{eq:def_distance_looptree_reduit}
d_{\Loop(\Xsf)}^0(s,t) \coloneqq \delta_{s \wedge t}(\Rsf^s_{s \wedge t}, \Rsf^t_{s \wedge t}) + \sum_{s \wedge t \prec r \prec s} \delta_r(0, \Rsf^s_r) + \sum_{s \wedge t \prec r \prec t} \delta_r(0, \Rsf^t_r).
\end{equation}
When $s \prec t$, the first two terms on the right are zero.
This function was defined in~\cite{CK14} with the property~\eqref{eq:PJ} in mind but it is not satisfactory otherwise; in the extreme case when $\Xsf$ is continuous, it is constant equal to $0$ since only jump times 
can contribute to~\eqref{eq:def_distance_looptree_reduit}. Let us therefore modify it using the continuous function $\Csf$ from~\eqref{eq:def_J_et_C}: 
define the general looptree distance with parameter $a\ge0$ between any pair $s, t \in [0,1]$ by
\begin{equation}\label{eq:def_distance_looptree}
d^a_{\Loop(\Xsf)}(s,t) \coloneqq d_{\Loop(\Xsf)}^0(s,t) + a\cdot d_\Csf(s,t),
\end{equation}
and let simply $d_{\Loop(\Xsf)} = d^1_{\Loop(\Xsf)}$.
Intuitively $\Xsf$ plays the role of the {\L}ukasiewicz path, which counts for every vertex of the looptree the length of the path to the root by following each loop to their right; typically for random models, a geodesic from a vertex to the root follows a loop to its right about half of the time and follows the left part otherwise, so the parameter $a=1/2$ shall play a prominent role. However in a ``finite variance'' regime an extra factor comes into play; Theorem~\ref{thm:convergence_looptree_degres_prescrits_general} makes this discussion more precise.

\begin{rem}
Recall from Section~\ref{ssec:codage_chemins} the coding of a discrete looptree $\Lsf^n$ with $n$ edges by its {\L}ukasiewicz path $\Xsf^n$ and note that the latter satisfies~\eqref{eq:PJ}. Then Formula~\eqref{eq:def_distance_looptree_reduit} recovers the graph distance when $ns$ and $nt$ are both integers. In fact this formula describes the ``cable looptree'' obtained by replacing each edge of $\Lsf^n$ by a line segment with length $1$.
\end{rem}

The following result is given in~\cite[Lemma~2.1 and Proposition~2.2]{CK14} under~\eqref{eq:PJ}.

\begin{prop}\label{prop:distance_Looptree_general}
For any $a\ge 0$ the function $d_{\Loop(\Xsf)}^a$ is a continuous pseudo-distance on $[0,1]$. Moreover the following upper bound holds: for every $0 \le s \le t \le 1$,
\[d_{\Loop(\Xsf)}(s,t) \le \Xsf_s + \Xsf_{t-} - 2 \inf_{r \in [s,t]} \Xsf_r.\]
\end{prop}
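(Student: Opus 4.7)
The strategy splits via the decomposition $d^a_{\Loop(\Xsf)} = d^0_{\Loop(\Xsf)} + a \cdot d_\Csf$. The second summand $d_\Csf$ is a continuous pseudo-distance on $[0,1]$ by the general theory of continuous tree-coding functions recalled in Section~\ref{ssec:arbres_continus}, and a sum of two continuous pseudo-distances is again one, so it suffices to show the same for $d^0_{\Loop(\Xsf)}$. For the upper bound (which concerns $a=1$), I shall combine separate bounds on $d^0_{\Loop(\Xsf)}(s,t)$ and $d_\Csf(s,t)$.

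For the pseudo-distance axioms of $d^0_{\Loop(\Xsf)}$: symmetry and $d^0(s,s)=0$ (using $\Rsf^s_s = 0$) are immediate from the definition, and non-negativity follows from each $\delta_r \ge 0$. The triangle inequality $d^0(x,z) \le d^0(x,y) + d^0(y,z)$ proceeds exactly as in~\cite[Lemma~2.1]{CK14}: among the three pairwise last common ancestors $x \wedge y$, $x \wedge z$, $y \wedge z$, at least two coincide with the smallest, call it $v$, and a short case analysis on how $x,y,z$ sit relative to the cycle at $v$ reduces the inequality to the triangle inequality for the cycle metric $\delta_v$, with the contributions from the common ancestral chain strictly below $v$ cancelling pairwise. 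The key observation is that the CK14 argument uses only the combinatorics of the partial order $\preceq$ and the triangle inequality of each $\delta_r$, so it transfers verbatim without any use of~\eqref{eq:PJ}.

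Joint continuity of $(s,t) \mapsto d^0(s,t)$ relies on the absolute summability $\sum_{r \prec s}\Rsf^s_r = \Jsf_s \le \Xsf_{s-}$, which is uniformly bounded on $[0,1]$. If $(s_n, t_n) \to (s,t)$ with $s,t$ not jump times of $\Xsf$, then away from the countable jump set the ancestor chains $\{r \prec s_n\}$ stabilise and each $\Rsf^{s_n}_r$ converges to $\Rsf^s_r$, so a dominated convergence argument yields $d^0(s_n,t_n) \to d^0(s,t)$. At jump times of $\Xsf$ the chain of ancestors may change abruptly, but the corresponding discrepancy is controlled by $\delta_s(0,\cdot)$, which vanishes at both endpoints of the associated cycle, thereby restoring continuity.

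For the upper bound, set $u = s \wedge t$ and establish the identity $\inf_{[s,t]}\Xsf = \inf_{[u,t]}\Xsf = \Xsf_{u-} + \Rsf^t_u$: the right-hand equality is just $u \prec t$, while the left-hand one comes from the maximality of $u$ among common ancestors (which forces $\inf_{[u,s)}\Xsf \ge \inf_{[u,t]}\Xsf$). Bounding each loop contribution by its ``right-side'' length $\delta_r(0,\Rsf^s_r) \le \Rsf^s_r$, and using $\delta_u(\Rsf^s_u, \Rsf^t_u) \le \Rsf^s_u - \Rsf^t_u$ (valid since $[u,s] \subset [u,t]$ gives $\Rsf^s_u \ge \Rsf^t_u$), yields
\[d^0_{\Loop(\Xsf)}(s,t) \;\le\; \sum_{u \preceq r \prec s}\Rsf^s_r + \sum_{u \preceq r \prec t}\Rsf^t_r - 2\Rsf^t_u.\]
Combining with $d_\Csf(s,t) \le \Csf_s + \Csf_t - 2\min_{[s,t]}\Csf$ and applying the dual-path decomposition $\sum_{u \preceq r \prec x}\Rsf^x_r + \Csf^x_{x-u} = \Xsf_{x-} - \inf_{[u,x]}\Xsf_{\cdot-}$ for $x \in \{s,t\}$ (i.e.~the identity $\overline{\Xsf}^x_{x-u} = \Jsf^x_{x-u} + \Csf^x_{x-u}$ from Section~\ref{ssec:decomposition_sauts_partie_continue}), one rearranges everything, using $\Xsf_s \ge \Xsf_{s-}$ on one side, to recover $\Xsf_s + \Xsf_{t-} - 2(\Xsf_{u-} + \Rsf^t_u)$, which is exactly the claimed bound. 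The main obstacle is the triangle inequality, whose three-ancestor case split is the only genuinely combinatorial step; the continuity and the upper-bound computation are then routine consequences of the path decompositions introduced in Section~\ref{ssec:decomposition_sauts_partie_continue}.
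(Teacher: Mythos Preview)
Your decomposition $d^a_{\Loop(\Xsf)} = d^0_{\Loop(\Xsf)} + a\,d_\Csf$, the triangle-inequality argument via the three-ancestor case split (as in \cite{CK14}), and the upper-bound computation are all correct and essentially coincide with the paper's proof.

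There is, however, a genuine gap in your continuity argument. The claim that the ancestor chains $\{r \prec s_n\}$ ``stabilise'' when $s_n \to s$ is not correct: even when $s$ is not a jump time, as $s_n$ approaches $s$ from above the set $\{r \prec s_n\}$ may lose infinitely many elements (those $r<s$ for which $\Xsf_{r-}$ sits exactly at, or barely above, the level $\inf_{[r,s_n]}\Xsf$), and may gain new ones in $(s,s_n)$. One can argue that the corresponding contributions are small, but this is not ``stabilisation plus dominated convergence''; it would require a separate uniform tail estimate which you do not supply. The handling of jump times is likewise too sketchy.

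The paper avoids this entirely by reversing the order of the argument: it proves the upper bound \emph{first}, then obtains continuity in one line from the triangle inequality,
\[
|d^a_{\Loop(\Xsf)}(s,s') - d^a_{\Loop(\Xsf)}(t,t')| \le d^a_{\Loop(\Xsf)}(s,t) + d^a_{\Loop(\Xsf)}(s',t'),
\]
together with the fact that $d^a_{\Loop(\Xsf)}(s,t) \le \max(1,a)\, d_{\Loop(\Xsf)}(s,t)$ and that the right-hand side of the upper bound, $\Xsf_s + \Xsf_{t-} - 2\inf_{[s,t]}\Xsf$, tends to $0$ as $|t-s|\to 0$ (using that $\Xsf$ is c\`adl\`ag with no negative jump). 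This is both simpler and rigorous; I recommend you replace your continuity paragraph with this argument.
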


In words, in the looptree distances, one follows the loops between $s$ and $t$ by taking their shortest side, whereas in the upper bound we always take the right side of each loop.

\begin{proof}
Let us first prove the upper bound for $d_{\Loop(\Xsf)}^0$.
First suppose that $s \prec t$ and observe that $\delta_r(0, \Rsf^t_r) \le \Rsf^t_r$ for any $s \prec r \prec t$, as well as $\delta_s(0, \Rsf^t_s) \le \Delta \Xsf_s - \Rsf^t_s$, so the identity~\eqref{eq:def_saut_pure_partie_continue} yields
\[d_{\Loop(\Xsf)}^0(s,t) 
= \delta_s(0, \Rsf^t_s) + \sum_{s \prec r \prec t} \delta_r(0, \Rsf^t_r)
\le \Delta \Xsf_s  + \Jsf^t_{t-s} - 2 \Rsf^t_s
.\]
Observe that
$\Jsf^t_{t-s} 
\le \overline{\Xsf}\vphantom{X}^t_{t-s} 
= \Xsf_{t-} - \Xsf_{s-}
$
so the bound follows in this case.
If $s$ is not an ancestor of $t$, then $s \wedge t < s$ and the minimum of $\Xsf$ between $s \wedge t$ and $t$ is actually achieved in the interval $[s,t]$. Moreover
$\delta_{s \wedge t}(\Rsf^s_{s \wedge t}, \Rsf^t_{s \wedge t})
\le \Rsf^s_{s \wedge t} - \Rsf^t_{s \wedge t}
$.
We infer that
\[d_{\Loop(\Xsf)}^0(s,t) 
\le \Jsf^s_{s - s \wedge t} + \Jsf^t_{t - s \wedge t} - 2 \Rsf^t_{s \wedge t}
.\]
As previously, we have
$\Jsf^s_{s - s \wedge t} + \Jsf^t_{t - s \wedge t} \le \Xsf_{s-} + \Xsf_{t-} - 2 \Xsf_{(s \wedge t)-}$ and the upper bound on $d_{\Loop(\Xsf)}^0(s,t)$ follows.

Let us turn to $d_{\Loop(\Xsf)}$. In upper bounding $\Jsf^t$ by $\overline{\Xsf}\vphantom{X}^t$, we actually added the continuous part $\Csf^t$. When $s \prec t$, we have $d_{\Csf}(s,t) = \Csf^t_{t-s}$ so
\[d_{\Loop(\Xsf)}(s,t) 
\le d_{\Loop(\Xsf)}^0(s,t) + \Csf^t_{t-s}
\le \Xsf_s + \Xsf_{t-} - 2 \inf_{[s,t]} \Xsf
.\]
Similarly, when $s \wedge t < s$,
\[d_{\Csf}(s,t) 
= \Csf^s_s + \Csf^t_{t} - 2 \inf_{r \in [s,t]} \Csf^{(r)}_r
= \Csf^s_{s-s\wedge t} + \Csf^t_{s-s\wedge t} 
,\]
and so
\[d_{\Loop(\Xsf)}(s,t) 
\le \Jsf^s_{s - s \wedge t} + \Jsf^t_{t - s \wedge t} - 2 \Rsf^t_{s \wedge t} + \Csf^s_{s-s\wedge t} + \Csf^t_{s-s\wedge t}
\le \Xsf_{s-} + \Xsf_{t-} - 2 \inf_{[s,t]} \Xsf.\]

A similar reasoning shows that $d_{\Loop(\Xsf)}^a$ satisfies the triangular inequality for any $a\ge0$ and is left to the reader. Moreover the upper bound shows that it is finite so it indeed defines a pseudo-distance. It remains to prove that it is continuous. 
By the triangle inequality, for every $s,s',t,t' \in [0,1]$,
\[|d^a_{\Loop(\Xsf)}(s, s') - d^a_{\Loop(\Xsf)}(t, t')| \le d^a_{\Loop(\Xsf)}(s, t) + d^a_{\Loop(\Xsf)}(s', t'),\]
and the right hand side is arbitrarily small when $s$ is close to $t$ and $s'$ close to $t'$ by the preceding upper bound.
\end{proof}

\subsection{Randomly labelled looptrees}
\label{ssec:labels_continus}

Let $g$ be a continuous function defined on $[0,1]$ and recall from the beginning of Section~\ref{ssec:generalites_looptrees} that it encodes a real tree $T_g$. One may define a random Gaussian field on this tree, known as the (head of the) \emph{Brownian snake} driven by $g$; formally, given $g$, there exists a centred Gaussian process $Z^g = (Z^g_t)_{t \in [0,1]}$ with covariance function
\[\E[Z^g_s Z^g_t] = \min_{r \in [s,t]} g_r,
\qquad s,t \in [0,1].\]
Equivalently, $\E[(Z^g_s-Z^g_t)^2] = d_g(s,t)$, where the latter is defined in~\eqref{eq:def_distance_arbre}.
Let us mention that the Brownian snake is actually a path-valued process which describes for every $t \in [0,1]$ the path $Z^g_s$ for all the ancestors $s$ of $t$ in $T_g$; see~\cite[Chapter~4]{DLG02} for details on such processes in a broader setting.
On an intuitive level, after associating a time $t$ with its projection in $T_g$, the values of $Z^g$ evolve along the branches of the tree like a Brownian motion, and these Brownian motions separate at the branchpoints to continue to evolve independently.

In the case of a looptree, although without this formalism, Le~Gall \& Miermont~\cite{LGM11} constructed a process that describes a Gaussian field on $\Loop(\Xsf)$ under the assumption~\eqref{eq:PJ}. In this case, one places on each cycle of the looptree an independent Brownian bridge, with duration given by the length of the cycle, which describes the increments of the field along the cycle. The value of the field at a given point is then the sum of the increments along a geodesic path on $\Loop(\Xsf)$ to the root, analogously to~\eqref{eq:def_processus_labels_discret} in the discrete setting. In the case where $\Csf$ defined in~\eqref{eq:def_J_et_C} is nontrivial, one then adds an independent Brownian snake driven by $\Csf$.

Formally, recall that the standard Brownian bridge $b = (b_t)_{t \in [0,1]}$ is centred Gaussian process with covariance
\[\E[b_s b_t] = \min(s,t) - st,
\qquad s,t \in [0,1].\]
In particular, the variance is $\E[b_t^2] = t(1-t)$. One can consider bridges of arbitrary given duration using a diffusive scaling.
Now recall the notation $\Rsf^t_{t_i} = \inf_{s \in [t_i,t]} \Xsf_s - \Xsf_{t_i-}$ from~\eqref{eq:def_R}, where the $t_i$'s are the jump times of $\Xsf$ up to time $t$, and also the partial order $\prec$ above. Let us finally recall the notation $\delta_t(a,b)$ used in~\eqref{eq:def_distance_looptree_reduit}.
Let $Z^\Csf$ be defined as above, and independently let $(b_i)_{i \ge 1}$ denote a sequence of i.i.d. standard Brownian bridges, 
and define for every $a\ge0$ and $t \in [0,1]$,
\begin{equation}\label{eq:def_labels_browniens}
Z^a_t \coloneqq \sqrt{a}\, Z^\Csf_t + \sum_{t_i \prec t} \sqrt{\Delta \Xsf_{t_i}}\, b_i\left((\Delta \Xsf_{t_i})^{-1} \Rsf^t_{t_i}\right),
\end{equation}
and set $Z = Z^1$. Note that by scaling $\sqrt{a}\, Z^\Csf$ has the same law as $Z^{a\Csf}$.

Let us prove that the series converges in $L^2$ for any fixed $t$; note that only jump times $t_i \prec t$ of $\Xsf$ contribute.
Since the $b_i$'s are independent Brownian bridges, then the nonzero summands in the formula are independent zero-mean Gaussian random variables with variance
\begin{align*}
\E\bigg[\left((\Delta \Xsf_{t_i})^{1/2} b_i\left((\Delta \Xsf_{t_i})^{-1} \Rsf^t_{t_i}\right)\right)^2\bigg]
= \frac{\Rsf^t_{t_i} (\Delta \Xsf_{t_i} - \Rsf^t_{t_i})}{\Delta \Xsf_{t_i}}
\le \delta_{t_i}(0, \Rsf^t_{t_i}).
\end{align*}
Then by definition of the looptree distance~\eqref{eq:def_distance_looptree},
\[\E[|Z^a_t|^2]
\le a \Csf_t + \sum_{t_i \prec t} \delta_{t_i}(0, \Rsf^t_{t_i})
= d^a_{\Loop(\Xsf)}(0,t).\]
This shows that the random variable $Z^a_t$ is well-defined for any fixed $t$.
Note that by Proposition~\ref{prop:distance_Looptree_general} we have $\E[|Z_t|^2] \le X_{t-}$.

\begin{prop}\label{prop:existence_labels_continus_general}
Suppose that there exist $c > 0$ and $K>0$ such that for every $s, t \in [0,1]$,
\begin{equation}\label{eq:hypothese_Kolmogorov_continuite_distance_loop}
d_{\Loop(\Xsf)}(s,t) \le K \cdot |t-s|^{c}.
\end{equation}
Then the process $(Z_t)_{t \in [0,1]}$ admits a continuous modification that is almost surely H\"older continuous with any exponent smaller than $c/2$.
\end{prop}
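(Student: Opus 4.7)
The plan is to apply the Kolmogorov continuity criterion in its standard Gaussian form. For each fixed $s, t \in [0,1]$, the difference $Z_t - Z_s$ is an $L^2$-limit of finite linear combinations of the independent centred Gaussian variables $Z^\Csf$ and $(b_i)_{i \ge 1}$, hence itself centred Gaussian. Once we establish the $L^2$ bound $\E[(Z_t - Z_s)^2] \le d_{\Loop(\Xsf)}(s,t)$, the Gaussian moment identity combined with~\eqref{eq:hypothese_Kolmogorov_continuite_distance_loop} yields, for every integer $p \ge 1$,
\[\E[|Z_t - Z_s|^{2p}] = C_p \bigl(\E[(Z_t - Z_s)^2]\bigr)^p \le C_p K^p |t-s|^{cp}.\]
Kolmogorov's theorem then produces a continuous modification that is H\"older with any exponent strictly less than $(cp-1)/(2p)$, and letting $p \to \infty$ gives the claimed regularity with any exponent smaller than $c/2$.

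It remains to establish the variance bound. Writing $Z_t - Z_s$ as the sum of $Z^\Csf_t - Z^\Csf_s$ and, for each jump time $r$ of $\Xsf$, the difference of the bridge contributions at $r$ (with the convention that $b_r$ contributes to $Z_u$ only when $r \prec u$), all summands are independent and variances add. The snake part contributes $\E[(Z^\Csf_t - Z^\Csf_s)^2] = d_\Csf(s,t)$. For the bridge at a jump time $r$, three cases arise: (i) if $r \prec s \wedge t$, then the geodesics in the looptree from $r$ down to $s$ and down to $t$ must both first pass through $s \wedge t$, so they leave the cycle at $r$ through the same point, i.e.~$\Rsf^s_r = \Rsf^t_r$, and the bridge contribution cancels in the difference; (ii) if $r = s \wedge t$ and $r \ne s, t$, both sides contribute at different points of the same bridge, with variance $|\Rsf^s_r - \Rsf^t_r|(\Delta\Xsf_r - |\Rsf^s_r - \Rsf^t_r|)/\Delta\Xsf_r$; (iii) if $s \wedge t \prec r \prec s$ (symmetrically for $t$), only one side contributes, with variance $\Rsf^s_r(\Delta\Xsf_r - \Rsf^s_r)/\Delta\Xsf_r$. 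The elementary inequality $x(1-x) \le \min(x, 1-x)$ for $x \in [0,1]$ bounds each of these bridge variances above by the corresponding $\delta_r(\cdot,\cdot)$ term in~\eqref{eq:def_distance_looptree_reduit}, and summing over $r$ reproduces exactly the definition~\eqref{eq:def_distance_looptree} of $d_{\Loop(\Xsf)}$ at $a = 1$.

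The most delicate step is the cancellation in case~(i), which requires unwinding the definitions of the partial order $\preceq$ and of $s \wedge t$ to verify that any common ancestor strictly above $s \wedge t$ sends both $s$ and $t$ through the same position on the cycle at $r$. Once this is granted, every other step is a routine Gaussian moment computation, and the argument mirrors the classical proof of H\"older continuity for the Brownian snake driven by a continuous function $g$ satisfying an analogous bound $d_g(s,t) \le K'|t-s|^c$.
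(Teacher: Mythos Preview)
Your proposal is correct and follows essentially the same approach as the paper's proof: establish that $Z_t-Z_s$ is centred Gaussian with variance bounded by $d_{\Loop(\Xsf)}(s,t)$ (by decomposing into the snake part and the bridge contributions and using $x(1-x)\le\min(x,1-x)$ to pass from the covariance form $\widehat{\delta}_r$ to $\delta_r$), then invoke Gaussian moment scaling and Kolmogorov's criterion. The paper states the variance bound more tersely, simply noting that after splitting at $s\wedge t$ the sum of variances equals the looptree distance with $\delta_r$ replaced by $\widehat{\delta}_r\le\delta_r$, and refers to~\cite{LGM11} for details; your explicit three-case breakdown (common ancestors cancel, the last common ancestor contributes once, and the two separate branches contribute independently) is exactly what is behind that sentence.
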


\begin{proof}
The preceding argument generalises to show that for any $q>0$, there exists $K_q > 0$ such that for every $s, t \in [0,1]$, it holds that
\begin{equation}\label{eq:label_moment_Kolmogorov_looptrees}
\Es{|Z_t-Z_s|^q} \le K_q \cdot d_{\Loop(\Xsf)}(s,t)^{q/2}.
\end{equation}
Indeed, note that $Z_t-Z_s$ is the sum of independent Gaussian random variables, and by splitting at the last common ancestor of $s$ and $t$, we see that the sum of the variances involved is given exactly by the same quantity as $d_{\Loop}(s,t)$, except that the function $\delta_t(x,y) = \min\{|x-y|, \Delta \Xsf_t - |x-y|\}$ used in~\eqref{eq:def_distance_looptree_reduit} is replaced by the covariance function $\widehat{\delta}_t(x) = |x-y| (\Delta \Xsf_t - |x-y|) / \Delta \Xsf_t \le \delta_t(x,y)$. We refer to the proof of Proposition~6 in~\cite{LGM11} for details. We infer that~\eqref{eq:label_moment_Kolmogorov_looptrees} holds, where $K_q$ is the absolute $q$'th moment of a standard Gaussian random variable.
We conclude from the Kolmogorov criterion and the assumption~\eqref{eq:hypothese_Kolmogorov_continuite_distance_loop}.
\end{proof}

\begin{rem}\label{rem:Holder_continuite_labels_looptree}
If $s$ and $t$ are such that $d_{\Loop(\Xsf)}(s,t) = 0$, then $Z_t-Z_s = 0$ almost surely by~\eqref{eq:label_moment_Kolmogorov_looptrees} so one can view $Z$ as a random process indexed by the looptree. The bound~\eqref{eq:label_moment_Kolmogorov_looptrees} further shows
that the process $(Z_x ; x \in \Loop(\Xsf))$ has a modification that is almost surely H\"older continuous with any exponent smaller than $1/2$.
\end{rem}

Let us mention that, even when $\Loop(\Xsf)$ is a tree coded by a continuous function, it may be the case that the process $Z$ is not continuous, see~\cite[Chapter~4.5]{DLG02}.
On the other hand, in Sections~\ref{sec:degres_prescrits} to~\ref{sec:convergence} we consider random paths with exchangeable increments and they do satisfy~\eqref{eq:hypothese_Kolmogorov_continuite_distance_loop} for some $c>0$.
In the particular case of a Brownian excursion, in which case the looptree reduces to the Brownian tree, from the H\"older continuity of the Brownian excursion, $c$ can take any value in $(0,1/2)$, but cannot exceed $1/2$.
On the other hand, when $\Xsf$ 
is an excursion of a stable L\'evy process with index $\alpha \in (1,2)$, although not explicitly written there, we can derive from~\cite{CK14} that~\eqref{eq:hypothese_Kolmogorov_continuite_distance_loop} holds with any $c$ smaller than $1/\alpha$, and again this is the largest possible value. For more general L\'evy processes, we prove in the forthcoming paper~\cite{KM22} that the optimal value for $c$ is the inverse of the so-called upper Blumenthal--Getoor exponent, see Section~\ref{sec:Levy} for more information.

\section{Invariance principles in the pure jump case}
\label{sec:convergence_saut_pur}

In this short section, we consider deterministic c\`adl\`ag paths $\Xsf$ and $(\Xsf^n)_{n \ge 1}$ with $\Xsf_0, \Xsf^n_0 \ge 0$ and no negative jump, all extended to $[-1, 1]$ by setting $\Xsf_t = \Xsf^n_t = 0$ for every $t \in [-1, 0)$. We assume that $\Xsf^n$ converges to some $\Xsf$ for the Skorokhod topology and that $\Xsf$ satisfies the pure jump property~\eqref{eq:PJ} and we derive invariance principles for the associated labelled looptrees and maps.
Before that, let us start with a simple result that we shall use throughout Section~\ref{sec:convergence} and whose proof's ideas will be used in Proposition~\ref{prop:convergence_looptrees_saut_pur} just below.

\subsection{The continuous part as limit of small jumps}
\label{ssec:approximation_partie_continue}

Recall from Section~\ref{ssec:decomposition_sauts_partie_continue} that we denote by $\Csf$ the continuous part of the supremum of the dual path of $\Xsf$, and by $\Jsf$ the jump part, so that $\Csf_t+\Jsf_t = \Xsf_{t-}$ for every $t\in[0,1]$. For $\delta > 0$, let us define $\Csf_t^\delta$ as the complement of the contribution to $\Xsf_{t-}$ of the jumps larger than $\delta$, namely for every $t\in[0,1]$,
\begin{equation}\label{eq:approximation_delta_partie_continue}
\Csf^\delta_t \coloneqq \Csf_t + \sum_{0 \preceq r \prec t} \Rsf^t_r \ind{\Delta \Xsf_r \le \delta}.
\end{equation}
We prove that for every $\delta>0$, the quantity $\Csf^\delta$ depends continuously on $\Xsf$, which is not the case of $\Csf$ itself.

\begin{lem}\label{lem:convergence_partie_continue}
Suppose that $\Xsf^n \to \Xsf$ for the Skorokhod topology as $n\to\infty$, then for every $\delta>0$ the path $\Csf^{n,\delta}$ converges uniformly to $\Csf^\delta$, which further converges uniformly to $\Csf$ as $\delta\downarrow0$.
\end{lem}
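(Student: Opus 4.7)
Plan:

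Both statements rest on the decomposition
$$
\Csf^\delta_t - \Csf_t \;=\; \sum_{0 \preceq r \prec t,\, \Delta \Xsf_r \le \delta} \Rsf^t_r \;=\; \Jsf_t - \Jsf^{>\delta}_t,
$$
where $\Jsf^{>\delta}_t := \sum_{r \prec t,\, \Delta\Xsf_r > \delta} \Rsf^t_r$ is a finite sum, since a c\`adl\`ag path has only finitely many jumps of size exceeding $\delta$ on $[0,1]$. This exhibits $\Csf^\delta$ as $\Xsf_{t-}$ minus the contribution of finitely many large jumps, a quantity that we expect to depend continuously on $\Xsf$ in the Skorokhod topology at generic $\delta$.

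I would first address $\Csf^\delta \to \Csf$ as $\delta \downarrow 0$. Pointwise convergence is immediate since $\Jsf^{>\delta}_t \uparrow \Jsf_t$ monotonically as $\delta \downarrow 0$. To upgrade to uniform convergence in $t$, I argue by contradiction: suppose there exist $\varepsilon>0$, $\delta_k \downarrow 0$, and $t_k \to t^\ast$ with $\Csf^{\delta_k}_{t_k} - \Csf_{t_k} > \varepsilon$. Split the relevant sum into ``moderate'' jumps with $\Delta \Xsf_r \in (\eta, \delta_k]$ (only finitely many in total, hence the partial sum is empty as soon as $\delta_k \le \eta$) and ``tiny'' jumps with $\Delta \Xsf_r \le \eta$ (where $\Rsf^{t_k}_r \le \eta$ and the total can be made smaller than $\varepsilon/2$ uniformly in $t_k$ for $\eta$ small enough, thanks to the global bound $\sum_{r \prec t} \Rsf^t_r \le \Xsf_{t-}$ and the continuity of $\Csf$ at $t^\ast$). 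This yields the contradiction.

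For the first assertion, I would fix $\delta > 0$ outside the countable set of jump sizes of $\Xsf$; the general case then follows by approximating $\delta$ by a nearby good value, using the second assertion. Under Skorokhod convergence $\Xsf^n \to \Xsf$, the jumps of $\Xsf^n$ of size $> \delta$ converge in position and size to those of $\Xsf$: their number stabilises to some $K$, and $(r^n_i, \Delta \Xsf^n_{r^n_i}) \to (r_i, \Delta \Xsf_{r_i})$ for $i = 1, \dots, K$. Writing
$$
\Csf^{n,\delta}_t = \Xsf^n_{t-} - \sum_{i:\, r^n_i \prec t} \Rsf^{n,t}_{r^n_i},
\qquad
\Csf^\delta_t = \Xsf_{t-} - \sum_{i:\, r_i \prec t} \Rsf^t_{r_i},
$$
one checks that the large-jump contributions of each sum cancel the discontinuities of $\Xsf^n_{\cdot-}$ and $\Xsf_{\cdot-}$ at those times, so that $\Csf^{n,\delta}$ and $\Csf^\delta$ are c\`agl\`ad with jumps bounded by $\delta$ (occurring only at small jumps of $\Xsf^n$, respectively $\Xsf$). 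Taking a time-change $\lambda_n \to \mathrm{id}$ uniformly with $\Xsf^n \circ \lambda_n \to \Xsf$ uniformly, each $\Rsf^{n,\lambda_n(t)}_{r^n_i}$ is an explicit continuous functional of $\Xsf^n \circ \lambda_n$ together with the discrete datum of its jump at $r^n_i$, converging uniformly in $t$ to $\Rsf^t_{r_i}$. Combined with $\Xsf^n(\lambda_n(t))^- \to \Xsf_{t-}$ uniformly, this gives $\Csf^{n,\delta} \circ \lambda_n \to \Csf^\delta$ uniformly. The main obstacle is removing the time-change: since the remaining jumps of $\Csf^\delta$ are of size $\le \delta$ and occur only at the small-jump times of $\Xsf$, composing with $\lambda_n^{-1}$ shifts them by $\|\lambda_n - \mathrm{id}\|_\infty \to 0$, and the resulting perturbation of $\Csf^\delta$ in the supremum norm vanishes by the same compactness-plus-continuity argument as in the $\delta \downarrow 0$ step applied at a fixed $\eta$.
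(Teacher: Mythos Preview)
Your strategy matches the paper's: write $\Csf^\delta_t = \Xsf_{t-} - \Jsf^{>\delta}_t$ with $\Jsf^{>\delta}_t$ a finite sum over the large jumps, and pass this finite sum to the limit under Skorokhod convergence. Your compactness argument for the $\delta\downarrow 0$ part is essentially the paper's appeal to Dini; both are slightly informal since the functions are only one-sided continuous, but can be made rigorous along the lines you sketch (the worst that happens at a jump time $t^\ast$ is an extra error of size at most $\eta$, which is harmless).

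There is, however, a genuine gap in your final step for the first assertion, and it is one the paper's proof also elides. You correctly obtain $\Csf^{n,\delta}\circ\lambda_n \to \Csf^\delta$ uniformly, but removing the time-change to reach \emph{uniform} convergence of $\Csf^{n,\delta}$ itself cannot work as you propose: the path $\Csf^\delta$ is not continuous --- it jumps by $\Delta\Xsf_r$ at every small-jump time $r$ of $\Xsf$ --- so $\|\Csf^\delta\circ\lambda_n^{-1}-\Csf^\delta\|_\infty$ need not vanish. In fact the uniform statement seems too strong: take $\Xsf$ with a single jump of size $a\in(0,\delta)$ at time $r$ and $\Xsf^n$ with the same jump at $r+1/n$; then $\Csf^\delta$ and $\Csf^{n,\delta}$ each jump by $a$, but at times differing by $1/n$, so $\|\Csf^{n,\delta}-\Csf^\delta\|_\infty \ge a - o(1)$. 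What your argument actually delivers is Skorokhod convergence $\Csf^{n,\delta}\to\Csf^\delta$, and this is all the paper ever uses downstream (the applications evaluate these processes at independent uniform random times, almost surely continuity points). You were right to flag ``removing the time-change'' as the main obstacle; the fix you sketch does not work because the target statement, read literally, is false.
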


\begin{proof}
The second convergence follows from Dini's theorem which shows that if $(r_i)_{i\ge 1}$ is an enumeration of the set $\{r \prec t\}$, then the series $\sum_{i=1}^N \Rsf^t_{r_i}$ converges uniformly to $J_t$ as $N\to\infty$. Let us focus on the first assertion.
Fix $\delta>0$ and suppose by contradiction that $\Csf^{n,\delta}$ does not converge uniformly to $\Csf^\delta$, then 
there exists $\varepsilon > 0$ and a sequence $(t_n)_n$ such that $|\Csf^{n,\delta}_{t_n} - \Csf^\delta_{t_n}| \ge \varepsilon$ for infinitely many indices $n$. Extracting a subsequence if necessary, let us assume that this holds for all $n$ and that $t_n$ converges to some $t \in [0,1]$.
Note that the set $\{0 \preceq r \prec t : \Delta \Xsf_r > \delta\}$ is finite, and properties of the Skorokhod topology imply that the values $\Rsf^t_r$ for such $r$'s are the limits of analogous values $\Rsf^{n, t_n}_{r_n}$ for $0 \preceq r_n \prec t_n$. Consequently the sum of the latter, $\Xsf^n_{t_n-}-\Csf^{n,\delta}_{t_n}$, converges to that of the former, $\Xsf_{t-}-\Csf^\delta_{t}$.
On the other hand, the convergence in the Skorokhod topology also implies that $\Xsf_{t_n-}$ converges to $\Xsf_{t-}$ and this yields a contradiction.
\end{proof}

\begin{rem}\label{rem:convergence_partie_continue_Luka}
We shall apply this lemma to (random) discrete {\L}ukasiewicz paths $\Xsf^n$, which satisfy $\Csf^n = 0$ for every $n$, and which converge once rescaled towards some limit process $\Xsf$. Then the continuous part $\Csf$ in the latter arises as the limit of the contribution of the jumps in $\Xsf^n$ which are smaller than $\delta$ (after scaling) when first $n\to\infty$ and then $\delta\downarrow0$.
\end{rem}

\subsection{Convergence of deterministic looptrees}

For the rest of this section, we assume that $\Xsf$ satisfies the pure jump property~\eqref{eq:PJ}. In this case, the geometry of the looptree $\Loop(\Xsf)$ and the Gaussian labels on it are very constraint and the function $\Loop$ is actually continuous at $\Xsf$. The next result is very close to~\cite[Theorem~4.1]{CK14}; the proof there is completely deterministic and only uses the key property~\eqref{eq:PJ}, see~\cite[Corollary~3.4]{CK14}.

\begin{prop}\label{prop:convergence_looptrees_saut_pur}
Suppose that $\Xsf^n \to \Xsf$ for the Skorokhod topology and that $\Xsf$ satisfies~\eqref{eq:PJ}. Then
\[\Loop(\Xsf^n) \cv \Loop(\Xsf)\]
in the Gromov--Hausdorff--Prokhorov topology.
\end{prop}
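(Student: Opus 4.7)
The plan is to exhibit an explicit correspondence between $\Loop(\Xsf^n)$ and $\Loop(\Xsf)$ coming from the canonical parametrisation by $[0,1]$, and then verify that its distortion tends to $0$ by a truncation argument on the jump sizes. Since $\Xsf^n \to \Xsf$ in the Skorokhod topology, fix homeomorphisms $\lambda_n : [0,1] \to [0,1]$ with $\lambda_n \to \mathrm{id}$ and $\Xsf^n \circ \lambda_n \to \Xsf$ uniformly. Writing $\pi^n : [0,1] \to \Loop(\Xsf^n)$ and $\pi : [0,1] \to \Loop(\Xsf)$ for the canonical projections, consider the correspondence $R_n = \{(\pi^n(\lambda_n(t)), \pi(t)) : t \in [0,1]\}$ together with the coupling $\nu_n$ supported in $R_n$ obtained by pushing forward the Lebesgue measure on $[0,1]$. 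The marginals of $\nu_n$ are precisely the natural measures on $\Loop(\Xsf^n)$ and $\Loop(\Xsf)$, so the GHP distance is controlled by half the distortion
\[\mathrm{dis}(R_n) = \sup_{s,t \in [0,1]} \bigl|d_{\Loop(\Xsf^n)}(\lambda_n(s), \lambda_n(t)) - d_{\Loop(\Xsf)}(s,t)\bigr|,\]
and the whole task reduces to showing this supremum vanishes as $n \to \infty$.

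The key step is to split each of the three sums in~\eqref{eq:def_distance_looptree_reduit} at a truncation threshold $\delta > 0$: the contribution of jumps of size $> \delta$ on one hand, and the residual ``small-jumps'' contribution on the other. The limit path $\Xsf$ has only finitely many jumps of size $> \delta$, so by standard continuity properties of the Skorokhod topology at jump discontinuities of the limit, the jump times and magnitudes of $\Xsf^n \circ \lambda_n$ in a neighbourhood of each such jump of $\Xsf$ converge, together with the corresponding infima that enter the definition of $\Rsf^t_r$. For the same reason the partial order $\prec$ restricted to these finitely many jump times is stable under approximation. Consequently, the ``large-jump'' part of $d_{\Loop(\Xsf^n)}(\lambda_n(s), \lambda_n(t))$ converges to the ``large-jump'' part of $d_{\Loop(\Xsf)}(s,t)$, uniformly in $s,t \in [0,1]$.

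The small-jump residuals are controlled via Lemma~\ref{lem:convergence_partie_continue}. On the limit side, under~\eqref{eq:PJ} one has $\Csf = 0$ and the residual at a point $t$ is bounded by $\Csf^\delta_t$ as defined in~\eqref{eq:approximation_delta_partie_continue}; summing over the endpoints $s,t$ gives a bound of $2 \sup_u \Csf^\delta_u$, which vanishes as $\delta \downarrow 0$ by the second statement of Lemma~\ref{lem:convergence_partie_continue}. On the pre-limit side, the analogous residual at a point $\lambda_n(t)$ is bounded by $\Csf^{n,\delta}_{\lambda_n(t)}$ (the continuous part of $\Xsf^n$ is absorbed into $\Csf^{n,\delta}$ by construction), and by the first statement of Lemma~\ref{lem:convergence_partie_continue} together with $\lambda_n \to \mathrm{id}$, this converges uniformly in $t$ to $\Csf^\delta_t$. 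Combining these estimates gives
\[\limsup_{n \to \infty} \mathrm{dis}(R_n) \le 4 \sup_{u \in [0,1]} \Csf^\delta_u,\]
and letting $\delta \downarrow 0$ concludes.

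The main obstacle is the stability of the partial order $\prec$ and of the functionals $\Rsf^t_r$ under Skorokhod convergence: these are not continuous functionals of the path at arbitrary points, only at jump times of the limit (or at continuity points). The truncation at size $\delta$ is precisely what allows one to reduce to a finite number of jumps of the limit, where this stability genuinely holds, while dumping the delicate tail of small-jump contributions into the quantity $\Csf^\delta$ that can be made uniformly small by~\eqref{eq:PJ}.
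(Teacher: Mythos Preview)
Your approach is essentially the same as the paper's: both proceed by truncating the jumps at a threshold, arguing that the finitely many large-jump contributions converge by standard Skorokhod continuity properties, and controlling the small-jump residual via~\eqref{eq:PJ}. The paper packages this as a proof by contradiction (extract a bad subsequence $(s_n,t_n)\to(s,t)$ and derive a contradiction), and truncates on the values $\Rsf^t_r$ rather than on the jump sizes $\Delta\Xsf_r$, with the small residual handled by the direct ``total minus large'' identity rather than by invoking Lemma~\ref{lem:convergence_partie_continue}. These are stylistic rather than substantive differences.

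One technical slip: your coupling $\nu_n$ does \emph{not} have the natural measure on $\Loop(\Xsf^n)$ as first marginal, because $\lambda_n$ need not preserve Lebesgue measure. The clean fix is to observe that once you have shown $\sup_{s,t}|d_{\Loop(\Xsf^n)}(\lambda_n(s),\lambda_n(t))-d_{\Loop(\Xsf)}(s,t)|\to0$, the uniform continuity of $d_{\Loop(\Xsf)}$ (Proposition~\ref{prop:distance_Looptree_general}) together with $\lambda_n\to\mathrm{id}$ upgrades this to $\sup_{s,t}|d_{\Loop(\Xsf^n)}(s,t)-d_{\Loop(\Xsf)}(s,t)|\to0$; then the diagonal correspondence $\{(\pi^n(t),\pi(t)):t\in[0,1]\}$ with the pushforward of Lebesgue by $t\mapsto(\pi^n(t),\pi(t))$ has the correct marginals and vanishing distortion.
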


\begin{proof}
The argument follows the lines of the preceding proof. 
We argue by contradiction and therefore suppose that there exist $\varepsilon > 0$, a pair $0 \le s < t \le 1$, and sequences $s_n \to s$ and $t_n \to t$ such that for every $n \ge 1$ (limiting ourselves to a subsequence if necessary),
\[\big|d_{\Loop(\Xsf^n)}(s_n, t_n) - d_{\Loop(\Xsf)}(s, t)\big| > \varepsilon.\]
To ease notation, we assume that $s_n \preceq t_n$ for every $n$, and so $s \preceq t$; the general case can be adapted in a straightforward way by cutting at the last common ancestor.
Recall that $\Xsf_{t-} - \Xsf_{s-} = \sum_{s \preceq r \prec t} \Rsf^t_r$; let us fix $\eta > 0$ small enough, so that, by the property~\eqref{eq:PJ}, it holds $\sum_{s \preceq r \preceq t} \Rsf^t_r \ind{\Rsf^t_r \le \eta} \le \varepsilon/2$. Note that the set $\{s \preceq r \prec t : \Rsf^t_r > \eta\}$ is finite, and properties of the Skorokhod topology imply that the values $\Delta \Xsf_r$ and $\Rsf^t_r$ for such $r$'s are approximated by similar quantities in $\Xsf^n$. Therefore the total contribution of these large cycles to $d_{\Loop(\Xsf^n)}(s_n, t_n)$ converges to that in $d_{\Loop(\Xsf)}(s, t)$. 

Now we note that $\Xsf_{t_n}-\Xsf_{s_n-}$ converges either to $\Xsf_{t-}-\Xsf_{s-}$ or to $\Xsf_t-\Xsf_{s-}$. In any case we have that
\[\limsup_{n \to \infty} \Xsf_{t_n}-\Xsf_{s_n-} - \sum_{s_n \preceq r \prec t_n} \Rsf^{t_n}_r \ind{\Rsf^{t_n}_r > \eta}
\le \sum_{s \preceq r \preceq t} \Rsf^t_r - \sum_{s \preceq r \prec t} \Rsf^t_r \ind{\Rsf^t_r > \eta}
\le \varepsilon/2 
.\]
Precisely, the term on the left converges to the one in the middle where $\Rsf^t_t$ may or may not be included.
The left hand side bounds above the contribution of the small cycles to $d_{\Loop(\Xsf^n)}(s_n, t_n)$ so this yields a contradiction and $d_{\Loop(\Xsf^n)}$ converges to $d_{\Loop(\Xsf)}$ uniformly on $[0,1]^2$, which implies the Gromov--Hausdorff--Prokhorov convergence.
\end{proof}

\begin{cor}\label{cor:convergence_looptrees_discrets_saut_pur}
For every $n \ge 1$, let $\Lsf^n$ be a looptree with $n$ edges and let $\Xsf^n$ be its {\L}ukasiewicz path. Suppose that there exists $a_n\to\infty$ such that $(a_n^{-1} \Xsf^n_{n t})_{t \in [-1,1]} \to \Xsf$ for the Skorokhod topology and that $\Xsf$ satisfies~\eqref{eq:PJ}. Suppose in addition that the convergence~\eqref{eq:hyp_proportion_feuilles} holds. Then
\[a_n^{-1} \Lsf^n \cv \Loop(\Xsf)\]
in the Gromov--Hausdorff--Prokhorov topology.
\end{cor}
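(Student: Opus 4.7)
The plan is to identify the rescaled discrete looptree $a_n^{-1}\Lsf^n$ with the continuum looptree associated with the rescaled {\L}ukasiewicz path $\tilde{\Xsf}^n_t \coloneqq a_n^{-1}\Xsf^n_{nt}$ and then apply Proposition~\ref{prop:convergence_looptrees_saut_pur}. By assumption $\tilde{\Xsf}^n \to \Xsf$ in the Skorokhod topology. First I would observe that discrete {\L}ukasiewicz paths automatically satisfy~\eqref{eq:PJ} since $\Xsf^n$ is strictly decreasing between its jumps, so the supremum of their dual path has no continuous part; hence $d^0_{\Loop(\tilde{\Xsf}^n)} = d_{\Loop(\tilde{\Xsf}^n)}$, and the same holds for $\Xsf$ by hypothesis. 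Combining the scaling identity
\[
a_n^{-1} d_{\Loop(\Xsf^n)}(i,j) = d_{\Loop(\tilde{\Xsf}^n)}(i/n,\,j/n),
\qquad i,j \in \{0,\dots,n\},
\]
which is immediate from~\eqref{eq:def_distance_looptree_reduit}, with the remark following~\eqref{eq:def_distance_looptree} that $d_{\Loop(\Xsf^n)}$ restricted to integer arguments equals the graph distance on $\Lsf^n$, Proposition~\ref{prop:convergence_looptrees_saut_pur}---whose proof in fact establishes the stronger uniform convergence $d_{\Loop(\tilde{\Xsf}^n)} \to d_{\Loop(\Xsf)}$ on $[0,1]^2$, not only the GHP convergence of the quotient spaces---reduces the problem to a GHP comparison between $a_n^{-1}\Lsf^n$ and $\Loop(\Xsf)$.

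I would then mimic the construction in the proof of Lemma~\ref{lem:tension_cartes_general} and introduce the correspondence
\[
R_n \coloneqq \bigl\{\bigl(v^n_{\lambda^n(V_n t)},\, \Pi(t)\bigr) : t \in [0,1]\bigr\},
\]
where $\Pi \colon [0,1] \to \Loop(\Xsf)$ is the canonical projection, together with the coupling $\nu_n$ obtained as the pushforward of the Lebesgue measure on $[0,1]$ by $t \mapsto (v^n_{\lambda^n(V_n t)}, \Pi(t))$. Since the sequence $v^n_{\lambda^n(0)},\dots,v^n_{\lambda^n(V_n-1)}$ enumerates the vertices of $\Lsf^n$ without redundancies, the first marginal of $\nu_n$ is exactly the uniform probability measure on $V(\Lsf^n)$; by construction, its second marginal is the canonical probability measure on $\Loop(\Xsf)$. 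Thus $\nu_n$ is fully supported by $R_n$ with the correct marginals, and only the distortion of $R_n$ remains to be controlled.

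That distortion equals
\[
\sup_{s,t \in [0,1]} \bigl| d_{\Loop(\tilde{\Xsf}^n)}\bigl(\lambda^n(V_n s)/n,\, \lambda^n(V_n t)/n\bigr) - d_{\Loop(\Xsf)}(s,t) \bigr|,
\]
which tends to zero by combining the uniform convergence of distances just mentioned, the continuity of $d_{\Loop(\Xsf)}$ from Proposition~\ref{prop:distance_Looptree_general}, and the uniform convergence $n^{-1}\lambda^n(V_n \cdot) \to \mathrm{id}$ supplied by~\eqref{eq:hyp_proportion_feuilles}. The main subtle point is the passage from the \emph{cable} picture---where Lebesgue measure on $[0,1]$ naturally matches the uniform distribution on corners---to the \emph{discrete vertex} picture with uniform measure on $V(\Lsf^n)$, and this is precisely what~\eqref{eq:hyp_proportion_feuilles} is designed to resolve by asserting that sampling a uniform vertex is asymptotically equivalent to sampling a uniform corner.
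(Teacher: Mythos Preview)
Your proof is correct and follows essentially the same strategy as the paper: identify the rescaled graph distance on $\Lsf^n$ with the continuum looptree distance $d_{\Loop(\tilde{\Xsf}^n)}$ at lattice points, invoke the uniform convergence $d_{\Loop(\tilde{\Xsf}^n)}\to d_{\Loop(\Xsf)}$ established in the proof of Proposition~\ref{prop:convergence_looptrees_saut_pur}, and use~\eqref{eq:hyp_proportion_feuilles} to pass from corners to vertices. The only difference is packaging: the paper argues via a two-step Gromov--Hausdorff--Prokhorov triangle inequality (the cable looptree $\Loop(\tilde{\Xsf}^n)$ lies within distance $O(a_n^{-1})$ of $a_n^{-1}\Lsf^n$, and then $\Loop(\tilde{\Xsf}^n)\to\Loop(\Xsf)$ by Proposition~\ref{prop:convergence_looptrees_saut_pur}), whereas you build a single explicit correspondence between $a_n^{-1}\Lsf^n$ and $\Loop(\Xsf)$ in the style of the proof of Lemma~\ref{lem:tension_cartes_general}. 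Both routes use the same ingredients; the paper's is a little shorter, yours makes the role of $\lambda^n$ more transparent. One cosmetic point: your claim that the first marginal of $\nu_n$ is \emph{exactly} the uniform measure on $V(\Lsf^n)$ may be off by a single vertex of mass $1/V_n$ depending on how the boundary case $t=0$ (or $t=1$) of $\lambda^n(\lceil V_n t\rceil)$ is handled, but this is harmless for the limit.
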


\begin{proof}
We already observed that $\Loop(\Xsf^n)$ is the continuum looptree obtained by replacing each edge of $\Lsf^n$ by a segment with unit length, then $\Loop(a_n^{-1} \Xsf^n_{n \cdot})$ describes the same looptree when the distances have been multiplied by $a_n^{-1}$, which therefore lies at Gromov--Hausdorff distance less than $a_n^{-1}$ from $a_n^{-1} \Lsf^n$. Finally  the convergence~\eqref{eq:hyp_proportion_feuilles} shows that the uniform probability measure on the vertices is close to the one on the corners, so the rescaled discrete and continuum looptrees lie at Gromov--Hausdorff--Prokhorov distance $O(a_n^{-1})$ from each other.
The claim then immediately follows from Proposition~\ref{prop:convergence_looptrees_saut_pur}.
\end{proof}

\begin{rem}\label{rem:convergence_Luka_pas_suffisante}
The sole functional convergence fails to dictate the behaviour of the looptrees in the following two cases:
\begin{enumerate}
\item In the presence of a nontrivial continuous part.
Even in the extreme case where $\Xsf$ is continuous, so for every $t \in [0,1]$ one has $\Jsf_t = 0$, one needs to look closer to deduce the convergence of looptrees towards the tree $T_\Xsf$. 
This is studied in Sections~\ref{sec:degres_prescrits} and~\ref{sec:convergence} in a probabilistic setting.

\item Also, for looptrees as defined in~\cite{CK14}, i.e.~in the middle of Figure~\ref{fig:looptree_intro}, without merging each right-most offspring in the tree with its parent. This is due to the fact that in this looptree, the graph distance between such a pair equals $1$, whereas the {\L}ukasiewicz path at the time of visit of the last offspring is exactly at the same level as the left limit at the time of visit of the parent. In~\cite[Theorem~4.1]{CK14}, the authors therefore also require that the maximal height of the tree is small compared to $a_n$. It can be reduced to requiring that the maximum over all vertices of the tree of the number of its ancestors which are the right-most offspring of their parent is small compared to $a_n$. One easily checks that this assumption is necessary, at least when $a_n = o(n)$, since in this case, one can add a chain of length $a_n$ of cycles with length two (individuals with only one offspring in the tree) to the root with no effect on the convergence of the {\L}ukasiewicz path. Again this is discussed in the probabilistic setting of Sections~\ref{sec:degres_prescrits} and~\ref{sec:convergence}.
\end{enumerate}
\end{rem}

\subsection{Random labels on deterministic looptrees}
\label{ssec:labels_aleatoires}

Let us next consider random labels on the looptrees and define $Z$ as in~\eqref{eq:def_labels_browniens}. We easily deduce the following result from Proposition~\ref{prop:convergence_looptrees_saut_pur}.

\begin{cor}\label{cor:convergence_labels_continus_saut_pur}
Suppose that $\Xsf^n \to \Xsf$ for the Skorokhod topology, that $\Xsf$ satisfies~\eqref{eq:PJ}, and that there exist $c > 0$ and $K>0$ such that for every $s, t \in [0,1]$, for every $n$ large enough,
\[d_{\Loop(\Xsf^n)}(s,t) \le K \cdot |t-s|^{c}.\]
Define $Z^n$ as in~\eqref{eq:def_labels_browniens} with $\Xsf^n$ instead of $\Xsf$, then the convergence in distribution
\[Z^n \cvloi Z\]
holds for the uniform topology.
\end{cor}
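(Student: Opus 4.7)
The plan is to establish (i) tightness of $(Z^n)_n$ in $C([0,1])$ and (ii) convergence of finite-dimensional marginals. Since $\Xsf$ and each $\Xsf^n$ are deterministic, both $Z^n$ and $Z$ are centred Gaussian processes (each $Z^n_t$ is a linear functional of independent Brownian bridges plus an independent Brownian snake driven by $\Csf^n$), so (ii) will reduce via polarisation to convergence of covariance functions.

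\emph{Step 1 (Tightness).} I would apply the moment bound~\eqref{eq:label_moment_Kolmogorov_looptrees} with $\Xsf^n$ in place of $\Xsf$: for every $q>0$ there exists $K_q$ such that
\[\Es{|Z^n_t - Z^n_s|^q} \le K_q \cdot d_{\Loop(\Xsf^n)}(s,t)^{q/2} \le K_q K^{q/2} \cdot |t-s|^{cq/2},\]
for all $n$ large enough and all $s,t \in [0,1]$. Choosing $q > 2/c$, the Kolmogorov--Chentsov criterion yields tightness in $C([0,1])$ together with a uniform H\"older estimate of any exponent $\alpha \in (0, c/2 - 1/q)$.

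\emph{Step 2 (Finite-dimensional convergence).} By Gaussianity, it suffices to establish convergence of the covariance matrices, and by polarisation it is enough to show that $\Es{(Z^n_t - Z^n_s)^2} \to \Es{(Z_t - Z_s)^2}$ for each pair $s,t$. The variance computation recalled in the proof of Proposition~\ref{prop:existence_labels_continus_general} exhibits $\Es{(Z^n_t - Z^n_s)^2}$ as being given by the same formula as $d_{\Loop(\Xsf^n)}(s,t)$, except that the per-cycle function $\delta_r(x,y) = \min\{|x-y|, \Delta \Xsf^n_r - |x-y|\}$ is replaced by $\widehat{\delta}_r(x,y) = |x-y|(\Delta \Xsf^n_r - |x-y|)/\Delta \Xsf^n_r$ (keeping the continuous-part term $a\, d_{\Csf^n}(s,t)$ with $a=1$); denote this quantity $\widehat{d}_{\Loop(\Xsf^n)}(s,t)$. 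I would then replay the proof of Proposition~\ref{prop:convergence_looptrees_saut_pur} verbatim with $\widehat\delta$ in lieu of $\delta$ to obtain $\widehat{d}_{\Loop(\Xsf^n)}(s,t) \to \widehat{d}_{\Loop(\Xsf)}(s,t) = \Es{(Z_t - Z_s)^2}$: the finite collection of cycles of size $>\eta$ is handled by Skorokhod continuity, while the total contribution of the small cycles is uniformly dominated by $\sum_{r \preceq t} \Rsf^t_r \ind{\Rsf^t_r \le \eta}$, which is made arbitrarily small thanks to~\eqref{eq:PJ}. The vanishing contribution of the continuous part is supplied by Lemma~\ref{lem:convergence_partie_continue}, since $\Csf^n \to \Csf = 0$ uniformly.

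\emph{Main obstacle.} The delicate point is that the distance $d_{\Loop(\cdot)}$ (and its covariance variant $\widehat d_{\Loop(\cdot)}$) is not continuous as a functional on Skorokhod space without the pure-jump assumption: the hypothesis~\eqref{eq:PJ} on the limit $\Xsf$ is precisely what makes the truncation of the infinite sum of cycle contributions uniform in $n$. One must verify that the same truncation step still functions with $\widehat\delta$ in place of $\delta$, but the only property used in Proposition~\ref{prop:convergence_looptrees_saut_pur} is the domination $\delta_r(0, \Rsf^t_r) \le \Rsf^t_r$, and $\widehat\delta$ shares that bound since $\widehat\delta \le \delta$.
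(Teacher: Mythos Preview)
Your proposal is correct and follows essentially the same route as the paper: tightness via the moment bound~\eqref{eq:label_moment_Kolmogorov_looptrees} combined with Kolmogorov's criterion, and finite-dimensional convergence via Gaussianity and convergence of covariances, the latter obtained by rerunning the argument of Proposition~\ref{prop:convergence_looptrees_saut_pur} with $\widehat\delta$ in place of $\delta$. Your remark invoking Lemma~\ref{lem:convergence_partie_continue} for the continuous part is harmless but unnecessary: in the contradiction argument of Proposition~\ref{prop:convergence_looptrees_saut_pur}, the residual term $\Xsf^n_{t_n}-\Xsf^n_{s_n-}$ minus the large-cycle contribution already dominates both the small-cycle sum and the $d_{\Csf^n}$ piece simultaneously, so no separate treatment is needed.
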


\begin{proof}
Combining our assumption with Equation~\eqref{eq:label_moment_Kolmogorov_looptrees}, we deduce tightness of the sequence $(Z^n)_n$ from Kolmogorov's criterion. The convergence in distribution then follows from Proposition~\ref{prop:convergence_looptrees_saut_pur} which ensures the convergence of the covariance function of our Gaussian processes.
\end{proof}

We shall be more interested in the convergence of labels on discrete looptrees such as defined in Section~\ref{ssec:codage_chemins}. Fix $n \ge 1$, let $\Lsf^n$ be a looptree with $n$ edges, and let $\Xsf^n$ be its {\L}ukasiewicz path. Let also $(\xi_k)_{k \ge 0}$ be i.i.d. copies of a random variable $\xi$ supported by $\Z$, which 
is centred and has variance $\Var(\xi) > 0$. 
Assume finally that for every $\ell \ge 1$, the event $\{\xi_1 + \dots + \xi_\ell = 0\}$ has a nonzero probability and for each $0 \le i \le n$ let $\ell^n_i = \Delta \Xsf^n_i$ and then let $(\xi^{i}_1,\dots,\xi^{i}_{\ell^n_i})$ have the conditional law of $(\xi_1, \dots, \xi_{\ell^n_i})$ under $\P(\,\cdot\mid \xi_1 + \dots + \xi_{\ell^n_i} = 0)$. Assume that these bridges are independent. We then define the random label process as in~\eqref{eq:def_processus_labels_discret}, namely
\begin{equation}\label{eq:def_processus_labels_discret_random}
Z^n_j \coloneqq \sum_{i=0}^{j-1} \sum_{k=0}^{\Delta \Xsf^n_i} \left(\xi^{i}_1+\dots+\xi^{i}_{k}\right) \ind{\inf_{t \in [i, j]} \Xsf^n_{\floor t} - \Xsf^n_{i-}=\Delta \Xsf^n_i-k},
\end{equation}
The process $Z^n$ is then linearly interpolated between integer times. Note that $\E[\xi^{i}_{k}] = 0$ for each $i$ and $k$ and so $\E[Z^n_j] = 0$ for each $j$. Also $Z^n_0 = Z^n_n = 0$ almost surely.

In the sequel, for every $0 \le j \le \ell$, we denote by $\Xi^\ell_j$ a random variable with the law of $\xi_1+\dots+\xi_j$ under under $\P(\,\cdot\mid \xi_1 + \dots + \xi_{\ell} = 0)$.
As proved in~\cite[Lemma~10 \&~11]{Bet10}, if $\xi$ admits a moment of order $q_0>2$, then:
\begin{enumerate}
\item\label{item:moments_ponts}
For every $q \in [2,q_0]$, there exists a constant $K_q > 0$ such that for every $0 \le j \le \ell$,
\[\E\left[\big|\Xi^\ell_j\big|^q\right] \le K_q \min\{j, \ell-j\}^{q/2}.\]

\item\label{item:convergence_pont_brownien}
If $b = (b_t)_{t \in [0,1]}$ denotes a standard Brownian bridge with duration $1$, then we have the convergence in distribution:
\[(\Var(\xi) \ell)^{-1/2} \Xi^\ell_{\floor{\ell \cdot}} \cvloi[\ell] b.\]
\end{enumerate}

Recall that for our applications to random plane maps, in order to equip the looptree with an independent uniformly random good labelling, $\xi$ shall have the centred geometric distribution $\sum_{i \ge -1} 2^{-i-2} \delta_i$. The latter admits moments of all order and has variance $\Var(\xi) = 2$.

In the next theorem, we let $\Lsf^n$ be a looptree with $n$ edges and let $\Xsf^n$ be its {\L}ukasiewicz path. For every pair of integers $0 \le i \le j \le n-1$, we let $d_{\Lsf^n}(i,j)$ denote the graph distance between the (vertices incident to the) corners $c_i$ and $c_j$ in $\Lsf^n$.
We also define $Z^n$ as in~\eqref{eq:def_processus_labels_discret_random} and $Z$ as in~\eqref{eq:def_labels_browniens}.

\begin{prop}\label{prop:convergence_labels_saut_pur}
Suppose that there exists $a_n \to \infty$ such that $(a_n^{-1} \Xsf^n_{n t})_{t \in [-1,1]} \to \Xsf$ for the Skorokhod topology and that $\Xsf$ satisfies~\eqref{eq:PJ}.
Suppose also that $\E[|\xi|^{q_0}] < \infty$ for some $q_0 > 2$.
Then the convergence in distribution
\[\left((\Var(\xi) a_n)^{-1/2} Z^{n}_{n t}\right)_{t \in [0,1]} \cvloi Z,\]
holds in the sense of finite dimensional marginals.
It holds for the topology of uniform convergence if furthermore 
there exist $c > 2/q_0$ and $K>0$ such that for every $n \ge 1$ and every $0 \le s \le t \le 1$, it holds
\begin{equation}\label{eq:hypothese_Kolmogorov_tension_distance_loop}
d_{\Lsf^n}(\floor{n s}, \floor{n t}) \le K \cdot a_n \cdot |t-s|^{c}.
\end{equation}
\end{prop}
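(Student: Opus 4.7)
The plan is to establish the first assertion (finite-dimensional convergence) by a truncation argument separating contributions of large and small jumps of $\Xsf^n$, and then to upgrade to uniform convergence by verifying the $L^q$ Kolmogorov criterion via Rosenthal's inequality and the bridge moment estimates~(i)--(ii) recalled above.

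First, I would expand each $Z^n_{nt_j}$ at finitely many fixed times $0 \le t_1 < \dots < t_k \le 1$ as a sum of independent bridge variables $\Xi^{\ell^n_i}_{\ell^n_i - r^n_i}$ indexed by the ancestral cycles $i \prec \floor{nt_j}$, where $\ell^n_i = \Delta \Xsf^n_i$ and $r^n_i = \Rsf^{\floor{nt_j}}_i$, and split the sum according to $\ell^n_i > \delta a_n$ or $\ell^n_i \le \delta a_n$ for a threshold $\delta > 0$. For the large cycles, which are finitely many above each fixed $t_j$, the Skorokhod convergence $a_n^{-1} \Xsf^n_{n \cdot} \to \Xsf$ yields joint convergence $(a_n^{-1} \ell^n_i, a_n^{-1} r^n_i) \to (\Delta \Xsf_{t_i}, \Rsf^{t_j}_{t_i})$ for each such cycle, and property~(ii) combined with the independence of the bridges attached to different cycles then gives convergence in distribution, jointly in $j$, of the large-cycle contribution to the corresponding truncation of $Z_{t_j}$ from~\eqref{eq:def_labels_browniens} retaining only jumps $> \delta$.

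The main obstacle will be controlling the small-cycle contribution. Using the variance bound~(i), its variance at time $t_j$ is at most $K_2 \sum_{i :\, \ell^n_i \le \delta a_n,\ i \prec \floor{nt_j}} r^n_i$, which after division by $\Var(\xi) a_n$ equals a constant multiple of $\Csf^{n,\delta}_{t_j}$ associated with the rescaled path $a_n^{-1} \Xsf^n_{n \cdot}$ (observing that $\Xsf^n$ has vanishing continuous part). By Lemma~\ref{lem:convergence_partie_continue}, $\Csf^{n,\delta}_{t_j} \to \Csf^\delta_{t_j}$, and under~\eqref{eq:PJ} the latter tends to $\Csf_{t_j} = 0$ as $\delta \to 0$. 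Symmetrically, the defect in truncating~\eqref{eq:def_labels_browniens} to jumps $> \delta$ is a centred Gaussian whose variance is itself bounded by $\Csf^\delta_{t_j}$ and therefore vanishes in the same limit. A standard iterated-limit argument combining these ingredients yields the finite-dimensional convergence.

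For the second assertion I would fix $q \in (2/c, q_0]$, possible since $c > 2/q_0$, and apply Rosenthal's inequality to $Z^n_{nt} - Z^n_{ns}$ written, for integer $\floor{ns}, \floor{nt}$, as a sum of independent centred bridge variables indexed by the cycles on the geodesic in $\Lsf^n$ between the two corresponding corners. Each summand has variance $\sigma_i^2$ bounded by the contribution of that cycle to $d_{\Lsf^n}(\floor{ns}, \floor{nt})$, so both $\sum_i \sigma_i^2$ and $\max_i \sigma_i^2$ are bounded by $K_2 \, d_{\Lsf^n}(\floor{ns}, \floor{nt}) \le K' a_n |t-s|^c$. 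Using $\sum_i \sigma_i^q \le \sigma_{\max}^{q-2} \sum_i \sigma_i^2$ together with property~(i) on each summand, both terms in Rosenthal's bound are controlled by $C_q (a_n |t-s|^c)^{q/2}$; after normalisation this reads
\[\E\bigl[|a_n^{-1/2} (Z^n_{nt} - Z^n_{ns})|^q\bigr] \le C'_q |t-s|^{cq/2}\]
with $cq/2 > 1$. The linear interpolation at non-integer $ns, nt$ perturbs the left-hand side only by $O(a_n^{-1/2})$ and is absorbed, so Kolmogorov's criterion yields tightness in the uniform topology, and the finite-dimensional convergence already established identifies the limit.
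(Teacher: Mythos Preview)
Your proposal is correct and follows essentially the same route as the paper: a $\delta$-truncation exploiting~\eqref{eq:PJ} via Lemma~\ref{lem:convergence_partie_continue} for finite-dimensional convergence, and the moment bound~\eqref{eq:moment_labels_looptrees_discrets} combined with~\eqref{eq:hypothese_Kolmogorov_tension_distance_loop} and Kolmogorov's criterion for tightness. The paper obtains the bound~\eqref{eq:moment_labels_looptrees_discrets} by citing~\cite[Equation~22]{Mar18b}, whereas you derive it directly from Rosenthal's inequality and property~\ref{item:moments_ponts}; these yield the same estimate.
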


\begin{proof}
This can be adapted from previous works which focus on the particular case where $\P(\xi=i) = 2^{-(i+2)}$ for $i \ge -1$. Indeed, the only properties of the random bridges $\Xi$'s which are used are the two listed previously.
With these two properties, the convergence in the sense of finite-dimensional marginals follows as in~\cite[Proposition~7]{LGM11} which is written for stable L\'evy processes but their argument is general. Let us note that there the authors work with the two-type mobile of~\cite{BDG04}, we refer alternatively the reader to the proof of Proposition~4.4 in~\cite{Mar18a} which adapted the work of~\cite{LGM11} with the tree version of our looptree (still for stable processes).
As for Proposition~\ref{prop:convergence_looptrees_saut_pur}, the only key result of the stable L\'evy processes used there is~\eqref{eq:PJ}, see~\cite[Equation~32]{LGM11}.
The proof is similar to that of Proposition~\ref{prop:convergence_looptrees_saut_pur}: by~\eqref{eq:PJ} only a finite number of large increments of $a_n^{-1} \Xsf^n$ will contribute, up to an arbitrarily small error, and the labels around these cycles converge to Brownian bridges by Property~\ref{item:convergence_pont_brownien} of $\Xi^\ell$.

The convergence for the uniform topology then requires a tightness argument. We claim that, similarly to the continuum setting, for every $q \in [2, q_0]$, there exists $C_q > 0$ such that for every $n \ge 1$ and every $0 \le s \le t \le 1$,
\begin{equation}\label{eq:moment_labels_looptrees_discrets}
\Es{\big|Z^{n}_{\floor{n s}} - Z^{n}_{\floor{n t}}\big|^q} \le C_q \cdot d_{\Lsf^n}(\floor{n s}, \floor{n t})^{q/2}.
\end{equation}
Combined with our assumption~\eqref{eq:hypothese_Kolmogorov_tension_distance_loop} and since $q_0c/2 > 1$, this bound implies tightness by applying Kolmogorov's criterion. 
The bound~\eqref{eq:moment_labels_looptrees_discrets} follows from Property~\ref{item:moments_ponts} of $\Xi^\ell$ as in~\cite[Equation~22]{Mar18b} and the next few lines there, where a similar bound is shown with a larger quantity $\widehat{d}_{\Lsf^n}$ instead of $d_{\Lsf^n}$. Recall that for $0\le i < j \le n$, we follow the chain of loops between the $i$'th and the $j$'th corner of $\Lsf^n$ and then the sum over all these loops of the shortest between their left and right length equals $d_{\Lsf^n}(i,j)$; then $\widehat{d}_{\Lsf^n}$ is constructed similarly, but following always the loops of the left branch to their right and vice versa. See the beginning of Section~\ref{ssec:tension} for a more detailed description. The argument from~\cite{Mar18b} still applies after this change.
\end{proof}

In Proposition~\ref{prop:tension_Holder_looptrees} below, we shall prove that~\eqref{eq:hypothese_Kolmogorov_tension_distance_loop} holds with high probability for random {\L}ukasiewicz paths under only an ``exchangeability'' property, for any $c \in (0,1/2)$ so this requires $q_0>4$ as in Theorem~\ref{thm:convergence_labels_intro} in the introduction.
We shall also prove in the same setting the convergence in probability of $\lambda^n$ to the identity as in~\eqref{eq:hyp_proportion_feuilles}.
By taking $\xi$ to have the centred geometric distribution, so $\Var(\xi) = 2$, we may therefore apply Lemma~\ref{lem:profil_general} and Lemma~\ref{lem:tension_cartes_general} (on a set of arbitrarily high probability) to deduce invariance principles for the random maps associated with these random looptrees equipped with uniformly random good labellings.
We shall further argue in Section~\ref{sec:convergence} that for this model the convergence of looptrees and labels still hold when the random limit path $X$ does not satisfy~\eqref{eq:PJ}.

\section{Random models with prescribed degrees}
\label{sec:degres_prescrits}

In the previous section we obtained deterministic results on labelled looptrees and plane maps, when their coding paths converge towards a limit which satisfies the assumption~\eqref{eq:PJ}. In the next sections we extend them in a random setting, when the random limit paths may not satisfy this assumption. 
Let us first formally introduce the model, then discuss some tightness results, and finally present in Section~\ref{ssec:epine} a key spinal decomposition which will be used in Section~\ref{sec:convergence} to identify the scaling limits of the discrete objects.

\subsection{Model and notations}
\label{ssec:model_degres_prescrits}

Let us extend the models presented in the introduction by considering more generally looptrees and maps \emph{with a boundary} by seeing the face to the right of the root edge as an external face; maps without boundary can be seen as maps with a boundary with length $2$ by doubling the root edge.
Fix henceforth $n\ge 1$, an integer $\varrho_n \ge 1$, and a finite sequence of integers
\[\Degf_n = (\degf_{n,i})_{1 \le i \le n+\varrho_n}
\quad\text{such that}\quad
\degf_{n,1} \ge \dots \ge \degf_{n,n+\varrho_n} \ge 0
\quad\text{and}\quad
\sum_{i =1}^{n+\varrho_n} \degf_{n,i} = n.\]
For $k \ge 0$, define then $\degf_n(k) = \#\{i \ge 1 : \degf_{n,i} = k\}$. Let also
\[E_n = n+\varrho_n,
\qquad
F_n = \sum_{k \ge 1} \degf_n(k),
\qquad\text{so that}\qquad
\degf_n(0) 
= E_n-F_n
= \varrho_n + \sum_{k \ge 1} (k-1) \degf_n(k).\]
Note that $\degf_{n,i}$ is zero if and only if $i > F_n$. In order to avoid trivialities, we shall assume that both $\degf_n(0)$ and $F_n$ tend to infinity as $n \to \infty$.

Consider first the set of ordered plane forests with $\varrho_n$ trees and which have exactly $\degf_n(k)$ individuals with $k$ offspring for every $k \ge 0$. By the above relations this set is nonempty and every such forest has $n$ edges, $\degf_n(0)$ leaves, and $F_n$ internal vertices. Note that $\degf_{n,i}$ equals the $i$'th largest offspring number.
We shall canonically view such a forest as a single tree with $E_n$ edges by adding an extra root vertex connected to the root of each tree and preserving the order. 
Then consider the set of looptrees of such forests (merging each right-most offspring with its parent): the root cycle has length $\varrho_n$ and there are exactly $\degf_n(k)$ other cycles with length $k$ for every $k \ge 1$. For every $i \le F_n$ the quantity $\degf_{n,i}$ equals the length of the $i$'th largest nonroot cycle. Note that any such looptree has $E_n$ edges, $F_n+1$ cycles in total, and so it has $\degf_n(0)$ vertices.
By endowing such a looptree with a good labelling, the image by the bijection from Section~\ref{ssec:la_bijection} is a pointed negative map with boundary length $2\varrho_n$ and with exactly $\degf_n(k)$ inner faces with degree $2k$ for every $k \ge 1$. Such a map has $F_n$ inner faces, $E_n$ edges, as well as $\degf_n(0)+1$ vertices. This is summarised in Table~\ref{fig:tableau} below.

\begin{table}[!ht] \centering
\includegraphics[page=28, height=11\baselineskip]{bijections}
\caption{The key quantities in our graphs.}
\label{fig:tableau}
\end{table}

Let us make a useful observation. Let us sample a non pointed map with a boundary uniformly at random amongst those with such degree statistics and then distinguish one of the $\degf_n(0)+1$ vertices uniformly at random and independently of the map; exactly $\varrho_n$ edges on the boundary, oriented so that the external face lie to their right, are negative; sample one of these oriented edges uniformly at random and let it replace the root edge. This results in a pointed negative map and the latter precisely has the uniform distribution amongst those with the previous degree statistics. Therefore it is equivalent to study the geometry of a non pointed map and of such a negative pointed one, for which we may use the bijection from Lemma~\ref{lem:bijection}.

Throughout this section, we let the dependence in $\varrho_n$ and in the sequence $\Degf_n$ be implicit in order to lighten the notation and we denote by $T^n$ a random forest sampled uniformly at random amongst those with $\varrho_n$ trees and offspring numbers $\Degf_n = (\degf_{n,i})_{1 \le i \le E_n}$, we denote by $LT^n$ the corresponding random looptree, and by $X^n = (X^n_t ; 0 \le t \le E_n)$ its {\L}ukasiewicz path.
We also equip this looptree with a random labelling and denote by $Z^n = (Z^n_t ; 0 \le t \le E_n)$ the associated label process, thus given by~\eqref{eq:def_processus_labels_discret_random} with some label increment distribution $\xi$. When $\P(\xi=i)=2^{-(i+2)}$ for $i \ge -1$, this random labelling is a uniform good labelling and we let $(M^n,v^n_\star)$ denote the associated negative pointed map by Lemma~\ref{lem:bijection}.

Recall finally from Section~\ref{ssec:codage_chemins} the notation $\lambda^n(t) \in \{0, \dots, E_n-1\}$ with $0 \le t \le \degf_n(0)$, for the index such that the vertex $v^n_{\lambda(\ceil{t})}$ is the $\ceil{t}$'th vertex fully visited in the contour of the looptree, so $(v^n_{\lambda^n(0)}, \dots, v^n_{\lambda^n(\degf_n(0)-1)})$ lists the vertices without redundancies. Then~\cite[Lemma~2]{Mar19} with $A = \{-1\}$ provides our assumption~\eqref{eq:hyp_proportion_feuilles} by showing that
\begin{equation}\label{eq:proportion_feuilles_random}
\left(\frac{\Lambda^n(E_n t)}{\degf_n(0)}, \frac{\lambda^n(\degf_n(0) t)}{E_n}\right)_{t \in [0,1]} \cvproba \left(t, t\right)_{t \in [0,1]}
\end{equation}
for the uniform topology. We henceforth concentrate on the convergence of the processes $X^n$ and $Z^n$, and the spaces $LT^n$ and $M^n$.

\subsection{Tightness results}
\label{ssec:tension}

Fix $n \ge 1$ and recall that $LT^n$ denotes a looptree sampled uniformly at random amongst those with cycle lengths $\varrho_n$ and $\Degf_n$. Recall that its has $\degf_n(0)$ vertices and $E_n$ edges in total.
Theorem~6 in~\cite{Mar19} states that the label process associated with a uniform random good labelling on $LT^n$ is always tight; we explain below that behind this result hides tightness of the distances on $LT^n$.

Fix two corners, say $c_i$ and $c_j$, of $LT^n$ with $i<j$. Recall that their looptree distance $d_{LT^n}(i,j)$ is obtained by considering all the cycles between them and summing the shortest between the left and right side. Let us modify this distance by defining $\widehat{d}_{LT^n}(i,j) \ge d_{LT^n}(i,j)$ as the length of the path between $c_i$ and $c_j$ which always follows the right side of the ancestral cycles of $c_i$ and, if $c_j$ does not belong these ancestral cycles, always follows the left side of the ancestral cycles of $c_j$, and finally follows the top part of their common ancestral cycle.
We extend bo $d_{LT^n}$ and $\widehat{d}_{LT^n}$ to $[0,E_n]$ by linear interpolation.

\begin{prop}\label{prop:tension_Holder_looptrees}
For every $\varepsilon > 0$, there exists $K > 0$ such that for every $n$ large enough, 
\[\P\left(\sup_{s \ne t} \frac{\widehat{d}_{LT^n}(E_n s, E_n t)}{\sigma_n \cdot |t-s|^{1/2-\varepsilon}} \le K\right) \ge 1-\varepsilon.\]
Consequently, the sequence of processes $(\sigma_n^{-1} d_{LT^n}(E_n s, E_n t))_{s,t \in [0,1]}$ is tight for the uniform topology.
\end{prop}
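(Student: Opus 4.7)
The plan follows a Kolmogorov--Chentsov scheme, reducing the H\"older modulus of $\widehat{d}_{LT^n}$ to one for the {\L}ukasiewicz path $X^n$ of $LT^n$.

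First, I would relate $\widehat{d}_{LT^n}$ to $X^n$ via a cactus-type bound
\[\widehat{d}_{LT^n}(i,j) \le X^n_i + X^n_{j-} - 2\inf_{r \in [i,j]} X^n_r + O(1),\]
obtained by reading off the right-arc lengths of $c_i$'s ancestral cycles as $X^n_i - X^n_{\alpha}$ (where $\alpha$ denotes the common ancestral cycle), likewise for the left arcs from $c_j$ via a dual accounting, and absorbing the top arc of the common cycle into the $O(1)$, which is negligible compared to $\sigma_n \to \infty$. This reduces the statement to a H\"older-$(1/2-\varepsilon)$ modulus of continuity for $X^n$ on scale $\sigma_n$.

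Second, I would prove a Rosenthal-type moment bound: for every integer $q \ge 2$ there exists $C_q > 0$ (independent of $n$) with
\[\Es{\bigl|X^n_{\lfloor E_n t\rfloor} - X^n_{\lfloor E_n s\rfloor}\bigr|^{q}} \le C_q \sigma_n^{q} |t-s|^{q/2}\qquad\text{for all } 0 \le s \le t \le 1.\]
By the Vervaat cycle lemma, $X^n$ is a uniformly random cyclic shift of the walk whose increments are $(\degf_{n,i}-1)_{1 \le i \le E_n}$ in uniformly random order, a conditioning whose cost is at most a factor $O(E_n)$ absorbed into $C_q$. For the resulting sums of exchangeable but non i.i.d.~increments, I would split each increment at threshold $\delta\sigma_n$: the small-increment part, sampled without replacement from a set with sum of squares at most $\sigma_n^2 + 2\varrho_n$, enjoys sub-Gaussian concentration (Hoeffding--Bernstein type inequalities for sampling without replacement), while the number of large increments in a window of length $\lfloor E_n|t-s|\rfloor$ is hypergeometric with expectation $O(|t-s|)$ and contributes a controllable amount at every moment.

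Third, given $\varepsilon > 0$, I would pick an integer $q > 2/\varepsilon$ so that $q(1/2 - \varepsilon) > 1$, and feed the previous bound into the Kolmogorov--Chentsov chaining: this yields $K = K(\varepsilon) > 0$ such that
\[\P\left(\sup_{s \ne t} \frac{|X^n_{\lfloor E_n s\rfloor} - X^n_{\lfloor E_n t\rfloor}|}{\sigma_n |t-s|^{1/2-\varepsilon}} \le K\right) \ge 1 - \varepsilon\]
for $n$ large enough. Combining this with Step 1 (absorbing the $O(1)$ boundary term using $\sigma_n \to \infty$) gives the announced estimate for $\widehat{d}_{LT^n}$, and the tightness of $(\sigma_n^{-1} d_{LT^n}(E_n \cdot, E_n \cdot))_n$ in the uniform topology on $[0,1]^2$ follows since $d_{LT^n} \le \widehat{d}_{LT^n}$ and the H\"older-$(1/2-\varepsilon)$ ball in $C([0,1]^2, \R)$ is compact by Arzel\`a--Ascoli. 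The main obstacle is the Rosenthal-type estimate of Step 2: a single increment $\degf_{n,1}$ may itself be of order $\sigma_n$, so naive Marcinkiewicz--Zygmund bounds do not apply, and one must carefully combine sub-Gaussian concentration on the small-increment side with combinatorial counting for the large jumps, while keeping track of the Vervaat shift uniformly in the degree sequence.
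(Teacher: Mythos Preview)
Your reduction has two genuine gaps, and they are precisely the reasons why the paper proceeds differently.

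\textbf{Step~1 fails.} The cactus bound $d_{LT^n}(i,j)\le X^n_i+X^n_{j-}-2\inf_{[i,j]}X^n$ is valid for $d_{LT^n}$, but it is \emph{not} an upper bound for $\widehat d_{LT^n}$, and your ``dual accounting'' does not make it one. By definition, on the $t$-branch $\widehat d$ sums the \emph{left} arc lengths $\Delta X^n_r-R^{t}_r$, whereas $X^n_{t-}-\inf_{[s,t]}X^n$ controls the sum of the \emph{right} arc lengths $R^{t}_r$. These are unrelated: if $t$ descends through children lying near the right end of each ancestral cycle (so each $R^{t}_r$ is small), the left arcs can be of order $\sum_r\Delta X^n_r$, far larger than any increment of $X^n$. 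This is exactly the asymmetry the paper points out: passing to the mirror \L ukasiewicz path recovers the left arcs, but because the looptree merges the \emph{right}-most offspring with its parent, the mirror path misses the left-most edge of every ancestral cycle of $t$. The defect is not $O(1)$ but the \emph{number of ancestral cycles}, i.e.\ a height-type quantity that can be of order $n/\sigma_n$. The paper handles this via a separate combinatorial input (its reference~[Mar19, Lemma~5]): on ancestral lines longer than $c\log(E_n-\degf_n(1))$, the proportion of cycles where only the left-most edge is traversed is bounded away from $1$, so the contribution of those lost edges is dominated by the mirror-\L ukasiewicz part; short ancestral lines are absorbed directly.

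\textbf{Step~2 fails as stated.} The moment bound $\E\bigl[|X^n_{\lfloor E_nt\rfloor}-X^n_{\lfloor E_ns\rfloor}|^{q}\bigr]\le C_q\,\sigma_n^{q}\,|t-s|^{q/2}$ is false whenever some $\degf_{n,i}$ is of order $\sigma_n$: a single such jump lands in the window with probability $\asymp|t-s|$ and contributes $\asymp\sigma_n^{q}|t-s|$ to the $q$-th moment, which dominates $|t-s|^{q/2}$ for small $|t-s|$. Your threshold splitting does not cure this, and the Vervaat factor you call $O(E_n)$ is not absorbable either. In fact $X^n$ itself is \emph{not} H\"older on scale $\sigma_n$ when there are macroscopic jumps, so no amount of Kolmogorov--Chentsov on $X^n$ can work.

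The point you are missing is that $\widehat d$ is designed so that both branches are one-sided \emph{decrements} of a \L ukasiewicz-type path (the original path for the $s$-branch, the mirror path for the $t$-branch). Because these paths have no negative jumps, the decrement $X_s-\inf_{[s,t]}X$ is a ``downward'' quantity unaffected by positive jumps, and \emph{that} is what admits a uniform H\"older bound on scale $\sigma_n$ (this is~[Mar19, Proposition~3] in the paper). The cactus bound instead contains $X_{t-}-\inf_{[s,t]}X$, an ``upward'' quantity that inherits the large positive jumps and is therefore useless here.
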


\begin{proof}
This result in implicit in the proof of Theorem~6 in~\cite{Mar19} to which we refer for details and only sketch the argument here. Note that we may assume that the number $E_n-\degf_n(1)$ of edges which do not belong to single loops tends to infinity as otherwise, if it is uniformly bounded say then so is the number of vertices and the diameter of the looptree as well as $\sigma_n$.

Let us fix $s < t$ and assume to ease notation that $E_n s$ and $E_n t$ are both integers. We shall abuse notation and for an integer $i \in \{0, \dots, E_n\}$, we shall talk about the corner $i$ or the vertex $i$ to mean the corner $c_i$ or its incident vertex.
Then let $r \in [s, t]$ be such that $E_n r$ is also an integer and such that the vertex $E_n r$ is the first one that belongs to an ancestral cycle of $E_n t$ but not of $E_n s$. If $E_n t$ itself belongs to an ancestral cycle of $E_n s$, then we let $r=t$. Let $X^n$ denote the {\L}ukasiewicz path associated with $LT^n$, then we have
\[\widehat{d}_{LT^n}(E_n s, E_n r) = X^n_{E_n s - 1} - \inf_{q \in [s,r]} X^n_{E_n q - 1}.\]
According to~\cite[Proposition~3]{Mar19}, for $\varepsilon > 0$ fixed, there exists $C > 0$ such that the right hand side is smaller than $C \cdot \sigma_n \cdot |t-s|^{(1-\varepsilon)/2}$ uniformly for $0 \le s < t \le 1$ with a probability at least $1-\varepsilon$.
It only remains to prove a similar bound for $\widehat{d}_{LT^n}(E_n r, E_n t)$.

The argument is provided by the proof of Theorem~6 in~\cite{Mar19}. One would like to proceed symmetrically by inverting the orientation of the edges of the looptree and thus following the contour from right to left, however there is a break of symmetry in the construction of the looptree version of a tree since we contract each edge from an inner vertex of the tree to its right-most offspring. This is consistent with the {\L}ukasiewicz path in which the value at the instant of visit of the right-most offspring equals the value of the left limit at the instant of visit of its parent, but if we were to consider the {\L}ukasiewicz path of the mirror tree, we would miss the left-most edge on each ancestral cycle of $E_n t$ and only count the other ones.

Note that an ancestral cycle must have length at least two. There are two cases when following such a cycle on its left. Suppose first that we traverse $i \ge 2$ edges, since $i \le 2(i-1)$ and the quantity $i-1$ is counted by the mirror {\L}ukasiewicz path, then the previous argument applies and this only replaces the constant $C$ above by $2C$.
However if we traverse only the left-most edge of the cycle, then this case really is lost when considering the mirror {\L}ukasiewicz path. It is shown in~\cite[Lemma~5]{Mar19} that with high probability the proportion of such cycles is bounded away from $1$ on all ancestral lines consisting of at least some constant times $\log(E_n-\degf_n(1))$ cycles. 
Then on the one hand the argument for the left branch applies to these ancestral lines, and only the constant $C$ is modified, and on the other hand the total contribution of the left-most edge of the ancestral lines with less than $\log(E_n-\degf_n(1))$ cycles is anyway small.

Finally, since $\widehat{d}_{LT^n} \ge d_{LT^n}$, then the latter satisfies the claim as well. We then infer from the triangle inequality that for every $\varepsilon > 0$, there exists $K > 0$ such that for every $n$ large enough, 
\[\P\left(\sup_{(s,t) \ne (s',t')} \frac{|d_{LT^n}(\floor{E_n s}, \floor{E_n t}) - d_{LT^n}(\floor{E_n s'}, \floor{E_n t'})|}{\sigma_n \cdot \|(s,t) - (s',t')\|^{1/2-\varepsilon}} \le K\right) \ge 1-\varepsilon.\]
Tightness of the rescaled looptree distance then follows.
\end{proof}

This bound combined with that from~\eqref{eq:moment_labels_looptrees_discrets} shows a similar result for the label process which extends~\cite[Theorem~6]{Mar19} which focused on the particular case of the labels associated with maps. Recall that $Z^n$ is defined in~\eqref{eq:def_processus_labels_discret_random}.

\begin{cor}\label{cor:tension_Holder_labels}
Suppose that $\E[|\xi|^{q_0}] < \infty$ for some $q_0 > 4$.
For every $\varepsilon > 0$, there exists $K > 0$ such that for every $n$ large enough, 
\[\P\left(\sup_{s \ne t} \frac{|Z^n_{E_n s} - Z^n_{E_n t}|}{\sigma_n^{1/2} |t-s|^{1/4-\varepsilon}} \le K\right) \ge 1-\varepsilon.\]
Consequently, the sequence of processes $(\sigma_n^{-1/2} Z^n_{E_n t})_{t \in [0,1]}$ is tight for the uniform topology.
\end{cor}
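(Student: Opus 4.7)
The plan is to combine the quenched moment bound~\eqref{eq:moment_labels_looptrees_discrets} for the labels with the annealed H\"older control of looptree distances from Proposition~\ref{prop:tension_Holder_looptrees}, and to conclude via a quantitative Kolmogorov--\v{C}entsov argument (or Garsia--Rodemich--Rumsey). Fix $\varepsilon > 0$, choose an exponent $q \in (4, q_0]$ and an auxiliary $\varepsilon' > 0$ small enough so that $\alpha \coloneqq 1/4 - \varepsilon'/2 - 1/q \ge 1/4 - \varepsilon$; this is feasible as soon as $\varepsilon > 1/q_0$, which is the relevant regime under the hypothesis $q_0 > 4$. By Proposition~\ref{prop:tension_Holder_looptrees} applied with parameter $\varepsilon'$, there exist $K' > 0$ and $n_0$ such that for every $n \ge n_0$, the event
\[A_n \coloneqq \Big\{d_{LT^n}(\floor{E_n s}, \floor{E_n t}) \le K' \sigma_n |t-s|^{1/2-\varepsilon'} \text{ for all } s, t \in [0,1]\Big\}\]
satisfies $\P(A_n) \ge 1 - \varepsilon/2$.

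Since~\eqref{eq:moment_labels_looptrees_discrets} holds for each deterministic looptree, I view it as a bound conditional on $X^n$, which determines $LT^n$. Multiplying by $\ind{A_n}$, which is $X^n$-measurable, and taking expectations yields, for every $s,t \in [0,1]$,
\[\E\Big[|Z^n_{\floor{E_n s}} - Z^n_{\floor{E_n t}}|^q \ind{A_n}\Big] \le C_q \, (K' \sigma_n)^{q/2} |t-s|^{q/4 - q\varepsilon'/2}.\]
The exponent on $|t-s|$ is strictly larger than $1$ by the choice of $q$ and $\varepsilon'$. Applying the Garsia--Rodemich--Rumsey inequality to the rescaled process $s \mapsto \sigma_n^{-1/2} Z^n_{E_n s}$ on $[0,1]$ (or equivalently, arguing pathwise on $A_n$ via a dyadic chaining and then integrating) produces a constant $C''_q$ independent of $n$ such that
\[\E\bigg[\bigg(\sup_{s \ne t \in [0,1]} \frac{|Z^n_{E_n s} - Z^n_{E_n t}|}{\sigma_n^{1/2} |t-s|^\alpha}\bigg)^q \ind{A_n}\bigg] \le C''_q.\]
For $s, t \in [0,1]$, since $\alpha \ge 1/4 - \varepsilon$ one has $|t-s|^{1/4-\varepsilon} \ge |t-s|^{\alpha}$, so the sup with exponent $1/4 - \varepsilon$ in the denominator is bounded by the sup with exponent $\alpha$. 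Markov's inequality, with $K$ chosen so that $C''_q K^{-q} \le \varepsilon/2$, then gives
\[\P\bigg(\sup_{s \ne t} \frac{|Z^n_{E_n s} - Z^n_{E_n t}|}{\sigma_n^{1/2} |t-s|^{1/4-\varepsilon}} > K,\ A_n\bigg) \le \varepsilon/2,\]
and adding $\P(A_n^c) \le \varepsilon/2$ yields the announced probability bound. Tightness for the uniform topology follows from the Arzel\`a--Ascoli theorem, using $Z^n_0 = 0$.

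The main obstacle is the joint treatment of the two independent sources of randomness --- the looptree geometry encoded by $X^n$ and the random labels built on top of it --- since~\eqref{eq:moment_labels_looptrees_discrets} only bounds labels quenched on the geometry. Conditioning on $X^n$ and restricting to the good event $A_n$ decouples the two layers and reduces the problem to a deterministic Garsia--Rodemich--Rumsey estimate with explicit constants, after which Markov's inequality suffices. The assumption $q_0 > 4$ is essentially sharp for this route: the best attainable H\"older exponent via this method is $1/4 - 1/q_0$, which explains why the statement is informative precisely in the regime $\varepsilon$ above $1/q_0$ (and becomes empty as $q_0 \downarrow 4$).
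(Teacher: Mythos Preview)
Your proof is correct and follows exactly the route the paper sketches: conditioning on the looptree, you feed the H\"older control on $d_{LT^n}$ from Proposition~\ref{prop:tension_Holder_looptrees} into the quenched moment bound~\eqref{eq:moment_labels_looptrees_discrets} and conclude via Kolmogorov/GRR. Your closing remark that the method only yields the display for $\varepsilon>1/q_0$ is accurate (the paper does not comment on this), but this range is of course ample for the tightness conclusion, which is the operative statement.
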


Recall the convergence~\eqref{eq:proportion_feuilles_random}; then jointly with this corollary (and say Skorokhod's representation theorem), this allows us to apply Lemma~\ref{lem:profil_general} and Lemma~\ref{lem:tension_cartes_general}. We deduce that from every sequence of integers one can extract a subsequence along which $\sigma_n^{-1/2} Z^n$ converges in distribution to a continuous limit $Z$. When $\xi$ has the centred geometric distribution, one can extract a further subsequence along which the associated rescaled map $\sigma_n^{-1/2} M^n$ converges in distribution.
In Section~\ref{sec:convergence} we characterise the subsequential limits of $Z^n$ and $LT^n$; the key technical input for this is Lemma~\ref{lem:consequence_epine} in the next subsection.

Before moving to this topic, let us consider looptrees $\overp{LT}^n$ as defined in~\cite{CK14}, i.e.~without merging the right-most offspring of each individual with its parent (so the cycles are longer by $1$).
Typically, our random models satisfy $\limsup_{n\to\infty} E_n^{-1} \degf_n(1) < 1$ so $\overp{\sigma}_n$ below can be replaced by $\sigma_n$.

\begin{cor}\label{cor:tension_looptrees_CK}
For every $\varepsilon > 0$, there exists $K > 0$ such that for every $n$ large enough, 
\[\P\left(\sup_{s \ne t} \frac{d_{\overp{LT}^n}(E_n s, E_n t)}{\overp{\sigma}_n \cdot |t-s|^{1/2-\varepsilon}} \le K\right) \ge 1-\varepsilon
\qquad\text{where}\qquad
\overp{\sigma}_n = \sigma_n \frac{E_n}{E_n - \degf_n(1)}.\]
Consequently, the sequence of processes $(\overp{\sigma}_n^{-1} d_{LT^n}(E_n s, E_n t))_{s,t \in [0,1]}$ is tight for the uniform topology. Similarly, if one constructs random labels on $\overp{LT}^n$ using random $\xi$-bridges with $\E[|\xi|^{q_0}] < \infty$ for some $q_0 > 4$, then the corresponding label process is tight as well, once rescaled by a factor $\overp{\sigma}_n^{1/2}$.
\end{cor}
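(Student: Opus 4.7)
The plan is to apply Proposition~\ref{prop:tension_Holder_looptrees} directly to $\overp{LT}^n$, after observing that $\overp{LT}^n$ is itself a uniformly random looptree with prescribed cycle lengths. Indeed, the correspondence from plane forests to rooted unmerged looptrees (not contracting the right-most offspring with its parent) is a bijection, so $\overp{LT}^n$ is uniformly distributed amongst looptrees with cycle lengths $(\varrho_n+1,\degf_{n,1}+1,\ldots,\degf_{n,F_n}+1)$ and $\tilde E_n:=E_n+F_n+1$ edges in total. The associated analogue of $\sigma_n^2$ is
\[\tilde\sigma_n^{\,2}:=\sum_{i=1}^{F_n}(\degf_{n,i}+1)\degf_{n,i}=\sigma_n^2+2n,\]
and Proposition~\ref{prop:tension_Holder_looptrees} applied to $\overp{LT}^n$ would yield, for each $\varepsilon>0$, a constant $K'$ such that with probability at least $1-\varepsilon$,
\[\widehat{d}_{\overp{LT}^n}(\tilde E_n\tilde s,\tilde E_n\tilde t)\le K'\tilde\sigma_n\cdot|\tilde t-\tilde s|^{1/2-\varepsilon}\qquad\text{for all }\tilde s,\tilde t\in[0,1].\]

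Two reductions would then remain. First, to re-parametrise by $s,t\in[0,1]$ indexed by the contour of $LT^n$ (of length $E_n$): the corners of $LT^n$ embed injectively into those of $\overp{LT}^n$ with at most one additional corner inserted per cycle of $LT^n$, so $\tilde E_n|\tilde t-\tilde s|\le 2E_n|t-s|$ for the corresponding positions. Second, I would prove the comparison $\tilde\sigma_n\le C\overp{\sigma}_n$ for some universal constant $C$. The key input here is the elementary bound $\sigma_n^2\ge n-\degf_n(1)$, obtained by noting that $\degf_{n,i}(\degf_{n,i}-1)\ge\degf_{n,i}$ whenever $\degf_{n,i}\ge 2$ and summing; a short computation then gives
\[\tilde\sigma_n^{\,2}=\sigma_n^2+2n\le 3\sigma_n^2\cdot\frac{n}{n-\degf_n(1)}\le C\sigma_n^2\cdot\frac{E_n^2}{(E_n-\degf_n(1))^2}=C\overp{\sigma}_n^{\,2}\]
in the generic setting where $\varrho_n$ is not dominant. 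Combining these ingredients yields the Hölder bound on $\widehat{d}_{\overp{LT}^n}(E_n s,E_n t)$, hence on $d_{\overp{LT}^n}\le\widehat{d}_{\overp{LT}^n}$, from which tightness of the rescaled distance process would follow by the same triangle-inequality argument as in Proposition~\ref{prop:tension_Holder_looptrees}.

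The hard part will be the comparison $\tilde\sigma_n\le C\overp{\sigma}_n$: the prefactor $E_n/(E_n-\degf_n(1))$ in $\overp{\sigma}_n$ must precisely absorb the excess term $2n$ in $\tilde\sigma_n^{\,2}$ when $\sigma_n^2\ll n$, the latter regime being exactly the one where many vertices of $T^n$ have offspring number~$1$---that is, where $\overp{LT}^n$ differs most from $LT^n$ (loops of the latter becoming double edges of the former, which contribute to distances). Finally, the tightness of the rescaled label process would follow from the moment bound \eqref{eq:moment_labels_looptrees_discrets} (which holds for any labelled looptree, irrespective of the degree sequence or of the distribution of $\xi$) combined with the Hölder bound just established and Kolmogorov's criterion, as in Corollary~\ref{cor:tension_Holder_labels}.
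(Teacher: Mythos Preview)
Your first step contains a genuine error: the claim that $\overp{LT}^n$ is uniformly distributed amongst all looptrees with cycle lengths $(\varrho_n+1,\degf_{n,1}+1,\ldots,\degf_{n,F_n}+1)$ is false. The map $T\mapsto\overp{LT}(T)$ is a bijection onto $\overp{LT}$-type looptrees (those in which every vertex lies on at most two cycles), but this is a strict subset of all looptrees with those cycle lengths. Concretely, take $\varrho_n=2$, $F_n=1$, $\degf_{n,1}=1$: there are exactly two forests $T$ (the single non-root internal vertex is in the first or the second tree), hence two looptrees $\overp{LT}(T)$, each a triangle with a $2$-cycle attached at one of the two non-root vertices. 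But there are three trees $T'$ with root of degree~$3$ and one other internal vertex of degree~$2$ (the internal vertex can be any of the three children), hence three looptrees $LT(T')$ with cycle lengths $(3,2)$; the third one has the $2$-cycle attached at the root of the triangle, which never arises as an $\overp{LT}(T)$. Thus $\overp{LT}^n$ is supported on a proper subset, and Proposition~\ref{prop:tension_Holder_looptrees} does not apply to it as a black box.

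The paper's proof avoids this by not trying to recast $\overp{LT}^n$ as a uniform looptree at all. Instead it observes that the \emph{argument} behind Proposition~\ref{prop:tension_Holder_looptrees} (the mirror-\L ukasiewicz estimate and \cite[Lemma~5]{Mar19}) already controls the extra right-most edge on every cycle of length at least~$3$; only the length-$2$ cycles of $\overp{LT}^n$ (i.e.\ vertices of $T^n$ with a single child) escape. These are then handled separately: conditionally on the looptree with the length-$2$ cycles removed, their positions are multinomial, and a Chernoff bound shows they inflate distances by a factor close to $E_n/(E_n-\degf_n(1))$, which is precisely the correction $\overp{\sigma}_n/\sigma_n$. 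Your scale comparison and label-process conclusion are fine in spirit, but they rest on the flawed uniformity claim; to repair the argument you would effectively have to reproduce the paper's two-step decomposition.
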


\begin{proof}
Let us briefly sketch the proof. 
First, the difference with Proposition~\ref{prop:tension_Holder_looptrees} is that the right-most edge along each cycle is not counted there; the argument in the previous proof allows to control this difference for cycles with length at least $3$ in this looptree, but cycles with length exactly $2$ are really lost when identifying the single child with its parent. 
Therefore, if one first samples $\overp{LT}^n$ and then removes from it all the cycles with length $2$, then the resulting looptree satisfies the claim from Proposition~\ref{prop:tension_Holder_looptrees}, with the scaling factor $\sigma_n$.
One then needs to plug back these cycles. This was done for the trees in~\cite[Lemma~8]{BM14} to which we refer for a detailed argument. Without these $\degf_n(1)$ cycles, we have lost as many vertices, so it remains $E_n-\degf_n(1)$ of them and we distribute these cycles above these vertices, according to a multinomial law with parameters $\degf_n(1)$ and $(E_n-\degf_n(1))^{-1}, \dots, (E_n-\degf_n(1))^{-1}$. Such a distribution is well concentrated around its mean (namely the Chernoff bound applies), which allows to show that the distances are very close to get multiplied by $1+\degf_n(1) / (E_n - \degf_n(1))$, see~\cite[page~303]{BM14} for details.
\end{proof}

Note that for the looptrees $\overp{LT}^n$, the looptree distance is lower bounded by the tree distance, which is not the case for the looptrees $LT^n$ because of the identification of the right-most offspring of each individual with its parent. Then we infer from the preceding result tightness for trees. Precisely, consider the \emph{height process}, defined for every integer $j \in \{0, \dots, E_n\}$ by
\[H^n_j = \#\{i \prec j\} = \#\{0\le i < j : X^n_{i-1} \le \inf_{[i,j]} X^n\},\]
and then linearly interpolated.

\begin{cor}\label{cor:tension_Holder_arbres}
Suppose that $\limsup_{n\to\infty} E_n^{-1} \degf_n(1) < 1$. 
Then for every $\varepsilon > 0$, there exists $K > 0$ such that for every $n$ large enough, 
\[\P\left(\sup_{s \ne t} \frac{|H^n_{E_n s} - H^n_{E_n t}|}{\sigma_n \cdot |t-s|^{1/2-\varepsilon}} \le K\right) \ge 1-\varepsilon.\]
Consequently, the sequence of processes $(\sigma_n^{-1} H^n_{E_n t})_{t \in [0,1]}$ is tight for the uniform topology.
\end{cor}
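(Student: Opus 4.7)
The plan is to deduce this bound on the height process from the analogous bound on graph distances in the CK-style looptrees $\overp{LT}^n$ already established in Corollary~\ref{cor:tension_looptrees_CK}. The key geometric input is that, in $\overp{LT}^n$, every cycle is precisely the star of some internal vertex $v$ of the tree $T^n$ (that is, $v$ together with all its offspring): the cycle corresponding to a vertex with $k$ children has length $k+1$ (or is a doubled edge when $k=1$) and consists of $v$ and its $k$ children linked in order. In particular, each edge of $\overp{LT}^n$ either joins $v$ to one of its children, or joins two consecutive children of $v$, and in either case the two endpoints have tree-heights differing by at most $1$.

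Summing these per-edge height increments along a geodesic realising the graph distance $d_{\overp{LT}^n}(i,j)$ therefore yields, for any two contour positions $i,j$,
\[|H^n_i - H^n_j| \le d_{\overp{LT}^n}(i,j),\]
where we identify the positions on both sides via the natural DFS/contour correspondence between the tree and its looptree (the shifts between the two parametrisations are $O(1)$ and are absorbed after rescaling). Under the hypothesis $\limsup_n E_n^{-1} \degf_n(1) < 1$, the ratio $\overp{\sigma}_n / \sigma_n = E_n / (E_n - \degf_n(1))$ is bounded above by some constant $C$ for all $n$ large enough. Hence on the high-probability event provided by Corollary~\ref{cor:tension_looptrees_CK} on which the rescaled looptree distances are $K$-Hölder with exponent $1/2-\varepsilon$, the above inequality immediately gives the same Hölder control for $\sigma_n^{-1} H^n$ with constant $CK$, which is the announced uniform bound. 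Tightness of $(\sigma_n^{-1} H^n_{E_n\cdot})_n$ then follows from Arzelà--Ascoli combined with the normalisation $H^n_0 = 0$.

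The only substantive ingredient is thus the simple per-edge height estimate above, and the whole point of passing through $\overp{LT}^n$ rather than $LT^n$ is precisely that this estimate fails for the modified looptree $LT^n$: merging each internal vertex with its right-most offspring identifies pairs of tree-vertices at heights differing by $1$, so a single edge of $LT^n$ may encode an arbitrarily long chain of height-$1$ increments. This is exactly why the extra factor $E_n/(E_n-\degf_n(1))$ enters the scaling of Corollary~\ref{cor:tension_looptrees_CK} and why the present statement requires $\degf_n(1)$ not to saturate $E_n$.
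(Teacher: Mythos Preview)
Your proof is correct and follows exactly the approach the paper intends: the paper itself does not write out a proof of this corollary, only remarking just before the statement that ``for the looptrees $\overp{LT}^n$, the looptree distance is lower bounded by the tree distance'', and then inferring tightness of the trees from Corollary~\ref{cor:tension_looptrees_CK}. Your per-edge argument that each edge of $\overp{LT}^n$ joins tree-vertices whose heights differ by at most $1$ is a clean way to justify $|H^n_i - H^n_j| \le d_{\overp{LT}^n}(i,j)$, and the rest (replacing $\overp{\sigma}_n$ by $\sigma_n$ via the hypothesis, and concluding tightness from $H^n_0=0$) is exactly the intended deduction.
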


However, the reader should have in mind that the scaling factor $\sigma_n$ may in general be too large, and the rescaled height process may converge to the null process. Indeed, as shown in~\cite[Proposition~5]{Mar19}, for $U$ independently sampled uniformly at random on $[0,1]$, the height $H^n_{E_n U}$ grows at most like $n/\sigma_n$, and we expect the maximal height of the tree to grow at this rate in most cases. If this is the case, then Corollary~\ref{cor:tension_Holder_arbres} is only useful when $\sigma_n^2$ behaves asymptotically like some constant, say $\sigma^2>0$ times $n$, and in this regime it extends~\cite[Lemma~8]{BM14}.
Let us point out that the maximal height of a leaf can be much larger than $n / \sigma_n$ and  trees are not always tight, as opposed to their looptree versions; see~\cite{Kor15} for examples with subcritical size-conditioned Bienaym\'e--Galton--Watson trees.
The very recent work~\cite[Theorem~8]{BR21} provides a precise tail bound for the maximal height of such trees.

\subsection{A spinal decomposition and its consequences}
\label{ssec:epine}

Lemma~3 in~\cite{Mar19} describes the ancestral lines of finitely many random vertices in the forest $T^n$, from which we deduce here a key lemma which will allow us in Section~\ref{sec:convergence} to determine the geometry of our random (loop)trees and their labels from the behaviour of the {\L}ukasiewicz path.

Let us here abuse notation and identify the vertices of $T^n$ with their instant of visit in depth-first search order, which belongs to $\{1, \dots, E_n\}$. 
Fix $q \ge 1$ and sample $q$ i.i.d. random vertices independent of $T^n$; the set of these vertices and their ancestors inherits a forest structure from $T^n$, let us remove from it its leaves and branchpoints and let us denote by $A^n_q$ the resulting set. The latter is made of a random number $N \in \{1, \dots, 2q-1\}$ of chains of vertices, which we denote respectively by $A^n_{q,1}, \dots, A^n_{q,N}$.
Any individual $i \in A^n_q$ has say, $k^n_i \ge 1$ offspring \emph{in the original forest}, and only one of them, the $j^n_i$'th say, is an ancestor of (or equal to) at least one of the random vertices. In the case $q=1$, with a single random vertex $U_n$ say, the set $A^n_1$ is simply given by $\{i \prec U_n\} = \{i < U_n : X^n_{i-} < X^n_t \text{ for every } t \in (i,U_n)\}$ and each pair $(k^n_i, j^n_i)$ is given by $(\Delta X^n_i, \Delta X^n_i - R^{n, U_n}_i)$, where
\[R^{n,j}_i = \inf_{t \in [i,j]} X^n_t - X^n_{i-}.\]

We let $g$ be a function from $\{(k,j) \in \N^2 : k \ge j \ge 1\}$ to $\R_+$ and define
\[G_n^2 = \sum_{k \ge j \ge 1} \degf_n(k) g(k,j) = \sum_{k \ge 1} \degf_n(k) \sum_{j=1}^k g(k,j).\]
Below, we shall use the notation for any $\delta>0$,
\[G_n^{2,\delta} = \sum_{k \le \delta \sigma_n} \degf_n(k) \sum_{j=1}^k g(k,j)
\qquad\text{and}\qquad
\sigma_n^{2,\delta} = \sum_{k \le \delta \sigma_n} k (k-1) \degf_n(k).\]

\begin{lem}\label{lem:consequence_epine}
Suppose that $g$ satisfies the following property: there exist $c, C > 0$ such that
\[c \min\{j, k-j\} \le g(k,j) \le C (k-1) \qquad\text{for every}\enskip k \ge j \ge 1.\]
For every $q\ge 1$ and every $\varepsilon>0$, we have
\[\lim_{\delta \downarrow 0} \limsup_{n \to \infty}
\P\bigg(\sup_{1 \le m \le N} \bigg| \frac{\sigma^{2,\delta}_n}{G^{2,\delta}_n} \sum_{i \in A^n_{q,m}} \ind{\Delta X^n_i \le \delta \sigma_n} g(k^n_i, j^n_i) - \frac{\sigma^{2,\delta}_n}{n} \#A^n_{q,m} \bigg| > \varepsilon \sqrt{\sigma^{2,\delta}_n}\bigg) 
= 0,\]
as soon as $\sigma_n\to\infty$ and $\sigma_n^{-1} \varrho_n$ is uniformly bounded above.
\end{lem}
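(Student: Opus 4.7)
The plan is to invoke the spinal decomposition of \cite[Lemma~3]{Mar19} to describe the joint law of the pairs $(k^n_i, j^n_i)_{i \in A^n_q}$ along the skeleton, and then to establish the stated deviation bound chain-by-chain via a second moment estimate; a union bound over the at most $2q-1$ chains takes care of the supremum over $m$.

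Conditionally on the combinatorial skeleton of $A^n_q$ (the number and lengths of its chains), the cited lemma realises the pairs $(k^n_i, j^n_i)_{i \in A^n_q}$ as a sample without replacement from the multiset of size $n=\sum_k k\, \degf_n(k)$ in which every pair $(k,j)$ with $1\le j\le k$ has multiplicity $\degf_n(k)$. Under this law, a single term $\ind{k\le \delta\sigma_n}\, g(k,j)$ has expectation $G_n^{2,\delta}/n$, so along each chain $A^n_{q,m}$ the sum
\[
S_m \coloneqq \sum_{i\in A^n_{q,m}} \ind{\Delta X^n_i \le \delta\sigma_n}\, g(k^n_i, j^n_i)
\]
has mean $\#A^n_{q,m} \cdot G_n^{2,\delta}/n$, which matches exactly the centring in the statement. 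Moreover the bounds on $g$, together with $(k-1)^2 \le k(k-1)$ and $\sum_{j=1}^k \min(j,k-j) \ge k(k-1)/4$ (for $k\ge 2$), give $G_n^{2,\delta} \asymp \sigma_n^{2,\delta}$ up to constants depending only on $c,C$, so that the ratio $\sigma_n^{2,\delta}/G_n^{2,\delta}$ is uniformly bounded.

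The crux of the argument is a variance bound. Since sampling without replacement has variance dominated by that of i.i.d.\ sampling, I would estimate
\[
\mathrm{Var}(S_m \mid \#A^n_{q,m}) \;\le\; \#A^n_{q,m} \cdot \frac{C^2}{n} \sum_{k\le \delta\sigma_n} k(k-1)^2 \degf_n(k) \;\le\; \#A^n_{q,m} \cdot \frac{C^2 \delta\sigma_n}{n}\, \sigma_n^{2,\delta},
\]
using $g(k,j)^2 \le C^2(k-1)^2$ and factoring one power of $k\le\delta\sigma_n$ out. Chebyshev's inequality and the comparability $G_n^{2,\delta}\asymp \sigma_n^{2,\delta}$ then bound the probability that $(\sigma_n^{2,\delta}/G_n^{2,\delta}) S_m$ deviates from $(\sigma_n^{2,\delta}/n) \#A^n_{q,m}$ by more than $\varepsilon\sqrt{\sigma_n^{2,\delta}}$ by a constant multiple of $\#A^n_{q,m} \cdot \delta\sigma_n / (n\varepsilon^2)$.

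It remains to argue that $\#A^n_{q,m} = O_\P(n/\sigma_n)$: each chain is contained in one of the $q$ ancestral lines of the sampled vertices, and the height of a uniformly random vertex in $T^n$ is of that order in probability under the hypotheses $\sigma_n\to\infty$ and $\varrho_n = O(\sigma_n)$. This follows from the tightness of $(\sigma_n^{-1} X^n_{E_n t})_{t\in[0,1]}$ underlying Proposition~\ref{prop:tension_Holder_looptrees}, since the height of a vertex is bounded by the number of ancestral records of its dual walk, itself tight after rescaling by $\sigma_n/n$. Plugging back, the deviation probability is of order $\delta/\varepsilon^2$, which tends to $0$ after first letting $n\to\infty$ then $\delta\downarrow 0$. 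The main technical subtlety I foresee is that the $N$ chains are not independent and their lengths are themselves random; however the bounded cardinality $N\le 2q-1$ means that the coupling of \cite[Lemma~3]{Mar19} can be applied separately to each chain after conditioning on the skeleton, and the remaining correlations contribute only a bounded factor to the final estimate.
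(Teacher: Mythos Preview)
Your approach follows the same overall plan as the paper's: reduce via the spinal decomposition of \cite[Lemma~3]{Mar19} to a tractable urn model, control each chain by Chebyshev, and use $\#A^n_q = O_\P(n/\sigma_n)$. However, there is a genuine gap in your reading of the spinal decomposition.

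\cite[Lemma~3]{Mar19} does \emph{not} say that, conditionally on the skeleton, the pairs $(k^n_i,j^n_i)$ are a sample without replacement from your multiset. It only provides, on the good event $\{\#A^n_q \le xn/\sigma_n,\ \sum_i(k^n_i-1)\le x\sigma_n\}$, an \emph{upper bound} of the form $C'\,(\sigma_n/n)^{q+b}$ times the probability of the same configuration under the $(K^n_i,J^n_i)$ model (where the $K^n_i$'s are a uniform permutation of the size-biased multiset and the $J^n_i$'s are conditionally independent uniforms on $\{1,\dots,K^n_i\}$). Consequently you are not entitled to compute $\mathrm{Var}(S_m\mid\#A^n_{q,m})$ directly in the tree model. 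The paper's proof instead transfers the bad event through this density factor to the $(K,J)$ model, applies Chebyshev there (its Lemma~\ref{lem:calculs_moments_K_J}, which yields essentially your variance bound), and then \emph{sums} over all admissible skeletons $(b,h_1,\dots,h_{q+b})$ with $\sum_m h_m\le xn/\sigma_n$; the prefactor $(\sigma_n/n)^{q+b}$ exactly cancels the volume of this sum, leaving a bound of the shape $c_1\e^{-c_2 x}+O(x^{2q+1}\delta/\varepsilon^2)$, and one concludes by first taking $x$ large and then $\delta$ small.

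Secondarily, your justification of $\#A^n_{q,m}=O_\P(n/\sigma_n)$ via Proposition~\ref{prop:tension_Holder_looptrees} is misplaced: that result controls looptree distances, not tree heights, and ``number of records of the dual walk is tight at scale $n/\sigma_n$'' is precisely the nontrivial statement one needs. The paper invokes instead the explicit tail bounds of \cite[Propositions~4--5 and Remark~1]{Mar19}, which also furnish the good event above.
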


Note that the function $g(k,j) = k-j$ satisfies the assumptions and moreover $G^{2,\delta}_n = \sigma^{2,\delta}_n/2$ and further in the case $q=N=1$ of a single random vertex $U_n$, the sum in the lemma equals
\[C^{n,\delta}_{U_n} \coloneqq \sum_{i \prec U_n} R^{n,U_n}_i \ind{\Delta X^n_i \le \delta \sigma_n}\]
where the notation is reminiscent of Lemma~\ref{lem:convergence_partie_continue}. Finally $\#A^n_1 = H^n_{U_n}$ is the height of the random vertex. Therefore, applying the lemma twice, we infer that for every $\varepsilon>0$, we have both
\begin{equation}\label{eq:consequence_epine_hauteur}
\lim_{\delta \downarrow 0} \limsup_{n \to \infty}
\P\bigg(\bigg| C^{n,\delta}_{U_n} - \frac{\sigma^{2,\delta}_n}{2 n} H^n_{U_n} \bigg| > \varepsilon \sqrt{\sigma^{2,\delta}_n}\bigg) 
= 0,
\end{equation}
and then, for any function $g$ as in the lemma,
\begin{equation}\label{eq:consequence_epine_Luka}
\lim_{\delta \downarrow 0} \limsup_{n \to \infty}
\P\bigg(\bigg| \frac{\sigma^{2,\delta}_n}{2 G^{2,\delta}_n} \sum_{i \prec U_n} \ind{\Delta X^n_i \le \delta \sigma_n} g(\Delta X^n_i, \Delta X^n_i-R^{n,U_n}_i) - C^{n,\delta}_{U_n} \bigg| > \varepsilon \sqrt{\sigma^{2,\delta}_n}\bigg) 
= 0.
\end{equation}
According to Lemma~\ref{lem:convergence_partie_continue} and Remark~\ref{rem:convergence_partie_continue_Luka}, if the {\L}ukasiewicz path, rescaled by a factor $\sigma_n^{-1}$, converges in distribution to some limit path, then $\sigma_n^{-1} C^{n,\delta}$ converges in distribution towards the continuous part when first $n\to\infty$ and then $\delta\downarrow0$. Along these limits the ratio $\sigma^{2,\delta}_n/\sigma^2_n$ will converge in our main results; therefore Lemma~\ref{lem:consequence_epine} will allow us to derive in Section~\ref{sec:convergence} the convergence of a function averaged along the ancestral line of random vertices. For example, in Section~\ref{ssec:convergence_looptrees_aleatoires}, we apply it to the function $g(k,j) = \min\{j, k-j\}$, in which case $G^{2,\delta}_n$ equals the contribution to the looptree spanned by random vertices of the loops shorter than $\delta \sigma_n$.

For every $k \ge 1$, one easily calculates
\[\sum_{j=1}^k \min\{j, k-j\} = \frac{k^2-\ind{k \in 2\Z+1}}{2} \ge \frac{k(k-1)}{2}.\]
Then the bounds on $g$ imply that
\begin{equation}\label{eq:bornes_G_sigma}
\frac{c}{2} \sigma_n^{2,\delta} \le G_n^{2,\delta} \le C \sigma_n^{2,\delta}
.\end{equation}

We shall need the following straightforward calculations. 
Let $(K^n_{i})_{1 \le i \le n}$ denote a uniform random permutation of the finite sequence consisting of $k \degf_n(k)$ occurrences of the integer $k$ for every $k \ge 1$, and conditionally on this vector, let $(J^n_{i})_{1 \le i \le n}$ be independent random variables, such that each $J^n_{i}$ has the uniform distribution in $\{1, \dots, K^n_{i}\}$.

\begin{lem}\label{lem:calculs_moments_K_J}
Let $g$ be as in Lemma~\ref{lem:consequence_epine}, then for every $\delta>0$, $n \ge 1$, and $h \ge 1$, it holds,
\[\Es{\sum_{i \le h} g(K^n_{i}, J^n_{i}) \ind{K^n_{i} \le \delta \sigma_n}} 
= h \frac{G_n^{2,\delta}}{n}
\qquad\text{and}\qquad
\Var\left(\sum_{i \le h} g(K^n_{i}, J^n_{i}) \ind{K^n_{i} \le \delta \sigma_n}\right)
\le 2 C h \delta \sigma_n \frac{G_n^{2,\delta}}{n}.\]
\end{lem}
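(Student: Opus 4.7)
The plan is to exploit the two structural features of the pair $(K^n,J^n)$: exchangeability of $(K^n_i)_{i \le n}$, and conditional independence of the $J^n_i$'s given $K^n$. Write $Y_i = g(K^n_i,J^n_i)\ind{K^n_i \le \delta \sigma_n}$, so that both assertions concern $\sum_{i \le h} Y_i$.

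For the mean, I would use that the uniform permutation structure gives $\P(K^n_i = k) = k\degf_n(k)/n$ for each $i$ and $k \ge 1$. Conditioning on $K^n_i = k$ and averaging over $J^n_i$ (uniform on $\{1,\dots,k\}$) produces
\[
\E[Y_i] \;=\; \sum_{k \le \delta \sigma_n} \frac{k\degf_n(k)}{n}\cdot\frac{1}{k}\sum_{j=1}^k g(k,j) \;=\; \frac{G_n^{2,\delta}}{n}.
\]
Summing over $i \le h$ is then immediate.

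For the variance, I would split it into diagonal and off-diagonal terms. The key observation for the off-diagonal terms is the following: setting $f(k) = k^{-1}\sum_{j=1}^k g(k,j)\ind{k \le \delta \sigma_n}$, the sum $\sum_{i=1}^n f(K^n_i) = \sum_k k\degf_n(k)f(k) = G_n^{2,\delta}$ is \emph{deterministic}, so it has zero variance. By conditional independence of the $J^n_i$'s given $K^n$, one has $\Cov(Y_i,Y_j) = \Cov(f(K^n_i),f(K^n_j))$ for $i \ne j$, and exchangeability then forces $\Cov(f(K^n_i),f(K^n_j)) = -\Var(f(K^n_1))/(n-1) \le 0$. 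Hence the off-diagonal contribution is non-positive and can simply be dropped.

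It then remains to bound $\sum_{i \le h}\Var(Y_i)$. On the support of the indicator one has $g(k,j) \le C(k-1) \le C\delta\sigma_n$, so $g(k,j)^2 \le C\delta\sigma_n\cdot g(k,j)$, which together with the first moment computation gives $\E[Y_i^2] \le C\delta\sigma_n\cdot G_n^{2,\delta}/n$. Summing yields the claimed variance bound (with a harmless factor of $2$ left over to spare). No step should really be an obstacle here; the only non-mechanical ingredient is the trick that a deterministic sum forces pairwise negative correlation, which reduces the variance estimate to the purely diagonal bound.
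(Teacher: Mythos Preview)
Your proof is correct and uses the same ingredients as the paper: the size-biased marginal of $K^n_i$, conditional independence of the $J^n_i$'s, and negative correlation among the $f(K^n_i)$'s. The paper organizes the variance bound via the law of total variance (splitting off the $J$-randomness from the $K$-randomness) and bounds each piece by $C h \delta \sigma_n G_n^{2,\delta}/n$ separately, whereas your diagonal/off-diagonal split, together with the observation that $\Cov(Y_i,Y_j)=\Cov(f(K^n_i),f(K^n_j))\le 0$ for $i\ne j$, handles everything in one stroke and actually saves the factor~$2$.
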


\begin{proof}
Let us first denote for every $k$ by $J_k$ a random variable with the uniform distribution on $\{1, \dots, k\}$ and let us denote by $K$ a random variable with the size-biased law $(k \degf_n(k)/n)_{k \ge 1}$. Then by bounding the variance by the second moment, we have
\[\E[g(k,J_k)] = \sum_{j=1}^k \frac{g(k,j)}{k}
\qquad\text{and}\qquad
\Var(g(k,J_k)) 
\le \sum_{j=1}^k \frac{g(k,j)^2}{k}
.\]
Further
\[\Es{\sum_{j=1}^K \frac{g(K,j)}{K} \ind{K \le \delta \sigma_n}}
= \sum_{k \le \delta \sigma_n} \sum_{j=1}^k \frac{g(k,j)}{k} \frac{k \degf_n(k)}{n}
= \frac{G_n^{2,\delta}}{n}
.\]
Also, since $g(k,j) \le C k$ for every pair $j \le k$, then
\[\Es{\sum_{j=1}^K \frac{g(K,j)^2}{K} \ind{K \le \delta \sigma_n}}
\le C \delta \sigma_n \Es{\sum_{j=1}^K \frac{g(K,j)}{K} \ind{K \le \delta \sigma_n}}
\le C \delta \sigma_n \frac{G_n^{2,\delta}}{n}
,\]
and similarly, bounding again the variance by the second moment and then the ratios in one of the two sums by $C$,
\[\Var\left(\sum_{j=1}^K \frac{g(K,j)}{K} \ind{K \le \delta \sigma_n}\right)
\le \Es{C K \sum_{j=1}^K \frac{g(K,j)}{K} \ind{K \le \delta \sigma_n}}
\le C \delta \sigma_n \frac{G_n^{2,\delta}}{n}
.\]
Let $K^{n,\delta}_{i} = K^n_{i} \ind{K^n_{i} \le \delta \sigma_n}$ and similarly $J^{n,\delta}_{i} = J^n_{i} \ind{K^n_{i} \le \delta \sigma_n}$ in order to lighten the notation; let us also set $g(0,0) = 0$. Then we infer first that
\[\Es{\sum_{i \le h} g(K^{n,\delta}_{i}, J^{n,\delta}_{i})} 
= \Es{\Esc{\sum_{i \le h} g(K^{n,\delta}_{i}, J^{n,\delta}_{i})}{(K^n_{i})_{1 \le i \le h}}} 
= h \frac{G_n^{2,\delta}}{n}.\]
Furthermore, the $J^{n,\delta}_{i}$'s are conditionally independent and it is well known that the $K^n_{i}$'s are negatively correlated, and thus so are random variables of the form $f(K^n_{i})$ with a given function $f$; in particular the variance of their sum is bounded above by the sum of their variances. Consequently,
\begin{align*}
\Var\left(\sum_{i \le h} g(K^{n,\delta}_{i}, J^{n,\delta}_{i})\right)
&= \Var\left(\Esc{\sum_{i \le h} g(K^{n,\delta}_{i}, J^{n,\delta}_{i})}{(K^n_{i})_{i \le h}}\right) + \Es{\Var\left(\sum_{i \le h} g(K^{n,\delta}_{i}, J^{n,\delta}_{i}) \;\middle|\; (K^n_{i})_{i \le h}\right)}
\\
&\le \Var\left(\sum_{i \le h} \sum_{j=1}^{K^n_{i}} \frac{g(K^n_{i},j)}{K^n_{i}} \ind{K^n_{i} \le \delta \sigma_n}\right) + \Es{\sum_{i \le h} \sum_{j=1}^{K^n_{i}} \frac{g(K^n_{i},j)^2}{K^n_{i}} \ind{K^n_{i} \le \delta \sigma_n}}
\\
&\le h \left(\Var\left(\sum_{j=1}^{K^n_{1}} \frac{g(K^n_{1},j)}{K^n_{1}} \ind{K^n_{1} \le \delta \sigma_n}\right) + \Es{\sum_{j=1}^{K^n_{1}} \frac{g(K^n_{1},j)^2}{K^n_{1}} \ind{K^n_{1} \le \delta \sigma_n}}\right)
\\
&\le 2 C h \delta \sigma_n \frac{G_n^{2,\delta}}{n}
,\end{align*}
and the proof is complete.
\end{proof}

\begin{proof}[Proof of Lemma~\ref{lem:consequence_epine}]
Let us translate the spinal decomposition for $q$ uniformly random vertices as stated in~\cite[Lemma~3]{Mar19}.
First according to Proposition~4, Proposition~5, and Remark~1 in~\cite{Mar19}, as well as a union bound, if $\sigma_n^{-1} \varrho_n$ is bounded above, then there exist constants $c_1, c_2 > 0$, that depend on $q$ and on the sequences $(\varrho_n)_n$ and $(\Degf_n)_n$, such that for any $n \ge 1$ and any $x \in [1, \infty)$,
\[\P\bigg(\#A^n_q \le x \frac{n}{\sigma_n} \enskip\text{and}\enskip \sum_{i \in A^n_q} (k^n_i - 1) \le x \sigma_n\bigg) \ge 1 - c_1 \e^{-c_2 x}.\]
Since we assume that $\sigma_n\to\infty$, then the bound on $\#A^n_q$ implies in particular that none of the $q$ random vertices is an ancestor of another.
Recall the notation $(K^n_{i})_{1 \le i \le n}$ for a uniform random permutation of the finite sequence consisting of $k \degf_n(k)$ occurrences of the integer $k$ for every $k \ge 1$, and conditionally on this vector, $(J^n_{i})_{1 \le i \le n}$ are independent random variables, such that each $J^n_{i}$ has the uniform distribution in $\{1, \dots, K^n_{i}\}$.
It is proved in~\cite[Lemma~3]{Mar19} that on the above event, for any $b \in \{0, \dots, q-1\}$, 
any $h_0, \dots, h_{b+q}$ with $h_0=0$ and $h_1 + \dots + h_{b+q} \le x n/\sigma_n$,
and any integers $k_{m,i} \ge j_{m,i} \ge 1$ for every $m \in \{1, \dots, q+b\}$ and $i \in \{1, \dots, h_m\}$, 
the probability that the forest reduced to the $q$ random vertices and their ancestors possesses $b$ branch-points and that for every $m$, we have $\#A^n_{q,m} = h_m$ and $(k^n_{m,i}, j^n_{m,i})_{1 \le i \le \#A^n_{q,m}} = (k_{m,i}, j_{m,i})_{1 \le i \le h_m}$ is upper bounded by
\begin{equation}\label{eq:borne_epine}
C' \left(\frac{\sigma_n}{n}\right)^{q+b} \Pr{\bigcap_{m = 1}^{q+b} \bigcap_{i = h_{m-1}+1}^{h_m} \left\{(K^n_{i}, J^n_{i}) = (k_{m,i-h_{m-1}}, j_{m,i-h_{m-1}})\right\}},
\end{equation}
where $C'$ depends on the sequences $(\varrho_n)_n$ and $(\Degf_n)_n$, as well as on $x$ and $q$.

Fix $\varepsilon > 0$, $x\ge 1$, and $h \ge 1$. We deduce from Lemma~\ref{lem:calculs_moments_K_J} and Chebychev's inequality, and then the bound~\eqref{eq:bornes_G_sigma} that
\[\P\bigg(\bigg| \frac{\sigma_n^{2,\delta}}{G_n^{2,\delta}} \sum_{\substack{1 \le i \le h \\ K^n_{i} \le \delta \sigma_n}} g(K^n_{i}, J^n_{i}) - \frac{\sigma_n^{2,\delta}}{n} h\bigg| > \varepsilon \sqrt{\sigma_n^{2,\delta}}\bigg)
\le \left(\frac{\sigma_n^{2,\delta}}{G_n^{2,\delta}}\right)^2 \frac{2 C h \delta \sigma_n G_n^{2,\delta}}{n \varepsilon^2 \sigma_n^{2,\delta}}
\le \frac{4 C h \sigma_n \delta}{c \varepsilon^2 n}
.\]
We infer by first union bounds and then crude bounds that for any $\varepsilon > 0$, $x \ge 1$, and $\delta > 0$,
\begin{align*}
&\P\bigg(\sup_{m \le N} \bigg| \frac{\sigma^{2,\delta}_n}{G^{2,\delta}_n} \sum_{i \in A^n_{q,m}} \ind{\Delta X^n_i \le \delta \sigma_n} g(k^n_i, j^n_i) - \frac{\sigma^{2,\delta}_n}{n} \#A^n_{q,m} \bigg| > \varepsilon \sqrt{\sigma^{2,\delta}_n}\bigg) 
\\
&\le c_1 \e^{-c_2 x} + C' \sum_{b=0}^{q-1} \left(\frac{\sigma_n}{n}\right)^{q+b} \sum_{h_1+\dots+h_{q+b} \le x n/\sigma_n}
\sum_{m \le q+b} \frac{4 C h_m \delta \sigma_n}{c \varepsilon^2 n}
\\
&\le c_1 \e^{-c_2 x} + C' q x^{2q+1} \frac{4 C \delta}{c \varepsilon^2}
.\end{align*}
Given $\varepsilon>0$ arbitrary, one can choose $x \ge 1$ large enough and then $\delta>0$ small enough so that the last line is arbitrarily small when $n$ is large. 
\end{proof}

\section{Random paths with exchangeable increments}
\label{sec:echangeable}

Lemma~\ref{lem:consequence_epine} will allow us in the next section to relate geometric information about random trees, looptrees, and their labels to their {\L}ukasiewicz paths. In this section, we provide more information on the latter and prove some invariance principles.
Let henceforth $X^n$ be the {\L}ukasiewicz path of a looptree sampled uniformly at random with cycle lengths $\varrho_n$ and $\Degf_n$.
Then $X^n$ is sampled uniformly at random in the set of c\`adl\`ag paths on $[0, E_n]$ such that:
\begin{enumerate}
\item $X^n_0 = \varrho_n$ and $\#\{1 \le i \le E_n : \Delta X^n_i = k\} = \degf_n(k)$ for every $k \ge 0$,
\item $X^n_t = X^n_{\floor{t}} - (t-\floor{t})$ for every $t \in [0, E_n]$,
\item $X^n_t > 0$ for every $t \in [0, E_n)$.
\end{enumerate}
As previously, we shall extend $X^n$ to $[-E_n, E_n]$ by setting $X^n_t = 0$ for every $t \in [-E_n, 0)$.
Note that 
\[X^n_{E_n} 
= X^n_0 + \sum_{i=1}^{E_n} \Delta X^n_i - E_n
= \varrho_n + \sum_{i=1}^n \degf_{n,i} - E_n
= 0.\]
A classical, simple way to sample such a random path is to first remove the positivity constraint and then make a conjugation operation also known as cyclic shift or Vervaat transform, as we next recall.

\subsection{The Vervaat transform}

Let us first introduce a variation of the classical conjugation pf paths, also known as Vervaat transform. Our modification is chosen so that the shifted path may have a different value at the starting point compared to the original one, which will be needed in a finite variation regime.

Fix $T > 0$, let $Y$ be a c\`adl\`ag path on $[-T,T]$ with no negative jump, with $Y_t = 0$ for every $t<0$, and let $r \in [0, T]$. Consider the shift operation $\vartheta_r$ defined for every $t \in [0,T]$ by
\begin{equation}\label{eq:shift_fonctions}
(\vartheta_r Y)_t = 
\begin{cases}
Y_{t+r} - Y_{r-} + Y_0 &\text{if } t+r \le T,
\\
Y_{(t+r-)-T} - Y_{r-} + Y_T &\text{if } t+r > T.
\end{cases}
\end{equation}
Set also $(\vartheta_r Y)_t = 0$ for $t \in [-T, 0)$.
One checks that this path c\`adl\`ag; it is obtained by swapping the parts $(Y_s ; 0 \le s \le r)$ and $(Y_s ; r \le s \le T)$ while keeping the value at the right edge and adding the value of the jump at time $r$ to the starting point: $(\vartheta_r Y)_T = Y_T$ and $(\vartheta_r Y)_0 = Y_0 + \Delta Y_r$.
The properties of the Skorokhod's topology immediately imply that for any fixed path $Y$, the function $r \mapsto \vartheta_rY$ is left-continuous; it is right-continuous at $r$ as soon $Y$ is continuous at $r$. 
More generally, in this case, since $Y$ does not vary much in a neighborhood of $r$, then neither does a function close to $Y$ in the Skorokhod's topology.
We leave the proof of the following lemma to the reader.

\begin{lem}\label{lem:shift_continu}
Let $Y$ be a path as above and $r \in [0,T]$ be such that $Y$ is continuous at $r$, then the shift function $\vartheta$ is continuous at the pair $(Y,r)$.
\end{lem}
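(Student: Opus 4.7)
My plan is to show the Skorokhod convergence $\vartheta_{r_n} Y_n \to \vartheta_r Y$ whenever $Y_n \to Y$ in the Skorokhod topology on $\D([-T,T],\R)$ and $r_n \to r$ with $Y$ continuous at $r$. This will be done by constructing explicit time changes $\mu_n$ of $[-T,T]$ such that $\vartheta_{r_n} Y_n \circ \mu_n \to \vartheta_r Y$ uniformly.

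First, I would use the standard characterisation of Skorokhod convergence to pick increasing homeomorphisms $\lambda_n$ of $[-T,T]$ fixing $-T$, $0$ and $T$ (fixing $0$ is allowed because both $Y$ and $Y_n$ vanish on $[-T,0)$) such that $\lambda_n \to \mathrm{id}$ and $Y_n \circ \lambda_n \to Y$ uniformly. The delicate preparatory step, which I expect to be the main obstacle, is to perturb $\lambda_n$ in an arbitrarily small neighbourhood of $r$ into $\tilde\lambda_n$ satisfying in addition $\tilde\lambda_n(r) = r_n$, without destroying uniform convergence. This step uses crucially that $Y$ is continuous at $r$: for any $\varepsilon > 0$ the oscillation of $Y$ on a small window around $r$ is $<\varepsilon$, which combined with $Y_n \circ \lambda_n \to Y$ uniformly implies that the oscillation of $Y_n$ on a corresponding window around $r_n$ is $<2\varepsilon$ eventually, so the local alteration of $\lambda_n$ costs at most $3\varepsilon$ in sup norm.

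Next, I would define $\mu_n : [-T, T] \to [-T, T]$ by
\[
\mu_n(t) =
\begin{cases}
\tilde\lambda_n(t) & \text{if } t \in [-T, 0],\\
\tilde\lambda_n(t + r) - r_n & \text{if } t \in [0, T - r],\\
\tilde\lambda_n(t + r - T) + (T - r_n) & \text{if } t \in [T - r, T].
\end{cases}
\]
The identities $\tilde\lambda_n(0) = 0$, $\tilde\lambda_n(r) = r_n$ and $\tilde\lambda_n(T) = T$ guarantee that $\mu_n$ is a continuous strictly increasing bijection of $[-T,T]$, that it maps the seam $T - r$ of $\vartheta_r Y$ onto the seam $T - r_n$ of $\vartheta_{r_n} Y_n$, and that $\mu_n \to \mathrm{id}$ uniformly (since $r_n \to r$ and $\tilde\lambda_n \to \mathrm{id}$).

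Finally, a direct computation plugging $\mu_n$ into the two branches of \eqref{eq:shift_fonctions} yields on $[0, T - r]$ the identity $(\vartheta_{r_n} Y_n)(\mu_n(t)) = Y_n(\tilde\lambda_n(t + r)) - Y_n(r_n -) + Y_n(0)$ and an analogous formula on $[T - r, T]$. Each of the three summands then converges uniformly to the corresponding summand defining $\vartheta_r Y$: the first by uniform convergence of $Y_n \circ \tilde\lambda_n$ to $Y$; the third because $\tilde\lambda_n$ fixes $0$ and $T$; and the second because Skorokhod convergence together with continuity of $Y$ at $r$ implies $Y_n(s_n) \to Y(r)$ and $Y_n(s_n -) \to Y(r)$ for any sequence $s_n \to r$. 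Hence $\vartheta_{r_n} Y_n \circ \mu_n \to \vartheta_r Y$ uniformly on $[-T,T]$, proving Skorokhod convergence.
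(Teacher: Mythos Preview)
Your proof is correct and is precisely the natural argument; the paper itself does not give a proof but leaves the lemma to the reader after the one-line hint that ``since $Y$ does not vary much in a neighbourhood of $r$, then neither does a function close to $Y$ in the Skorokhod topology.'' Your construction of $\tilde\lambda_n$ with $\tilde\lambda_n(r)=r_n$ (exploiting continuity of $Y$ at $r$ to control the cost of the local perturbation) and the piecewise definition of $\mu_n$ matching the seam $T-r$ to $T-r_n$ is exactly what that hint unpacks to. Two minor points worth noting explicitly: (i) the assertion that $\lambda_n$ may be taken to fix $0$ is justified because the paths are identically $0$ on $[-T,0)$, so Skorokhod convergence on $[-T,T]$ reduces to Skorokhod convergence on $[0,T]$; (ii) on the branch $(T-r,T]$ the formula \eqref{eq:shift_fonctions} involves left limits $Y_{(t+r-T)-}$, and uniform convergence $Y_n\circ\tilde\lambda_n\to Y$ indeed transfers to the left-continuous versions since $\tilde\lambda_n$ is an increasing homeomorphism.
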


Now let $\varrho \ge 0$ and suppose that $Y_0 = \varrho$ and $Y_T = 0$.
Let $u \in [0, \varrho]$ and define
\begin{equation}\label{eq:temps_passage}
I_u = \inf\left\{t \in [0,T] : Y_{t-} = u + \inf_{s \in [0,T]} Y_s\right\},
\end{equation}
to which we shall refer by abuse of language as the first passage time of $Y$ at level $u + \inf Y$, although if $Y$ makes a positive jump at time $I_u$ this is incorrect.
Then the \emph{Vervaat transform} of $Y$ at height $u \in [0, \varrho]$ is the c\`adl\`ag path defined on $[-T,T]$ by
\begin{equation}\label{eq:Vervaat}
V_uY = \vartheta_{I_u} Y.
\end{equation}
For every $u \in [0, \varrho]$ the path $V_uY$ satisfies $(V_uY)_0 = \varrho + \Delta Y_{I_u}$ and $(V_uY)_T = 0 = Y_T$, and furthermore $(V_uY)_t > 0$ for every $t \in (0,T)$. 
We say that $Y$ realises a weak local minimum at a time $r$ if there exists $\varepsilon > 0$ such that $Y_s \ge Y_{r-}$ for every $s \in [r-\varepsilon, r+\varepsilon] \cap [0,T]$; if moreover the inequality is strict except maybe for $s=r$, we say that $Y$ realises a strict local minimum at a time $r$. Note that this may include the case when $Y$ has a positive jump at time $r$. Again the denomination is slightly incorrect (one should in fact replace $Y$ by its left-continuous version).

\begin{lem}\label{lem:Vervaat_continue}
Let $Y$ be a c\`adl\`ag path on $[-T,T]$, null on $[-T, 0)$, with no negative jump, and with $Y_0 = \varrho \ge 0$ and $Y_T = 0$.
\begin{enumerate}
\item If there exists a unique $I = I_0 \in [0,T]$ such that $Y_{I-} = \inf Y$, then the function $V_0$ is continuous at $Y$.
\item If $u \in (0, \varrho)$ and $Y$ does not realise a weak local minimum at time $I_u$, then the function $V : (Y,u) \mapsto V_uY$ is continuous at the pair $(Y,u)$. Moreover the shifted path satisfies $(V_uY)_0 = Y_0 = \varrho$.
\end{enumerate}
\end{lem}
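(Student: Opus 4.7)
The overall plan is to reduce both parts to two ingredients: on one hand, Lemma~\ref{lem:shift_continu}, which gives continuity of the shift $\vartheta$ at any pair $(Y,r)$ for which $Y$ is continuous at $r$; on the other hand, convergence of the first-passage times $I_{u_n}(Y_n) \to I_u(Y)$ whenever $(Y_n, u_n) \to (Y,u)$ in the Skorokhod topology. Combining these yields $V_{u_n} Y_n = \vartheta_{I_{u_n}(Y_n)} Y_n \to \vartheta_{I_u(Y)} Y = V_u Y$ as required.

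For part (ii), I would first argue that the hypothesis forces $Y$ to be continuous at $I_u$, which in particular yields $(V_u Y)_0 = Y_0 + \Delta Y_{I_u} = \varrho$. By the definition of $I_u$, the left limits $t \mapsto Y_{t-}$ are strictly above the level $u + \inf Y$ on $[0, I_u)$, and since $Y$ has no negative jumps, the path $Y$ itself is strictly above that level there. If $Y$ jumped at $I_u$, then $Y_{I_u} > Y_{I_u -} = u + \inf Y$, and the c\`adl\`ag property combined with the absence of negative jumps would force $Y_s \ge Y_{I_u}$ on a small right-neighbourhood of $I_u$; together with the left inequality this produces a weak local minimum at $I_u$, contradicting the hypothesis.

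For the first-passage-time convergence in part (ii), I would use two-sided estimates, exploiting the standard fact that $\inf Y_n \to \inf Y$ under Skorokhod convergence. For the upper bound, given $\delta > 0$ use the hypothesis to pick $s \in (I_u, I_u + \delta)$ with $Y_s < u + \inf Y - 2\eta$ for some $\eta > 0$; Skorokhod convergence then yields $s_n \to s$ with $Y_n(s_n) < u_n + \inf Y_n - \eta$ for $n$ large, whence $I_{u_n}(Y_n) \le s_n$. For the lower bound, on the compact interval $[0, I_u - \delta]$ the function $t \mapsto Y_{t-}$ is bounded below by $u + \inf Y + 2\eta$ for some $\eta > 0$ (using continuity of $Y$ at $I_u$ to handle the right endpoint), and Skorokhod convergence transfers this bound to $Y_n$ against the level $u_n + \inf Y_n$ eventually, forcing $I_{u_n}(Y_n) \ge I_u - \delta$. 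Letting $\delta \downarrow 0$ concludes.

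Part (i) follows the same scheme with $u = 0$ fixed and uniqueness of $I_0$ replacing the \emph{no weak local minimum} assumption: for any accumulation point $I'$ of $I_0(Y_n)$, standard Skorokhod arguments combined with $Y_n(I_0(Y_n)-) = \inf Y_n \to \inf Y$ show that $\inf Y$ coincides with one of $Y(I'-)$ or $Y(I')$; the absence of negative jumps together with the minimality of $\inf Y$ then forces $Y_{I'-} = \inf Y$, so uniqueness gives $I' = I_0$. The main obstacle here is that $Y$ may have a genuine jump at $I_0$, so Lemma~\ref{lem:shift_continu} does not apply verbatim. I would circumvent this by observing that the shift $\vartheta_{I_0(Y_n)}$ is precisely designed so that the jump of $Y_n$ at $I_0(Y_n)$ is absorbed into the initial value $(\vartheta_{I_0(Y_n)} Y_n)_0 = Y_n(0) + \Delta Y_n(I_0(Y_n))$; by Skorokhod convergence the jump of $Y_n$ at $I_0(Y_n)$ tracks the jump of $Y$ at $I_0$, so this initial value converges to $(\vartheta_{I_0}Y)_0$, and a decomposition of the shifted path on $[0, T - I_0 - \varepsilon]$ and $[T - I_0 + \varepsilon, T]$, where the shift merely translates the jumps of $Y_n$ to nearby jumps of the same size, then delivers the Skorokhod convergence.
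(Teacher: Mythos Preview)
Your treatment of part~(ii) is correct and follows the same route as the paper: establish continuity of $Y$ at $I_u$ (hence $(V_uY)_0=\varrho$), prove $I_{u_n}(Y_n)\to I_u(Y)$ from the no-weak-local-minimum hypothesis, and conclude via Lemma~\ref{lem:shift_continu}. You supply considerably more detail than the paper, which simply asserts these facts.

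For part~(i), your argument that $I_0(Y_n)\to I_0(Y)$ is fine, but the next step contains a genuine gap. You claim that ``the jump of $Y_n$ at $I_0(Y_n)$ tracks the jump of $Y$ at $I_0$'', i.e.\ that $\Delta Y_n(I_0^n)\to\Delta Y(I_0)$, and use this to deduce convergence of the initial value of the shifted path. This is false in general. For a concrete obstruction, take $Y$ on $[0,3]$ given by $Y_t=-t$ on $[0,1)$, a jump to $1$ at $t=1$, then linear down to $0$ at $t=3$; so $I_0=1$ is the unique minimiser with $\Delta Y(1)=2$. Approximate by $Y_n$ equal to $Y$ on $[0,1)$, constant at $-1$ on $[1,1+1/n)$, jumping to $1$ at $t=1+1/n$, then linear to $0$ at $t=3$. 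Then $Y_n\to Y$ in the Skorokhod topology, $\inf Y_n=-1$, and $I_0(Y_n)=1$ with $\Delta Y_n(1)=0$. Hence $(\vartheta_{I_0^n}Y_n)_0=0$ while $(\vartheta_{I_0}Y)_0=2$: pointwise convergence at $0$ fails. The convergence of the shifted paths does hold, but only because in $\D([-T,T])$ the size-$2$ jump of $\vartheta_{I_0}Y$ sits at $t=0$ and is matched by the jump of $\vartheta_{I_0^n}Y_n$ at $t=1/n$, not by its initial value; your decomposition on $[0,T-I_0-\varepsilon]$ and $[T-I_0+\varepsilon,T]$ does not capture this.

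The paper handles this by a different device: it locates an auxiliary time $t^n\ge I_0^n$ with $t^n\to I_0$, $Y_n(t^n-)\to Y(I_0-)$ and $Y_n(t^n)\to Y(I_0)$ --- in the example above, $t^n=1+1/n$ --- so that $\vartheta_{t^n}Y_n\to\vartheta_{I_0}Y$, and then argues that $\vartheta_{I_0^n}Y_n$ and $\vartheta_{t^n}Y_n$ are close because $Y_n$ varies little on $[I_0^n,t^n]$. To repair your argument you need either this auxiliary time, or a direct argument in $\D([-T,T])$ matching the jump of $Y_n$ near $I_0$ (which need not sit at $I_0^n$) with the jump of the shifted limit at $0$, which is essentially the same idea.
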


\begin{proof}
The first part of the lemma is~\cite[Lemma~3]{Ber01} when furthermore $Y$ is continuous at $I_0$. It extends when $\Delta Y_{I_0} > 0$ thanks to our definition of the shift. Indeed, let $(Y^n)_{n\ge1}$ be a sequence of paths satisfying the same properties as $Y$ and which converges to it in the Skorokhod's topology. Then by uniqueness of $I_0$, the time $I_0^n$ defined as in~\eqref{eq:temps_passage} for $Y^n$ converges to $I_0$ and $Y^n_{I^n_0-} \to Y_{I_0-}$. Moreover there exists an instant $t^n \ge I^n_0$ which also converges to $I_0$ such that both $Y^n_{t^n-} \to Y_{I_0-}$ and $Y^n_{t^n} \to Y_{I_0}$. 
Consequently, the path $Y^n$ shifted at time $I^n_0$ is close when $n\to\infty$ to the same path shifted at time $t^n$, which converges to the path $Y$ shifted at time $I_0$.
For the second claim, since $Y$ does not realise a weak local minimum at time $I_u$, then if $u_n$ converges to $u$, then $I_{u_n}$ converges to $I_u$. Note also that since $Y$ makes no negative jump, then it is continuous at $I_u$. We conclude from Lemma~\ref{lem:shift_continu}.
\end{proof}

\begin{rem}\label{rem:min_locaux}
Let us note that, as any real-valued function, the \emph{times} at which $Y$ realises a strict local minimum are at most countable, and so are the \emph{values} of the weak local minima (since there is at most one such value in each open time interval with rational edges). Therefore, for any $Y$, the second claim holds true for every $u \in (0,\varrho)$ outside a countable set.
\end{rem}

\subsection{Bridges with exchangeable increments}

Let $Y^n$ be a uniformly random path satisfying:
\begin{enumerate}
\item $Y^n_0 = \varrho_n$ and $\#\{1 \le i \le E_n : \Delta Y^n_i = k\} = \degf_n(k)$ for every $k \ge 0$,
\item $Y^n_t = Y^n_{\floor{t}} - (t-\floor{t})$ for every $t \in [0, E_n]$.
\end{enumerate}
Recall that our {\L}ukasiewicz paths $X^n$ further satisfy $X^n_t > 0$ for every $t \in [0, E_n)$. The path $Y^n$ can simply be obtained by taking $(\Delta Y^n_i)_{1 \le i \le E_n}$ to be a uniform random permutation of $(\degf_{n,i})_{1 \le i \le E_n}$. By definition, these increments are exchangeable: their joint law is invariant under permutation. In continuous time, a random process $Y$ on the interval $[0,1]$, is said to have \emph{exchangeable increments} when for every $k \ge 1$, the law of the increments $(Y_{i/k}-Y_{(i-1)/k})_{1 \le i \le k}$ is exchangeable.
According to~\cite[Theorem~16.21]{Kal02}, such random processes which start at $\varrho$ and end at $0$ admit the following representation: for every $t \in [0,1]$,
\begin{equation}\label{eq:representation_echangeable_general}
Y_t = \varrho (1-t) + \theta_0 b_t + \sum_{i \ge 1} \theta_i (\ind{U_i \le t} - t),
\end{equation}
where $b$ is a standard Brownian bridge, the $U_i$'s are i.i.d. uniformly distributed on $[0,1]$, and $\varrho$ and the $\theta_i$'s are (possibly dependent) random real numbers with $\sum_i \theta_i^2 < \infty$ a.s. and the three groups $b$, $(U_i)_i$ and $(\varrho, (\theta_i)_i)$ are independent. 
In our case, $\varrho$ and the $\theta_i$'s are deterministic and nonnegative.

Recall the (abusively called) first passage times $I_u$ for $u \in [0, \varrho]$ from~\eqref{eq:temps_passage}.
In the case $\varrho = 0$, by the work of Knight~\cite{Kni96}, the process $Y$ admits a.s. a unique instant of infimum $I = I_0$.
Then by Lemma~\ref{lem:Vervaat_continue} the Vervaat transform $V_0$ is a.s. continuous at $Y$ and the \emph{excursion} path
$X = V_0Y$
satisfies $X_0 = \Delta Y_I \ge 0$, $X_1=0$, and $X_t>0$ for every $t \in (0,1)$. According to Bertoin~\cite[Section~3.2]{Ber01}, the path $Y$ is continuous at $I$, and so $X_0=0$, as soon as
\begin{equation}\label{eq:variation_infinie_echangeable}
\text{either}\quad \theta_0 > 0,\qquad
\text{or}\quad \sum_{i \ge 1} \theta_i = \infty,
\end{equation}
which is known to be equivalent to requiring that $Y$ has infinite variation paths. On the other hand in the case of finite variation paths, $\Delta Y_I > 0$ a.s. as explained by Miermont~\cite{Mie01}, see Corollary~1 and the discussion before there; precisely $\Delta Y_I$ in this case has the law of a size biased pick from the sequence of jump values $(\theta_i)_{i\ge 1}$.

Let us next consider the case $\varrho > 0$.
Let us mention Proposition~1 in~\cite{CUB15} for a less restrictive sufficient condition for $\Delta Y_I=0$ and Theorem 2 in~\cite{AHUB20} which relates continuity at the infimum and regularity at $0$. We shall not use these results since in this case, we instead sample $U$ uniformly at random on $[0,\varrho]$ and independently of $Y$ and define the \emph{first-passage bridge} version of $Y$ as the random path
\begin{equation}\label{eq:pont_premier_passage_continu}
X = V_U Y = \vartheta_{I_U} Y.
\end{equation}
It is classical that a.s. every local minimum of the Brownian motion is a strict local minimum, so there only are countably many times of local minimum. The proof can be adapted to the path $Y$ whenever $\theta_0>0$. We shall not consider this question when $\theta_0=0$ and only recall from Remark~\ref{rem:min_locaux} that the \emph{values} of the local minima are always countable.
Indeed, we infer by combining Lemma~\ref{lem:Vervaat_continue} and Remark~\ref{rem:min_locaux} that since $U$ has a diffuse law, then a.s. the Vervaat transform $V : (Y,u) \mapsto V_uY$ is continuous at the pair $(Y,U)$ and moreover 
$X_0 = \varrho$, $X_1 = 0$, and $X_t > 0$ for every $t \in (0,1)$.

A key feature is that in both cases, the path $X$ and the random time $I_0$ or $I_U$ are independent, and the latter has the uniform distribution on $[0,1]$, see e.g.~\cite[Equation~3.14]{Ber01}.

Let us mention that when $\varrho = 0$ Theorem~3 in~\cite{AHUB20} provides a sufficient condition, weaker than finite variation paths, for $X = V_0 Y$ to really be the excursion version of $Y$, in the sense that it is the weak limit as $\varepsilon \to 0$ of the process $Y$ conditioned on the event $\{\inf Y > -\varepsilon\}$. When $\varrho > 0$, it is argued under more technical assumptions in~\cite{BCP03} that $X$ defined in~\eqref{eq:pont_premier_passage_continu} is similarly the first-passage bridge version of $Y$.
Again we do not consider this interesting question in general and only content ourselves with this construction of $X$.

Recall from the previous sections that the continuous part $C$ of $X$ as in~\eqref{eq:def_J_et_C} plays a role in the construction of the labelled looptrees and the pure jump case, when this path is constant null has been treated in Section~\ref{sec:convergence_saut_pur}. An important question is then to characterise when $X$ satisfies~\eqref{eq:PJ}.

\begin{lem}\label{lem:PJ_echangeable}
Suppose that $\sum_i \theta_i < \infty$, then the following holds:
\begin{enumerate}
\item Either $\theta_0>0$ and then almost surely, for Lebesgue almost every $t\in[0,1]$, we have $C_t > 0$,
\item Or $\theta_0=0$ and then almost surely $C_t = 0$ for all $t\in[0,1]$.
\end{enumerate}
\end{lem}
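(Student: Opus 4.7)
The plan is to handle the two alternatives separately. In case (ii), where $\theta_0=0$ and $\sum_i \theta_i<\infty$, the representation~\eqref{eq:representation_echangeable_general} gives an almost surely finite-variation path $Y$: linearly decreasing between the countably many upward jumps at the $U_i$. Since the Vervaat transform is a cyclic rearrangement, $X$ inherits this piecewise-linear-plus-upward-jumps structure. For every $t\in[0,1]$, the dual process $X^t_s = X_{t-}-X_{(t-s)-}$ is then also of finite variation: its jumps occur exactly at the times $t-\tau$ for $\tau$ a jump time of $X$ in $[0,t]$, at which it makes upward jumps of size $\Delta X_\tau$, while between them it strictly decreases. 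Since the running maximum of a process that is strictly decreasing between its upward jumps is a pure step function, $\overline{X}^t$ is piecewise constant, has no continuous part, and $C_t=0$ identically.

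For case (i), where $\theta_0>0$, the aim is to show that $\mathrm{Leb}\{t\in[0,1]:C_t>0\}=1$ almost surely. By Fubini's theorem it suffices to prove that $\P(C_t>0)=1$ for every fixed $t\in(0,1)$, so I fix such a $t$, which is almost surely not a jump time of $X$. Using the identity $\overline{X}^t_s = X_{t-}-\inf_{r\in[t-s,t]}X_r$, the continuous part $C_t$ coincides with the continuous part of the total variation of the non-increasing function $s\mapsto \inf_{r\in[t-s,t]}X_r$. The plan is to exploit the decomposition $X = \theta_0\tilde b + X^{\mathrm{fv}}$, where $\tilde b$ is a (cyclically shifted) Brownian bridge and $X^{\mathrm{fv}}$ is the finite-variation part, to isolate a small interval immediately below $t$ on which the Brownian fluctuations force a strictly positive continuous decrease of the backward infimum.

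The main obstacle is that when infinitely many $\theta_i$ are positive the jumps of $X$ may be dense in $[0,t]$, so one cannot simply pick a deterministic interval of continuity of $X$ abutting $t$. I will handle this via a big/small jump decomposition: for each $\epsilon>0$ only finitely many jumps of $X$ exceed $\epsilon$, so almost surely there is a last such big-jump time $\tau_\epsilon<t$. On $(\tau_\epsilon,t)$ the path $X$ is the sum of a Brownian piece of scale $\theta_0\sqrt{t-\tau_\epsilon}$, a linear drift and upward jumps of total mass bounded by $\sum_{i:\theta_i\le\epsilon}\theta_i$, which tends to $0$ as $\epsilon\downarrow 0$. The Brownian piece almost surely dips strictly below $X_{t-}$ somewhere in $(\tau_\epsilon,t)$, contributing a continuous decrease to the backward infimum of order $\theta_0\sqrt{t-\tau_\epsilon}$; choosing $\epsilon$ small enough that the small-jump contribution is dominated, one deduces $C_t>0$ almost surely. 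Combining this with Fubini concludes case (i).
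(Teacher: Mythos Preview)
Your treatment of case~(ii) is essentially correct, though the phrase ``strictly decreasing between its upward jumps'' needs care when the jump times are dense; making it rigorous amounts to the same $\epsilon$-thinning the paper uses.

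In case~(i) there is a real gap. After the Vervaat transform the shifted Brownian piece $\tilde b$ and the jump times of $X$ are \emph{not} independent: the shift time $I_U$ is a functional of the whole path $Y$, hence of $b$. So you cannot freely invoke Brownian fluctuation estimates for $\tilde b$ at a deterministic time $t$. Concretely, your key step ``the Brownian piece dips by order $\theta_0\sqrt{t-\tau_\epsilon}$, and one chooses $\epsilon$ small enough that the small-jump contribution is dominated'' is unjustified on two counts. First, the dip of a Brownian path over an interval of length $\ell$ is a positive random variable with no almost-sure lower bound of the form $c\sqrt{\ell}$; for each fixed $\epsilon$ the event that the dip exceeds $\eta(\epsilon)=\sum_{\theta_i\le\epsilon}\theta_i$ has probability strictly less than~$1$. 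Second, as $\epsilon\downarrow 0$ both $\eta(\epsilon)$ and the interval length $t-\tau_\epsilon$ (which is random and correlated with $\tilde b$ through the shift) may tend to~$0$, and you give no comparison of rates.

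The paper avoids this by exploiting that the shift time is uniform on $[0,1]$ and \emph{independent of $X$}: taking the uniform time $T$ equal to $1-I_U$ and time-reversing, $C_T$ has the same law as the Lebesgue measure of the range of $s\mapsto\sup_{[0,s]}Y$. One is then back to the unshifted process $Y$, where the Brownian bridge and the jump part are genuinely independent, and the law of the iterated logarithm for $b$ at the deterministic time~$0$ applies cleanly. Your approach can also be repaired, but differently: rather than fixing $t$, argue directly that almost surely, for Lebesgue-a.e.\ $t$, both the LIL holds for $\tilde b$ at $t$ (the set of exceptional times has Lebesgue measure zero, and this is preserved under the shift) and the jump mass on $(t-\delta,t]$ is $o(\delta)$ (Lebesgue differentiation of the purely atomic jump measure); together these yield $C_t>0$.
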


\begin{rem}\label{rem:PJ_echangeable_mieux}
We believe that this result can be strengthened into the following directions, although we were enable to prove it: regardless the convergence of the sum $\sum_i \theta_i$, almost surely, for all $t\in[0,1]$,
\begin{enumerate}
\item $C_t = 0$ if and only if $X_t = \min_{s \le t} X_s$ when $\theta_0>0$;
\item $C_t = 0$ when $\theta_0=0$.
\end{enumerate}
Both are known to hold in the case of L\'evy processes, which are mixtures of such processes in which the $\theta_i$'s are random, see Proposition~\ref{prop:Levy_partie_continue_hauteur}.
\end{rem}

\begin{proof}[Proof of Lemma~\ref{lem:PJ_echangeable}]
Since $C$ has continuous paths, the claims are equivalent to showing that if $T$ is an independent random time with the uniform distribution, then $C_T > 0$ a.s. or $C_T = 0$ a.s. according as wether $\theta_0 > 0$.
Recall that $1-T$ can be taken as the time at which we cyclically shift $Y$ to produce $X$, then by time-reversal of $Y$, $C_T$ has the same law as the Lebesgue measure of the range of the supremum process of $Y$, namely $\Leb(\sup_{s \in [0,t]} Y_s; t \in [0,1]\})$.

Now when $\sum_i \theta_i < \infty$, the process $Y$ can be written in the following form
\[Y_t = \varrho + b_t' + Y_t'
\qquad\text{where}\qquad
b_t' = \theta_0 b_t - \bigg(\varrho + \sum_{i \ge 1} \theta_i\bigg) t
\qquad\text{and}\qquad
Y_t' = \sum_{i \ge 1} \theta_i \ind{U_i \le t},\]
so $b'$ is a Brownian bridge with a drift and $Y'$ is a nondecreasing process (and both are independent).
Suppose first that $\theta_0 = 0$, so $b'$ reduces to a negative linear drift. Arguing as in Section~\ref{ssec:approximation_partie_continue}, removing the (finitely many) jumps larger than a small $\delta>0$, the sum of the other jumps can be made arbitrarily small and this sum is certainly larger than or equal to the Lebesgue measure of the range of the supremum process. This proves the second claim.

As for the first claim, using e.g. the law of the iterated logarithm for the Brownian motion combined with standard absolute continuity arguments to transfer it to the bridge version, we have that $\limsup_{t \downarrow 0} (2t \ln\ln 1/t)^{-1/2} b_t = 1$ a.s. In particular the negative drift is negligible and $b'$ reaches positive values at arbitrarily small times and this shows that the Lebesgue measure of the range of the supremum process after removing the jumps larger than $\delta$, does not converge to $0$.
\end{proof}

Let us note that even when $\sum_i \theta_i = \infty$, it holds that $\limsup_{t \downarrow 0} (2t \ln\ln 1/t)^{-1/2} \sum_i \theta_i (\ind{U_i \le t}-t) = 0$ a.s. so the Brownian bridge part dominates at small times, which may eventually lead to the first claim in this regime, but we loose here the monotonicity of $Y'$ which was used in the previous proof.

\subsection{Scaling limits of random paths}

Let $Y^n$ be as above: a c\`adl\`ag path on $[0, E_n]$ started from $Y^n_0 = \varrho_n$, which only jumps at integer times, as a uniform random permutation of the sequence $\Degf_n = (\degf_{n,i})_{1 \le i \le E_n}$, and decreases at unit speed otherwise. 
Independently, sample $U_n$ uniformly at random in $[0, \varrho_n]$ and define $X^n$ as for the Vervaat transform~\eqref{eq:pont_premier_passage_continu} of $Y^n$, except that the first passage time $I_{U_n}$ defined in~\eqref{eq:temps_passage} is replaced by its integer part $\floor{I_{U_n}}$.
Necessarily, the path $Y^n$ is continuous at time $\floor{I_{U_n}}$ and precisely $Y^n$ is linear with slope $-1$ in the open interval $(\floor{I_{U_n}}-1, \floor{I_{U_n}}+1)$ so $X^n_0 = \varrho_n$.
Then $X^n$ has the same law as the {\L}ukasiewicz path of a uniformly random looptree with root cycle with length $\varrho_n$ and other cycle length prescribed by $\Degf_n$.

The convergence criterion from~\cite[Theorem~16.23]{Kal02} can be written as follows. Fix any sequence $a_n \to \infty$; then the rescaled processes $(a_n^{-1} Y^n_{E_n t})_{t \in [-1,1]}$ converge in distribution in the Skorokhod topology to some limit, say $Y$, if and only if there exist $\theta_0 \ge 0$ and $\theta_1 \ge \theta_2 \ge \dots \ge 0$ with $\sum_{i \ge 1} \theta_i^2 < \infty$ such that, as $n\to\infty$,
\[a_n^{-1} \varrho_n \to \varrho,
\qquad
a_n^{-1} \degf_{n,i} \to \theta_i \quad\text{for every}\enskip i \ge 1,
\qquad\text{and}\qquad
a_n^{-2} \sum_{k \ge 0} k (k-1) \degf_n(k) 
\to \sum_{i \ge 0} \theta_i^2.
\]
Furthermore, in this case, the limit $Y$ takes the form~\eqref{eq:representation_echangeable_general} with also $Y_t = 0$ for every $t < 0$.
To be precise and fit the framework of~\cite[Theorem~16.23]{Kal02} we should replace $Y^n$ by a path that at any integer time $i \in\{1, \dots, E_n+1\}$ makes a jump with value $\Delta Y^n_{i-1} - 1$ and remains constant otherwise; then the quoted theorem stipulates that the above statement holds, with only the last condition being replaced by $\lim_{n\to\infty} a_n^{-2} \sum_{k \ge 0} (k-1)^2 \degf_n(k) = \sum_{i \ge 0} \theta_i^2$. On the one hand, the difference between the two series equals $\sum_{k \ge 0} k (k-1) \degf_n(k) - \sum_{k \ge 0} (k-1)^2 \degf_n(k) = \sum_{k \ge 0} (k-1) \degf_n(k) = -\varrho_n$, which we assume is of order $a_n$ and therefore small compared to $a_n^2$. On the other hand, the two paths lie at Skorokhod distance $1$ from each other, hence one converges after scaling if and only if the other one does, and with the same limit.

Considering this convergence criterion, let us define
\begin{equation}\label{eq:def_sigma}
\sigma_n^2 = \sum_{k \ge 0} k (k-1) \degf_n(k) = \sum_{i=1}^{E_n} \degf_{n,i} (\degf_{n,i}-1).
\end{equation}
We shall assume that $\sigma_n^2\to\infty$, as if it is bounded our objects are too degenerate.
We conclude from the preceding results that the following holds. The case $\varrho = 0$ can also be found in~\cite[Proposition~1]{AHUB20b}.
Let $\varrho \ge 0$, $\theta_0 \ge 0$, and $\theta_1 \ge \theta_2 \ge \dots \ge 0$ be such that $\sum_{i \ge 0} \theta_i^2 = 1$ and define $X = V_U Y$ as in~\eqref{eq:pont_premier_passage_continu}, where $Y$ is as in~\eqref{eq:representation_echangeable_general} and $U$ is independently sampled uniformly at random on $[0,\varrho]$.

\begin{prop}\label{prop:convergence_excursions}
Suppose that, as $n\to\infty$, we have $\sigma_n^2 \to \infty$ and
\[\sigma_n^{-1} \varrho_n \to \varrho
\qquad\text{and}\qquad
\sigma_n^{-1} \degf_{n,i} \to \theta_i \quad\text{for every}\enskip i \ge 1.\]
Then $(\sigma_n^{-1} X^n_{E_n t})_{t \in [-1,1]}$ converges in distribution to $X$ in the Skorokhod topology.
\end{prop}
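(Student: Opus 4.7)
The plan is to combine the scaling limit of the bridge-type path $Y^n$ provided by Kallenberg's criterion with the almost sure continuity of the Vervaat transform established in Lemma~\ref{lem:Vervaat_continue}, and then absorb the harmless integer-part truncation separating $X^n$ from $\vartheta_{I_{U_n}} Y^n$.

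First, I would apply the convergence criterion of \cite[Theorem~16.23]{Kal02} to the rescaled path $(\sigma_n^{-1} Y^n_{E_n t})_{t\in[-1,1]}$, as explained just before the statement of the proposition. The first two hypotheses, $\sigma_n^{-1}\varrho_n\to\varrho$ and $\sigma_n^{-1}\degf_{n,i}\to\theta_i$ for every $i\ge 1$, are exactly ours, while the third one reads
\[\sigma_n^{-2}\sum_k (k-1)^2\degf_n(k) = 1+\varrho_n/\sigma_n^2 \cv 1 = \sum_{i\ge 0}\theta_i^2,\]
since $\sum_k(k-1)^2\degf_n(k)=\sigma_n^2+\varrho_n$ (using $\sum_k k\,\degf_n(k)=n$ and $\sum_k\degf_n(k)=E_n=n+\varrho_n$) and $\varrho_n/\sigma_n^2\to 0$. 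This yields $(\sigma_n^{-1} Y^n_{E_n t})_{t\in[-1,1]}\cvloi Y$. Since $U_n$ is independent of $Y^n$ and uniform on $[0,\varrho_n]$, we obtain jointly
\[\bigl(\sigma_n^{-1} Y^n_{E_n\cdot},\,\sigma_n^{-1} U_n\bigr)\cvloi (Y,U),\]
where $U$ is uniform on $[0,\varrho]$ and independent of $Y$, reduced to $U=0$ almost surely when $\varrho=0$.

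Second, I would invoke the continuous mapping theorem for the Vervaat functional $\Phi:(y,u)\mapsto V_u y$. By Lemma~\ref{lem:Vervaat_continue} combined with Remark~\ref{rem:min_locaux}, $\Phi$ is almost surely continuous at $(Y,U)$: when $\varrho>0$, conditionally on $Y$ the values attained by the weak local minima of $Y$ form at most a countable set by the remark, so since $U$ is diffuse and independent of $Y$, almost surely $U\in(0,\varrho)$ avoids this set and part~(ii) of the lemma applies; when $\varrho=0$, we have $U=0$ almost surely and Knight's theorem~\cite{Kni96} ensures that $Y$ attains its infimum at a unique point almost surely, so part~(i) applies. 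Since the spatial scaling by $\sigma_n^{-1}$ and temporal scaling by $E_n$ commute with the cyclic shift up to the obvious reparametrisation, this gives
\[\sigma_n^{-1}\bigl(\vartheta_{I_{U_n}} Y^n\bigr)_{E_n\cdot}\cvloi V_U Y \,=\, X.\]

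Finally, the process $X^n=\vartheta_{\floor{I_{U_n}}} Y^n$ differs from $\vartheta_{I_{U_n}} Y^n$ only by an additional cyclic time-shift of length $I_{U_n}-\floor{I_{U_n}}\in[0,1)$, which after rescaling time by $E_n$ becomes a shift of length at most $1/E_n\to 0$; such a shift is negligible in the Skorokhod topology, so $X^n$ shares the same scaling limit as $\vartheta_{I_{U_n}} Y^n$. I expect the only genuinely nontrivial point to be the second step: the continuity of the Vervaat map requires treating the two qualitatively distinct regimes $\varrho=0$ (infinite variation limit) and $\varrho>0$ (finite variation limit) separately, invoking respectively Knight's uniqueness theorem and the countability of local minimum values together with the diffuseness of $U$ to secure almost sure avoidance of the bad set.
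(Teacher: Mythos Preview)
Your proposal is correct and follows essentially the same approach as the paper: convergence of the unconditioned bridge $Y^n$ via Kallenberg's criterion, joint convergence with the independent uniform height, and then the continuous mapping theorem through Lemma~\ref{lem:Vervaat_continue} and Remark~\ref{rem:min_locaux}. One terminological slip: the dichotomy $\varrho=0$ versus $\varrho>0$ has nothing to do with infinite versus finite variation of the limit (that is governed by~\eqref{eq:variation_infinie_echangeable}), so you should drop those parenthetical labels.
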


\begin{proof}
Under the assumptions, the uniformly permuted path $\sigma_n^{-1} Y^n$ converges in distribution to $Y$, and (jointly) the independent random height $\sigma_n^{-1} U_n$ converges to $U$. The convergence in distribution of the first-passage bridges $\sigma_n^{-1} X^n$ to $X$ then follows from the almost sure continuity of the Vervaat transform at $(Y,U)$ provided by combining Remark~\ref{rem:min_locaux} and Lemma~\ref{lem:Vervaat_continue}.
\end{proof}

\begin{rem}\label{rem:convergence_excursions_brownien}
When the largest cycle length is $\degf_{n,1} = o(\sigma_n)$, then $\theta_i = 0$ for every $i \ge 1$, so $\theta_0=1$ and then $X$ reduces to the Brownian first passage bridge; in this case, the result was proved in~\cite[Proposition~1]{Mar19} and previously in~\cite[Theorem~1.6]{Lei19} under restrictive assumptions.
\end{rem}

\begin{rem}\label{rem:one_big_jump}
The case $\sigma_n^{-1} \varrho_n \to \infty$ has also been considered in~\cite{Mar19}, where now $(\varrho_n^{-1} X^n_{E_n t})_{t \in [-1,1]}$ converges in probability to $t\mapsto (1-t) \ind{t\ge0}$. Since the latter function satisfies~\eqref{eq:PJ}, then the results from Section~\ref{sec:convergence_saut_pur} apply.
\end{rem}

\section{Invariance principles for random (loop)trees and maps}
\label{sec:convergence}

Throughout this section we consider the uniform random models with degrees prescribed by $\varrho_n$ and $\Degf_n$ as defined in Section~\ref{ssec:model_degres_prescrits}. Let us refer to this section for the notation.
Let us start with weak results on trees, one of our points being that one does not need to control them in order to control labelled looptrees and eventually maps.
We then state and prove more precise versions of the theorems in the introduction.
Recall from Section~\ref{ssec:tension} our tightness results. Here we apply Lemma~\ref{lem:consequence_epine} to different functions $g$ in order to characterise the subsequential limits from the convergence of the rescaled {\L}ukasiewicz paths provided by Proposition~\ref{prop:convergence_excursions}. We henceforth denote by $\varrho$, $\bdtheta = (\theta_i)_{i \ge 0}$, and $X$ the quantities and process which appear in this proposition.
Recall that the case where $X$ satisfies the pure jump property~\eqref{eq:PJ} has been treated in Section~\ref{sec:convergence_saut_pur} and recall from Lemma~\ref{lem:PJ_echangeable} that when $\sum_i \theta_i < \infty$ this is the case if and only if $\theta_0=0$.

\subsection{Weak scaling limits of random trees}
\label{ssec:convergence_arbres_reduits}

Let $T^n$ denote an ordered plane forest with $\varrho_n$ trees sampled uniformly at random amongst those with exactly $\degf_n(k)$ vertices with outdegree $k$ for every $k\ge0$. 
Note that it has $\degf_n(0)$ leaves, $F_n+\varrho_n$ inner vertices, and $n$ edges.
In the case $\varrho_n = 1$, under technical assumptions on the degree sequence $(\degf_n)_{n\ge 1}$, in particular that $\sigma_n^2 \sim \sigma^2 n$ for some $\sigma \in (0,\infty)$ and $\lim_{n \to \infty} \sigma_n^{-1} \degf_{n,1} = 0$, Broutin \& Marckert~\cite{BM14} proved the convergence in distribution
\begin{equation}\label{eq:convergence_arbres_BM}
\frac{\sigma_n}{2 n} T^n \cvloi T_{\mathbf{e}},
\end{equation}
in the strong sense of Gromov--Hausdorff--Prokhorov, where $\mathbf{e}$ is the standard Brownian excursion, and the associated continuum random tree $T_{\mathbf{e}}$ as in~\eqref{eq:def_distance_arbre} is the celebrated Brownian CRT.
More precisely, they show that the contour or height process of the rescaled tree converges in distribution towards $\mathbf{e}$.
When instead $\varrho_n \sim \varrho \sqrt{n}$ with $\varrho \in (0,\infty)$, Lei~\cite{Lei19} extended this result to the sequence of trees, ranked in decreasing order of their number of vertices.

As in Section~\ref{ssec:model_degres_prescrits}, we shall canonically view the forest as a single tree by adding an extra root vertex, connected to the root of each tree and preserving the order. 
Under the sole assumption $\lim_{n \to \infty} \sigma_n^{-1} \degf_{n,1} = 0$ of no macroscopic degree, it was proved in~\cite[Theorems~4 and~5]{Mar19} that these trees converge in a weaker sense of subtrees spanned by finitely many random vertices, as depicted in Figure~\ref{fig:arbre_reduit}. Let us here extend this result in the case of possibly large degrees.
Fix $q \ge 1$ and let $u_1, \dots, u_q$ be $q$ i.i.d. uniform random vertices of $T^n$ and keep only these vertices and their ancestors and remove all the other ones; further let us merge each chain of vertices with only one child in this new tree into a single edge with a length given by the number of edges of the former chain. The resulting tree $\mathscr{R}^n(q)$ is called a discrete tree with edge-lengths; its combinatorial structure is that of a plane tree with at most $q$ leaves and no vertex with outdegree $1$, so there are only finitely many possibilities, and thus there are a bounded number of edge-lengths to record. We equip the space of trees with edge-lengths with the natural product topology.

\begin{figure}[!ht] \centering
\includegraphics[height=10\baselineskip]{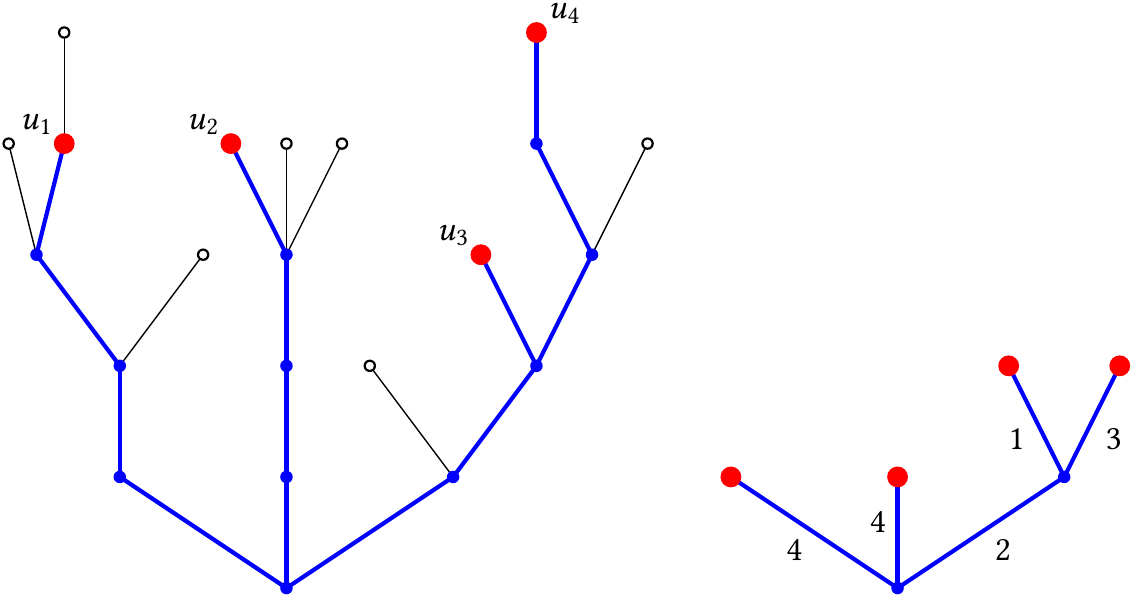}
\caption{Left: a plane tree and four distinguished vertices in red. Right: the associated reduced tree with edge-lengths.}
\label{fig:arbre_reduit}
\end{figure}

Analogously, fix $\varrho \ge 0$, $\theta_0 \ge 0$, and $\theta_1 \ge \theta_2 \ge \dots \ge 0$ such that $\sum_{i \ge 0} \theta_i^2 = 1$, let $Y$ be the bridge in~\eqref{eq:representation_echangeable_general}, sample $U$ independently and uniformly at random on $[0,\varrho]$, and define the first-passage bridge version $X = V_U Y$ as in~\eqref{eq:pont_premier_passage_continu}. Finally, let us denote by $C$ the continuous part of the running supremum of the dual path as in~\eqref{eq:def_J_et_C}. The latter encodes a continuum tree $T_C$; sample next i.i.d. random points $x_1, \dots, x_q$ in $T_C$ from its natural probability measure and construct similarly a discrete tree with edge-lengths $\mathscr{R}_C(q)$. Let us note that if $\theta_0=1$, then $X$ is a Brownian first passage bridge from $\varrho$ to $0$ and the process $C$ takes the form $C_t = X_t-\min_{s \in [0,t]} X_s$ for every $t \in [0,1]$. If moreover $\varrho=0$, then $X = C = \mathbf{e}$ and the law of $\mathscr{R}_{\mathbf{e}}(q)$ has been described by Aldous~\cite[Section~4.3]{Ald93}.

\begin{thm}\label{thm:convergence_arbres_reduits}
Suppose that $\theta_0 > 0$ and that, as $n\to\infty$, we have $\sigma_n^2 \to \infty$ with $n^{-1} \sigma_n \to 0$ and
\[\sigma_n^{-1} \varrho_n \to \varrho
\qquad\text{and}\qquad
\sigma_n^{-1} \degf_{n,i} \to \theta_i \quad\text{for every}\enskip i \ge 1.\]
Then for every $q \ge 1$ we have the convergence in distribution
\[\frac{\theta_0^2 \sigma_n}{2 n} \mathscr{R}^n(q) \cvloi \mathscr{R}_C(q),\]
jointly with the convergence in distribution of the rescaled {\L}ukasiewicz path $(\sigma_n^{-1} X^n_{E_n t})_{t \in [-1,1]}$ to $X$ from Proposition~\ref{prop:convergence_excursions}.
\end{thm}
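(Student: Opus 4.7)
The plan is to exploit the spinal decomposition of Lemma~\ref{lem:consequence_epine} to control both the combinatorial shape and the edge-lengths of $\mathscr{R}^n(q)$, starting from the {\L}ukasiewicz-path convergence given by Proposition~\ref{prop:convergence_excursions}. Using Skorokhod's representation theorem, I would work on a probability space where $\sigma_n^{-1} X^n_{E_n\cdot}\to X$ holds almost surely in the Skorokhod topology, and independently sample i.i.d.\ uniform variables $U_1,\dots,U_q$ on $[0,1]$; the $q$ uniform random vertices of $T^n$ are then coupled with the corners $c^n_{\floor{E_n U_i}}$ up to an asymptotically negligible error by~\eqref{eq:proportion_feuilles_random}.

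The reduced tree decomposes into the spinal chains $A^n_{q,1},\dots,A^n_{q,N}$, whose cardinalities $\#A^n_{q,m}$ are exactly its edge-lengths. To control these, I would apply Lemma~\ref{lem:consequence_epine} with $g(k,j)=k-j$, so that $G^{2,\delta}_n=\sigma^{2,\delta}_n/2$; the sum appearing there becomes the truncated partial sum of the right-loop lengths $R^{n,\cdot}_i$ along each chain, which is the $C^{n,\delta}$ contribution associated with the endpoint of the chain. Lemma~\ref{lem:convergence_partie_continue} together with Remark~\ref{rem:convergence_partie_continue_Luka} then imply that, after division by $\sigma_n$, this quantity converges to the corresponding increment of the continuous part $C$ along the spine in $T_C$. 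Since $\sigma_n^{-2}\sigma^{2,\delta}_n\to\theta_0^2$ in the iterated limit $n\to\infty$ then $\delta\downarrow0$ (which follows from $\sigma_n^{-1}\degf_{n,i}\to\theta_i$ and the definition of $\theta_0$), one concludes that $\frac{\theta_0^2\sigma_n}{2n}\#A^n_{q,m}$ converges in probability to the edge-length of the $m$-th chain of $\mathscr{R}_C(q)$, jointly over all $m$.

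For the combinatorial shape, the key observation is that under $\theta_0>0$ the limit path $X$ has infinite-variation paths a.s.\ by~\eqref{eq:variation_infinie_echangeable}, and hence a.s.\ all its local minima are strict and attained at a unique time. It follows that a.s.\ the $\binom{q}{2}$ pairwise meets $U_i\wedge U_j$ in the partial order $\preceq$ encoded by $X$ are all distinct from one another and from the $U_k$'s, and that $X$ is continuous at every meet. Standard properties of the Skorokhod topology would then imply that the $E_n^{-1}$-rescaled discrete meets between the $\floor{E_n U_i}$'s on $X^n$ converge to the continuum meets, and hence that the combinatorial shape of $\mathscr{R}^n(q)$ eventually coincides with that of $\mathscr{R}_C(q)$. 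Combining this with the edge-length convergence yields the announced joint convergence with $X^n$.

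The main technical obstacle lies in the third paragraph: one needs to justify carefully that the discrete meets converge to their continuum counterparts, which requires a local analysis of $X^n$ near each meet and where the hypothesis $\theta_0>0$ is crucially used to exclude degenerate plateaus of $X$.
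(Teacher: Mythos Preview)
Your proposal is correct and follows essentially the same approach as the paper: both rely on Lemma~\ref{lem:consequence_epine} with $g(k,j)=k-j$ to relate the branch lengths $\#A^n_{q,m}$ to the truncated right-loop sums $C^{n,\delta}$, then on Lemma~\ref{lem:convergence_partie_continue} to pass from $C^{n,\delta}$ to $C$, and on the observation $\sigma_n^{2,\delta}/\sigma_n^2\to\theta_0^2$ to identify the scaling factor. The only presentational difference is that the paper packages shape and edge-lengths together by reformulating the claim as the joint convergence of $(H^n_{E_n U_{(i)}},\ \inf_{[U_{(i-1)},U_{(i)}]}H^n)$ to $(C_{U_{(i)}},\ \inf_{[U_{(i-1)},U_{(i)}]}C)$, which makes your third paragraph on convergence of meets implicit rather than explicit; the paper likewise leaves that step ``to the reader''.
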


Alternatively, in the spirit of~\cite{BM17}, one could build a tree from the forest by instead linking each root to the next one in a chain, and letting the former right-most root of the forest be that of the forest. This only requires mild adaptation and the theorem result holds if we replace the process $C$ by $\tildep{C}_t = C_t+\min_{s\le t} X_s$. If $X$ is a Brownian first passage bridge from $\varrho$ to $0$, then the process $\tildep{C}$ coincides with $X$.

\begin{rem}\label{rem:BHT}
Independently of this work, this result was proved in~\cite[Theorem~1]{BHT21} in the case of a single tree and with the restriction that $\sum_{i \ge 1} \theta_i < \infty$, using a method very close to ours. Note from Lemma~1 there that this extra assumption $\sum_{i \ge 1} \theta_i < \infty$ implies that $n^{-1}\sigma_n \to 0$, which seems to be sufficient in their work.
Recall from Lemma~\ref{lem:PJ_echangeable} that in the regime $\sum_i \theta_i < \infty$ and $\theta_0 > 0$, the process $C$ is nontrivial, therefore if we replace the assumption $n^{-1}\sigma_n \to 0$ by $\sum_i \theta_i < \infty$, then we can conclude than the discrete trees in the theorem have a nontrivial limit. We preferred however writing the theorem in this way because we believe that the limit is nontrivial also in the regime $\sum_i \theta_i = \infty$ and $\theta_0 > 0$ (recall Remark~\ref{rem:PJ_echangeable_mieux}).
\end{rem}

In the regime $\sum_i \theta_i < \infty$ and $\theta_0 > 0$ and in the case $\varrho=0$, the process $C$ is (up to a scaling factor) the one denoted by $Y^{\boldsymbol{\theta}}$ in~\cite{AMP04} where the authors prove that it is the height process of the so-called Inhomogeneous Continuum Random Trees obtained by a stick-breaking procedure.
If Remark~\ref{rem:PJ_echangeable_mieux} is true and $C$ is non trivial also when $\theta_0 > 0$ and $\sum_i \theta_i = \infty$, then Theorem~\ref{thm:convergence_arbres_reduits} would provide another clue indicating that $C$ is the height process of these trees in this regime as well.
On the other hand, 
the question of constructing the height process (if it exists) from $X$ in the case $\theta_0=0$ is much more involved; recall from Lemma~\ref{lem:PJ_echangeable} that in this case $C=0$ at least when in addition $\sum_i \theta_i < \infty$.

\begin{proof}[Proof of Theorem~\ref{thm:convergence_arbres_reduits}]
Let us rewrite our claim in terms of the coding paths.
Fix $q \ge 1$ and let $U_1, \dots, U_q$ be i.i.d. uniform random variables in $[0,1]$ independent of the rest and denote by $0 = U_{(0)} < U_{(1)} < \dots < U_{(q)}$ their ordered statistics. Then the theorem is equivalent to the fact that the convergence in distribution
\begin{equation}\label{eq:convergence_hauteur_arbre_reduit}
\frac{\theta_0^2 \sigma_n}{2 n} \left(H^n_{E_n U_{(i)}}, \inf_{U_{(i-1)} \le t \le U_{(i)}} H^n_{E_n t}\right)_{1 \le i \le q}
\cvloi
\left(C_{U_{(i)}}, \inf_{U_{(i-1)} \le t \le U_{(i)}} C_t\right)_{1 \le i \le q}
\end{equation}
holds jointly with that of $(\sigma_n^{-1} X^n_{E_n t})_{t \in [-1,1]}$ to $X$ from Proposition~\ref{prop:convergence_excursions}.

Fix a single random time $U$ and let us set $U_n = E_n U$. 
Recall from~\eqref{eq:consequence_epine_hauteur} that
\[\lim_{\delta \downarrow 0} \limsup_{n \to \infty}
\P\bigg(\bigg| C^{n,\delta}_{U_n} - \frac{\sigma^{2,\delta}_n}{2 n} H^n_{U_n} \bigg| > \varepsilon \sqrt{\sigma^{2,\delta}_n}\bigg) 
= 0.\]
Let us make the following remark: since $\sigma_n^{-1} \degf_{n,i} \to \theta_i$ for every $i \ge 1$, then for every $\delta>0$, 
\begin{equation}\label{eq:sigma_tronque}
\frac{\sigma_n^{2,\delta}}{\sigma_n^2}
= 1 - \frac{1}{\sigma_n^2}\sum_{k > \delta \sigma_n} \degf_n(k) k(k-1) 
\cv 1 - \sum_{i \ge 1} \theta_i^2 \ind{\theta_i > \delta}
\enskip\mathop{\longrightarrow}^{}_{\delta\downarrow0}\enskip \theta_0^2.
\end{equation}
Therefore $\frac{1}{\sigma_n} C^{n,\delta}_{U_n}$ is arbitrarily close to $\frac{\theta_0^2 \sigma_n}{2n} H^n_{U_n}$ with high probability. By a union bound, this holds jointly for each $E_n U_{(i)}$ for $1 \le i \le q$.
On the other hand Lemma~\ref{lem:convergence_partie_continue}, combined with Proposition~\ref{prop:convergence_excursions} shows that, letting first $n\to\infty$ and then $\delta\downarrow0$, jointly with the convergence of $\sigma_n^{-1} X^n_{E_n \cdot}$ to $X$ holds that of $\sigma_n^{-1} C^{n,\delta}_{E_n \cdot}$ to $C$. We deduce therefore the convergence of the first marginals in~\eqref{eq:convergence_hauteur_arbre_reduit}.

More generally, assume that the tree $T^n$ reduced to the ancestors of $q$ i.d.d. vertices has $q$ leaves (this occurs with high probability since the height of such a vertex is of order $n / \sigma_n = o(n)$ as we just proved) and a random number, say, $b \le q-1$ of branchpoints; then remove these $b$ branchpoints from the reduced tree to obtain a collection of $N=q+b$ single branches. Then Lemma~\ref{lem:consequence_epine} shows that, uniformly for $1 \le i \le q+b$, the number of individuals with less than $\delta \sigma_n$ offspring in the original forest on the $i$'th branch, multiplied by $\theta_0^2 \sigma_n/(2 n)$, is close to $1/\sigma_n$ times the total number of these offspring that lie strictly to the right of this path. Since this number is encoded by the {\L}ukasiewicz path, the theorem then follows as above from Proposition~\ref{prop:convergence_excursions} and Lemma~\ref{lem:convergence_partie_continue}. We leave the details to the reader.
\end{proof}

Below, we shall prove strong invariance principles for labelled looptrees. What prevents us to conclude to a Gromov--Hausdorff--Prokhorov convergence of the trees is a tightness argument which, as mentioned in Section~\ref{ssec:tension}, does not hold in general, as opposed to looptrees. Tightness can be obtained under some technical assumption thanks to the very recent work by Blanc-Renaudie~\cite{BR21}, see also the short discussion after Theorem~1 in~\cite{BHT21}.
By Corollary~\ref{cor:tension_Holder_arbres}, a very simple where tightness holds is when $n/\sigma_n$ is of order $\sigma_n$, which allows us to improve on~\cite{BM14, Lei19} which assume $\theta_1 = 0$ so $\theta_0=1$ and further technical assumptions. As for Remark~\ref{rem:BHT}, this was also observed in~\cite[Proposition~1]{BHT21}.

\begin{cor}\label{cor:convergence_arbres_var_finie}
Suppose that $\limsup_{n\to\infty} n^{-1} \degf_n(1) < 1$, that $\theta_0 > 0$, and that there exists $\sigma>0$ such that, as $n\to\infty$, we have 
\[n^{-1} \sigma_n^2 \to \sigma^2,
\qquad
n^{-1/2} \varrho_n \to \sigma\varrho,
\qquad\text{and}\qquad
n^{-1/2} \degf_{n,i} \to \sigma\theta_i \quad\text{for every}\enskip i \ge 1.\]
Then the following convergences in distribution hold jointly:
\[\left(\frac{1}{\sigma n^{1/2}} X^n_{E_n t}\right)_{t \in [0,1]} \cvloi X,
\qquad
\left(\frac{\theta_0^2 \sigma}{2 \sqrt{n}} H^n_{E_nt}\right)_{t \in [0,1]} \cvloi C,
\qquad\text{and}\qquad
\frac{\theta_0^2 \sigma}{2 \sqrt{n}} T^n \cvloi T_C,\]
respectively in the Skorokhod, uniform, and Gromov--Hausdorff--Prokhorov topology.
\end{cor}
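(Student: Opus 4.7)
The plan is to combine Proposition~\ref{prop:convergence_excursions}, Corollary~\ref{cor:tension_Holder_arbres}, and Theorem~\ref{thm:convergence_arbres_reduits} in order to upgrade the weak reduced-tree convergence of the latter into a uniform convergence of height processes and a Gromov--Hausdorff--Prokhorov convergence of the trees themselves. The assumption $n^{-1}\sigma_n^2 \to \sigma^2$ gives $\sigma_n \sim \sigma\sqrt{n}$, so $\sigma_n^{-1}\varrho_n \to \varrho$ and $\sigma_n^{-1}\degf_{n,i} \to \theta_i$ for every $i\ge 1$; therefore Proposition~\ref{prop:convergence_excursions} yields directly the convergence of $(\frac{1}{\sigma\sqrt{n}}X^n_{E_n t})_{t\in[0,1]}$ to $X$ in the Skorokhod topology, which is the first claim. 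Under the additional hypothesis $\limsup_n n^{-1}\degf_n(1) < 1$, Corollary~\ref{cor:tension_Holder_arbres} ensures that $(\sigma_n^{-1}H^n_{E_n\cdot})_n$ is tight in the uniform topology, hence so is $(\frac{\theta_0^2\sigma}{2\sqrt{n}}H^n_{E_n\cdot})_n$ since the two rescalings only differ by a factor that converges to $\theta_0^2/2$; combined with the Skorokhod convergence, this gives joint tightness.

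The crux is to identify any subsequential limit. Extract a subsequence along which the pair $(\frac{1}{\sigma\sqrt{n}}X^n_{E_n\cdot}, \frac{\theta_0^2\sigma}{2\sqrt{n}}H^n_{E_n\cdot})$ converges jointly in distribution to some $(X, H^\infty)$ with $H^\infty$ continuous. The proof of Theorem~\ref{thm:convergence_arbres_reduits} (see~\eqref{eq:convergence_hauteur_arbre_reduit}) establishes that, for every $q\ge 1$ and every i.i.d.\ uniform sequence $U_1, \ldots, U_q$ on $[0,1]$ independent of all else,
\[\left(\tfrac{\theta_0^2\sigma_n}{2n} H^n_{E_n U_i}\right)_{1 \le i \le q} \cvloi (C_{U_i})_{1 \le i \le q}\]
jointly with $\sigma_n^{-1} X^n_{E_n \cdot} \to X$. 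Passing to the subsequence and using that $\frac{\theta_0^2\sigma_n}{2n} \sim \frac{\theta_0^2\sigma}{2\sqrt{n}}$, one deduces that the joint laws of $(X, (H^\infty_{U_i})_{i\ge 1})$ and $(X, (C_{U_i})_{i\ge 1})$ coincide. Since $(U_i)_{i\ge 1}$ is a.s.\ dense in $[0,1]$ and both $H^\infty$ and $C$ are continuous, they are a.s.\ equal as processes (after applying Skorokhod's representation theorem to get an almost sure coupling along the subsequence). This identifies the unique subsequential limit, so the convergence of rescaled height processes to $C$ holds for the whole sequence, jointly with the Skorokhod convergence of the \L ukasiewicz paths.

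The Gromov--Hausdorff--Prokhorov convergence of the rescaled trees then follows: the map $f \mapsto T_f$ from $(C([0,1]), \|\cdot\|_\infty)$ to the space of compact metric measured spaces equipped with the Gromov--Hausdorff--Prokhorov topology is continuous, because $d_f$ as defined in~\eqref{eq:def_distance_arbre} depends continuously on $f$ uniformly on $[0,1]^2$ and the quotient/pushforward construction preserves this. Since $\frac{\theta_0^2\sigma}{2\sqrt{n}}T^n$ is, up to an error of order $\frac{1}{\sqrt n}$ coming from linear interpolation between integer times and from the convergence~\eqref{eq:proportion_feuilles_random} relating the uniform measure on vertices to that on corners, the compact metric measured space coded by $\frac{\theta_0^2\sigma}{2\sqrt{n}}H^n_{E_n\cdot}$, the convergence of the rescaled trees to $T_C$ follows. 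The main obstacle in the argument is the third step: the passage from finite-dimensional convergence at i.i.d.\ uniform random times to a.s.\ equality of the continuous limiting processes, which relies on continuity of $H^\infty$ supplied by Corollary~\ref{cor:tension_Holder_arbres} and on a density argument.
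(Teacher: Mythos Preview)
Your proof is correct and follows exactly the approach the paper intends: tightness of the rescaled height process from Corollary~\ref{cor:tension_Holder_arbres} (available here because $\sigma_n \sim \sigma\sqrt n$ so $E_n^{-1}\degf_n(1) \le n^{-1}\degf_n(1)$ stays bounded away from $1$), combined with the finite-dimensional identification at i.i.d.\ uniform times coming from~\eqref{eq:convergence_hauteur_arbre_reduit} in the proof of Theorem~\ref{thm:convergence_arbres_reduits}, and then continuity of $f\mapsto T_f$ for the Gromov--Hausdorff--Prokhorov convergence. One minor remark: your appeal to~\eqref{eq:proportion_feuilles_random} is misplaced, since that convergence concerns looptree vertices (i.e.\ leaves of the tree), whereas here $H^n$ is indexed by the $E_n+1$ vertices of $T^n$ in depth-first order, so the push-forward of Lebesgue measure on $[0,1]$ already coincides with the uniform measure on vertices up to an $O(1/E_n)$ error and no extra input is needed.
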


\subsection{Scaling limits of random looptrees}
\label{ssec:convergence_looptrees_aleatoires}

Let us keep the same notation as above and let now $LT^n$ denote a uniform random looptree with cycle lengths prescribed by $\varrho_n$ and $(\degf_{n,i})_{1 \le i \le E_n}$. Using a similar argument as in the proof of Theorem~\ref{thm:convergence_arbres_reduits} we can control distances between random points in $LT^n$. As opposed to trees, we possess a tightness argument for the looptrees so we can conclude to a strong convergence.
Recall from~\eqref{eq:def_distance_looptree} the construction of the looptree $\Loop^a(X)$ with parameter $a \ge 0$ from $X$.

\begin{thm}\label{thm:convergence_looptree_degres_prescrits_general}
Suppose that, as $n\to\infty$, we have $\sigma_n^2 \to \infty$ 
and, 
\[\sigma_n^{-1} \varrho_n \to \varrho,
\qquad\text{and}\qquad
\sigma_n^{-1} \degf_{n,i} \to \theta_i \quad\text{for every}\enskip i \ge 1.\]
Assume also that there exists $a \ge 0$ such that 
\[\sigma_n^{-2} \degf_n(2\Z_+) \cv a \theta_0^2,\]
where $a=0$ if $\theta_0=0$.
Then the convergence in distribution
\[\sigma_n^{-1} LT^n \cvloi \Loop^{(a+1)/2}(X)\]
holds in the Gromov--Hausdorff--Prokhorov topology.
\end{thm}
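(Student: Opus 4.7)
The plan is to combine Proposition~\ref{prop:tension_Holder_looptrees}, Proposition~\ref{prop:convergence_excursions}, and the vertex-counting convergence~\eqref{eq:proportion_feuilles_random} to extract, from any subsequence, a further subsequence along which, via Skorokhod representation, I may work almost surely with $\sigma_n^{-1} X^n_{E_n\cdot} \to X$ in the Skorokhod topology, $\sigma_n^{-1} d_{LT^n}(E_n\cdot, E_n\cdot) \to D$ uniformly for some continuous random pseudo-distance $D$, and $\lambda^n(\degf_n(0)\cdot)/E_n \to \operatorname{id}$ uniformly. The argument of Lemma~\ref{lem:tension_cartes_general} then yields the Gromov--Hausdorff--Prokhorov convergence $\sigma_n^{-1} LT^n \to [0,1]/\{D=0\}$, so that it only remains to identify $D = d_{\Loop^{(a+1)/2}(X)}$ almost surely.

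\textbf{Identification at finitely many random points.} Sample $U_1, \ldots, U_q$ i.i.d. uniform on $[0,1]$, independently of everything else. The discrete distance $d_{LT^n}(\lfloor E_nU_i\rfloor, \lfloor E_nU_j\rfloor)$ decomposes along the spine joining $U_i$ and $U_j$ as a sum of shortest-side contributions $\delta_r(\cdot, \cdot)$ over ancestral cycles, which I split by cycle size at threshold $\delta\sigma_n$. For \emph{large} cycles, properties of the Skorokhod topology lift the finitely many relevant large jumps of $X^n$, together with the associated quantities $\sigma_n^{-1}R^{n,U_i}$, to their continuum analogues, so their contribution converges to $d^0_{\Loop(X)}(U_i, U_j)$ as in~\eqref{eq:def_distance_looptree_reduit}. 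For \emph{small} cycles, applying Lemma~\ref{lem:consequence_epine} on each branch of the spine with both $g(k,j) = k-j$ and $g(k,j) = \min(j, k-j)$ (both satisfying its hypotheses with $c = C = 1$), the small-cycle distance contribution along each branch equals $\tfrac{2G_n^{2,\delta}}{\sigma_n^{2,\delta}}$ times that branch's contribution to $C^{n,\delta}_{U_i}$, up to an error $o(\sigma_n)$. Using the identity $\sum_{j=1}^k \min(j,k-j) = \lfloor k^2/4 \rfloor$ together with $\sum_k k\degf_n(k) = n$, $E_n = n + \varrho_n$, and $\sum_{k\,\mathrm{odd}}\degf_n(k) = E_n - \degf_n(2\Z_+)$ (with large-cycle terms being $O(\sigma_n)$ in these sums), one obtains
\[\lim_{\delta \downarrow 0} \lim_{n \to \infty} \frac{2G_n^{2,\delta}}{\sigma_n^{2,\delta}} = \lim_{\delta \downarrow 0} \lim_{n \to \infty} \tfrac{1}{2}\Bigl(1 + \tfrac{\degf_n(2\Z_+) - \varrho_n}{\sigma_n^{2,\delta}}\Bigr) = \tfrac{a+1}{2},\]
using $\sigma_n^{-2}\sigma_n^{2,\delta} \to \theta_0^2$, $\sigma_n^{-2}\degf_n(2\Z_+) \to a\theta_0^2$, and $\sigma_n^{-1}\varrho_n \to \varrho$ (the latter term being of lower order). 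Combined with Lemma~\ref{lem:convergence_partie_continue}, which gives $\sigma_n^{-1} C^{n,\delta} \to C$ uniformly as first $n \to \infty$ then $\delta \downarrow 0$, the small-cycle contribution converges to $\tfrac{a+1}{2} d_C(U_i, U_j)$. Summing and recalling~\eqref{eq:def_distance_looptree} gives $\sigma_n^{-1} d_{LT^n}(\lfloor E_nU_i\rfloor, \lfloor E_nU_j\rfloor) \to d_{\Loop^{(a+1)/2}(X)}(U_i, U_j)$, which must coincide with $D(U_i, U_j)$.

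\textbf{Extension and main obstacle.} Both $D$ and $d_{\Loop^{(a+1)/2}(X)}$ being continuous pseudo-distances on $[0,1]^2$ that agree almost surely on every tuple $\{(U_i, U_j)\}$ for i.i.d. uniform samples of any finite size, they agree almost surely everywhere by density, determining the subsequential limit uniquely and yielding convergence of the full sequence. The main difficulty lies in the precise computation of the constant $(a+1)/2$, which requires tracking the parity of small cycle lengths through the identity $\sum_{j=1}^k \min(j,k-j) = \lfloor k^2/4 \rfloor$ and connecting it, via the spinal decomposition in Lemma~\ref{lem:consequence_epine}, to the continuous part $C$ of $X$. A secondary technical point is the separate treatment of the common ancestor cycle, whose contribution involves $\delta_r(R^{n,U_i}_r, R^{n,U_j}_r)$ rather than $\delta_r(0, R^{n,U}_r)$, although small such cycles contribute at most $O(\delta)$ after rescaling and large ones are captured directly by the Skorokhod convergence.
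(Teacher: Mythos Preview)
Your proposal is correct and follows essentially the same strategy as the paper's proof: tightness from Proposition~\ref{prop:tension_Holder_looptrees}, reduction to finitely many i.i.d.\ uniform points, a threshold split at $\delta\sigma_n$, Skorokhod-topology convergence for the finitely many large cycles, and Lemma~\ref{lem:consequence_epine} applied with $g(k,j)=\min\{j,k-j\}$ (combined with the case $g(k,j)=k-j$ as in~\eqref{eq:consequence_epine_hauteur}--\eqref{eq:consequence_epine_Luka}) for the small cycles, together with Lemma~\ref{lem:convergence_partie_continue} to recover the continuous part $C$. Your computation of the constant via $\sum_{j=1}^k\min(j,k-j)=\lfloor k^2/4\rfloor$ and the identity $N_n^\delta-\degf_n^\delta(2\Z+1)=\degf_n(2\Z_+)-\varrho_n+O(\sigma_n)$ matches the paper's, as does the final identification of $D$ with $d_{\Loop^{(a+1)/2}(X)}$ by density of uniform samples and continuity.
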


We stress that $a$ can also be null when $\theta_0>0$, as in Corollary~\ref{cor:convergence_looptree_CRT} below.

\begin{proof}
First, exactly as in the proof of Corollary~\ref{cor:convergence_looptrees_discrets_saut_pur}, using the convergence~\eqref{eq:proportion_feuilles_random}, it suffices to prove the convergence in distribution
\begin{equation}\label{eq:convergence_distance_looptrees_echangeable}
\left(\sigma_n^{-1} d_{LT^n}(\floor{E_n s}, \floor{E_n t})\right)_{s,t \in [0,1]}
\cvloi (d^{(a+1)/2}_{\Loop(X)}(s,t))_{s,t \in [0,1]}
\end{equation}
in the uniform topology.
Proposition~\ref{prop:tension_Holder_looptrees} shows tightness of the distances, so it suffices to fix $q\ge 1$ and $q$ i.i.d. random times $U_1, \dots, U_q$ independent of the rest and to prove such a convergence for $s$ and $t$ restricted to these values.

As in the preceding proof, let us sample $U$ uniformly at random on $[0,1]$ and independent of the rest and let us set $U_n = \floor{E_n U}$. 
Let us apply Lemma~\ref{lem:consequence_epine} with $q=N=1$ and the function given by $g(k,j) = \min\{j, k-j\}$ for $k\ge j \ge 1$. In this case, we have for every $\delta>0$,
\[G_n^{2,\delta} 
= \sum_{k \le \delta \sigma_n} \degf_n(k) \sum_{j=1}^k \min\{j, k-j\}
= \sum_{k \le \delta \sigma_n} \degf_n(k) \frac{k^2 - \ind{k \in 2\Z+1}}{4}
= \frac{\sigma_n^{2,\delta}}{4} \left(1+\frac{N_n^\delta - \degf_n^\delta(2\Z+1)}{\sigma_n^{2,\delta}}\right),\]
where we have set $N_n^\delta = \sum_{k \le \delta \sigma_n} k \degf_n(k)$ and $\degf_n^\delta(2\Z+1) = \sum_{k \le \delta \sigma_n} \degf_n(k) \ind{k \in 2\Z+1}$.
Also the sum in Lemma~\ref{lem:consequence_epine} denotes the contribution of the cycles shorter than $\delta\sigma_n$ to the distance $d_{LT^n}(0, U_n)$, which we shall denote by $d^{n,\delta}_{LT^n}(0, U_n)$.
Then Equation~\eqref{eq:consequence_epine_Luka} 
and the fact that $\sigma^{2,\delta} \le \sigma_n^2$ imply that for any $\varepsilon>0$,
\[\lim_{\delta \downarrow 0} \limsup_{n \to \infty}
\P\bigg(\bigg| \frac{2}{1+(N_n^\delta - \degf_n^\delta(2\Z+1))/\sigma_n^{2,\delta}} \cdot d^{n,\delta}_{LT^n}(0, U_n) - C^{n,\delta}_{U_n} \bigg| > \varepsilon \sigma_n\bigg) 
= 0.\]
Next Lemma~\ref{lem:convergence_partie_continue} and Proposition~\ref{prop:convergence_excursions} show that, letting first $n\to\infty$ and then $\delta\downarrow0$, jointly with the convergence of $\sigma_n^{-1} X^n_{E_n \cdot}$ to $X$ holds that of $\sigma_n^{-1} C^{n,\delta}_{E_n \cdot}$ to $C$, hence the convergence in distribution
when first $n\to\infty$ and then $\delta \downarrow 0$,
\begin{equation}\label{eq:convergence_partie_continue_distance_looptrees}
\frac{2}{\sigma_n (1 + (N_n^\delta - \degf_n^\delta(2\Z+1))/\sigma_n^{2,\delta})} \cdot d^{n,\delta}_{LT^n}(0, U_n) \to  C_U.
\end{equation}
Now recall from~\eqref{eq:sigma_tronque} that, when $\delta>0$ is fixed and $n\to\infty$, the ratio $ \sigma_n^{2,\delta} / \sigma_n^2$ converges to $1 - \sum_{i \ge 1} \theta_i^2 \ind{\theta_i > \delta}$. 
Similarly, it holds for $\delta>0$ fixed, when $n\to\infty$,
\[(n-\degf_n(2\Z+1)) - (N_n^\delta - \degf_n^\delta(2\Z+1))
= \sum_{k > \delta \sigma_n} \degf_n(k) (k-\ind{k \in 2\Z+1}) 
\sim \sigma_n \sum_{i \ge 1} \theta_i \ind{\theta_i > \delta}
.\]
Recall that we assume that $\sigma_n^{-1} (n-\degf_n(2\Z+1)) \to a \theta_0^2$; we infer that for any $\delta>0$ fixed, it holds
\[\frac{N_n^\delta - \degf_n^\delta(2\Z+1)}{\sigma_n^{2,\delta}}
\cv \frac{a\theta_0^2}{1 - \sum_{i \ge 1} \theta_i^2 \ind{\theta_i > \delta}}.\]
For definiteness, the denominator in the right hand side must be nonzero; on the other hand, if it equals $0$, then $\theta_0=0$ and only finitely many $\theta_i$'s are nonzero, in which case $X$ clearly satisfies~\eqref{eq:PJ} a.s. so we can apply Proposition~\ref{prop:convergence_looptrees_saut_pur} to conclude about the theorem. 
Now either $\theta_0=0$ and the limit above is null for $\delta>0$ fixed, or $\theta_0 > 0$ and then it further converges to $a$ as $\delta\downarrow0$.

This shows the convergence of the contribution of the short cycles to $\sigma_n^{-1}$ times the looptree distance to $(a+1)/2 \cdot d_C(0,U)$.
On the other hand, the same argument used in the proof of Proposition~\ref{prop:convergence_looptrees_saut_pur}, under~\eqref{eq:PJ}, i.e.~when $C = 0$, shows that for any $\delta > 0$, the contribution of the cycles longer than $\delta \sigma_n$ to $\sigma_n^{-1}$ times the looptree distance between $0$ and $U_n$ converges to the analogous quantity in $\Loop(X)$. Letting then $\delta \downarrow 0$, the latter converges to $d^0_{\Loop(X)}(0,U)$.
We have therefore proved that
\[\sigma_n^{-1} d_{LT^n}(0, E_n U) \cvloi d_{\Loop(X)}^{(a+1)/2}(0,U).\]

The convergence of the pairwise distances between the $E_nU_i$'s is shown similarly by considering separately the branches on the reduced tree to which we remove the branchpoints, as in the proof of Theorem~\ref{thm:convergence_arbres_reduits}.
We leave the details to the reader.
\end{proof}

The proof can be adapted to control the looptrees $\overp{LT}^n$ as defined in~\cite{CK14}, without merging the right-most offspring of each individual with its parent.

\begin{thm}\label{thm:convergence_looptrees_CK}
Suppose that $\limsup_n n^{-1} \degf_n(1) < 1$ and that, as $n\to\infty$, we have $\sigma_n^2 \to \infty$ and, 
\[\sigma_n^{-1} \varrho_n \to \varrho,
\qquad\text{and}\qquad
\sigma_n^{-1} \degf_{n,i} \to \theta_i \quad\text{for every}\enskip i \ge 1.\]
Assume also that there exists $a \ge 0$ such that 
\[\frac{4n - \degf_n(2\Z_+)}{\sigma_n^2} \cv a \theta_0^2,\]
where $a=0$ if $\theta_0=0$.
Then the convergence in distribution
\[\sigma_n^{-1} \overp{LT}^n \cvloi \Loop^{(a+1)/2}(X)\]
holds in the Gromov--Hausdorff--Prokhorov topology.
\end{thm}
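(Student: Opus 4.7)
The plan is to mirror the proof of Theorem~\ref{thm:convergence_looptree_degres_prescrits_general}, adapting to the fact that in $\overp{LT}^n$ a tree vertex with $k$ offspring yields a cycle of length $k+1$ instead of $k$. Tightness of the rescaled distance process $(\sigma_n^{-1} d_{\overp{LT}^n}(\floor{E_n s}, \floor{E_n t}))_{s,t\in[0,1]}$ is provided by Corollary~\ref{cor:tension_looptrees_CK}: the assumption $\limsup_n n^{-1}\degf_n(1) < 1$ ensures that $\overp{\sigma}_n/\sigma_n = E_n/(E_n-\degf_n(1))$ stays bounded. Combined with~\eqref{eq:proportion_feuilles_random}, it then suffices to identify the finite-dimensional limits of the distances, from which the Gromov--Hausdorff--Prokhorov convergence follows as in Corollary~\ref{cor:convergence_looptrees_discrets_saut_pur}.

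For a single uniform random point $U_n = \floor{E_n U}$, I would split $d_{\overp{LT}^n}(0,U_n)$ into the contribution $d^{n,\delta}(0,U_n)$ from cycles of length at most $\delta\sigma_n+1$ and the contribution of the larger cycles. The latter, as in Proposition~\ref{prop:convergence_looptrees_saut_pur}, rescales correctly: a cycle of length $\degf_{n,i}+1 \sim \sigma_n\theta_i$ converges, after division by $\sigma_n$, to a cycle of length $\theta_i$ in $\Loop(X)$, with the additive~$+1$ being negligible. Letting $n\to\infty$ and then $\delta\downarrow 0$, this part converges to $d^0_{\Loop(X)}(0,U)$ defined in~\eqref{eq:def_distance_looptree_reduit}. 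For the small-cycle part I would invoke Lemma~\ref{lem:consequence_epine} with $g(k,j) = \min\{j, k+1-j\}$, the length of the shortest arc from the incoming edge to the $j$-th outgoing edge of a cycle of length $k+1$. This $g$ satisfies $\min\{j,k-j\} \le g(k,j) \le k$, which is sufficient to rerun the proofs of Lemmas~\ref{lem:calculs_moments_K_J} and~\ref{lem:consequence_epine} verbatim: the hypothesis $g(k,j)\le C(k-1)$ in Lemma~\ref{lem:consequence_epine} is only used through the variance estimate $g(k,j)^2 \le C k \, g(k,j)$, which still holds here with $C=1$.

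The elementary identity $\sum_{j=1}^k \min\{j,k+1-j\} = k(k-1)/4 + (3k + \ind{k\in 2\Z+1})/4$ yields
\[G_n^{2,\delta} = \frac{\sigma_n^{2,\delta}}{4}\left(1 + \frac{3 N_n^\delta + \degf_n^\delta(2\Z+1)}{\sigma_n^{2,\delta}}\right).\]
Paralleling~\eqref{eq:convergence_partie_continue_distance_looptrees}, this gives the convergence of the short-cycle contribution to $\tfrac{1}{2}(1 + \Lambda) C_U$, where $\Lambda$ is the double limit of $(3N_n^\delta + \degf_n^\delta(2\Z+1))/\sigma_n^{2,\delta}$. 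Using $\degf_n(2\Z+1) = E_n - \degf_n(2\Z_+)$ and $E_n = n+\varrho_n$ one finds $4n - \degf_n(2\Z_+) = 3n + \degf_n(2\Z+1) - \varrho_n$, so that after truncation at $\delta\sigma_n$, combining the hypothesis of the theorem with $\sigma_n^{-1}\varrho_n = O(1)$ and~\eqref{eq:sigma_tronque} delivers $\Lambda = a$. Consequently $\sigma_n^{-1} d_{\overp{LT}^n}(0,U_n) \cvloi d^{(a+1)/2}_{\Loop(X)}(0,U)$, and the extension to finitely many random points via branch decomposition at branch points of the reduced tree is identical to the proof of Theorem~\ref{thm:convergence_looptree_degres_prescrits_general}. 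The main technical obstacle is the algebraic bookkeeping that isolates the coefficient $(a+1)/2$ from the slightly unnatural-looking hypothesis on $4n - \degf_n(2\Z_+)$; once this identity is verified, the argument is a routine adaptation of the existing proof.
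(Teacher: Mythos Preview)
Your proposal is correct and follows essentially the same approach as the paper's own proof: tightness from Corollary~\ref{cor:tension_looptrees_CK}, then Lemma~\ref{lem:consequence_epine} applied with $g(k,j)=\min\{j,k+1-j\}$, the identical computation $\sum_{j=1}^k g(k,j)=(k^2+2k+\ind{k\in2\Z+1})/4$, and the resulting ratio $(3N_n^\delta+\degf_n^\delta(2\Z+1))/\sigma_n^{2,\delta}$ whose limit is identified with $a$. Your explicit observation that $g$ fails the literal hypothesis $g(k,j)\le C(k-1)$ of Lemma~\ref{lem:consequence_epine} at $k=1$, but that the proof only uses the weaker bound $g(k,j)\le Ck$, is a detail the paper itself glosses over; likewise your derivation of the identity $4n-\degf_n(2\Z_+)=3n+\degf_n(2\Z+1)-\varrho_n$ makes explicit a step the paper leaves implicit.
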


\begin{proof}
Recall from Corollary~\ref{cor:tension_looptrees_CK} that the rescaled graph distance is tight under our first assumption, so as previously it only remains to consider distances between random vertices.
Here one should consider the function $g(k,j)=\min\{j, k-j+1\}$. In this case, we have
\[\sum_{j=1}^k \min\{j, k-j+1\} 
= \frac{k^2 + 2k + \ind{k \in 2\Z+1}}{4}
.\]
Then the argument of the previous proof shows that, instead of~\eqref{eq:convergence_partie_continue_distance_looptrees}, we have here
\[\frac{2}{\sigma_n (1 + (3N_n^\delta + \degf_n^\delta(2\Z+1))/\sigma_n^{2,\delta})} \cdot d^{n,\delta}_{LT^n}(0, U_n) \to  C_U\]
when first $n\to\infty$ and then $\delta \downarrow 0$. Further, as in the previous proof, for any $\delta>0$ fixed,
\[\frac{3N_n^\delta + \degf_n^\delta(2\Z+1)}{\sigma_n^{2,\delta}}
\cv \frac{a\theta_0^2}{1 - \sum_{i \ge 1} \theta_i^2 \ind{\theta_i > \delta}},\]
and the rest is adapted verbatim.
\end{proof}

In the particular case when there is no macroscopic jump, i.e.~when $\sigma_n^{-1} \degf_{n,1} \to \theta_1 = 0$, first for every $\delta>0$ all values when removing the cycles longer than $\delta \sigma_n$ are equal to their non truncated counter part for $n$ large enough. Furthermore the limit path $X=B^\varrho$ is a Brownian first-passage bridge from $\varrho$ to $0$. When $\varrho = 0$, it reduces to the standard Brownian excursion, and the associated continuum random tree $\Loop^a(B^0) = a T_{B^0}$ as in~\eqref{eq:def_distance_arbre}, is up to a constant the celebrated Brownian CRT. When $\varrho > 0$, each excursion above its past minimum in $B^\varrho$ codes a tree, and the tree $T_{B^\varrho}$ is obtained by glueing these small trees along a line segment with length $\varrho$, see e.g.~\cite{BM17} for details. Instead, $\Loop^a(B^\varrho)$ is obtained by identifying together the two extremities of this segment to create a cycle, and multiplying all the trees glued on it by $a$.

\begin{cor}\label{cor:convergence_looptree_CRT}
Let $\varrho \ge 0$, $\sigma^2 \in (0, \infty]$, and $a \in [0,1)$ and suppose that
\[n^{-1} \sigma_n^2 \to \sigma^2,
\qquad
n^{-1} \degf_n(2\Z_+) \to a,
\qquad
\sigma_n^{-1} \varrho_n \to \varrho,
\qquad\text{and}\qquad
\sigma_n^{-1} \degf_{n,1} \to 0.\]
Then the convergence in distribution
\[\sigma_n^{-1} LT^n \cvloi \Loop^{(c+1)/2}(B^\varrho)
\qquad\text{where}\qquad
c = \frac{a}{\sigma^2} \ind{\sigma^2 < \infty},\]
holds in the Gromov--Hausdorff--Prokhorov topology.
\end{cor}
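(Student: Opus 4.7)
The plan is to derive this as a direct consequence of Theorem~\ref{thm:convergence_looptree_degres_prescrits_general}, whose hypotheses I will verify one by one under the assumptions of the corollary.

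First I would determine the values of $\bdtheta$ in the representation~\eqref{eq:hyp_sauts}. The assumption $\sigma_n^{-1}\degf_{n,1} \to 0$ forces $\theta_1 = 0$ (and hence $\theta_i = 0$ for all $i \ge 1$ by the ordering), so by definition $\theta_0^2 = 1 - \sum_{i \ge 1}\theta_i^2 = 1$. Next, since we assume $\degf_n(0) \to \infty$, the inequality $\sigma_n^2 \ge \degf_n(0)-1$ noted after the definition of $\sigma_n^2$ guarantees $\sigma_n^2 \to \infty$, so the standing assumption of Theorem~\ref{thm:convergence_looptree_degres_prescrits_general} holds. The convergences $\sigma_n^{-1}\varrho_n \to \varrho$ and $\sigma_n^{-1}\degf_{n,i} \to \theta_i$ for every $i \ge 1$ are given.

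The remaining point is to verify the hypothesis on $\degf_n(2\Z_+)$. Write
\[\sigma_n^{-2}\degf_n(2\Z_+) = \frac{n}{\sigma_n^2}\cdot \frac{\degf_n(2\Z_+)}{n}.\]
If $\sigma^2 < \infty$, then $n/\sigma_n^2 \to 1/\sigma^2$ and $\degf_n(2\Z_+)/n \to a$, so the product tends to $a/\sigma^2 = c$. If $\sigma^2 = \infty$, then $n/\sigma_n^2 \to 0$ while $\degf_n(2\Z_+)/n$ is bounded by $1$, so the product tends to $0 = c$. In both cases $\sigma_n^{-2}\degf_n(2\Z_+) \to c = c\cdot\theta_0^2$, so the hypothesis of Theorem~\ref{thm:convergence_looptree_degres_prescrits_general} holds with the parameter there equal to~$c$ (note $c=0$ whenever $\theta_0=0$ is vacuously imposed since here $\theta_0=1$).

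Applying Theorem~\ref{thm:convergence_looptree_degres_prescrits_general}, we obtain the convergence in distribution
\[\sigma_n^{-1} LT^n \cvloi \Loop^{(c+1)/2}(X)\]
in the Gromov--Hausdorff--Prokhorov topology, where $X$ is the limit process from Proposition~\ref{prop:convergence_excursions}. It only remains to identify $X$ with $B^\varrho$. In the representation~\eqref{eq:representation_echangeable_general} with $\theta_0=1$ and all $\theta_i=0$ for $i\ge 1$, the bridge $Y$ reduces to $Y_t = \varrho(1-t) + b_t$, i.e.~a standard Brownian bridge with drift from $\varrho$ to $0$, and $X = V_U Y$ for $U$ uniform on $[0,\varrho]$ independent of $Y$ is by definition a Brownian first-passage bridge from $\varrho$ to $0$, which is precisely $B^\varrho$. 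There is no real obstacle in this proof: it is a clean bookkeeping application of the more general theorem, the only mild subtlety being to handle uniformly the cases $\sigma^2 \in (0,\infty)$ and $\sigma^2 = \infty$ in the computation of the limit of $\sigma_n^{-2}\degf_n(2\Z_+)$.
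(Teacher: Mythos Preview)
Your proof is correct and matches the paper's approach: the corollary is obtained as a direct specialisation of Theorem~\ref{thm:convergence_looptree_degres_prescrits_general} in the case $\theta_1=0$ (hence $\theta_0=1$), where the limit process $X$ reduces to the Brownian first-passage bridge $B^\varrho$ as the paper notes in the paragraph preceding the statement. One tiny quibble: your claim that $\degf_n(2\Z_+)/n$ is ``bounded by $1$'' is not quite right (the total count of degrees is $E_n=n+\varrho_n$, not $n$), but since this ratio converges to $a$ by hypothesis it is certainly bounded, so the argument goes through unchanged.
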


\begin{rem}\label{rem:convergence_looptrees_KR}
With the same adaptation as in Theorem~\ref{thm:convergence_looptrees_CK}, this result may also be written for the looptrees $\overp{LT}^n$, in which case it is close to~\cite[Theorem~1.2]{KR20} which considers so-called Bienaym\'e--Galton--Watson (loop)trees conditioned to have $n$ vertices. Actually it allows to recover this theorem, and extend it by conditioning also on the length of the root cycle as well as to other size conditionings (having $n$ cycles with length in any given fixed set $A \in \N$) by checking that the random empirical cycle lengths of such a conditioned looptree satisfies the assumptions on $\degf_n$, see Section~\ref{sec:Levy}.
We note that when $\sigma^2<\infty$, the constant $c$ here does not match $c_\mu$ in~\cite[Theorem~1.2]{KR20}; the latter is incorrect and should be divided by $\sigma_\mu^2/2$.
The error comes from Equation~5.9 there: with the notation there, $R(v_n^\ast)$ is a sum of $n$ i.i.d. random variables with mean $\sum_{k\ge 1} \mu(k) k (k-1)/2 = \sigma_\mu^2/2$ and not $1$ in general.
\end{rem}

Another extreme case is when the root cycle is very long. Recall from Remark~\ref{rem:one_big_jump} that when $\sigma_n^{-1} \varrho_n \to \infty$, it is shown in~\cite[Proposition~1]{Mar19} that $(\varrho_n^{-1} X^n_{E_n t})_{t \in [-1,1]}$ converges in probability to $t\mapsto (1-t) \ind{t\ge0}$. Note that the latter satisfies~\eqref{eq:PJ} so we may apply Proposition~\ref{prop:convergence_looptrees_saut_pur} to deduce the convergence of their looptrees.

\begin{cor}\label{cor:convergence_looptree_cercle}
Suppose that $\sigma_n^{-1} \varrho_n \to \infty$, then $\varrho_n^{-1}LT^n$ converges in probability towards the circle with unit perimeter in the Gromov--Hausdorff--Prokhorov topology.
\end{cor}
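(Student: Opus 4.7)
The plan is to deduce this from the continuity result Corollary~\ref{cor:convergence_looptrees_discrets_saut_pur} (and the underlying Proposition~\ref{prop:convergence_looptrees_saut_pur}) applied to the scale $a_n=\varrho_n$ and the limit path $X\colon t\mapsto (1-t)\ind{t\ge0}$ on $[-1,1]$. Remark~\ref{rem:one_big_jump} already tells us that $(\varrho_n^{-1}X^n_{E_n t})_{t\in[-1,1]}$ converges in probability to this $X$ in the Skorokhod topology, and the convergence~\eqref{eq:hyp_proportion_feuilles} of the vertex-counting process is provided for free by~\eqref{eq:proportion_feuilles_random}. So the work is: (i) check that $X$ satisfies the pure jump property~\eqref{eq:PJ}, and (ii) identify $\Loop(X)$.

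For (i), the path $X$ has a single positive jump $\Delta X_0=1$ at the origin and equals $1-t$ on $[0,1]$. Computing the dual path $X^t_s=X_{t-}-X_{(t-s)-}$ for $s\in[0,t]$ gives $X^t_s=-s$ for $s\in[0,t)$ and $X^t_t=1-t$, so $\overline{X}^t_s=0$ for $s<t$ and jumps to $1-t$ at $s=t$. Thus $\overline{X}^t$ is pure jump and $\Csf_t=0$ for every $t$, establishing~\eqref{eq:PJ}.

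For (ii), I would compute the partial order $\preceq$ directly: an ancestor $r$ of $s\in(0,1]$ must satisfy $r<s$ with $X_{r-}\le \inf_{[r,s]}X=1-s$; since $X_{r-}=1-r>1-s$ for $r\in(0,s)$ and $X_{r-}=0\le 1-s$ for $r\le 0$, the only ancestor of any $s\in(0,1]$ is $0$. Hence for every pair $s,t\in(0,1]$ we have $s\wedge t=0$, $\Rsf^s_0=1-s$ and $\Rsf^t_0=1-t$, so from~\eqref{eq:def_distance_looptree_reduit} and $\Csf\equiv0$,
\[
d_{\Loop(X)}(s,t)=\delta_0(1-s,1-t)=\min\{|s-t|,\,1-|s-t|\}.
\]
This is exactly the arc-length metric on the circle of perimeter $1$, and the push-forward of the Lebesgue measure on $[0,1]$ yields the uniform probability measure on that circle.

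Putting these together, Corollary~\ref{cor:convergence_looptrees_discrets_saut_pur} (via Skorokhod's representation theorem, since the hypothesis there is pathwise/deterministic and our Skorokhod convergence holds in probability) yields $\varrho_n^{-1}LT^n\to \Loop(X)$ in distribution in the Gromov--Hausdorff--Prokhorov topology. As $\Loop(X)$ is a deterministic space (the unit-perimeter circle), convergence in distribution upgrades automatically to convergence in probability. There is no real obstacle here: the whole statement reduces to the pure jump computation of $\Loop(X)$, since all the nontrivial analytic work has already been packaged into Remark~\ref{rem:one_big_jump} and into the deterministic continuity of the $\Loop$ functional under~\eqref{eq:PJ}.
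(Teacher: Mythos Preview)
Your proof is correct and follows essentially the same approach as the paper: invoke Remark~\ref{rem:one_big_jump} for the Skorokhod convergence of the rescaled {\L}ukasiewicz path to $t\mapsto(1-t)\ind{t\ge0}$, observe this limit satisfies~\eqref{eq:PJ}, and apply the pure-jump continuity result (Proposition~\ref{prop:convergence_looptrees_saut_pur}/Corollary~\ref{cor:convergence_looptrees_discrets_saut_pur}). You supply more detail than the paper does---the explicit verification of~\eqref{eq:PJ} and the computation identifying $\Loop(X)$ with the unit-perimeter circle---but the strategy is identical.
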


\subsection{Convergence of random labels on random looptrees}
\label{ssec:convergence_labels_aleatoires}

Recall that $X^n$ denotes the {\L}ukasiewicz path of a uniformly random looptree with cycle lengths prescribed by $\varrho_n$ and $(\degf_{n,i})_{1 \le i \le E_n}$. Let $(\xi_i)_{i\ge1}$ be i.i.d. copies of  an integer valued, centred random variable $\xi$, with nonzero and finite variance and recall the notation 
$\Xi^\ell_j$ for the law of $\xi_1 + \dots + \xi_j$ under the conditional law $\P(\,\cdot\mid \xi_1 + \dots + \xi_\ell = 0)$ for every $0 \le j \le \ell$. 
Finally define the label process $Z^n$ on such a random looptree by~\eqref{eq:def_processus_labels_discret_random}. Similarly, let $\varrho \ge 0$, $\theta_0 \ge 0$, and $\theta_1 \ge \theta_2 \ge \dots \ge 0$ be such that $\sum_{i \ge 0} \theta_i^2 = 1$ and let $Y$ be a random process as in~\eqref{eq:representation_echangeable_general} and then $X = V_U Y$ as in~\eqref{eq:pont_premier_passage_continu}. Conditionally on $X$, define $Z^a$ as in~\eqref{eq:def_labels_browniens}.
Finally define
\[\Sigma_n^2 = \sum_{k \ge 1} \degf_n(k) k (k+1) \Var(\Xi^k_1),\]
which is of order $\sigma_n^2$ since $\Var(\Xi^k_1)$ converges to $\Var(\xi)$ as $k\to\infty$.

\begin{thm}\label{thm:convergence_labels_degres_prescrits_general}
Suppose that $\E[|\xi|^{4+\varepsilon}] < \infty$ for some $\varepsilon > 0$ and that
\[\sigma_n^{-1} \varrho_n \to \varrho,
\qquad\text{and}\qquad
\sigma_n^{-1} \degf_{n,i} \to \theta_i \quad\text{for every}\enskip i \ge 1.\]
Finally assume that there exists $\Sigma \ge 0$ such that 
\[\sigma_n^{-2} \Sigma^2_n \cv \Var(\xi) (1 + (\Sigma^2-1) \theta_0^2),\]
where $\Sigma^2 = 1$ if $\theta_0=0$.
Then the convergence in distribution
\[\left(\frac{1}{\sigma_n} X^n_{E_n t}, \frac{1}{\sqrt{\Var(\xi) \sigma_n}} Z^n_{E_n t}\right)_{t \in [-1,1]} \cvloi (X, Z^{\Sigma^2/3}),\]
holds for the Skorokhod topology.
\end{thm}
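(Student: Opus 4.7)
The plan is to combine the tightness from Corollary~\ref{cor:tension_Holder_labels} (valid since $\E[|\xi|^{4+\varepsilon}] < \infty$) with the functional convergence of Łukasiewicz paths from Proposition~\ref{prop:convergence_excursions}: working along a subsequence with a joint Skorokhod representation, it suffices to identify the finite dimensional marginals of any subsequential limit of $((\Var(\xi)\sigma_n)^{-1/2} Z^n_{E_n \cdot})_n$ as those of $Z^{\Sigma^2/3}$ constructed from $X$ through~\eqref{eq:def_labels_browniens}. As in the proofs of Theorems~\ref{thm:convergence_arbres_reduits} and~\ref{thm:convergence_looptree_degres_prescrits_general}, by sampling $q$ independent uniform random times and running the spinal analysis along each branch of the reduced spanning tree, the problem reduces to the one point case: I take $U$ uniform on $[0,1]$ independent of everything, set $U_n = \lfloor E_n U\rfloor$, and aim to identify the limit in law of $(\Var(\xi)\sigma_n)^{-1/2} Z^n_{U_n}$.

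Conditionally on $X^n$, this variable is a sum of independent centred contributions $\Xi^{\ell_i}_{\ell_i - R^{n,U_n}_i}$ with $\ell_i = \Delta X^n_i$, indexed by the ancestors $i \prec U_n$. I split the sum at a threshold $\delta\sigma_n$. The \emph{large cycles} $\ell_i > \delta\sigma_n$ are finitely many and in asymptotic bijection with the large jumps of $X$; Property~\ref{item:convergence_pont_brownien} of Section~\ref{ssec:labels_aleatoires} applied to each of them gives joint convergence (using their conditional independence) to a family of independent standard Brownian bridges evaluated at the scaled positions $(\Delta X_{t_i})^{-1} R^U_{t_i}$, yielding, after the $\sqrt{\Delta X_{t_i}}$ scaling, exactly the jump part of $Z^{\Sigma^2/3}_U$ in~\eqref{eq:def_labels_browniens}, just as in Proposition~\ref{prop:convergence_labels_saut_pur}. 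For the \emph{small cycles} $\ell_i \le \delta\sigma_n$, a Lindeberg--Feller central limit theorem (the Lindeberg condition being secured by the $(4+\varepsilon)$-th moment of $\xi$ via Property~\ref{item:moments_ponts}) gives conditional asymptotic normality with conditional variance
\[V^{n,\delta}_{U_n} \;=\; \frac{1}{\Var(\xi)\sigma_n} \sum_{i \prec U_n,\ \ell_i \le \delta\sigma_n} \Var\bigl(\Xi^{\ell_i}_{\ell_i - R^{n,U_n}_i}\bigr).\]

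The core computation is to identify the limit of $V^{n,\delta}_{U_n}$. The identity $\Var(\xi_1+\cdots+\xi_k\mid \xi_1+\cdots+\xi_k=0)=0$ combined with exchangeability of the $\xi_i$ yields
\[\Var(\Xi^k_j) = \Var(\Xi^k_1) \frac{j(k-j)}{k-1}, \qquad \sum_{j=1}^{k-1}\Var(\Xi^k_j) = \Var(\Xi^k_1) \frac{k(k+1)}{6},\]
so the function $g(k,j) = \Var(\Xi^k_j)$ satisfies the hypotheses of Lemma~\ref{lem:consequence_epine} with $G_n^{2,\delta} = \Sigma_n^{2,\delta}/6$, where $\Sigma_n^{2,\delta}$ is $\Sigma_n^2$ restricted to $k \le \delta\sigma_n$. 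Combining Lemma~\ref{lem:consequence_epine} with~\eqref{eq:consequence_epine_hauteur} one obtains $V^{n,\delta}_{U_n} = \frac{2 G_n^{2,\delta}}{\Var(\xi)\sigma_n^{2,\delta}} \cdot \sigma_n^{-1} C^{n,\delta}_{U_n} + o(1)$. The main hypothesis together with the elementary $k\Var(\Xi^k_1) \sim \Var(\xi)(k-1)$ for large $k$ imply $G_n^{2,\delta}/\sigma_n^{2,\delta} \to \Var(\xi)\Sigma^2/6$ as first $n\to\infty$ and then $\delta\downarrow 0$, while Lemma~\ref{lem:convergence_partie_continue} gives $\sigma_n^{-1} C^{n,\delta}_{U_n} \to C_U$. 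Altogether $V^{n,\delta}_{U_n} \to \Sigma^2 C_U/3$, which matches the variance of the continuous part $\sqrt{\Sigma^2/3}\,Z^C_U$ of $Z^{\Sigma^2/3}_U$.

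The main technical obstacle lies in the clean exchange of the limits $n\to\infty$ and $\delta\downarrow 0$, and in ensuring the asymptotic independence, in the limit, between the small-cycle Gaussian component and the bridges on the finitely many large cycles; both are handled via the conditional independence given $X^n$ and the uniform-in-$\delta$ control provided by Lemma~\ref{lem:consequence_epine} and Corollary~\ref{cor:tension_Holder_labels}. Extending from one point to $q$ points is routine thanks to the spinal decomposition running independently on each branch.
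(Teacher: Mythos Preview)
Your proposal is correct and follows essentially the same approach as the paper: tightness from Corollary~\ref{cor:tension_Holder_labels} plus finite-dimensional identification at i.i.d.\ uniform times via a $\delta$-cutoff, with the large cycles handled by bridge convergence (Property~\ref{item:convergence_pont_brownien}) and the small cycles by a conditional CLT whose variance is identified through Lemma~\ref{lem:consequence_epine} applied to $g(k,j)=\Var(\Xi^k_j)=\Var(\Xi^k_1)\,j(k-j)/(k-1)$, yielding $G_n^{2,\delta}/\sigma_n^{2,\delta}\to \Var(\xi)\Sigma^2/6$ and hence limiting variance $\Sigma^2 C_U/3$. The only minor differences are that the paper justifies the small-cycle CLT via an explicit third-moment characteristic function bound (Lemma~\ref{lem:difference_ponts_gaussiennes}) rather than invoking Lindeberg--Feller, and that in the $q\ge 2$ case it explicitly bounds the label increments at the at most $q-1$ branchpoints of small degree using Property~\ref{item:moments_ponts}, a detail you sweep under ``routine''.
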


We shall see that the main point of the proof Theorem~\ref{thm:convergence_labels_degres_prescrits_general} is to show a conditional central limit theorem, given $X^n$, for a sum of independent random variables of the form $\Xi^\ell_k$, where each $\ell$, each $k$, and the number of terms itself are measurable with respect to $X^n$ and an independent sample of i.i.d. uniform random times in $[0,1]$. A useful idea to prove such a convergence is to replace these random variables by Gaussian random variables with the same variance, for which the CLT holds as soon as the sum of the variances converges once suitably rescaled. Since these variances are measurable with respect to $X^n$ (and the independent uniform random times), this removes one layer of randomness. The next lemma bounds the cost of such a replacement.

\begin{lem}\label{lem:difference_ponts_gaussiennes}
Suppose that $\E[|\xi|^3] < \infty$. Then there exists a constant $K>0$
such that the following holds:
Let $h\ge1$ and $k_m \ge j_m \ge 1$ for every $1 \le m \le h$ and sample independently $\Xi^{1, k_1}_{j_1}, \dots, \Xi^{h, k_h}_{j_h}$, where each $\Xi^{m, k_m}_{j_m}$ has the law of $\xi_1 + \dots + \xi_{j_m}$ under the conditional law $\P(\,\cdot\mid \xi_1 + \dots + \xi_{k_m} = 0)$,
then for every $z \in \R$,
\[\bigg|\E\bigg[\exp\bigg(\i z \sum_{m=1}^h \Xi^{m, k_m}_{j_m}\bigg)\bigg] - \exp\bigg(- \frac{z^2}{2} \sum_{m=1}^h \Var(\Xi^{m, k_m}_{j_m})\bigg)\bigg|
\le K |z|^3 \sqrt{\max_{1 \le m \le h} k_m} \sum_{m=1}^h (k_m - j_m)
.\]
\end{lem}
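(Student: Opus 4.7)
The plan is to exploit the independence of the $\Xi^{m,k_m}_{j_m}$'s to factor both sides as products over $m$, and then reduce to a per-term estimate via a telescoping argument. Writing $\phi_m(z) := \E[\exp(\i z \Xi^{m,k_m}_{j_m})]$ and $\psi_m(z) := \exp(-z^2 \Var(\Xi^{m,k_m}_{j_m})/2)$, independence gives that the quantities to compare are $\prod_m \phi_m(z)$ and $\prod_m \psi_m(z)$. Since $|\phi_m|, |\psi_m| \le 1$, the elementary telescoping inequality $|\prod_m a_m - \prod_m b_m| \le \sum_m |a_m - b_m|$ reduces the claim to the per-term bound $|\phi_m(z) - \psi_m(z)| \le K |z|^3 \sqrt{k_m}(k_m - j_m)$, after which summing over $m$ and bounding $\sqrt{k_m} \le \sqrt{\max_\ell k_\ell}$ yields the lemma.

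For each $m$, I would compare both $\phi_m(z)$ and $\psi_m(z)$ to their common second order expansion $1 - z^2\Var(\Xi^{m,k_m}_{j_m})/2$ around $z=0$; this is legitimate since the bridge is centered, $\E[\Xi^{m,k_m}_{j_m}] = 0$. The classical Taylor bound $|e^{\i x} - 1 - \i x + x^2/2| \le |x|^3/6$ integrated against the law of $\Xi^{m,k_m}_{j_m}$ gives
\[\bigl|\phi_m(z) - 1 + \tfrac{z^2}{2}\Var(\Xi^{m,k_m}_{j_m})\bigr| \le \tfrac{|z|^3}{6} \E\bigl|\Xi^{m,k_m}_{j_m}\bigr|^3 \le C |z|^3 \min\{j_m, k_m - j_m\}^{3/2},\]
where the second inequality uses the bridge moment bound of~\cite[Lemma~10]{Bet10} (recalled as property~\ref{item:moments_ponts} in Section~\ref{ssec:labels_aleatoires}) with $q=3$, applicable thanks to the standing assumption $\E|\xi|^3 < \infty$. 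For $\psi_m$, I would rely on the elementary inequality $e^{-u} - 1 + u \le \tfrac{2}{3} u^{3/2}$ for $u \ge 0$, which follows from writing $e^{-u} - 1 + u = \int_0^u (1 - e^{-s}) \d s$ and estimating $1 - e^{-s} \le \min\{s, 1\} \le \sqrt{s}$. With $u = z^2 \Var(\Xi^{m,k_m}_{j_m})/2$ and property~\ref{item:moments_ponts} taken at $q=2$, this yields
\[\bigl|\psi_m(z) - 1 + \tfrac{z^2}{2}\Var(\Xi^{m,k_m}_{j_m})\bigr| \le \tfrac{2}{3}\bigl(\tfrac{z^2}{2}\Var(\Xi^{m,k_m}_{j_m})\bigr)^{3/2} \le C |z|^3 \min\{j_m, k_m - j_m\}^{3/2}.\]
Combining the two residuals by the triangle inequality and using both $\min\{j_m, k_m - j_m\} \le k_m - j_m$ and $\sqrt{\min\{j_m, k_m - j_m\}} \le \sqrt{k_m}$ gives precisely the per-term estimate.

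Everything above is routine once the correct elementary bounds are in place; the only point that requires a bit of care is that the Gaussian residual must be controlled by $|z|^3$ rather than $z^4$, so that the two sides of the comparison balance. The naive quadratic bound $|e^{-u} - 1 + u| \le u^2/2$ would only produce a $z^4 \min\{j_m, k_m - j_m\}^2$ term, which would lose an extra factor $\sqrt{k_m}$ in the final estimate; the sharper $u^{3/2}$ estimate is what ultimately delivers the $\sqrt{\max_m k_m}$ appearing in the statement of the lemma.
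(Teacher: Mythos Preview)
Your proof is correct and follows essentially the same route as the paper: factor via independence, telescope the product of unit-modulus complex numbers, and compare each $\phi_m$ and $\psi_m$ to their common second-order expansion using the third-moment Taylor remainder and the bridge moment bound from~\cite{Bet10}. The only cosmetic difference is in the Gaussian residual: the paper treats $\psi_m$ as the characteristic function of a centred Gaussian with variance $\Var(\Xi^{k_m}_{j_m})$ and reuses the same third-moment Taylor bound (then Jensen to pass from $\Var^{3/2}$ to $\E|\Xi^{k_m}_{j_m}|^3$), whereas you invoke directly the elementary inequality $0\le e^{-u}-1+u\le \tfrac{2}{3}u^{3/2}$ together with the $q=2$ moment bound; both give the required $|z|^3\min\{j_m,k_m-j_m\}^{3/2}$ control.
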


\begin{proof}
We use the fact that for any random variable, $X$ say, which is centred and has finite third moment, it holds that
\[\left|\Es{\exp\left(\i z X\right)} - \left(1 - \frac{z^2}{2} \Es{X^2}\right)\right| \le \frac{|z|^3}{6} \Es{|X|^3}.\]
When $X$ has the standard Gaussian law, so $\E[|X|^3] = \sqrt{8/\pi} \le 2$, we obtain using also Jensen's inequality that for any $k \ge j\ge 1$,
\[\left|\exp\left(- \frac{z^2}{2} \Var(\Xi^k_j)\right) - \left(1 - \frac{z^2}{2} \Var(\Xi^k_j)\right)\right|
\le \frac{|z|^3}{6} \E\big[|\Xi^k_j|^2\big]^{3/2} \sqrt{\frac{8}{\pi}}
\le \frac{|z|^3}{3} \E\big[|\Xi^k_j|^3\big].\]
The same bound applied to $\Xi^k_j$ and the triangle inequality then yield
\[\left|\Es{\exp(\i z\, \Xi^k_j)} - \exp\left(- \frac{z^2}{2} \Var(\Xi^k_j)\right)\right|
\le \frac{|z|^3}{2} \E\big[|\Xi^k_j|^3\big].\]
On the other hand, one can show by induction that for any complex numbers in the closed unit disk $(\alpha_m)_{1 \le m \le h}$ and $(\beta_m)_{1 \le m \le h}$, it holds that $|\prod_{1 \le m \le h} \alpha_m - \prod_{1 \le m \le h} \beta_m| \le \sum_{1 \le m \le h} |\alpha_m - \beta_m|$. We infer that
\[\bigg|\E\bigg[\exp\bigg(\i z \sum_{m=1}^h \Xi^{m, k_m}_{j_m}\bigg)\bigg] - \exp\bigg(- \frac{z^2}{2} \sum_{m=1}^h \Var(\Xi^{k_m}_{j_m})\bigg)\bigg|
\le \frac{|z|^3}{2} \sum_{m=1}^h \E\big[|\Xi^{m, k_m}_{j_m}|^3\big]
.\]
Property~\ref{item:moments_ponts} of the bridges, below Equation~\eqref{eq:def_processus_labels_discret_random}, shows that there exists a constant $K_3$ such that for every $k \ge j \ge 1$, we have
\[\E[|\Xi^k_j|^3] \le K_3 |k-j|^{3/2} \le K_3 \sqrt{k} |k-j|.\]
Our claim then follows.
\end{proof}

Let us mention that a better bound in the first line of the proof allows to extend the claim assuming only that $\E[|\xi|^{2+\varepsilon}] < \infty$ for some $\varepsilon > 0$, with $|z|^{2+\varepsilon} \max_{1 \le m \le h} k_m^{\varepsilon/2}$ instead on the right hand side and this is sufficient for our purpose. We nevertheless assume that $\E[|\xi|^{4+\varepsilon}] < \infty$ for some $\varepsilon > 0$ in Theorem~\ref{thm:convergence_labels_degres_prescrits_general} in order to deduce tightness of the labels.

\begin{proof}[Proof of Theorem~\ref{thm:convergence_labels_degres_prescrits_general}]
Recall that the convergence of the {\L}ukasiewicz paths was proved in Proposition~\ref{prop:convergence_excursions} and that Corollary~\ref{cor:tension_Holder_labels} ensures that the sequence of rescaled label processes is tight. Therefore it suffices to sample independently of the rest a finite number, say $U_1, \dots, U_q$ of i.i.d. uniform random times in $[0,1]$, and prove the joint convergence
\begin{equation}\label{eq:convergence_marginales_labels}
\left(\Var(\xi) \sigma_n\right)^{-1/2}
\left(Z^n_{E_n U_1}, \dots, Z^n_{E_n U_q}\right) \cvloi (Z_{U_1}, \dots, Z_{U_q}),
\end{equation}
jointly with the convergence of the {\L}ukasiewicz paths. Let us assume that the latter holds almost surely by Skorokhod's representation theorem and let us work conditionally given $X^n$ and $X$. 
This was done in~\cite[Proposition~6]{Mar19} when $\theta_i = 0$ for every $i \ge 1$. 
As in the proof of Theorem~\ref{thm:convergence_looptree_degres_prescrits_general} let us introduce a cut-off and argue as in the proof of Proposition~\ref{prop:convergence_labels_saut_pur} for the long loops and differently for the short loops. Let us first focus on the case $q=1$ of a single random time $U$; let $U_n = \floor{E_n U}$ and let us also denote by $U_n$ the vertex visited at this time.

Let us henceforth fix $\delta > 0$. As in the proof of Proposition~\ref{prop:convergence_labels_saut_pur} the number of loops longer than $\delta \sigma_n$ converges, as well as their left and right length, towards the same quantities in the limit looptree coded by $X$. Moreover, by Property~\ref{item:convergence_pont_brownien} of the $\Xi$-bridges, below Equation~\eqref{eq:def_processus_labels_discret_random}, the contribution of these loops to $(\Var(\xi) \sigma_n)^{-1/2} Z^n_{U_n}$ converges in distribution towards that of $Z_U$. 
This limit finally converges in distribution towards the series in~\eqref{eq:def_labels_browniens} when $\delta \downarrow 0$. The difference with Proposition~\ref{prop:convergence_labels_saut_pur} is that the rest does not vanish when $\delta \downarrow 0$, but instead converges to the ``snake'' part $\sqrt{\Sigma^2/3} Z^C_U$. We therefore aim at showing that the latter is the limit of the contribution of the short loops.
In order to deal with these short loops, define
\[Z^{n,\delta}_{U_n} = \sum_{i=0}^{U_n-1} \sum_{\ell=1}^{\delta \sigma_n} \sum_{k=0}^\ell \Xi^{\ell,i}_k \ind{\Delta X^n_{i-1} = \ell} \ind{\inf_{t \in [i, U_n]} X^n_{\floor t} - X^n_{i-}=\ell-k},\]
which is the contribution of the loops shorter than $\delta \sigma_n$ to the label of the random point $U_n$. It remains to prove that when first $n\to\infty$ and then $\delta \downarrow 0$, the law of $(\Var(\xi) \sigma_n)^{-1/2} Z^{n,\delta}_{U_n}$ converges to a centred Gaussian law with variance $\Sigma^2 C_U/3$.
Recall the notation $C^{n,\delta}_{U_n}$ for the sum of the right part of the ancestral loops of $U_n$ with length smaller than or equal to $\delta \sigma_n$.
According to Lemma~\ref{lem:convergence_partie_continue}, almost surely in our probability space, $\sigma_n^{-1} C^{n,\delta}_{U_n}$ converges to $C^\delta_U$ as $n\to\infty$, which itself converges to $C_U$ as $\delta\downarrow0$. Therefore, in order to deduce~\eqref{eq:convergence_marginales_labels} with $q=1$, it only remains to show that, when first $n\to\infty$ and then $\delta \downarrow 0$, the law of 
$(3\sigma_n/(\Var(\xi) \Sigma^2 C^{n,\delta}_{U_n}))^{1/2} Z^{n,\delta}_{U_n}$ 
converges to the standard Gaussian law. Let us first prove that its variance converges to $1$.

Observe that the conditional variance of $Z^{n,\delta}_{U_n}$ given $X^n$ is of the form of the sum in Lemma~\ref{lem:consequence_epine}, with the function $g(k,j) = \Var(\Xi^k_j)$. As discussed in~\cite[page~1664]{MM07}, it holds by exchangeability for every $k \ge j \ge 1$,
\[\Var(\Xi^k_j) = j \Var(\Xi^k_1) + j (j-1) \Cov(\Xi^k_1, \Xi^k_2-\Xi^k_1).\]
Since this vanishes for $j=k$, then the covariance term equals $-(k-1)^{-1} \Var(\Xi^k_1)$, hence
\[g(k,j) = \Var(\Xi^k_j) 
= \left(j - \frac{j (j-1)}{k-1}\right) \Var(\Xi^k_1)
= \frac{j (k - j)}{k-1} \Var(\Xi^k_1).\]
Note that $g$ satisfies the assumptions of Lemma~\ref{lem:consequence_epine}, so we read from Equation~\eqref{eq:consequence_epine_Luka} that for any $\varepsilon>0$,
\[\lim_{\delta \downarrow 0} \limsup_{n \to \infty}
\P\bigg(\bigg| \frac{\sigma^{2,\delta}_n}{2 G^{2,\delta}_n} \Var(Z^{n, \delta}_{U_n} \mid X^n) - C^{n,\delta}_{U_n} \bigg| > \varepsilon \sqrt{\sigma^{2,\delta}_n}\bigg) 
= 0,\]
where
\[G_n^{2,\delta} = \sum_{k \le \delta \sigma_n} \degf_n(k) \sum_{j=1}^k g(k,j) 
= \sum_{k \le \delta \sigma_n} \degf_n(k) \frac{k (k+1)}{6} \Var(\Xi^k_1)
.\]
We already mentioned that $\Var(\Xi^k_1) \to \Var(\xi)$ as $k\to\infty$, therefore
since $\sigma_n^{-1} \degf_{n,i} \to \theta_i$ for every $i \ge 1$, then for every $\delta>0$,
\[6 \left(G_n^2 - G_n^{2,\delta}\right)
= \sum_{k > \delta \sigma_n} \degf_n(k) k (k+1) \Var(\Xi^k_1)
\enskip\mathop{\sim}_{n\to\infty}\enskip \sigma_n^2 \Var(\xi) \sum_{i \ge 1} \theta_i^2 \ind{\theta_i > \delta}
\enskip\mathop{\sim}_{\delta\downarrow0}\enskip \sigma_n^2 \Var(\xi) (1 - \theta_0^2)
.\]
Recall that we assume that $\sigma_n^{-2} \Sigma^2_n \to \Var(\xi) (1 - \theta_0^2 + \Sigma^2 \theta_0^2)$, with $\Sigma^2 = 1$ when $\theta_0=0$. We next consider limits when first $n \to \infty$ and then $\delta \downarrow 0$. First, recall from~\eqref{eq:sigma_tronque} that the ratio $ \sigma_n^{2,\delta} / \sigma_n^2$ converges to $\theta_0^2$ and this implies that $G^{2,\delta}_n / \sigma_n^{2,\delta}$ converges to $\Sigma^2 \Var(\xi)/6$.
Since $\sigma_n^{2,\delta}\le \sigma_n^2$ and $\sigma_n^{-1} C^{n, \delta}_{U_n}$ converges to $C_U > 0$ a.s. by Lemma~\ref{lem:convergence_partie_continue}, then we infer that for any $\varepsilon>0$, 
\[\lim_{\delta \downarrow 0} \limsup_{n \to \infty}
\P\bigg(\bigg|\frac{3}{\Sigma^2 \Var(\xi) \sigma_n C_U} \Var(Z^{n, \delta}_{U_n} \mid X^n) - 1 \bigg| > \varepsilon\bigg) 
= 0,\]
which is what we aimed at.

Let us next apply Lemma~\ref{lem:difference_ponts_gaussiennes} to the bridges that contribute to $Z^{n, \delta}_{U_n}$: the $k_m$' are all bounded by $\delta \sigma_n$, and the sum of the $(k_m-j_m)$'s equals $C^{n,\delta}_{U_n}$.
We infer that for every $z\in \R$ and $\delta>0$,
\begin{multline*}
\bigg|\E\bigg[\exp\bigg(\i z \sqrt{\frac{3}{\Sigma^2 \Var(\xi) \sigma_n C_U}} Z^{n, \delta}_{U_n}\bigg) \;\bigg|\; X^n\bigg] - \exp\bigg(- \frac{z^2}{2} \frac{3}{\Sigma^2 \Var(\xi) \sigma_n C_U} \Var(Z^{n, \delta}_{U_n} \mid X^n)\bigg)\bigg|
\\
\qquad\le K |z|^3 \bigg(\frac{3}{\Sigma^2 \Var(\xi) \sigma_n C_U}\bigg)^{3/2} \sqrt{\delta \sigma_n} C^{n,\delta}_{U_n}
\cv K |z|^3 \bigg(\frac{3}{\Sigma^2 \Var(\xi)}\bigg)^{3/2} \sqrt{\delta} C_U^{-3/2} C^{\delta}_{U}
,\end{multline*}
which further tends to $0$ when $\delta\downarrow0$. This finishes the proof of~\eqref{eq:convergence_marginales_labels} with $q=1$.

When $q \ge 2$, we proceed exactly as in the proof of Theorem~\ref{thm:convergence_arbres_reduits} by splitting at the branchpoints of the reduced tree. Then the previous argument extends to show the joint convergence of the label increments along each branch of this reduced tree as well as the increments at the branchpoints with degree greater than $\delta\sigma_n$. As for the label increments at the branchpoints with degree smaller than $\delta\sigma_n$, we fist note that there are at most $q-1$ of them, and each such increment has the conditional law of $\Xi^k_i$ for some pair $1 \le i \le k \le \delta \sigma_n$. We conclude from the property~\ref{item:moments_ponts} of the $\Xi$-bridges, below Equation~\eqref{eq:def_processus_labels_discret_random}, combined with a union bound and the Markov inequality that with high probability as first $n\to\infty$ and then $\delta\downarrow0$ the maximal increment contributing to the label of our $q$ random points over all branchpoints with degree smaller than $\delta\sigma_n$ is small compared to $\sigma_n^{1/2}$, which completes the proof.
\end{proof}

\begin{rem}\label{rem:convergence_labels_CK}
Similarly to Theorem~\ref{thm:convergence_looptrees_CK}, this results holds for random labels constructed in the same way but on the looptrees $\overp{LT}^n$ when $\limsup_n n^{-1} \degf_n(1) < 1$. 
One should just instead use the function $g(k,j) = \Var(\Xi^{k+1}_j)$ for $k\ge j \ge 1$, 
so the factor $\Sigma_n^2$ gets replaced by
$\sum_{k \ge 1} \degf_n(k) (k+1) (k+2) \Var(\Xi^{k+1}_1)$.
\end{rem}

\subsection{Application to random maps}
\label{ssec:cartes_degres_prescrits}

Recall that in the case $\P(\xi=i) = 2^{-(i+2)}$ for every $i \ge -1$, we obtain a uniformly random good labelling of the looptree; further, by Lemma~\ref{lem:bijection}, this codes a random pointed map $(M^n, v^n_\star)$ sampled uniformly at random with degrees prescribed by $\varrho_n$ and $\Degf_n$. 
Recall from the introduction that in this case Marckert \& Miermont~\cite[page~1664]{MM07} have calculated $\Var(\Xi^k_1) = \frac{2(k-1)}{k+1}$, so $\Sigma^2_n = 2\sigma_n^2$ and thus $\Sigma^2=1$ in Theorem~\ref{thm:convergence_labels_degres_prescrits_general}. Let us therefore set
\[\mathbf{Z} = Z^{1/3}.\]
Then combining our previous results, we deduce invariance principles which extend the work~\cite{Mar19} which focused on the Brownian case $\theta_1=0$.
Let us denote by $(v^n_1, \dots, v^n_{\degf_n(0)})$ the vertices of $M^n$ different from $v^n_\star$, which are the vertices of its associated looptree, listed in the order of visit of their last external corner when following the contour of the looptree.
Recall that we denote by $d_{M^n}(v^n_\star, \vec{e}_n)$ for the smallest between the distiguinshed vertex $v^n_\star$ and the endpoints of the root edge.

\begin{thm}\label{thm:convergence_cartes_degres_prescrits}
Suppose that 
$\sigma_n^2 \to \infty$, and that
\[\sigma_n^{-1} \varrho_n \to \varrho,
\qquad\text{and}\qquad
\sigma_n^{-1} \degf_{n,i} \to \theta_i \quad\text{for every}\enskip i \ge 1.\]
\begin{enumerate}
\item The convergence in distribution
\[\left(\frac{1}{\sqrt{2 \sigma_n}}\dgr(v^n_\star, v^n_{\floor{\degf_n(0) t}})\right)_{t \in [0,1]} \cvloi \left(\mathbf{Z}_t - \min \mathbf{Z}\right)_{t \in [0,1]},\]
holds for the uniform topology. Consequently,
\[\frac{1}{\sqrt{2 \sigma_n}} d_{M^n}(v^n_\star, \vec{e}_n) \cvloi - \min \mathbf{Z}
\qquad\text{and}\qquad
\frac{1}{\sqrt{2 \sigma_n}} \max_{v \in V(M^n)} d_{M^n}(v^n_\star, v) \cvloi \max \mathbf{Z} - \min \mathbf{Z},\]
and for every continuous and bounded function $F$, we have
\[\frac{1}{\degf_n(0)} \sum_{v \in V(M^n)} F\left(\frac{1}{\sqrt{2 \sigma_n}} d_{M^n}(v^n_\star, v)\right) \cvloi \int_0^1 F(\mathbf{Z}_t - \min \mathbf{Z}) \d t.\]

\item From every increasing sequence of integers, one can extract a subsequence along which the convergence in distribution
\[\left(\frac{1}{\sqrt{2 \sigma_n}} d_n(\degf_n(0) s, \degf_n(0) t)\right)_{s,t \in [0,1]} \cvloi \left(D_\infty(s,t)\right)_{s,t \in [0,1]},\]
holds for the uniform topology, where $D_\infty$ is a random continuous pseudo-distance which satisfies
\[D_\infty(U,\cdot) \eqloi \mathbf{Z} - \min \mathbf{Z},\]
where $U$ is sampled uniformly at random on $[0,1]$ and independently of the rest.
Finally along this subsequence, we have
\[\frac{1}{\sqrt{2 \sigma_n}} M^n \cvloi{} [0,1]/\{D_\infty=0\}\]
in the Gromov--Hausdorff--Prokhorov topology. 
\end{enumerate}
\end{thm}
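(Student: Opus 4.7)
The plan is to transport Theorem~\ref{thm:convergence_labels_degres_prescrits_general} through the bijection of Lemma~\ref{lem:bijection} and then invoke the map-oriented Lemmas~\ref{lem:profil_general} and~\ref{lem:tension_cartes_general} of Section~\ref{ssec:labels_to_cartes}. Via the bijection, the pointed map $(M^n, v^n_\star)$ is encoded by a uniformly good-labelled looptree whose label process $Z^n$ is of the form~\eqref{eq:def_processus_labels_discret_random} with $\xi$ the shifted geometric $\P(\xi = i) = 2^{-i-2}$ for $i \ge -1$. This $\xi$ has finite moments of all orders, $\Var(\xi) = 2$, and the Marckert--Miermont identity recalled below Theorem~\ref{thm:convergence_labels_intro} yields $\Sigma^2_n = 2\sigma_n^2$, so that the constant $\Sigma$ in Theorem~\ref{thm:convergence_labels_degres_prescrits_general} equals $1$. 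Combining that theorem with the convergence of the vertex-counting process in~\eqref{eq:proportion_feuilles_random} gives the joint uniform convergence
\[\Bigl( (2\sigma_n)^{-1/2} Z^n_{E_n t},\; E_n^{-1} \lambda^n(\degf_n(0) t) \Bigr)_{t \in [0,1]} \cvloi (\mathbf{Z}_t, t)_{t \in [0,1]},\]
which is precisely hypothesis~\eqref{eq:hyp_cv_labels_et_proportion_feuilles} with $a_n = \sqrt{2\sigma_n}$.

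Part (i) follows by a direct application of Lemma~\ref{lem:profil_general} with $\Zsf = \mathbf{Z}$: the convergence of the profile and its three stated consequences match verbatim the four claims of part (i). To pass from the indexing by $v^n_{\lambda^n(\degf_n(0) t)}$ to the indexing by $v^n_{\floor{\degf_n(0) t}}$ one simply uses that the convergence of $\lambda^n$ to the identity in~\eqref{eq:proportion_feuilles_random} is in probability.

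For the subsequential convergence in part (ii), the hypothesis~\eqref{eq:hyp_cv_labels_et_proportion_feuilles} established above is exactly what is required by Lemma~\ref{lem:tension_cartes_general}. After invoking Skorokhod's representation to turn the convergence in distribution into an a.s.\ convergence along a subsequence, that lemma yields, possibly after a further diagonal extraction, the uniform convergence of the rescaled distance matrix to a continuous pseudo-distance $D_\infty$ together with the Gromov--Hausdorff--Prokhorov convergence $(2\sigma_n)^{-1/2} M^n \to [0,1]/\{D_\infty = 0\}$.

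The main obstacle, and the only part that is not a direct application of the preceding results, is the identification of the distribution of $D_\infty(U, \cdot)$. I would argue this via a re-rooting trick. By the observation of Section~\ref{ssec:model_degres_prescrits}, sampling the uniform negative pointed map $(M^n, v^n_\star)$ is equivalent to first sampling the underlying non-pointed map uniformly and then choosing the distinguished vertex uniformly among its $\degf_n(0) + 1$ vertices, independently of the map. Consequently, for an independent uniform $U_n \in [0,1]$, the joint law of
\[\bigl( (2\sigma_n)^{-1/2} d_n\bigl(\degf_n(0) U_n,\; \degf_n(0) t\bigr) \bigr)_{t \in [0,1]}\]
agrees, up to a vanishing error arising from the distinction between a uniform corner and a uniform vertex (quantitatively controlled by~\eqref{eq:proportion_feuilles_random}) and from the fact that $v^n_\star$ does not appear in the listing $(v^n_i)$, with that of
\[\bigl( (2\sigma_n)^{-1/2} d_{M^n}\bigl(v^n_\star,\; v^n_{\lambda^n(\degf_n(0) t)}\bigr) \bigr)_{t \in [0,1]}.\]
Passing to the limit along the extracted subsequence, the former converges to $D_\infty(U, \cdot)$ with $U$ uniform on $[0,1]$ and independent of $D_\infty$, while the latter converges in law to $\mathbf{Z} - \min \mathbf{Z}$ by part (i); this establishes the desired identity in distribution.
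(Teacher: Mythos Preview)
Your proposal is correct and follows essentially the same approach as the paper: both combine Theorem~\ref{thm:convergence_labels_degres_prescrits_general} (with the shifted geometric $\xi$, so $\Sigma^2=1$) and~\eqref{eq:proportion_feuilles_random} via Skorokhod's representation to feed Lemmas~\ref{lem:profil_general} and~\ref{lem:tension_cartes_general}, and then identify $D_\infty(U,\cdot)$ by coupling an independent uniform vertex with the distinguished vertex $v^n_\star$ using the observation of Section~\ref{ssec:model_degres_prescrits}. The paper phrases the last step as a coupling in which $u_n$ and $v^n_\star$ differ with probability at most $1/(\degf_n(0)+1)\to 0$, which is exactly your re-rooting argument.
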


\begin{proof}
Combining Theorem~\ref{thm:convergence_labels_degres_prescrits_general}, the convergence~\eqref{eq:proportion_feuilles_random}, and Skorokhod's representation theorem, we may assume the almost sure convergence 
\[\left(\frac{1}{\sqrt{2 \sigma_n}} Z^n_{n t}, \frac{1}{E_n} \lambda^n(\degf_n(0) t)\right)_{t \in [0,1]} \cvps \left(\mathbf{Z}_t, t\right)_{t \in [0,1]}.\]
The first claim then follows from Lemma~\ref{lem:profil_general} and the second one from Lemma~\ref{lem:tension_cartes_general}. 
For the identity in law $D_\infty(U,\cdot) = \mathbf{Z} - \min \mathbf{Z}$, the left hand side is the limit in law of the rescaled distances between the vertices $v^n_1, \dots, v^n_{\degf_n(0)}$ of $M^n\setminus\{v^n_\star\}$ and a uniformly random vertex $u_n$ in this set. As remarked in Section~\ref{ssec:model_degres_prescrits}, the distinguished vertex $v^n_\star$ has the uniform distribution on the set of all vertices of $M^n$ and is independent of the latter. Therefore we can couple $u_n$ and $v^n_\star$ in such a way that they differ with a probability at most $1/(\degf_n(0)+1) \to 0$. Then replacing $u_n$ by $v^n_\star$, we conclude from the first claim.
\end{proof}

\section{Boltzmann distributions and L\'evy processes}
\label{sec:Levy}

In this last section, let us briefly remark the applications of our results to \emph{mixtures} obtained by first sampling the degree sequence at random and then sampling a (loop)tree or a map uniformly at random given these degrees. This recovers the models of simply generated and size-conditioned Bienaym\'e--Galton--Watson (loop)trees and Boltzmann plane maps.
In the case of no macroscopic degrees, this is already discussed in~\cite[Section~6]{Mar19}. This will be pursued in~\cite{KM22}.
Let us first present the continuum objects before presenting the discrete ones and finally discuss invariance principles.

\subsection{L\'evy processes}
\label{ssec:Levy}

Let $X$ denote a L\'evy process, i.e. a random c\`adl\`ag path with stationary and independent increments. Such a process has in particular exchangeable increments, and so have bridge versions defined below. They can therefore fit in the setting of Section~\ref{sec:echangeable}, where the parameters $\theta_i$ in~\eqref{eq:representation_echangeable_general} are now random.
We shall work in the same setting as Duquesne \& Le~Gall~\cite{DLG02} and consider such a process with no negative jump and whose law is determined by its Laplace transform given for all $t, \lambda \ge 0$, by
$\E[\exp(-\lambda X_t)] = \exp(t \psi(\lambda))$, where
\[\psi(\lambda) = d \lambda + \beta \lambda^2 + \int_{(0,\infty)} (\e^{-\lambda r} - 1 + \lambda r) \pi(\d r),\]
where $d \ge 0$ is the drift coefficient, $\beta \ge 0$ is the Gaussian coefficient, and finally $\pi$ is the L\'evy measure, which satisfies $\int_{(0,\infty)} \min\{r, r^2\} \pi(\d r) < \infty$. This integrability condition, together with the nonnegativity of $d$, ensures that the path is either recurrent, when $d = 0$, or drifts to $-\infty$ otherwise (this is a kind of subcritical regime); this ensures that the path has no finite lower bound so one can easily consider excursions and first passage-bridges. Finally the path has finite variation if and only if $\beta = 0$ and $\int_{(0,1)} r \pi(\d r) < \infty$.
In the case $\psi(\lambda) = \lambda^2$, the process $X$ is $\sqrt{2}$ times a standard Brownian motion; more generally the case $\psi(\lambda) = \lambda^\alpha$ where $\alpha \in (1,2]$ is known as that of stable L\'evy processes with index $\alpha$.

From any such L\'evy process, 
Duquesne \& Le~Gall~\cite{DLG02} constructed the so-called \emph{height process} $H$ and studied thoroughly its properties. If (an excursion of) $X$ is the analogue of the {\L}ukasiewicz path of a plane tree, then (an excursion of) $H$ is the analogue of its height or contour process, and the continuum L\'evy tree is $T_H = [0,1]/\{d_H=0\}$, where $d_H$ is defined in~\eqref{eq:def_distance_arbre}.
Recall the notation at the very beginning of Section~\ref{sec:deterministe}, in particular that for every $t \in [0,1]$, we write the running supremum of the dual path $\overline{\Xsf}\vphantom{X}^t$ as the sum of a pure jump path $J^t$ and a continuous path $C^t$; then $X_{t-} = J^t_t + C^t_t = J_t + C_t$.
As shown in the next proposition, L\'evy processes satisfy the pure jump property~\eqref{eq:PJ} if and only if they have no Gaussian component, which in this case answers Remark~\ref{rem:PJ_echangeable_mieux}.

\begin{prop}\label{prop:Levy_partie_continue_hauteur}
For any such L\'evy process, it holds:
\begin{enumerate}
\item Either $\beta = 0$ and then $C = 0$ almost surely,
\item Or $\beta > 0$ and then $C_t = \beta H_t = \Leb(\{\min_{r \in [s,t]} X_s ; s \in [0,t]\})$, which vanishes only at times $t$ such that $X_t = \min_{s \le t} X_s$.
\end{enumerate}
\end{prop}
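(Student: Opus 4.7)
The plan is to express $C_t$ as the Lebesgue measure of an explicit random set, use the L\'evy duality lemma to reduce to the range of the running supremum of $X$, and then invoke classical fluctuation theory.

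First, I would unwind the definition~\eqref{eq:def_saut_pure_partie_continue}: since $\overp{X}^t = \Jsf^t + \Csf^t$ is the decomposition of the non-decreasing c\`adl\`ag path $\overp{X}^t$ into its pure jump and continuous parts, one has $\Csf^t_s = \Leb(\{\overp{X}^t_r : r \in [0, s]\})$. Substituting $\overp{X}^t_s = X_{t-} - \inf_{u \in [t-s, t]} X_{u-}$, setting $s = t$, using translation invariance of Lebesgue measure, and observing that $u \mapsto \inf_{v \in [u, t]} X_{v-}$ and $u \mapsto \inf_{v \in [u, t]} X_v$ agree outside a countable set of $u$'s (and both are monotone, hence have the same range up to a set of zero Lebesgue measure), one recovers the third expression of the statement:
\[
C_t = \Leb\bigl(\bigl\{\textstyle\min_{u \in [s, t]} X_u \;:\; s \in [0, t]\bigr\}\bigr).
\]

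Next, for fixed $t$, the duality lemma for L\'evy processes gives $(X_{t-} - X_{(t-s)-})_{s \in [0, t]} \stackrel{(d)}{=} (X_s)_{s \in [0, t]}$, so $C_t$ has the same law as the Lebesgue measure of the range of the running supremum $(\overp{X}_s)_{s \in [0, t]}$. By classical ladder theory (Bertoin, \emph{L\'evy processes}, Chapter~VI), this range is the image over $[0, L_t]$ of the ladder-height subordinator $H^\uparrow$, where $L$ is a normalisation of the local time of $\overp{X} - X$ at zero; its Lebesgue measure equals $d(H^\uparrow) \cdot L_t$ with $d(H^\uparrow)$ the drift of $H^\uparrow$. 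The Wiener--Hopf factorisation for a spectrally positive L\'evy process shows that $d(H^\uparrow) > 0$ iff $\beta > 0$, and one can choose the normalisation of $L$ so that $d(H^\uparrow) = \beta$.

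If $\beta = 0$, this immediately gives $C_t = 0$ a.s.\ for each fixed $t$, and by continuity of the random path $C$ this upgrades to $C \equiv 0$ on $[0, 1]$ a.s., yielding~(i). If $\beta > 0$, the previous paragraph reads $C_t = \beta L_t$; with the normalisation of local time adopted in~\cite{DLG02}, Section~1.2, the process $L$ coincides with the height process $H$ of $X$, so $C_t = \beta H_t$. The description of the zero set then follows from the standard fact that $H$, constructed as a local time at zero of (the reflection of) $X - \underline{X}$ under duality, vanishes precisely on $\{t : X_t = \min_{s \le t} X_s\}$. The main obstacle I foresee is the careful bookkeeping of normalisation constants across the ladder theory and the DLG formalism, to pin down the identity $C_t = \beta H_t$ with exactly the constant $\beta$ and no spurious factor; this is standard but delicate.
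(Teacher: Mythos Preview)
Your proposal is correct and follows the same route as the paper: duality to identify $X^t$ with a copy of $X$, then ladder theory (the paper cites \cite[Lemma~1.1.2]{DLG02}, you cite Bertoin Chapter~VI) to see that the ascending ladder height subordinator has drift $\beta$, and finally the identification of the local time at the supremum of the dual with the height process $H$ of \cite{DLG02}. One cosmetic point: since both $C_t$ and $H_t$ are functionals of the \emph{same} dual path $X^t$, it is cleaner to apply the ladder theory directly to $X^t$ (as the paper does) rather than first transferring distributionally to $X$ and then sliding back to the almost-sure identity $C_t=\beta H_t$.
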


\begin{proof}
Fix $t \in [0,1]$ and first note that the dual process $X^t$ has the same law as $X$.
According to~\cite[Lemma~1.1.2]{DLG02}, its ladder height process is a subordinator with 
drift coefficient $\beta$, and the range of the ladder process and the running supremum agree except on a set which is at most countable. Therefore if $\beta=0$, then the ladder process is a pure jump process and so $\overline{\Xsf}\vphantom{X}^t = J^t$. On the other hand, if $\beta > 0$, then $\overline{\Xsf}\vphantom{X}^t$ has a continuous part, given by $\beta$ times the local time at the supremum of the dual process, which is exactly the height process~\cite[Definition~1.2.1]{DLG02}. 
The second claim is then~\cite[Equation~14]{DLG02}.
\end{proof}

We next want to consider conditioned L\'evy processes.
Under the two assumptions that 
(i) the point $0$ is regular for both $(0, \infty)$ and $(-\infty, 0)$, which holds since $X$ has no negative jump and infinite variation~\cite[Chapter~VII]{Ber96}, and 
(ii) that $\int |\E[\e^{i u X_t}]| \d u < \infty$ for every $t > 0$,
it is shown in~\cite{UB14} that one can construct bridge and excursion versions of $X$, i.e.~paths $X^{\rm br}$ and $X^{\rm ex}$ which are regular conditional distributions of $(X_t)_{t \in [0,1]}$, conditioned on $X_0 = X_1 = 0$ for the former and moreover on $X_t > 0$ for all $t \in (0,1)$ for the latter. Moreover, the bridge is locally absolutely continuous with respect to the unconditioned path~\cite[Equation~2.1]{UB14} and then the excursion is obtained from the bridge via the Vervaat transform~\cite[Theorem~4]{UB14}, therefore Proposition~\ref{prop:Levy_partie_continue_hauteur} remains valid for both processes. More generally one can construct first-passage bridges versions from $\varrho > 0$ to $0$, which satisfy $X_0 = \varrho$, $X_1 = 0$, and $X_t > 0$ for every $t \in (0,1)$.

We may therefore construct as in Section~\ref{sec:deterministe} the looptree associated with such a process $X$, or with a bridge or excursion version, and then add random Gaussian labels on it.
We study more these L\'evy labelled looptrees and associated maps in the forthcoming paper~\cite{KM22}; in particular we prove the regularity assumption~\eqref{eq:hypothese_Kolmogorov_continuite_distance_loop} on the looptree distance with the optimal exponent and we calculate the fractal (Hausdorff, Minkowski, packing) dimensions of the looptree and the corresponding map in terms of the so-called Blumenthal--Getoor exponents of $X$, which generalise the Brownian and stable cases~\cite{CK14, LGM11}.

\subsection{Boltzmann distributions and random degree sequences}
\label{sec:Boltzmann}

Marckert \& Miermont~\cite{MM07} defined laws on pointed bipartite plane maps which are designed so the maps have random face degrees, called \emph{Boltzmann distributions}. Let us also refer to~\cite{BM17} for more details, in the context of maps with a boundary as well as to~\cite{Mar18b} for a presentation closer to the present work. The model is parameterised by a sequence $\q = (q_k)_{k\ge0}$ of nonnegative real numbers which in a sense plays the role of the offspring distribution for Bienaym\'e--Galton--Watson random trees. More precisely, the $\q$-Boltzmann distribution on finite pointed maps with boundary length $2\varrho$ is defined as
\[\P^\varrho(M, v_\star) = \frac{1}{W^\varrho} \prod_{f \text{ inner face}} q_{\mathrm{deg}(f)/2},\]
where $W^\varrho$ is a normalising constant. For $\P^\varrho$ to be well-defined, this constant has to be finite, and this can be checked analytically on the sequence $\q$, see~\cite[Proposition~1]{MM07}, recast in~\cite[Proposition~9]{Mar18b}. 

By the bijection from Lemma~\ref{lem:bijection}, such a random pointed map is related to a labelled (loop)tree whose offspring distribution, say $\mu_\q$, is explicit~\cite{MM07,Mar18b}. One can then condition such a map to have ``size'' $S=N$, in the sense of having $N$ vertices, or $N$ edges, or $N$ faces, or even having $N$ faces whose degree belongs to a given set $A\in2\N$, and possibly other faces, but with a degree in $2\N \setminus A$. By Lemma~\ref{lem:bijection}, this corresponds to conditioning the looptree on its number of vertices, edges, or loops, i.e. conditioning the corresponding plane tree on its number of leaves, edges, or internal vertices respectively.
Such conditionings have been especially studied by Kortchemski~\cite{Kor12}, relying on the \L ukasiewicz path, as well as Rizzolo~\cite{Riz15} using a different approach.

Note that one can consider the same law $\P^\varrho$ on non-pointed maps, with a different normalising constant, and further distinguish a vertex uniformly at random. Since the number of vertices of the map is random ---~unless one condition on it~--- then sampling a pointed map induces a bias on the total number of vertices; however it is now classical that this bias vanishes in the large $N$ limit in many models and a pointed Boltzmann is close to a non pointed one in which one further distinguish a vertex uniformly at random, see~\cite[Proposition~6.4]{Mar19}, adapted from~\cite{BJM14, Abr16, BM17,Mar18b}.

Thanks the preceding remark, one can focus on pointed maps and thus rely on the bijection from Lemma~\ref{lem:bijection}.
Let $\degf_{n,1} \ge \degf_{n,2} \ge \dots \ge 0$ denote the ranked outdegrees of the associated conditioned random labelled forest, which, for the nonzero terms, are the cycle lengths of the looptree and the half-face degrees of the pointed map.
The key observation is that these $\degf_{n,i}$'s and $n$ itself (unless we condition on the number of edges) are random, but conditional on these numbers, the forests, looptrees, and pointed maps have the uniform distribution with these degrees. Therefore, using e.g.~Skorokhod's representation theorem, one can apply our previous general results conditionally given these degrees as soon as they satisfy the assumptions with convergences in distribution, where the $\theta_i$'s are a priori random. Note that the scaling factor $\sigma_n^2 = \sum_i \degf_{n,i} (\degf_{n,i}-1)$ is also random, however in several cases it behaves asymptotically deterministically, by a kind of law of large numbers.

\subsubsection{The Brownian regime}
According to~\cite[Theorem~8.1]{Kor12} when $\varrho=1$ and the law $\mu_\q$ is critical and has finite variance, the \L ukasiewicz path rescaled by some constant times $N^{-1/2}$ converges in distribution towards a Brownian excursion. 
Moreover in this case, as shown in~\cite[Section~6]{Mar19} the random scaling $\sigma_n^2$ divided by $N$ converges in probability to a constant. We deduce that our results apply, when $\sigma_n^2$ is replaced by this constant times $N$, and where $\theta_1 = 0$ so $\theta_0=1$. In particular in this case, the trees converge to the Brownian tree at a scaling of order $N^{1/2}$ by Corollary~\ref{cor:convergence_arbres_var_finie}, so do the looptrees by Corollary~\ref{cor:convergence_looptree_CRT}, the labels converge to the head of the Brownian snake at a scaling of order $N^{1/4}$ by Theorem~\ref{thm:convergence_labels_degres_prescrits_general}, and finally the maps then converge to the Brownian sphere at the same scaling by Theorem~\ref{thm:convergence_cartes_degres_prescrits}. This can be generalised to forests with $\varrho_n \sim \varrho N^{1/2}$ trees with $\varrho \in [0,\infty)$. 
More generally~\cite[Theorem~8.1]{Kor12} treats the case when $\mu_\q$ is critical and belongs to \emph{the domain of attraction of a Gaussian law}, which means that $(j^2 \sum_{k\ge j} \mu_\q(k))_{j \ge 1}$ is a \emph{slowly varying sequence}. The previous results extend, except tightness of trees, with all scaling factors multiplied by some slowly varying sequence.
We refer to~\cite[Section~6]{Mar19} for details.

\subsubsection{The stable regimes}
Theorem~8.1 in~\cite{Kor12} also treats the case when $\mu_\q$ is critical and belongs to the domain of attraction of a stable law with index $\alpha \in (1,2)$, i.e. when $(j^\alpha \sum_{k\ge j} \mu_\q(k))_{j \ge 1}$ is a slowly varying sequence. In this case the \L ukasiewicz path, rescaled by some other slowly varying sequence times $N^{-1/\alpha}$ converges in distribution towards the excursion of a stable L\'evy process with index $\alpha$ as defined in Section~\ref{ssec:Levy}. Les us mention that when the map is conditioned on having $N$ faces with degree in a set $A$, then it is assumed in~\cite{Kor12} that either $A$ or its complement is finite, however Th\'evenin~\cite{The20} lifted this restriction.
As opposed to the Brownian regime, the stable L\'evy excursion admits positive jumps, so now the $\theta_i$'s are positive for $i \ge 1$ and random, and satisfy $\sum_i \theta_i = \infty$ a.s., and finally $\theta_0=0$. By Proposition~\ref{prop:Levy_partie_continue_hauteur} and the remark after, these processes and their conditioned versions satisfy~\eqref{eq:PJ}, so we can apply our results from Section~\ref{sec:convergence_saut_pur}. In this case the stable looptrees are defined in~\cite{CK14}, whereas the limit of the label process is the ``continuous distance process'' in~\cite{LGM11}. Finally, as in the Brownian regime, one can replace the random scaling factor of labels and maps $\sigma_n^{1/2}$ by the deterministic one $N^{-1/(2\alpha)}$ times a slowly varying sequence, see again~\cite[Section~6]{Mar19} for details. We stress that this reference could only provide an abstract tightness argument whereas our present results now allow to recover (and extend to more general conditionings and boundaries) the results of~\cite{LGM11, Mar18a}.
Let us mention that as in these reference, the convergence of the maps holds only after extraction of a subsequence and proving the uniqueness of the limits in this case is under active investigation~\cite{CMR}.

\subsubsection{Condensation regimes}
When $\mu_\q$ has mean smaller than $1$, the situation is different and the \L ukasiewicz paths of the trees conditioned to have $N$ edges, now rescaled by a factor $N$, converge in distribution towards a linear slope by~\cite{Kor15}. This is also the case when $\mu_\q$ is critical and belongs to the domain of attraction of a Cauchy distribution by~\cite{KR19}.
In both cases we have $\theta_0 = 1$ and Corollary~\ref{cor:convergence_looptree_cercle} applies: the rescaled looptrees converge towards circles. Then random labels on them converge towards Brownian bridges and the associated maps converge to the Brownian CRT. We refer again to~\cite[Section~6]{Mar19} as well as~\cite{JS15} for the original result on subcritical random maps.

\subsubsection{Non stable L\'evy processes}

In the recent paper~\cite{KM21b} we consider maps conditioned to have $N$ edges \emph{and} an arbitrary number $K_N+1$ of vertices, and so $N-K_N+1$ faces by Euler's formula. 
Under appropriate assumptions on both $\q$ and $K_N$, the largest degree satisfies $\sigma_n^{-1} \degf_{n,1} \to 0$ in probability and we identify the asymptotic behaviour of $\sigma_n$, so the rescaled maps again converge in distribution to the Brownian sphere, see precisely~\cite[Theorem~5.2 \&~5.4]{KM21b}. Again our results here on the one hand show the convergence in this case of the underlying looptree to the Brownian CRT, and on the other hand they also provide the convergence of the labels and thus more precise results on the geometry of these maps in the presence of macroscopic degrees, when~\cite[Theorem~5.5]{KM21b}, based on~\cite{Mar19} does not. Let us point out that in this particular framework the limits of the \L ukasiewicz paths are simply excursions of stable L\'evy processes with a drift (positive or negative, depending on $K_N$); a condensation phenomenon can also occur if $K_N$ is large enough.
These natural models are the main motivation to study more these non stable L\'evy objects in~\cite{KM22}, which should also more generally arise as limits of size-conditioned Boltzmann maps sampled with a weight sequence which varies with the size.

\addcontentsline{toc}{section}{References}


\end{document}